\providecommand{\keywords}[1]{\smallskip\noindent\textbf{\emph{Keywords:}} #1}
\providecommand{\msc}[1]{\smallskip\noindent\textbf{\emph{2020 Mathematics Subject Classification:}} #1}
\DeclareMathOperator{\Fac}{\mathcal{L}}
\DeclareMathOperator{\N}{\mathbb{N}}
\DeclareMathOperator{\Z}{\mathbb{Z}}
\DeclareMathOperator{\SL}{SL}
\newcommand{\infw}[1]{\mathbf{#1}}
\newcommand{\fc}[1]{\mathsf{p}_{#1}}
\newcommand{\bc}[2]{\mathsf{b}_{#1}^{(#2)}}
\DeclareMathOperator{\tmprop}{\mathcal{T}\!\!\mathcal{M}\!\mathcal{B}}
\declaretheorem[numberwithin=section]{theorem}
\declaretheorem[sibling=theorem]{lemma,corollary,proposition}
\declaretheorem[sibling=theorem,style=definition]{example,definition,remark,observation}
\declaretheorem{claim}
\declaretheorem[name=Question,refname={Question,Questions},style=definition]{questions}
\declaretheoremstyle[
    headfont=\normalfont\itshape, 
    bodyfont = \normalfont,
    qed=$\blacksquare$, 
    headpunct={:}]{claimproofstyle} 
\declaretheorem[name={Proof of claim}, style=claimproofstyle, unnumbered]{claimproof}
\crefname{equation}{}{}
\DeclareMathOperator{\suff}{suff}
\DeclareMathOperator{\pref}{pref}
\newcommand{\ver}[1]{\textnormal{\texttt{#1}}}
\newcommand{\assoc}{\models}
\title{Characterizations of families of morphisms and words via binomial complexities\footnote{Markus Whiteland dedicates this paper to the memory of his father Alan Whiteland (1940--2021).}}
\author{Michel Rigo}
\author{Manon Stipulanti\thanks{Supported by the FNRS Research grant 1.B.397.20F.}}
\author{Markus A. Whiteland\thanks{Supported by the FNRS Research grant 1.B.466.21F.}}
\affil{Department of Mathematics, University of Li\`ege, Li\`ege, Belgium}
\affil{\texttt{\{m.rigo,m.stipulanti,mwhiteland\}@uliege.be}}
\date{}
\begin{document}
\maketitle            
\begin{abstract}
  Two words are $k$-binomially equivalent if each subword of length
  at most $k$ occurs the same number of times in both words. The
  $k$-binomial complexity of an infinite word is a counting function
  that maps $n$ to the number of $k$-binomial equivalence classes
  represented by its factors of length $n$. Cassaigne et al.~[Int. J. Found. Comput. S., 22(4) (2011)] characterized a family of morphisms, which we call Parikh-collinear, as those morphisms
  that map all words to words with bounded $1$-binomial
  complexity. Firstly, we extend this characterization: they map words
  with bounded $k$-binomial complexity to words with bounded
  $(k+1)$-binomial complexity.  As a consequence, fixed points of
  Parikh-collinear morphisms are shown to have bounded $k$-binomial
  complexity for all $k$. Secondly, we give a new characterization of Sturmian
  words with respect to their $k$-binomial complexity. Then we
  characterize recurrent words having, for some $k$, the same $j$-binomial
  complexity as the Thue--Morse word for all $j\le k$. Finally, inspired by
  questions raised by Lejeune, we study the relationships between the
  $k$- and $(k+1)$-binomial complexities of infinite words; as well as
  the link with the usual factor complexity.
  
    \keywords{Factor complexity, Abelian complexity, Binomial complexity, powers of the Thue--Morse morphism, Sturmian words.}

\msc{Primary: 68R15. Secondary: 05A05.}
  \end{abstract}

%
%
%

\section{Introduction}\label{sec: intro}

The combinatorial structure of an infinite word $\infw{x}\in A^\mathbb{N}$ over a finite alphabet~$A$ may reveal important aspects of $\infw{x}$ itself. 
This structure is often studied through its language $\mathcal{L}(\infw{x})$, i.e., the set of its factors, and in particular to inspect the set $\Fac_n(\infw{x}):=\mathcal{L}(\infw{x})\cap A^n$ of factors of length~$n$.
Even plain counting the cardinality of this set turns out to be a useful concept: with $\fc{\infw{x}}$ the \emph{factor complexity}
function defined as $\fc{\infw{x}}\colon\N \to \N$, $n\mapsto \#\Fac_n(\infw{x})$, the celebrated Morse--Hedlund theorem asserts that
an infinite word $\infw{x}$ is aperiodic if and only if $\fc{\infw{x}}(n) \geq n+1$ for all $n\geq 1$. 
For instance,  the Thue--Morse word $\infw{t}=01101001\cdots$ (also known as the Prouhet--Thue--Morse word),  the fixed point of the morphism $\varphi \colon 0\mapsto 01, 1\mapsto 10$,  is aperiodic because its factor complexity is given by
\begin{equation}\label{eq: fact-compl-TM}
\fc{\infw{t}}(2^m + r) =
\begin{cases}
3\cdot 2^m + 4(r-1), & \text{if } 1 \leq r \leq 2^{m-1};\\
4\cdot 2^m + 2(r-1), & \text{if } 2^{m-1}<r\leq 2^m,
\end{cases}
\end{equation}
for all $m\geq 0$ \cite[\S 4]{CANT2010}.
The factor complexity has proved its importance in a number of areas of mathematics. 
For example,  in number theory,  Adamczewski and Bugeaud \cite{AdamczewskiBugeaud2007} proved that the base-$b$ expansion of a real algebraic irrational number has a factor complexity function satisfying
\[
\liminf_{n \to \infty} \frac{\fc{}(n)}{n} = \infty.
\]
As a consequence,  the number having $\infw{t}$ as base-$2$ expansion is transcendental.

One can conversely define families of words using the factor complexity function. For example, a word $\infw{x}$ is called \emph{Sturmian} if $\fc{\infw{x}}(n) = n+1$ for all $n$. Such words, studied also in this note, turn out to have many interesting properties. For general references about combinatorics on (Sturmian) words, see, for
instance, \cite{AS,CANT2010,Lothaire1997}.

Many variations of the factor complexity have been introduced.
Some of these counting functions only take into account factors
with specific properties such as palindromes or privileged words
\cite{DroubayPirillo1999,Peltomaki2013privileged}. Other
functions count subwords extracted along subsequences of
prescribed forms like maximal pattern or arithmetical
complexities \cite{KamaeZamboni2002,Avgustinovich2003}. Closely
related to the subject discussed in this paper, abelian,
$k$-abelian or cyclic complexities are functions of the form
$n\mapsto\#(\Fac_n(\infw{x})/{\sim})$ for the quotient by a
relevant equivalence relation~$\sim$ \cite{Rigo2015,KarhumakiSaarelaZamboni2013,Cyclic}.
For more on abelian combinatorics on words (and related notions),
we refer the reader to the recent excellent survey
\cite{FiciPuzynina}. For each of the above complexity functions
usual questions naturally arise: 

\begin{itemize}
\item What is the complexity of well-known families of words such as Sturmian, Arnoux-Rauzy, automatic, (pure) morphic, or Toeplitz words? 

\item This leads to the more interesting problem of classifying or characterizing infinite words with respect to their complexity.
As an example, Coven and Hedlund in \cite{CovenH1973sequences} show that an infinite word is purely periodic if and only if its abelian complexity function attains the value $1$. Further, a binary aperiodic word has its abelian complexity function equal to the constant function $2$ if and only if it is Sturmian. Words with linear factor complexity are characterized in \cite{CassaigneFPZ2019characterization}.

\item What are the possible growth rates of the complexity function? 

\item Which non-periodic words may achieved the lowest complexity? 
\end{itemize}
These questions have intrinsic theoretical interests but also provide particular insight about the combinatorial structure of the studied words. Depending on the properties of interest, one focuses on the appropriate complexity function. Infinite words with specific combinatorial properties are, for instance, sought to construct particular symbolic dynamical systems or tilings of the line. For instance, the Thue--Morse minimal subshift is completely characterized by its factor complexity together with its abelian complexity \cite{RichommeSZ2011Abelian}.
The Thue--Morse word, which is central in our paper,  plays an important role in many areas of mathematics, e.g., see \cite{AlloucheS1999ubiquitous,Allouche2015}. 

In this paper, the complexity function of interest is built from binomial coefficients of words.
\begin{definition}
Let $u,w\in A^*$.
The \emph{binomial coefficient} of $u$ and $w$ is the number of times $w$ occurs as a subword of $u$, i.e.
writing $u = u_1\cdots u_n$ with $u_i \in A$,
$$\binom{u}{w}=\#\left\{ i_1<i_2<\cdots < i_{|w|} : u_{i_1}u_{i_2}\cdots u_{i_{|w|}}=w\right\}.$$
\end{definition}
These binomial coefficients have proven to be useful in a variety of domains: generalizations of Pascal-like triangles \cite{LRSti2016}, algebra and topology \cite{PinSilva,CrochemoreBerstel}, formal languages or relationship with the extensively studied Parikh matrices and Simon's congruence \cite{Atanasiu2002,Fosse2004,SimonTest}. For more on these binomial coefficients, see, for instance,~\cite[\S 6]{Lothaire1997}.

We mention a well-known and actively researched problem related to binomial coefficients. A word $u$ is {\em $k$-reconstructible}
whenever the knowledge of the binomial coefficients
$\binom{u}{v}$, for all subwords $v$ of length~$k$, uniquely
determines $u$. Inspired by the problem of reconstructing graphs
from vertex-deleted subgraphs, the famous reconstruction problem
is to determine the function $f(n)=k$ where $k$ is the least
integer for which all words of length~$n$ (over a given alphabet)
are $k$-reconstructible. For articles on this problem, we mention \cite{FleischmannLejeune2021,%
Kalasnik,ManvelMeyerowitz}
and references therein.

Let us now introduce our main object of study. Let $k\ge 1$ be an integer. The $k$-binomial complexity function introduced in \cite{RigoSalimov2015} is the central theme  of Lejeune's thesis \cite{Lejeune2021thesis}. It is built on the $k$-binomial equivalence where factors are distinguished with respect to the number of occurring subwords.
\begin{definition}
Two words $u, v\in A^*$ are \emph{$k$-binomially equivalent}, and we write $u \sim_k v$, if
\[
\binom{u}{x} = \binom{v}{x}, \quad \forall\, x\in A^{\le k}.
\]
\end{definition}
%
In \cite[Lem.~1]{ManvelMeyerowitz}, it is observed that one may replace
the condition $\forall x\in A^{\leq k}$ with $\forall x\in A^k$
as soon as $|u|$, $|v| \geq k$.
Observe that the word $u$ is obtained as a permutation of the letters in $v$ if and only if $u \sim_1 v$. The latter relation is the \emph{abelian equivalence} already introduced by Erd\H{o}s \cite{Erdos1957}. This leads to the following definition, introduced in \cite{RigoSalimov2015}.
\begin{definition}
Let $k\ge 1$ be an integer. The \emph{$k$-binomial complexity function} of an infinite word $\infw{x}$ is defined as $\bc{\infw{x}}{k} \colon \N \to \N$, $n\mapsto \#(\Fac_n(\infw{x})/{\sim_k})$.
\end{definition}

It is clear that we have a series of refinements of the abelian equivalence: for all $k \ge 1$,
$u \sim_{k+1} v$ implies $u\sim_k v$. Thus, for all~$n$, we have the
inequalities
\begin{equation}
  \label{eq:ineq}
\bc{\infw{x}}{1}(n) \le \bc{\infw{x}}{2}(n) \le \cdots \le \bc{\infw{x}}{k}(n) \le \bc{\infw{x}}{k+1}(n) \le \cdots \le \fc{\infw{x}}(n).  
\end{equation}

The study of the $k$-binomial complexity function has so far been
studied for restricted families of words. For example, for
$k\ge 2$, the $k$-binomial complexity of Sturmian words coincides
with their factor complexity \cite{RigoSalimov2015} (recalled here as \cref{thm:Sturmian2-binomial}) and the same property holds for the Tribonacci word \cite{LejeuneRigoRosenfeld2020}.
For any $k\geq 2$, fixed points of Parikh-constant morphisms (see the next part for a definition) are known to have bounded
$k$-binomial complexity \cite{RigoSalimov2015}. Recently, the
$k$-binomial complexities of the Thue--Morse word
\cite{LejeuneLeroyRigo2020} (given in \cref{eq:k-image-j-complexities}) and the $2$-binomial complexities of generalized Thue--Morse words was also computed \cite{2021binomial}. That is the extent to which the notion has been studied.

We remark that a better understanding of the $k$-binomial complexity may give information about the language $\mathcal{L}(\infw{x})$ of an infinite word~$\infw{x}$ for which the reconstruction problem could be solved. The aim is to restrict the reconstruction problem to the language of an infinite word having a $k$-binomial complexity of the same order as its factor complexity. Indeed, if
$\bc{\infw{x}}{k}=\fc{\infw{x}}$ for some $k$, then for any two distinct factors $y,z$ of $\infw{x}$, there exists a subword $v$ of length $k$ such that $\binom{y}{v}\neq\binom{z}{v}$. 

Finally a parallel can be drawn between the $k$-abelian complexity introduced by Karhum\"aki et al.~\cite{KarhumakiSaarelaZamboni2013} and the
$k$-binomial complexity. In both cases, we have a series of refinements \cref{eq:ineq} of the abelian equivalence. The fundamental difference is the following one. Two finite words $u,v$ are {\em $k$-abelian equivalent}
if, for each word $w$ of length at most~$k$, we count the same number of occurrences of the factor $w$ in both words $u$ and~$v$.
We thus make the important distinction between a \emph{factor} and
a \emph{subword} of a word. Many properties of the $k$-abelian complexity have been
recently and extensively studied such as growth and fluctuations, $k$-abelian palindromes,
variation of Morse--Hedlund theorem, etc.~\cite{CassaigneKarhumaki2017,CassaigneKarhumaki2018,KarhumakiSaarela2017}.
This is to be contrasted with the limited knowledge we have on
the $k$-binomial complexity function. Indeed, part of our motivation for this work stems from this rather limited state of the art as described above.

\subsection{Our Results}
We present three kinds of results: a new characterization of
Parikh-collinear morphisms and links with bounded binomial
complexities; a characterization of recurrent words with the same
$j$-binomial complexities as the Thue--Morse word for
$j=1,\ldots,k$; study of the relationships existing between $\bc{\infw{w}}{k}$
and $\bc{\infw{w}}{k+1}$.
This paper improves by far the preliminary version \cite{RSWDLT}: not only do we prove announced results,  but we also significantly extend them.
\smallskip

$\bullet$ Morphisms mapping all infinite words to words with bounded
abelian complexity have been characterized in \cite{CassaigneRichomeSaariZamboni2011}.
Such a morphism $f\colon A^* \to B^*$ is said to be \emph{Parikh-collinear}: for all
letters $a,b \in A$, there is $r_{a,b}\in\mathbb{Q}$ such that $\Psi(f(b)) = r_{a,b} \Psi(f(a))$,
where $\Psi(u)$ denotes the \emph{Parikh vector} of a word~$u$ (see \cref{sec:prelim} for definitions).
In \cref{sec: rank1}, we obtain several new characterizations of Parikh-collinear morphisms.  Connecting this with the series of inequalities \cref{eq:ineq}, we show with \cref{thm:Parikh-collinear_bounded-k_to_bounded_k+1} that a morphism is Parikh-collinear if and only if it maps all words with bounded $k$-binomial complexity to words with bounded $(k+1)$-binomial complexity. 

It is known that any fixed point of a prolongable {\em Parikh-constant morphism}~$f:A^*\to A^*$, i.e., $\Psi(f(a))=\Psi(f(b))$ for all letters $a,b\in A$, has a bounded $k$-binomial complexity \cite{RigoSalimov2015}. Any Parikh-constant morphism is obviously Parikh-collinear. As a direct consequence of our characterization of Parikh-collinear morphisms, \cref{cor:k-binomial_bounded} extends the previous result: bounded $k$-binomial complexity holds for any fixed point of a prolongable Parikh-collinear morphism. 
\smallskip

$\bullet$
We now turn to words sharing their binomial complexities with the Thue--Morse word $\infw{t}$.
From the above discussion (the Thue--Morse morphism $\varphi$ is Parikh-constant), for all $j\ge 1$, the $j$-binomial complexity of $\infw{t}$ is bounded by a constant depending on~$j$. But more is known, the exact value of $\bc{\infw{t}}{j}(n)$ computed in \cite{LejeuneLeroyRigo2020} is given by 
\begin{equation}\label{eq:k-image-j-complexities}
\bc{\infw{t}}{j}(n)=
\begin{cases}
\fc{\infw{t}}(n), & \text{if } n < 2^j;\\
3\cdot 2^j-3, & \text{if } n\equiv 0 \pmod{2^j} \text{ and } n \geq 2^j; \\
3\cdot 2^j-4, & \text{otherwise},
\end{cases}
\end{equation}
where the factor complexity $\fc{\infw{t}}$ of $\infw{t}$
is given by \cref{eq: fact-compl-TM}.
Considering $j=1$ in \cref{eq:k-image-j-complexities}, words having the same abelian complexity as the Thue--Morse words have been characterized in \cite{RichommeSZ2011Abelian} as follows. The abelian complexity of an aperiodic word~$\infw{x}\in\{0,1\}^\mathbb{N}$ is, for $n>0$, $\bc{\infw{x}}{1}(n)=3$ if $n$ is even, and $\bc{\infw{x}}{1}(n)=2$ if $n$ is odd, if and only if there exists a word~$\infw{y}$ such that $\infw{x}=u\varphi(\infw{y})$ with $u\in\{\varepsilon,0,1\}$.
\cref{sec:propsTMI,sec:propsTMII} are about binomial properties of iterates of $\varphi$. We generalize the latter result and obtain a characterization of words having the same $j$-binomial complexity as the Thue--Morse word~$\infw{t}$ for all $j\le k$.
Except for a remark in \cite{FiciPuzynina} (see \cref{thm:fici}), such a result together with \cref{thm:charact_sturmian} are the first where binomial complexity leads to the characterization of combinatorial families of words. In this paper, with \cref{thm:charact_sturmian}, we observe that a word $\infw{x}$ is Sturmian if and only if $\bc{\infw{x}}{2}(n)=n+1$, for all $n$. We make the statements about words sharing the same $j$-binomial complexities as $\infw{t}$ more precise.

Let $k$ be an integer and let $\infw{y}$ be an aperiodic binary
word. With \cref{thm:k-image-j-complexities} we show that for $\infw{x} = u\varphi^k(\infw{y})$ we have, for all $j \leq k$, $\bc{\infw{x}}{j}=\bc{\infw{t}}{j}$ which is given by \eqref{eq:k-image-j-complexities}, where $u$ is a (possibly empty) proper suffix of $\varphi^k(0)$ or $\varphi^k(1)$. Conversely, with \cref{thm:k-complexities-k-image}, if $\bc{\infw{x}}{j}=\bc{\infw{t}}{j}$ for all $j\leq k$ for a \emph{recurrent} word~$\infw{x}$, i.e., each factor of $\infw{x}$ appears infinitely often,
then $\infw{x}=u\varphi^k(\infw{y})$ where $u$ is a proper suffix of $\varphi^k(0)$ or $\varphi^k(1)$ and $\infw{y}$ is some aperiodic binary word.

\smallskip

$\bullet$ In general, not much is known about the general behavior or fluctuations that can be expected for the $k$-binomial complexity of an infinite word. In particular, computing the $k$-binomial complexity of a particular infinite word remains quite challenging. It would also be desirable to compare in some ways the $k$- and $(k+1)$-binomial complexities of a word.
\begin{definition}
For two functions $\mathsf{f}, \mathsf{g} \colon \N \to \N$, we write $\mathsf{f} \prec \mathsf{g}$ when the relation $\mathsf{f}(n) < \mathsf{g}(n)$ holds for infinitely many $n \in \N$.\end{definition}

We define $\prec$ this way because for some words,  the $2$-binomial complexity attains the factor complexity infinitely often while it is less than the factor complexity infinitely often.
See end of \cref{sec: 7.1} for a discussion.

As an example, a consequence of \cref{pro:k-image_k+1-prefix-suffix} is that $\bc{\infw{x}}{k}\prec \bc{\infw{x}}{k+1}$ for $\infw{x}=\varphi^k(\infw{y})$ with $\infw{y}$ aperiodic.
Our reflection is here driven by the following questions inspired by Lejeune's questions \cite[pp.~115--117]{Lejeune2021thesis} that are natural to consider in view of \eqref{eq:ineq}.
\begin{questions}\label{q: strict ineq}
Does there exist an infinite word $\infw{w}$ such that, for all $k\geq 1$, $\bc{\infw{w}}{k}$ is unbounded and $\bc{\infw{w}}{k}
\prec \bc{\infw{w}}{k+1}$?
If the answer is positive, can we find a (pure) morphic such word $\infw{w}$?
\end{questions}
From \eqref{eq:ineq}, notice that $\bc{\infw{w}}{k}$ is unbounded, for all $k\geq 1$, if and only if the abelian complexity $\bc{\infw{w}}{1}$ is unbounded. Even though the Thue--Morse word $\infw{t}$ is such that, for all $k\ge 1$, $\bc{\infw{t}}{k}\prec \bc{\infw{t}}{k+1}$,  $\bc{\infw{t}}{k}$ remains bounded \eqref{eq:k-image-j-complexities}. So $\infw{t}$ is not a satisfying answer to \cref{q: strict ineq}. However, in \cref{sec:question_of_Lejeune}, we provide several positive
answers to  this question.

\begin{restatable}{questions}{qstab}
\label{q: stab}
For each $\ell \ge 1$, does there exist a word $\infw{w}$ (depending on $\ell$) such that $
\bc{\infw{w}}{1}
\prec \bc{\infw{w}}{2}
\prec \cdots 
\prec \bc{\infw{w}}{\ell-1}
\prec \bc{\infw{w}}{\ell}
= \fc{\infw{w}}$? 
If the answer is positive, is there a (pure) morphic such word $\infw{w}$?
\end{restatable}


Putting together results from \cref{sec: answer question stab,q: stab-bis} we fully answer \cref{q: stab}: \cref{thm:k-image-j-complexities} and \cref{pro:k-image_k+1-prefix-suffix} provide a word $\infw{x}=\varphi^k(\infw{y})$ for which
$\bc{\infw{x}}{1}
\prec \bc{\infw{x}}{2}
\prec \cdots 
\prec \bc{\infw{x}}{k-1}
\prec \bc{\infw{x}}{k}
\prec \bc{\infw{x}}{k+1}$, while assuming that
$\infw{y}$ above is Sturmian, we show that $\bc{\infw{x}}{k+2}=\fc{\infw{x}}$.
We remark that iterates of $\varphi$ applied to Sturmian words
have been studied (among other words) in \cite{Frid1999}. We
observe that our construction leads to words with bounded abelian
complexity. \cref{q: stab} is then strengthened in
\cref{q: stab-bis} where we ask for words with unbounded abelian
complexity. We give a pure morphic answer when $\ell=3$.

\section{Preliminaries}\label{sec:prelim}

Let us now give precise definitions and notation. For any integer $k$, we let $A^k$ (resp., $A^{\le k}$; resp., $A^{< k}$) denote the set of words of length exactly (resp., at most; resp., less than) $k$ over $A$. We let $A^*$ (resp., $A^+$) denote the semigroup of finite words (resp., non-empty finite words) over $A$ equipped with concatenation. We let $\varepsilon$ denote the empty word. The length of the word $w$ is denoted by $|w|$ and the number of occurrences of a letter $a$ in $w$ is denoted by $|w|_a$. 
For binary words $u$, $v$ (always over $\{0,1\}$ in this note, unless otherwise stated), we refer to $|u]_1$ as the \emph{weight} of $u$ and we say that $u$ is {\em lighter} (resp., {\em heavier}) than $v$ whenever $|u|_1<|v|_1$ (resp.,
$|u|_1>|v|_1$). For instance, if $\bc{\infw{y}}{1}(n)=2$, then there are only two kinds of factors in $\infw{y}$: the light ones and the heavy ones. A language $L$ is said to be {\em balanced} if, for all words $u,v\in L$ of the same length and all letters $a$, we have $\bigl| |u|_a-|v|_a\bigr|\le 1$. In particular, an infinite word $\infw{z}$ is {\em balanced} if $\Fac(\infw{z})$ is balanced.

We let $\overline{\,\cdot\,}$ denote the (binary) complementation morphism defined by $\overline{a} = 1-a$, for $a\in\{0,1\}$.
Writing $A=\{a_1,\ldots,a_k\}$ and fixing the order $a_1 < a_2 < \cdots < a_k$ on the letters, the \emph{Parikh vector} of a word $w\in A^*$ is defined as the column vector
\[
\Psi(u) = 
(|w|_{a_1}, 
|w|_{a_2},
\ldots,
|w|_{a_k}
)
^\intercal.
\]

Using a classical ``length-$n$ sliding window'' argument or extending factors of length~$n$ to factors of length~$n+1$, one has the following.

\begin{lemma}[Folklore]\label{rk:abelian_complexity}
  For any binary word $\infw{y}$ over $\{0,1\}$, for all $n\ge 0$, we have $$\bc{\infw{y}}{1}(n)=1+\max_{u,v\in \Fac_n(\infw{y})}\bigl| |u|_1-|v|_1\bigr| \quad\text{ and }\quad
  \bigl| \bc{\infw{y}}{1}(n+1)-\bc{\infw{y}}{1}(n)\bigr|\le 1.$$
\end{lemma}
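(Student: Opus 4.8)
The plan is to prove both assertions of \cref{rk:abelian_complexity} by reducing the $1$-binomial (abelian) complexity to a counting problem about the possible weights of factors of length $n$. Recall that for binary words, $u \sim_1 v$ holds if and only if $|u|_1 = |v|_1$ (equivalently $|u|_0 = |u|_0$, since $|u|=|v|=n$ is fixed). Hence the number of abelian classes among factors of length $n$ equals the number of distinct values taken by the weight function $u \mapsto |u|_1$ as $u$ ranges over $\Fac_n(\infw{y})$.

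For the first identity, I would argue that the set of weights $\{|u|_1 : u \in \Fac_n(\infw{y})\}$ is a set of consecutive integers, i.e.\ an interval $\{m, m+1, \ldots, M\}$ where $m = \min_{u} |u|_1$ and $M = \max_u |u|_1$. The key step here is a \emph{discrete intermediate value} argument: if $u$ and $w$ are two factors of length $n$ realizing weights $c < c'$, then by sliding a window of length $n$ across a sufficiently long factor of $\infw{y}$ containing occurrences of both $u$ and $w$, the weight changes by at most $1$ at each single-letter shift (removing the leftmost letter and appending a new one on the right alters the weight by $-1$, $0$, or $+1$). Therefore every intermediate integer value between $c$ and $c'$ is attained by some factor of length $n$. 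Consequently the number of distinct weights is exactly $M - m + 1 = 1 + \max_{u,v \in \Fac_n(\infw{y})}\bigl||u|_1 - |v|_1\bigr|$, which is the claimed formula.

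For the second inequality, I would relate factors of length $n$ to factors of length $n+1$ via the two standard operations: every factor of length $n+1$ restricts (by deleting its last letter) to a factor of length $n$, and every factor of length $n$ extends to the right by at least one letter to a factor of length $n+1$ (valid since $\infw{y}$ is infinite). Writing $\Delta(n) := \max_{u,v\in\Fac_n(\infw{y})}\bigl||u|_1-|v|_1\bigr|$, so that $\bc{\infw{y}}{1}(n) = 1 + \Delta(n)$, it suffices to show $|\Delta(n+1) - \Delta(n)| \le 1$. On the one hand, if two factors of length $n+1$ realize weights differing by $\Delta(n+1)$, deleting their last letters yields factors of length $n$ whose weights differ by at least $\Delta(n+1) - 1$, giving $\Delta(n) \ge \Delta(n+1) - 1$. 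On the other hand, extending two length-$n$ factors realizing the spread $\Delta(n)$ to length $n+1$ changes each weight by at most $1$, so $\Delta(n+1) \ge \Delta(n) - 1$; combining these gives $|\Delta(n+1)-\Delta(n)| \le 1$ and hence the stated bound on the difference of complexities.

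The main obstacle I anticipate is making the intermediate-value step fully rigorous: one must ensure that there genuinely exists a single factor of $\infw{y}$ long enough to contain (shifted occurrences of) both extremal factors, so that the sliding window connects their weights through a continuous chain of unit steps. Since $\infw{y}$ is infinite and both $u,w\in\Fac_n(\infw{y})$ occur in it, any factor extending far enough to the right of an occurrence of $u$ until it reaches an occurrence of $w$ provides such a bridging window, and the unit-step property of the sliding window then does the rest. The extension-and-restriction bookkeeping in the second part is routine once one fixes the convention that extensions are always taken to the right (so that restriction and extension are genuinely inverse-like operations on weights up to $\pm 1$).
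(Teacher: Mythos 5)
Your proof is correct and follows exactly the route the paper sketches: the lemma is stated as folklore with only the hint ``a classical length-$n$ sliding window argument or extending factors of length $n$ to factors of length $n+1$,'' and your two arguments (the discrete intermediate-value argument showing the weights of length-$n$ factors form an integer interval, and the restriction/extension bookkeeping bounding $|\Delta(n+1)-\Delta(n)|\le 1$) are precisely the details that hint is pointing to. Nothing is missing; the bridging-window concern you raise is handled correctly by taking the factor of $\infw{y}$ spanning from an occurrence of $u$ to an occurrence of $w$.
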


\subsection{Binomial Equivalence}
We first collect some useful results on $k$-binomial equivalence. Note that $\sim_k$ is a congruence, i.e., for $u,v,x\,y\in A^*$, $u\sim_k v$ and $x\sim_k y$ implies $ux\sim_k vy$. In particular,
$A^*/{\sim_k}$ is a monoid. In fact, it is a cancellative monoid
(see \cite[Lemma~10]{LejeuneLeroyRigo2020}; cancellativity also follows from $A^*/{\sim_k}$ being isomorphic to a subsemigroup
of the special linear group $\SL((k+1)\cdot\#A^k,\Z)$ \cite{RigoSalimov2015}):

\begin{lemma}[Cancellation property]\label{lem: cancellation property} Let $u, v, w$ be words over $A$. We have $$v\sim_k w\Leftrightarrow uv\sim_kuw \text{ and } v\sim_k w\Leftrightarrow vu\sim_k wu.$$
\end{lemma}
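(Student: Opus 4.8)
The two forward implications are precisely the congruence property of $\sim_k$ recorded just above the lemma, so the plan is to establish only the two cancellation (backward) directions. I would prove $uv \sim_k uw \Rightarrow v \sim_k w$ in detail; the right-cancellation $vu \sim_k wu \Rightarrow v \sim_k w$ is symmetric, obtained by the same argument applied to the reversed words.

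The single tool I would invoke is the classical convolution identity for binomial coefficients of words: for all $x, y, z \in A^*$,
$$\binom{xy}{z} = \sum_{z_1 z_2 = z} \binom{x}{z_1}\binom{y}{z_2},$$
the sum ranging over all ways of writing $z$ as a (possibly empty) prefix $z_1$ followed by a suffix $z_2$; this is standard for the binomial coefficients introduced above. The point is that, taking $x = u$, the summand with $z_1 = \varepsilon$ contributes exactly $\binom{v}{z}$ (respectively $\binom{w}{z}$), and this is the term I would isolate.

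First I would fix $uv \sim_k uw$, so that $\binom{uv}{z} = \binom{uw}{z}$ for all $z \in A^{\le k}$, and prove $\binom{v}{z} = \binom{w}{z}$ for such $z$ by induction on $m = |z|$. The base case $m = 0$ is trivial since both sides equal $1$. For the inductive step I would expand both $\binom{uv}{z}$ and $\binom{uw}{z}$ via the convolution identity and split off the $z_1 = \varepsilon$ term:
$$\binom{uv}{z} = \binom{v}{z} + \sum_{\substack{z_1 z_2 = z \\ z_1 \ne \varepsilon}} \binom{u}{z_1}\binom{v}{z_2}.$$
Every surviving summand has $|z_2| < m$, so $\binom{v}{z_2} = \binom{w}{z_2}$ by the induction hypothesis, forcing the two residual sums to agree; since the left-hand sides coincide by hypothesis, I obtain $\binom{v}{z} = \binom{w}{z}$, which closes the induction and yields $v \sim_k w$.

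I do not expect a serious obstacle here: the only subtlety is organizing the induction so that peeling the whole of $z$ off of $v$ leaves only strictly shorter subwords to be matched, which the convolution identity makes transparent. Since no property of $u$ beyond $\binom{u}{\varepsilon} = 1$ is used, the argument is uniform in $u$, and reflecting all words and factorizations (isolating the $z_2 = \varepsilon$ term instead) handles the right-cancellation case verbatim.
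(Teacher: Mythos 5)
Your proof is correct. The paper does not actually reproduce a proof of this lemma: it cites \cite[Lemma~10]{LejeuneLeroyRigo2020} and remarks that cancellativity also follows from the fact that $A^*/{\sim_k}$ embeds into the group $\SL((k+1)\cdot\#A^k,\Z)$, so that the quotient monoid is a submonoid of a group and hence cancellative. Your argument is the natural elementary alternative to that structural route: the convolution identity $\binom{xy}{z}=\sum_{z_1z_2=z}\binom{x}{z_1}\binom{y}{z_2}$ is the same identity the paper itself uses (in expanded form) in the proof of \cref{lem:diff-powers}, and your induction on $|z|$, isolating the $z_1=\varepsilon$ term and using that all remaining $z_2$ are strictly shorter, is airtight; the base case, the treatment of the forward implications via the congruence property, and the symmetric handling of right-cancellation (either by reversal or by isolating the $z_2=\varepsilon$ term) are all fine. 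What the group-embedding argument buys is brevity and a conceptual explanation (cancellativity is inherited from a group); what your argument buys is self-containedness and the quantitative refinement that $\binom{v}{z}-\binom{w}{z}$ is determined degree by degree, with no appeal to the matrix representation.
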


We will also need the following result characterizing $k$-binomial commutation among words of equal length.

\begin{theorem}[{\cite[Thm.~3.5]{Whiteland2021}}]\label{thm:w35}
Let $k\geq 2$ and $x,y\in A^*$ such that $|x|=|y|$. Then $xy\sim_k yx$ if and only if $x\sim_{k-1}y$.
\end{theorem}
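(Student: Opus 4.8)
The plan is to pass to generating functions in non-commuting variables. For $x\in A^*$ set $\Phi(x)=\sum_{w\in A^*}\binom{x}{w}\,w$, an element of the power-series ring $\Z\langle\langle A\rangle\rangle$ with constant term $1$. The product formula $\binom{xy}{w}=\sum_{w=w_1w_2}\binom{x}{w_1}\binom{y}{w_2}$ (the only word-combinatorial input needed) says precisely that $\Phi$ is a monoid morphism, $\Phi(xy)=\Phi(x)\Phi(y)$, and by definition $u\sim_k v$ holds iff $\Phi(u)$ and $\Phi(v)$ agree in all degrees $\le k$. Writing $\Phi(x)=1+X$ and $\Phi(y)=1+Y$ with $X,Y$ supported in degrees $\ge 1$, a one-line computation gives $\Phi(x)\Phi(y)-\Phi(y)\Phi(x)=XY-YX=[X,Y]$. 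Hence $xy\sim_k yx$ is equivalent to $[X,Y]$ vanishing in all degrees $\le k$, while $x\sim_{k-1}y$ is equivalent to $X$ and $Y$ agreeing in all degrees $\le k-1$. Throughout I write $X=\sum_{j\ge 1}X_j$ with $X_j$ the homogeneous degree-$j$ part, and similarly for $Y$; note $X_1=\sum_a|x|_a\,a$.

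For the easy (backward) implication, assume $X_j=Y_j$ for all $j\le k-1$ (which already forces $|x|=|y|$, since $x\sim_1 y$). For each $d\le k$ the degree-$d$ part of the commutator is $[X,Y]_d=\sum_{i+j=d,\ i,j\ge1}(X_iY_j-Y_iX_j)$; every index satisfies $i,j\le d-1\le k-1$, so $X_i=Y_i$ and $X_j=Y_j$, and each summand cancels. Thus $[X,Y]$ vanishes up to degree $k$, i.e.\ $xy\sim_k yx$.

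The substance is the forward implication, which I would prove by showing inductively that $X_m=Y_m$ for $1\le m\le k-1$, assuming $[X,Y]_d=0$ for all $d\le k$. For $m=1$ the degree-$2$ relation is $[X_1,Y_1]=0$; comparing coefficients of each word $ab$ gives $|x|_a|y|_b=|y|_a|x|_b$ for all letters $a,b$, i.e.\ the Parikh vectors of $x$ and $y$ are collinear, and the hypothesis $|x|=|y|$ then upgrades collinearity to equality, $X_1=Y_1$. (This is the only place the equal-length hypothesis is used, and it is why $k\ge 2$ is needed.) For the inductive step, suppose $X_j=Y_j$ for all $j<m$, where $2\le m\le k-1$, and look at the degree-$(m+1)$ relation. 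Every summand $X_iY_j-Y_iX_j$ with $2\le i\le m-1$ has both indices $<m$ and hence vanishes, leaving only the two extreme terms $(i,j)=(1,m)$ and $(m,1)$; using $X_1=Y_1$ these collapse to $[X_1,\,Y_m-X_m]=0$. Writing $D_m:=Y_m-X_m$, the task is reduced to proving the implication $[X_1,D_m]=0\Rightarrow D_m=0$.

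This last step is the main obstacle: commuting with the single degree-$1$ element $X_1$ does not on its own annihilate a degree-$m$ element, so one must exploit extra structure of $D_m$. Here I would invoke the shuffle identity for binomial coefficients, $\binom{x}{u}\binom{x}{v}=\sum_w c(u,v,w)\binom{x}{w}$, where $c(u,v,w)$ counts the interleavings of $u$ and $v$ equal to $w$. For $|u|+|v|=m$ with $u,v$ nonempty the left-hand side involves only degrees $<m$, where $x$ and $y$ agree by the induction hypothesis, so $\sum_w c(u,v,w)\bigl(\binom{y}{w}-\binom{x}{w}\bigr)=0$; that is, $D_m$ (read as the polynomial $\sum_{|w|=m}(\binom{y}{w}-\binom{x}{w})\,w$) is orthogonal to every nontrivial shuffle product. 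By Ree's criterion this means $D_m$ is a Lie polynomial, i.e.\ it lies in the free Lie algebra over $A$ (working over $\Q$). Finally, the classical fact that two elements of a free Lie algebra commute only if they are linearly dependent forces $D_m\in\Q X_1$; since $D_m$ is homogeneous of degree $m\ge 2$ while $X_1$ has degree $1$, we conclude $D_m=0$, i.e.\ $X_m=Y_m$. (If $x=\varepsilon$ then $y=\varepsilon$ and the statement is trivial, so one may assume $X_1\ne0$.) This closes the induction and hence the proof. A more elementary route avoiding Ree's theorem is to unwind the degree-$(m+1)$ relations into the recursion $|x|_\ell\bigl(\tbinom{y}{fg}-\tbinom{x}{fg}\bigr)=|x|_f\bigl(\tbinom{y}{g\ell}-\tbinom{x}{g\ell}\bigr)$ relating the length-$m$ prefix and suffix of $w=fg\ell$, but closing this system directly for arbitrary alphabets seems delicate, which is why I would favour the Lie-theoretic argument.
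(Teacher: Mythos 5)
This theorem is quoted in the paper from the reference [Whiteland, 2021] without proof, so there is no in-paper argument to compare against; the proof in that reference proceeds by direct combinatorial manipulation of binomial coefficients, whereas your route through the Magnus-type embedding $\Phi$, Ree's criterion and the structure of free Lie algebras is genuinely different. It is, in my view, essentially correct. The backward implication and the base case (vanishing of all $2\times 2$ minors gives collinear Parikh vectors, and $|x|=|y|$ upgrades this to equality) are fine, and the reduction of the degree-$(m+1)$ relation to $[X_1,D_m]=0$ is a correct computation. The closing step is also sound: Ree's criterion places the homogeneous element $D_m$ in the free Lie algebra over $\Q$, the Shirshov--Witt theorem forces two commuting elements of a free Lie algebra to be linearly dependent, and homogeneity in distinct degrees ($1$ versus $m\geq 2$) then kills $D_m$. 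What this approach buys is a conceptual explanation of why the statement is true (commutation in the $k$-binomial monoid is governed by commutation in a free Lie algebra); what it costs is reliance on two nontrivial external theorems where the original proof is self-contained.

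One identity does need repair. The formula you call the shuffle identity, $\binom{x}{u}\binom{x}{v}=\sum_w c(u,v,w)\binom{x}{w}$ with $c(u,v,w)$ counting interleavings, is false as stated: for $x=aa$ and $u=v=a$ the left-hand side is $4$ while the right-hand side is $2\binom{aa}{aa}=2$. The correct identity (Lothaire, Ch.~6) uses the \emph{infiltration} product in place of the shuffle; its coefficient of $w$ counts pairs of occurrences of $u$ and $v$ as subwords of $w$ that jointly cover $w$ but may overlap, so it contains terms of every length from $\max(|u|,|v|)$ up to $|u|+|v|$, and only its top-degree part equals the shuffle product. This does not break your argument: for nonempty $u,v$ with $|u|+|v|=m$, the left-hand side and all the lower-degree terms on the right involve only coefficients $\binom{\cdot}{w}$ with $|w|<m$, which agree for $x$ and $y$ by the induction hypothesis, so subtracting the two instances of the (corrected) identity still yields that $D_m$ pairs to zero with every nontrivial shuffle product, which is exactly what Ree's criterion requires. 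With that one correction made explicit, I would accept the proof.
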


A proof of the next result can be conveniently found in \cite[Lem.~30]{LejeuneLeroyRigo2020}.

\begin{theorem}[Ochsenschl\"ager \cite{Ochsenschlager}]\label{thm:Ochsenschlager}
Let $\varphi\colon 0\mapsto 01, 1\mapsto 10$ be the Thue--Morse morphism. For all $k\ge 1$, we have
$\varphi^k(0) \sim_k \varphi^k(1)$ and $\varphi^k(0) \not\sim_{k+1} \varphi^k(1)$.
\end{theorem}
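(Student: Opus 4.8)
The plan is to exploit the self-similar structure of the Thue--Morse blocks together with the commutation criterion of \cref{thm:w35}. First I would record the elementary fact that $\varphi$ commutes with the complementation morphism, $\varphi(\overline{a})=\overline{\varphi(a)}$ for $a\in\{0,1\}$ (checked directly on the two letters), whence by induction $\varphi^k(1)=\overline{\varphi^k(0)}$ for every $k$. Writing $w_k:=\varphi^k(0)$, this yields the decomposition
$$w_{k+1}=\varphi^k(\varphi(0))=\varphi^k(01)=\varphi^k(0)\varphi^k(1)=w_k\,\overline{w_k},\qquad \overline{w_{k+1}}=\overline{w_k}\,w_k,$$
so that $w_{k+1}$ and $\overline{w_{k+1}}$ are exactly the two products $xy$ and $yx$ with $x=w_k$, $y=\overline{w_k}$ of equal length $2^k$. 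This is the key observation: it casts the whole statement into the commutation form handled by \cref{thm:w35}.

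For the positive part $w_k\sim_k\overline{w_k}$, I would argue by induction on $k$. The base case $k=1$ reads $01\sim_1 10$, which holds since both words have one $0$ and one $1$. For the inductive step, applying \cref{thm:w35} with parameter $k+1\ge 2$ to $x=w_k$ and $y=\overline{w_k}$ gives $w_k\overline{w_k}\sim_{k+1}\overline{w_k}w_k$ if and only if $w_k\sim_k\overline{w_k}$; the induction hypothesis provides the right-hand side, and by the recursion above the left-hand side is precisely $w_{k+1}\sim_{k+1}\overline{w_{k+1}}$.

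For the negative part $w_k\not\sim_{k+1}\overline{w_k}$, I would again induct on $k$. The base case $k=1$ follows from $\binom{01}{01}=1\neq 0=\binom{10}{01}$, so $01\not\sim_2 10$. For the step, applying \cref{thm:w35} with parameter $k+2\ge 2$ to the same $x,y$ gives $w_k\overline{w_k}\sim_{k+2}\overline{w_k}w_k$ if and only if $w_k\sim_{k+1}\overline{w_k}$; taking the contrapositive and invoking the induction hypothesis $w_k\not\sim_{k+1}\overline{w_k}$ yields $w_{k+1}\not\sim_{k+2}\overline{w_{k+1}}$, which is the desired claim at level $k+1$.

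I do not expect a genuine obstacle once \cref{thm:w35} is in hand: the two inductions run in parallel, each consuming one direction of the equivalence in \cref{thm:w35} (the ``if'' direction for the positive statement, the contrapositive of the ``only if'' direction for the negative one). The only points requiring care are checking that the hypotheses of \cref{thm:w35} are met---namely $|w_k|=|\overline{w_k}|=2^k$ and that the parameters $k+1$ and $k+2$ are at least $2$, which forces the two base cases to be verified by hand as above. Absent \cref{thm:w35}, the difficulty would shift to a direct combinatorial estimate of the binomial coefficients $\binom{w_k}{x}$, which is exactly the work the commutation criterion lets us bypass.
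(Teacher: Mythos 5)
Your proof is correct. The identities $\varphi^k(1)=\overline{\varphi^k(0)}$ and $\varphi^{k+1}(0)=\varphi^k(0)\varphi^k(1)$, $\varphi^{k+1}(1)=\varphi^k(1)\varphi^k(0)$ do put both halves of the statement exactly into the commutation form of \cref{thm:w35}, the two base cases are verified correctly, and each inductive step uses the appropriate direction of the equivalence with the parameter ($k+1$, resp.\ $k+2$) safely at least $2$.

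The paper itself does not prove this theorem: it attributes it to Ochsenschl\"ager and points to \cite[Lem.~30]{LejeuneLeroyRigo2020} for a written proof, which proceeds by a direct inductive computation of the binomial coefficients $\binom{\varphi^k(a)}{e}$ (in the spirit of \cref{eq:morphic_image_coeff} and \cref{lem:Michel's}). Your route via the commutation criterion is genuinely different and considerably slicker: it replaces all coefficient bookkeeping by two one-line inductions, at the price of importing the nontrivial \cref{thm:w35} from \cite{Whiteland2021} as a black box. Interestingly, the paper itself remarks that \cref{lem:transfer} ``can alternatively be proved using \cref{thm:w35} combined with Ochsenschl\"ager's result,'' so your argument is very much in the spirit of tools already on the table. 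The one thing worth checking before relying on this as a free-standing proof is that the proof of \cref{thm:w35} in \cite{Whiteland2021} does not itself invoke Ochsenschl\"ager's theorem (it does not --- it is a general characterization of $k$-binomial commutation --- but the logical dependency deserves an explicit sentence). What the classical computational proof buys in exchange for its length is quantitative information, namely an explicit nonzero value for $\binom{\varphi^k(0)}{01^{k+1}}-\binom{\varphi^k(1)}{01^{k+1}}$ of the kind exploited in \cref{lem:Michel's}, which your argument does not produce.
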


The following result from \cite[Lem.~31]{LejeuneLeroyRigo2020} will be of use. It can alternatively be proved using \cref{thm:w35} combined with Ochsenschl\"ager's result.
\begin{lemma}[Transfer lemma]\label{lem:transfer}
  Let $k\ge 1$. Let $u,v,v'$ be three non-empty words such that $|v|=|v'|$. We have $\varphi^{k-1}(u)\varphi^k(v)\sim_k\varphi^k(v')\varphi^{k-1}(u)$.
\end{lemma}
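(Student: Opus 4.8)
The plan is to refactor both sides of the claimed equivalence into products of equal-length blocks and to show that these blocks commute modulo $\sim_k$. Concretely, for $k\ge 2$ set $P=\varphi^{k-1}(0)$ and $Q=\varphi^{k-1}(1)$, so that $|P|=|Q|=2^{k-1}$. The point is that every factor appearing in the statement is a concatenation of such blocks: since $u$ is binary, $\varphi^{k-1}(u)$ is the concatenation of the blocks $\varphi^{k-1}(u_i)\in\{P,Q\}$; and since $\varphi^{k}=\varphi^{k-1}\circ\varphi$, we have $\varphi^k(v)=\varphi^{k-1}(\varphi(v))$, which is again a concatenation of blocks from $\{P,Q\}$, one per letter of $\varphi(v)$. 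Thus both $\varphi^{k-1}(u)\varphi^{k}(v)$ and $\varphi^{k}(v')\varphi^{k-1}(u)$ are words over the two-letter ``block alphabet'' $\{P,Q\}$.

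First I would record that these two block-words have the same multiset of blocks. Indeed, $\varphi^{k-1}(u)$ contributes $|u|_0$ copies of $P$ and $|u|_1$ copies of $Q$ to each side. For the $\varphi^k$-part, note that $\varphi(v)$ contains exactly $|v|$ zeros and $|v|$ ones (each letter of $v$ produces one $0$ and one $1$), so $\varphi^k(v)$ contributes exactly $|v|$ copies of $P$ and $|v|$ copies of $Q$; by the hypothesis $|v|=|v'|$, the factor $\varphi^k(v')$ contributes the same, namely $|v'|=|v|$ copies of each. Hence both sides consist of exactly $|u|_0+|v|$ copies of $P$ and $|u|_1+|v|$ copies of $Q$, merely arranged in different orders. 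This is precisely where the assumption $|v|=|v'|$ enters, and it conveniently makes the substitution $v\mapsto v'$ disappear at the block level.

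It then remains to prove that reordering the blocks does not change the $\sim_k$-class. The key tool is \cref{thm:w35}: since $|P|=|Q|$ and $P\sim_{k-1}Q$ by \cref{thm:Ochsenschlager} (applied at exponent $k-1$, which is exactly why $k\ge 2$ is required), we obtain $PQ\sim_k QP$; together with the trivial $PP\sim_k PP$ and $QQ\sim_k QQ$, any two adjacent blocks commute modulo $\sim_k$. Because $\sim_k$ is a congruence, such a swap may be performed inside an arbitrary context, and a standard adjacent-transposition (bubble-sort) argument then shows that any two block-words with the same block multiset are $\sim_k$-equivalent, as both can be brought to the canonical form $P^{\,|u|_0+|v|}Q^{\,|u|_1+|v|}$. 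Applying this to the two sides settles the case $k\ge 2$. The main obstacle here is conceptual rather than computational: one must spot the common block length $2^{k-1}$ (so that $\varphi^{k-1}(u)$ and $\varphi^{k}(v)$ become comparable despite their differing lengths) and verify that the hypotheses of \cref{thm:w35} are met. Finally, the excluded case $k=1$ is immediate, since there $\sim_1$ is abelian equivalence and $\Psi(u\varphi(v))=\Psi(u)+(|v|,|v|)^\intercal=\Psi(u)+(|v'|,|v'|)^\intercal=\Psi(\varphi(v')u)$, so the two sides already coincide abelianly.
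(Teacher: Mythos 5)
Your proof is correct and follows precisely the route the paper itself indicates: the paper cites the lemma from an earlier reference and remarks that it ``can alternatively be proved using \cref{thm:w35} combined with Ochsenschl\"ager's result,'' which is exactly your block-decomposition argument ($P\sim_{k-1}Q$ from \cref{thm:Ochsenschlager}, hence $PQ\sim_k QP$ from \cref{thm:w35}, then congruence plus adjacent transpositions). The multiset count and the separate $k=1$ base case are both handled correctly, so there is nothing to add.
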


It is an exercise to see that, for an arbitrary morphism $f\colon A^* \to B^*$, we have, for all $u \in A^*$, $e \in B^*$, 
\begin{equation}\label{eq:morphic_image_coeff}
\binom{f(u)}{e} = \sum_{\substack{a_1,\ldots,a_{\ell} \in A\\ \ell \leq |e|} }\binom{u}{a_1\cdots a_{\ell}}
						\sum_{\substack{e=e_1\cdots e_{\ell}\\e_i\in B^+}}\prod_{i=1}^{\ell}\binom{f(a_i)}{e_i}.
\end{equation}

The next result will turn out to be useful in several places of the paper.

\begin{lemma}\label{lem:diff-powers}
Let $x,y \in A^*$ be two $k$-binomially equivalent words.
For any integer $n\ge 0$ and any word $e\in A^*$ of length $k+1$, we have
\[
\binom{x^n}{e} - \binom{y^n}{e} = n \left[ \binom{x}{e} -\binom{y}{e} \right].
\]
In particular, for all $n\ge 1$, $x \sim_{k+1} y$ if and only if $x^n \sim_{k+1} y^n$.
\end{lemma}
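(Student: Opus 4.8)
The plan is to expand both $\binom{x^n}{e}$ and $\binom{y^n}{e}$ using the standard factorization identity for binomial coefficients of words, and then to exploit that $|e|=k+1$ is just one more than the order~$k$ of the equivalence so that almost everything cancels.

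First I would recall the iterated product formula: for any words $z_1,\dots,z_n$ and any word $e$,
\[
\binom{z_1\cdots z_n}{e}=\sum_{\substack{e=e_1\cdots e_n\\ e_i\in A^*}}\ \prod_{i=1}^n\binom{z_i}{e_i},
\]
the sum ranging over all ordered factorizations of $e$ into $n$ (possibly empty) factors. This is folklore: it follows by induction from the two-block case $\binom{uv}{e}=\sum_{e=e'e''}\binom{u}{e'}\binom{v}{e''}$, or as the special case of \eqref{eq:morphic_image_coeff} for the one-letter morphism $c\mapsto x$ applied to $u=c^n$. Applying it with all $z_i=x$ and then with all $z_i=y$, and subtracting term by term, gives
\[
\binom{x^n}{e}-\binom{y^n}{e}=\sum_{\substack{e=e_1\cdots e_n\\ e_i\in A^*}}\left[\prod_{i=1}^n\binom{x}{e_i}-\prod_{i=1}^n\binom{y}{e_i}\right].
\]

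The key observation is a length count. In any factorization $e=e_1\cdots e_n$ of the length-$(k+1)$ word $e$, either some factor has length exceeding $k$ — which forces that factor to equal $e$ and all others to be empty — or every factor has length $\le k$. In the latter case $x\sim_k y$ forces $\binom{x}{e_i}=\binom{y}{e_i}$ for each $i$, so the two products coincide and the bracket vanishes. Hence only the $n$ single-block factorizations survive, one for each position $j\in\{1,\dots,n\}$ with $e_j=e$ and $e_i=\varepsilon$ otherwise; using $\binom{x}{\varepsilon}=\binom{y}{\varepsilon}=1$, each contributes $\binom{x}{e}-\binom{y}{e}$. Summing over the $n$ positions yields exactly $n\bigl[\binom{x}{e}-\binom{y}{e}\bigr]$, the claimed identity; the degenerate cases $n=0$ and $n=1$ are immediate.

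For the ``in particular'' statement, fix $n\ge 1$ and keep the hypothesis $x\sim_k y$. Since $\sim_k$ is a congruence, $x^n\sim_k y^n$, so $\binom{x^n}{w}=\binom{y^n}{w}$ holds automatically for every $w$ with $|w|\le k$; thus $x^n\sim_{k+1}y^n$ reduces to requiring $\binom{x^n}{e}=\binom{y^n}{e}$ for all $e\in A^{k+1}$. By the identity just proved and $n\ge 1$, for each such $e$ we have $\binom{x^n}{e}=\binom{y^n}{e}$ if and only if $\binom{x}{e}=\binom{y}{e}$. Combining these equivalences (and again using $x\sim_k y$ for the lower-order coefficients) gives $x^n\sim_{k+1}y^n$ if and only if $x\sim_{k+1}y$. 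The only point requiring genuine care is the combinatorial claim that among all $n$-block factorizations of $e$ exactly the $n$ single-block ones contribute, each with coefficient $1$: this is precisely where $|e|=k+1$ and the order-$k$ equivalence interact, and it is what pins the difference down to a function linear in $n$ rather than a higher-degree polynomial; everything else is bookkeeping.
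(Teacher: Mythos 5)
Your proof is correct and follows essentially the same route as the paper's: both expand the binomial coefficient of a concatenation via the product formula and use $x\sim_k y$ together with $|e|=k+1$ to cancel every term in which all blocks have length at most $k$. The only difference is cosmetic --- the paper inducts on $n$ using the two-block identity $\binom{uv}{w}=\binom{u}{w}+\binom{v}{w}+\sum_{w=w_1w_2,\,w_i\neq\varepsilon}\binom{u}{w_1}\binom{v}{w_2}$, whereas you unroll that induction into the $n$-block expansion, which has the minor advantage of making the linearity in $n$ visible as the count of the $n$ single-block factorizations.
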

\begin{proof}
For any words $u,v,w\in A^*$, we have
\[
\binom{uv}{w} = 
	\binom{u}{w}+\binom{v}{w} + \sum_{\substack{w = w_1w_2\\ w_i \neq \varepsilon}}
		\binom{u}{w_1}\binom{v}{w_2}. 
\]
To show the statement, we proceed by induction and we make use of the previous formula.
The statement is trivially true for $n\in\{0,1\}$.
By the previous formula (with $(u,v,w)=(x^n,x,e)$ and $(u,v,w)=(y^n,y,e)$ respectively) and the induction hypothesis, we obtain
\begin{align*}
\binom{x^{n+1}}{e} - \binom{y^{n+1}}{e}
&= (n+1) \left[ \binom{x}{e} -\binom{y}{e} \right] 
+ \sum_{\substack{e = e_1e_2\\ e_i \neq \varepsilon}}
		\left[\binom{x^n}{e_1}\binom{x}{e_2} -
		\binom{y^n}{e_1}\binom{y}{e_2} \right].
\end{align*}
Since $x \sim_k y$ and $|e|=k+1$, the sum in the right-hand term is zero and we obtain the desired result.
\end{proof}

We recall the following lemma that appears in~\cite{Whiteland2021}; it is a straightforward generalization of an observation in~\cite{Salomaa2003}.
We give a proof for the sake of completeness.

\begin{lemma}\label{lem:sum-constantPvect}
Let $\mathcal{C} \in A^*/{\sim_1}$ be an abelian equivalence class of non-empty words with Parikh vector $(m_a)_{a\in A}$.
Then, for any word $u \in A^*$, we have $\sum_{w \in \mathcal{C}}\binom{u}{w} = \prod_{a \in A} \binom{|u|_a}{m_a}$.
\end{lemma}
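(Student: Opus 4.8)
### Proof Plan for \cref{lem:sum-constantPvect}

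The plan is to count, over all words $w$ in the fixed abelian class $\mathcal{C}$, the total number of occurrences of $w$ as a subword of $u$, and to reorganize this count by interpreting an occurrence directly. By definition, $\sum_{w \in \mathcal{C}}\binom{u}{w}$ counts the number of pairs $(w, \iota)$ where $w \in \mathcal{C}$ and $\iota$ is a strictly increasing sequence of positions in $u$ spelling out $w$. Since every $w\in\mathcal{C}$ has the same Parikh vector $(m_a)_{a\in A}$, such an occurrence is nothing but a choice, for each letter $a$, of $m_a$ positions in $u$ carrying the letter $a$; the word $w$ is then entirely determined by the interleaving of these chosen positions. The key observation is that every such choice of positions yields exactly one pair $(w,\iota)$, and conversely, so the total count is the number of ways to select the required positions.

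First I would set up the bijection precisely. Writing $u = u_1 \cdots u_{|u|}$ with $u_i \in A$, an occurrence contributing to the left-hand side is a strictly increasing index sequence $i_1 < i_2 < \cdots < i_{m}$ (with $m = \sum_a m_a = |w|$) such that the subword $u_{i_1}\cdots u_{i_m}$ lies in $\mathcal{C}$, i.e.\ contains exactly $m_a$ letters equal to $a$ for each $a \in A$. The crucial point is that the constraint ``the selected letters form a word in $\mathcal{C}$'' depends only on \emph{how many} of the chosen positions carry each letter, not on their relative order: once we fix, for each $a$, which $m_a$ of the $|u|_a$ positions bearing $a$ are selected, the increasing sequence $\iota$ is forced (positions are read left to right), and the resulting word automatically belongs to $\mathcal{C}$.

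Then I would carry out the count. The selections for distinct letters are made independently among disjoint sets of positions of $u$: for each $a \in A$ there are exactly $|u|_a$ positions carrying $a$, from which we must choose $m_a$, giving $\binom{|u|_a}{m_a}$ possibilities. Multiplying over all letters and invoking the bijection above yields
\[
\sum_{w \in \mathcal{C}}\binom{u}{w} = \prod_{a \in A} \binom{|u|_a}{m_a},
\]
which is the desired identity. (The edge cases are handled automatically by the binomial convention: if some $m_a > |u|_a$ then $\mathcal{C}$ has no occurrence in $u$ and the corresponding factor vanishes.)

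I do not anticipate a serious obstacle here; the statement is essentially a combinatorial repackaging. The one point requiring care is the verification that the map from position-selections to occurrences is genuinely a bijection — in particular that no double counting arises because two different words $w, w' \in \mathcal{C}$ could share an occurrence. This cannot happen, since a fixed increasing index sequence $\iota$ determines the subword $u_{i_1}\cdots u_{i_m}$ uniquely; conversely the pair $(w,\iota)$ determines the per-letter position choices. Making this bijection explicit is the only step that needs to be written with any care, and it is routine.
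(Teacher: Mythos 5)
Your proposal is correct and is essentially the same argument as the paper's own proof: both identify the left-hand side with the number of ways to choose, for each letter $a$, exactly $m_a$ of the $|u|_a$ positions of $u$ carrying $a$, which gives the product on the right. Your write-up merely spells out the bijection in more detail than the paper does.
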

\begin{proof}
The sum on the left counts the number of ways one can choose a subword $w$ of $u$ so that $\Psi(w) = (m_a)_{a\in A}$.
On the other hand, for a vector $(m_a)_{a \in A}$, any choice of $m_a$ many distinct $a$'s in $u$ for each $a\in A$ gives
rise to a subword of $u$ having Parikh vector $(m_a)_{a\in A}$. The number of distinct such choices is the product on the right.
\end{proof}

\subsection{Binomial equivalence in Sturmian words}
The following result links the factor complexity and the $2$-binomial complexity of Sturmian words.

\begin{theorem}[{\cite[Thm.~7]{RigoSalimov2015}}]\label{thm:Sturmian2-binomial}
For any Sturmian word $\infw{s}$, we have $\bc{\infw{s}}{2} = \fc{\infw{s}}$.
\end{theorem}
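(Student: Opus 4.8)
The plan is to establish the reverse of the inequality $\bc{\infw{s}}{2}(n)\le\fc{\infw{s}}(n)$, the latter being already supplied by \eqref{eq:ineq}. Thus it suffices to show that any two factors $u,v$ of $\infw{s}$ with $|u|=|v|$ and $u\sim_2 v$ coincide. The first step is to reduce $2$-binomial equivalence of equal-length binary words to a single scalar invariant. If $|u|=|v|=n$ and $u\sim_1 v$, set $k=|u|_1=|v|_1$; then $\binom{u}{00}=\binom{n-k}{2}$, $\binom{u}{11}=\binom{k}{2}$, and $\binom{u}{01}+\binom{u}{10}=k(n-k)$ are all determined by $(n,k)$ alone, so $u\sim_2 v$ is equivalent to $|u|_1=|v|_1$ together with $\binom{u}{01}=\binom{v}{01}$. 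Writing the positions (indexed from $0$) of the $1$'s of $u$ as $p_1<\cdots<p_k$, one has $\binom{u}{01}=\sum_{m=1}^{k}p_m-\binom{k}{2}$, so within a fixed abelian class $\binom{\cdot}{01}$ is a strictly increasing function of the position-sum $\sum_m p_m$. Hence the statement reduces to: the invariant $\binom{\cdot}{01}$ is injective on each abelian class of length-$n$ factors of $\infw{s}$. Since $\infw{s}$ is balanced and aperiodic, $\bc{\infw{s}}{1}(n)=2$, so there are exactly two such classes, a light and a heavy one.

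Because the language of a Sturmian word depends only on its slope, I may compute with the rotation coding of slope $\alpha$. The $n+1$ factors of length $n$ correspond to the $n+1$ arcs into which the circle $\mathbb{R}/\mathbb{Z}$ is partitioned by the points $\{-j\alpha\}$, $0\le j\le n$; as the base point $x$ sweeps the circle, crossing a breakpoint alters the current factor. A direct check of which letters are governed by which breakpoint shows that the two extreme breakpoints $\{0\}$ and $\{-n\alpha\}$ flip a single letter (changing the weight by $\pm1$), whereas each of the $n-1$ intermediate breakpoints $\{-j\alpha\}$, $1\le j\le n-1$, swaps two adjacent letters from $01$ to $10$ in the direction of increasing $x$. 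Consequently the two weight-changing crossings split the circle into two contiguous arcs, one consisting entirely of light factors and the other of heavy factors, consistently with $\bc{\infw{s}}{1}(n)=2$.

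I would then track $\binom{\cdot}{01}$ along each weight class. By the position-sum formula, an adjacent swap $01\mapsto 10$ moves one $1$ one place to the left, decreasing $\sum_m p_m$, and hence $\binom{\cdot}{01}$, by exactly $1$. Since every intermediate crossing in the direction of increasing $x$ is such a swap, $\binom{\cdot}{01}$ is strictly monotone along each of the two weight classes, and therefore injective on each. Combined with the first paragraph (distinct weights already preclude even $\sim_1$), distinct factors of the same length are never $2$-binomially equivalent, which yields $\bc{\infw{s}}{2}(n)\ge\fc{\infw{s}}(n)$ and hence the claimed equality.

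The main obstacle is the bookkeeping in the second step: correctly identifying, for the chosen coding, which letter(s) each breakpoint controls and in which direction they flip, so as to guarantee that all $n-1$ intermediate crossings are swaps acting the \emph{same} way on $\binom{\cdot}{01}$, and that exactly the two extreme crossings change the weight. One must also dispose of boundary conventions (lower versus upper mechanical words and the intercept), and invoke the irrationality of $\alpha$ to ensure that the $n+1$ breakpoints are distinct and that both weight classes are non-empty.
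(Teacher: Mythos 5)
Your proposal is correct. Note that the paper does not prove this statement at all: it is imported verbatim as \cite[Thm.~7]{RigoSalimov2015}, so there is no in-paper argument to compare against. Your reduction in the first paragraph is sound --- for equal-length binary words in a fixed abelian class, $\binom{u}{00}$, $\binom{u}{11}$ and $\binom{u}{01}+\binom{u}{10}$ are determined by $(n,k)$ (the last by \cref{lem:sum-constantPvect}), and the position-sum formula $\binom{u}{01}=\sum_m p_m-\binom{k}{2}$ is right. The geometric core also checks out: with $a_j(x)=1$ iff $\{x+j\alpha\}\in[1-\alpha,1)$, the letter $a_j$ flips exactly at $x=\{-j\alpha\}$ (from $1$ to $0$) and at $x=\{-(j+1)\alpha\}$ (from $0$ to $1$), so the breakpoint $\{-j\alpha\}$ with $1\le j\le n-1$ simultaneously turns $a_{j-1}$ on and $a_j$ off, i.e.\ performs the swap $01\mapsto10$ at positions $(j-1,j)$, while $\{0\}$ and $\{-n\alpha\}$ each flip a single letter; irrationality of $\alpha$ guarantees the $n+1$ breakpoints are distinct and that no breakpoint controls more than the letters you claim. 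Hence $\binom{\cdot}{01}$ decreases by exactly $1$ across each of the $n-1$ intermediate crossings and is injective on each of the two (contiguous, nonempty) weight classes, which gives $\bc{\infw{s}}{2}(n)=n+1=\fc{\infw{s}}(n)$. Compared with the original Rigo--Salimov argument, which works analytically with the mechanical-word formula and sums of floor functions (and with the purely combinatorial balancedness arguments one might otherwise attempt), your version buys a transparent reason \emph{why} the single scalar $\binom{\cdot}{01}$ separates all factors within an abelian class: it changes by exactly one unit at every non-weight-changing breakpoint of the circle partition. The one piece of housekeeping you should make explicit is the standard fact that every Sturmian word has the same language as a rotation word of its slope, so that arguing on the coding loses no generality.
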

In particular, the theorem implies that for two distinct equal-length factors $u$, $v$ of a Sturmian word, we have either $u \not\sim_1 v$, or $\binom{u}{01} \neq \binom{v}{01}$. It further
implies that $\bc{\infw{s}}{k}(n) = n+1$ for all $k\geq 2$.
In the survey paper
\cite{FiciPuzynina} on abelian combinatorics on words, Fici and Puzynina derive a characterization of Sturmian words from \cref{thm:Sturmian2-binomial}:
\begin{theorem}[{\cite[Rem.~80]{FiciPuzynina}}]\label{thm:fici}
\label{thm:FiciPuzynina}
Let $\infw{x}$ be an infinite word. The following are equivalent:
\begin{enumerate}
\item $\infw{x}$ is Sturmian;
\item for all $n\geq 1$ and some $k\geq 2$,
$\bc{\infw{x}}{1}(n) = 2$ and $\bc{\infw{x}}{k}(n) = n+1$;
\item for all $n\geq 1$ and $k\geq 2$,
$\bc{\infw{x}}{1}(n) = 2$ and $\bc{\infw{x}}{k}(n) = n+1$.
\end{enumerate}
\end{theorem}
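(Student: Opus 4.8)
The plan is to prove the cycle $(1)\Rightarrow(3)\Rightarrow(2)\Rightarrow(1)$. The implication $(3)\Rightarrow(2)$ is immediate: a property holding for all $k\ge 2$ holds in particular for some $k\ge 2$.

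For $(1)\Rightarrow(3)$, I would assume $\infw{x}$ Sturmian and treat the two complexity functions separately. The abelian part $\bc{\infw{x}}{1}(n)=2$ is the classical fact recalled in the introduction (a binary aperiodic word is Sturmian if and only if its abelian complexity is constantly $2$; see \cite{RichommeSZ2011Abelian,CovenH1973sequences}). For the $k$-binomial part, I would use that Sturmian words satisfy $\fc{\infw{x}}(n)=n+1$ together with \cref{thm:Sturmian2-binomial}, giving $\bc{\infw{x}}{2}(n)=\fc{\infw{x}}(n)=n+1$; then for every $k\ge 2$ the chain \cref{eq:ineq} squeezes $n+1=\bc{\infw{x}}{2}(n)\le \bc{\infw{x}}{k}(n)\le \fc{\infw{x}}(n)=n+1$, forcing equality.

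The crux is $(2)\Rightarrow(1)$, where I would fix a witnessing $k\ge 2$ and assemble the three defining features of a Sturmian word. First, evaluating the abelian hypothesis at $n=1$ gives $\bc{\infw{x}}{1}(1)=\#\Fac_1(\infw{x})=2$, so exactly two letters occur and, up to relabelling, $\infw{x}$ is a word over $\{0,1\}$. Second, by the Folklore lemma \cref{rk:abelian_complexity}, the hypothesis $\bc{\infw{x}}{1}(n)=2$ for all $n$ is equivalent to $\bigl| |u|_1-|v|_1\bigr|\le 1$ for all equal-length factors $u,v$, i.e.\ to $\infw{x}$ being balanced. Third, combining the $k$-binomial hypothesis with \cref{eq:ineq} yields $\fc{\infw{x}}(n)\ge \bc{\infw{x}}{k}(n)=n+1$ for all $n$, so $\infw{x}$ is aperiodic by the Morse--Hedlund theorem. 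A balanced aperiodic word over a binary alphabet is exactly a Sturmian word, which concludes the argument.

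The point requiring the most care is identifying the precise role of each hypothesis in $(2)\Rightarrow(1)$: the abelian condition alone does \emph{not} characterize Sturmian words, since eventually periodic balanced binary words such as $0(01)^\omega$ also have constant abelian complexity $2$ without being Sturmian. The unbounded $k$-binomial complexity $\bc{\infw{x}}{k}(n)=n+1$ is exactly what forces $\fc{\infw{x}}$ to be unbounded, hence aperiodicity via Morse--Hedlund; this is the ingredient that upgrades ``balanced'' to ``Sturmian''. I expect no genuine difficulty beyond this bookkeeping, since all the heavy lifting (the identity $\bc{\infw{s}}{2}=\fc{\infw{s}}$ and the abelian characterization of Sturmian words) is imported from earlier results.
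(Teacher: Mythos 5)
Your proof is correct and follows exactly the route the paper indicates: the paper cites this result without a full proof, but its accompanying footnote spells out the same argument for $(2)\Rightarrow(1)$ (the abelian condition gives ``binary and balanced'', the $k$-binomial condition gives aperiodicity via Morse--Hedlund), and $(1)\Rightarrow(3)$ is the same squeeze between $\bc{\infw{x}}{2}=\fc{\infw{x}}$ from \cref{thm:Sturmian2-binomial} and the chain \cref{eq:ineq}. Nothing further is needed.
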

In fact, the second property in the above can be weakened to
``$\bc{\infw{x}}{1}(n) = 2$ for all $n\geq 1$ and
$\sup_{k,n\in\N} \bc{\infw{x}}{k}(n) = \infty$'' using the same arguments
\footnote{Indeed, the former property implies $\infw{x}$ is balanced and binary,
and the latter implies that $\infw{x}$ is aperiodic.}.
In the following we show that the assumption of balancedness can
be removed from the second point; Sturmian words are characterized by their
$k$-binomial complexity for any fixed integer $k\geq 2$. We first recall
the following crucial observation, which can be found
in part of
\cite[Lem.~4.02'\footnote{The statement has a fourth condition, which does not affect the conclusion appearing here.}]{CovenH1973sequences}

\begin{lemma}%
\label{lem:coven}
  Let $\infw{z}$ be an infinite binary word. Let $N\ge 2$ be such that
  \begin{enumerate}
  \item $\Fac(\infw{z})\cap A^{<N}$ is balanced;  
  \item $\Fac(\infw{z})\cap A^{N}$ is unbalanced;
  \item $\fc{\infw{z}}(N)=N+1$;
  \end{enumerate}
  Then $\infw{z}$ is ultimately periodic.
\end{lemma}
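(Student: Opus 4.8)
The plan is to argue by contradiction: assuming $\infw{z}$ is \emph{not} ultimately periodic, I would show that the three hypotheses clash. By the Morse--Hedlund theorem aperiodicity makes $\fc{\infw{z}}$ strictly increasing, so the first difference $s(n):=\fc{\infw{z}}(n+1)-\fc{\infw{z}}(n)$ satisfies $s(n)\ge 1$ for every $n$; recall that over a binary alphabet $s(n)$ is exactly the number of right-special factors of length $n$ (those $u$ with $u0,u1\in\Fac(\infw{z})$). First I would cash in hypothesis (3): since $\fc{\infw{z}}(0)=1$ and $\fc{\infw{z}}(N)=N+1$, we get $\sum_{n=0}^{N-1}s(n)=N$ with each term at least $1$, forcing $s(n)=1$ for all $n<N$. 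Hence $\fc{\infw{z}}(n)=n+1$ for $n\le N$, there is exactly one right-special factor of each length $<N$, and by the same count applied to left-special factors, exactly one left-special factor of each length $<N$.

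Next I would pin down the local structure at length $N-1$. By the Folklore lemma \cref{rk:abelian_complexity} and its $1$-Lipschitz property, (1) and (2) upgrade to $\bc{\infw{z}}{1}(N-1)=2$ and $\bc{\infw{z}}{1}(N)=3$, so the weights of length-$(N-1)$ factors fill a window $\{a,a+1\}$ and those of length-$N$ factors fill $\{a,a+1,a+2\}$. Using the classical fact that a binary language fails to be balanced exactly when it contains $0w0$ and $1w1$ for some palindrome $w$, together with (1)--(2) placing the shortest such obstruction at length $N$, I would obtain a palindrome $w$ of length $N-2$ with $0w0,1w1\in\Fac(\infw{z})$ (a lightest and a heaviest factor of length $N$). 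This $w$ is simultaneously right- and left-special, hence the unique bispecial factor of length $N-2$; its bilateral multiplicity equals the jump $s(N-1)-s(N-2)=0$, so exactly three of $0w0,0w1,1w0,1w1$ are factors. As both $0w0$ and $1w1$ occur, exactly one of $0w1,1w0$ is forbidden; say $1w0\notin\Fac(\infw{z})$ (the other case is symmetric). It follows that the unique right-special factor of length $N-1$ is $0w$ while the unique left-special one is $w1$, and these differ because $w$ is a palindrome.

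With this in hand I would pass to the Rauzy graph $G_{N-1}$, whose vertices are the length-$(N-1)$ factors and whose edges are the length-$N$ factors, viewing $\infw{z}$ as an infinite walk in it. Every factor of an infinite word is right-extendable, so every vertex has out-degree $\ge 1$, and by the previous step the \emph{only} vertex of out-degree $2$ is $0w$; thus the walk is deterministic except when it sits at $0w$. If $0w$ were visited only finitely often, the walk would eventually be forced inside a finite graph and hence ultimately periodic, a contradiction; so $0w$ is visited infinitely often and $\infw{z}$ is an infinite concatenation of the two first-return loops at $0w$ (the one beginning $0w\to w0$ and the one beginning $0w\to w1$). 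The forbidden factor $1w0$ enters precisely here: it makes $w0$ reachable only from $0w$, which rigidifies these loops.

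The hard part will be the final step: showing that this rigid loop structure cannot support an aperiodic interleaving, i.e. that the two return loops collapse to a single cycle (equivalently, that some longer factor is a weak bispecial driving $s$ down to $0$), which would force ultimate periodicity and contradict the standing assumption. I expect to close it as in Coven and Hedlund's original analysis, by tracking the out-degree-$2$ vertex $0w$ and the in-degree-$2$ vertex $w1$ through $G_{N-1}$ and using the forbidden transition $1w\not\to w0$ to show the two loops share their continuation, with the balance hypotheses below length $N$ preventing the loops from being genuinely independent. This graph-pruning argument is the crux; the earlier steps are essentially bookkeeping.
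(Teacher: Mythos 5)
There is a genuine gap, and you name it yourself: the step that actually produces ultimate periodicity is absent. First, a point of comparison: the paper does not prove \cref{lem:coven} at all --- it imports it from Coven and Hedlund \cite[Lem.~4.02']{CovenH1973sequences} (with a footnote noting that the original statement carries a fourth hypothesis). So there is no in-paper argument to measure yours against; your proposal has to stand on its own. Its first half does: the deductions $s(n)=1$ for $n<N$, $\bc{\infw{z}}{1}(N-1)=2$ and $\bc{\infw{z}}{1}(N)=3$, the extraction of the palindromic witness $w$ with $0w0,1w1\in\Fac(\infw{z})$, the bilateral-multiplicity count forcing exactly one of $0w1,1w0$ to be forbidden, and the identification of $0w$ and $w1$ as the unique right- and left-special factors of length $N-1$ are all correct (modulo the standard caveat that a factor occurring only as the prefix of $\infw{z}$ need not be left-extendable, which you should dispose of explicitly since it affects the in-degree count in $G_{N-1}$). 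But this is, as you say, bookkeeping.

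The crux is the last paragraph, and it is not a proof: ``I expect to close it as in Coven and Hedlund's original analysis'' is circular, since the statement you are proving \emph{is} Coven and Hedlund's lemma. Worse, the closing strategy as sketched cannot work at the level of generality at which you state it. The configuration you reach --- a strongly connected Rauzy graph $G_{N-1}$ with $N$ vertices, $N+1$ edges, a single out-degree-$2$ vertex and a single in-degree-$2$ vertex, so that $\infw{z}$ is eventually a concatenation of two return loops --- is \emph{exactly} the shape of the Rauzy graph of a Sturmian word at every order, and Sturmian words are aperiodic. Hence ``the two return loops collapse to a single cycle'' cannot follow from the graph shape plus a vague appeal to ``the balance hypotheses below length $N$''; the argument must exploit the precise extension data you derived (that $0w0$, $0w1$, $1w1$ occur while $1w0$ does not, with $w$ a palindrome) to show, concretely, that after the walk branches at $0w$ one of the two continuations can never lead back to $0w$ (equivalently, that one of the two ``loops'' is not in fact a return loop, so the walk branches only finitely often and is eventually trapped in a deterministic cycle). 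Until that pruning argument is written out, the proof establishes nothing beyond what a Sturmian word already satisfies, and the conclusion of \cref{lem:coven} is not reached.
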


\begin{theorem}\label{thm:charact_sturmian}
Let $\infw{z}$ be an infinite word such that for some $k \geq 2$,
$\bc{\infw{z}}{k}=n+1$ for all $n$. Then $\infw{z}$ is Sturmian. 
\end{theorem}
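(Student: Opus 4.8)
The plan is to prove the three facts that together characterize Sturmian words: $\infw{z}$ is binary, $\infw{z}$ is not ultimately periodic, and $\infw{z}$ is balanced; a single separation lemma then upgrades these to $\fc{\infw{z}}(n)=n+1$ for all $n$. Binariness is immediate, since two distinct letters are never $\sim_k$-equivalent, so $\bc{\infw{z}}{k}(1)=2$ forces a two-letter alphabet, which I take to be $\{0,1\}$. Aperiodicity (in the sense of not being ultimately periodic) follows from \eqref{eq:ineq}, since $\fc{\infw{z}}(n)\ge\bc{\infw{z}}{k}(n)=n+1$ is unbounded.

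The heart of the argument, and the one place where the hypothesis $k\ge 2$ is used (through $\sim_2$), is the following separation statement: \emph{if $\Fac_m(\infw{z})$ is balanced for every $m<N$, then any two factors $u,v\in\Fac_N(\infw{z})$ with $u\sim_2 v$ must coincide.} To prove it, observe that $u\sim_2 v$ forces $|u|_1=|v|_1$ (from $\sim_1$) together with $\binom{u}{01}=\binom{v}{01}$; since $\binom{w}{01}$ equals the sum of the positions of the $1$'s of $w$ minus a quantity depending only on $|w|_1$, the words $u$ and $v$ have the same sum of $1$-positions. Recording the discrepancy by $d_i=|u_1\cdots u_i|_1-|v_1\cdots v_i|_1$, we get $d_0=d_N=0$, and a one-line summation by parts turns equality of the $1$-position sums into $\sum_{i=1}^{N-1}d_i=0$. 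If $u\ne v$, then $(d_i)_{1\le i\le N-1}$ is not identically zero; being integer-valued with zero sum it must attain a value $\ge 1$ and a value $\le -1$. Choosing indices $a<b$ in $\{1,\dots,N-1\}$ realizing these, the factors $u_{a+1}\cdots u_b$ and $v_{a+1}\cdots v_b$ have length $b-a<N$, yet their weights differ by $|d_b-d_a|\ge 2$, contradicting balancedness of $\Fac_{b-a}(\infw{z})$. Hence $u=v$.

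With the lemma in hand I would finish as follows. Suppose, for contradiction, that $\infw{z}$ is not balanced, and let $N$ be minimal with $\Fac_N(\infw{z})$ unbalanced; since single letters are always balanced, $N\ge 2$ (and \cref{rk:abelian_complexity} even pins down $\bc{\infw{z}}{1}(N-1)=2$ and $\bc{\infw{z}}{1}(N)=3$). For every $n\le N$ all strictly shorter factors are balanced, so the separation lemma shows that $\sim_2$ is injective on $\Fac_n(\infw{z})$, whence $\fc{\infw{z}}(n)=\bc{\infw{z}}{2}(n)\le\bc{\infw{z}}{k}(n)=n+1$ and therefore $\fc{\infw{z}}(n)=n+1$. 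In particular $\fc{\infw{z}}(N)=N+1$, so the three hypotheses of \cref{lem:coven} hold at $N$ and $\infw{z}$ is ultimately periodic, a contradiction. Consequently $\infw{z}$ is balanced, and applying the separation lemma at every length gives $\fc{\infw{z}}(n)=n+1$ for all $n$; that is, $\infw{z}$ is Sturmian.

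I expect the separation lemma to be the only genuine obstacle: it is what converts a purely binomial coincidence ($u\sim_2 v$) into a combinatorial defect (a short pair of factors with weights differing by $2$), and it is precisely what allows $\bc{\infw{z}}{2}$ to be compared with $\fc{\infw{z}}$ at and below the first length of imbalance. The remaining steps — the two reductions and the invocation of \cref{lem:coven} — are routine.
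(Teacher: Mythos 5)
Your proof is correct, and its core is genuinely different from the paper's. Both arguments share the same skeleton --- reduce to a binary aperiodic word, let $N$ be the first length at which balancedness fails, show $\fc{\infw{z}}(N)=N+1$, and invoke \cref{lem:coven} to force ultimate periodicity --- but they diverge on the key step of showing that $\sim_2$ (hence $\sim_k$) separates factors at lengths up to $N$. The paper removes the heavy factor $1w1$ from $\Fac_N(\infw{z})$, checks that the remaining set together with all shorter factors is balanced and factorial, embeds it into the language of a Sturmian word via a theorem of Richomme and S\'e\'ebold, and then quotes \cref{thm:Sturmian2-binomial} to conclude that distinct words there are $2$-binomially inequivalent. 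You instead prove a self-contained separation lemma: if all factors of length $<N$ are balanced, then $\sim_2$-equivalent factors of length $N$ coincide. Your proof of it is sound: the identity $\binom{w}{01}=\sum_i j_i-\binom{|w|_1+1}{2}$ (with $j_1<\cdots<j_{|w|_1}$ the positions of the $1$'s) is correct, Abel summation does give $\sum_{j}j(u_j-v_j)=-\sum_{i=1}^{N-1}d_i$, and a nonzero integer sequence $(d_i)$ with zero sum must take a value $\geq 1$ and a value $\leq -1$, producing indices $a<b$ with $b-a\leq N-2$ and a pair of equal-length factors whose weights differ by at least $2$. This route is more elementary (no external structural theorems about Sturmian languages are needed) and buys a little more: reapplying the lemma at every length once balancedness is established yields $\fc{\infw{z}}(n)=n+1$ directly, rather than falling back on the classical characterization of Sturmian words as the aperiodic balanced binary words. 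The remaining steps (binariness from $\bc{\infw{z}}{k}(1)=2$, aperiodicity from Morse--Hedlund, and the verification of the three hypotheses of \cref{lem:coven}) match the paper.
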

\begin{proof}
Assume that $\bc{\infw{z}}{k}(n) = n+1$ for all $n\ge 0$. In particular, $\infw{z}$ is binary and also aperiodic because $\fc{\infw{z}}(n) \ge \bc{\infw{z}}{k}(n)=n+1$. This also implies $\bc{\infw{z}}{1}(n)\ge 2$ for all $n\ge 1$. To get a contradiction, assume that $\infw{z}$ is unbalanced. Hence there exists a minimal integer $N\ge 2$ such that $\bc{\infw{z}}{1}(N)=3$. There is a pair  $(u,v)$ of factors of $\infw{z}$ of length $N$ such that $|u|_1-|v|_1=2$. The minimality of $N$ implies that this pair is unique and of the form $(1w1,0w0)$.
For details, see \cite[Lem.~3.06]{CovenH1973sequences}.

Let $|w|_1=r$. Let $x\in\Fac_{N-2}(\infw{z})$. If $|x|_1=r-1$, then $0x$ and $x0$ do not belong to $\Fac_{N-1}(\infw{z})$. Indeed, $1w,w1$ belong to the latter set and $|1w|_1=|w1|_1=r+1$ but by minimality of $N$, the set $\Fac_{N-1}(\infw{z})$ is balanced. In that case, $x$ is preceded and followed by $1$. Similarly, if $y\in\Fac_{N-2}(\infw{z})$ and $|y|_1=r+1$, then $y$ is preceded and followed by $0$. This means that $\Fac_N(\infw{z})\setminus \{0w0,1w1\}$ is a subset of 
$$ \{1x1: |x|_1=r-1, |x|=N-2\}\cup \{0y0: |y|_1=r+1, |y|=N-2\} \cup \bigcup_{a\in\{0,1\}}\{az\overline{a}: |z|_1=r, |z|=N-2\}$$ where all words have weight $r+1$.
Now, observe that $\Fac(\infw{z})\cap A^{<N}$ is a balanced set (by minimality of $N$) and that is also the case of 
$\Fac_N(\infw{z})\setminus \{1w1\}$. The union of these two sets is factorial. By \cite[Thm.~3.1]{RichommeSeebold2011}, there exists a Sturmian word $\infw{s}$ such that
\[
(\Fac(\infw{z})\cap A^{<N}) \cup (\Fac_N(\infw{z})\setminus \{1w1\}) \subset \Fac(\infw{s}).
\]
As a consequence of \cref{thm:Sturmian2-binomial}, any two distinct words in the left-hand side set are not $k$-binomially equivalent. Also, $1w1$ is not abelian (and thus not $k$-binomially) equivalent to any word in $\Fac_N(\infw{z})\setminus \{1w1\}$. In particular, since $\bc{\infw{z}}{k}(n)=n+1$ for $n\le N$, 
\[
\Fac(\infw{z})\cap A^{<N} = \Fac(\infw{s})\cap A^{<N}
\]
and $\#(\Fac_N(\infw{z})\setminus \{1w1\})=N$. Therefore,
$\#(\Fac_N(\infw{z}))=N+1$.
The word $\infw{z}$ now fulfills all the conditions of \cref{lem:coven} implying the contradiction that $\infw{z}$ is ultimately periodic.
\end{proof}

\section{Parikh-Collinear Morphisms via Binomial Complexities}\label{sec: rank1}
In this section, we obtain a new characterization of Parikh-collinear morphisms and show that, given an infinite fixed point of a
prolongable Parikh-collinear morphism, its $k$-binomial complexity is bounded for each $k$. 

\begin{definition}[Parikh-collinear morphisms]
\label{def:Parikh-collinear}
A morphism $f\colon A^* \to B^*$ is said to be \emph{Parikh-collinear} if, for all letters $a,b \in A$, there is $r_{a,b}\in\mathbb{Q}$ such that $\Psi(f(b)) = r_{a,b} \Psi(f(a))$.
\end{definition}

\begin{remark}
Given a morphism $f\colon A^* \to B^*$, its {\em adjacency matrix} $M_f$ is the matrix of size $|B|\times |A|$ defined by $(M_f)_{b,a}=|f(a)|_b$ for all $a\in A$, $b\in B$. Observe that $f$ is a Parikh-collinear morphism if and only if $M_f$ has rank $1$ (unless it is totally erasing). We observe that for any word $u \in A^*$, we have that $\Psi(f(u)) = M_f\Psi(u)$.
\end{remark}

\begin{example}\label{exa:pc}
The morphism $f$ defined by $0 \mapsto 000111$; $1 \mapsto 0110$ is Parikh-collinear since $\Psi(f(1))=\frac{2}{3} \Psi(f(0))$. 
\end{example}


\begin{theorem}[{\cite[Thm.~11]{CassaigneRichomeSaariZamboni2011}}]\label{thm: abelian complexity iff Parikh-collinear}
A morphism $f \colon A^* \to B^*$ maps all infinite words to words with bounded abelian complexity if and only if it is Parikh-collinear.
\end{theorem}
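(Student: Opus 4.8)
The plan is to prove both implications, relying on the rank-$1$ reformulation of Parikh-collinearity noted in the remark after \cref{def:Parikh-collinear}: the morphism $f$ is Parikh-collinear (and not totally erasing) exactly when all columns $\Psi(f(a))$, $a \in A$, of $M_f$ are non-negative rational multiples of a single fixed vector $\mathbf{v} \in \N^B$.

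For the forward direction, I would assume $f$ is Parikh-collinear, so $\Psi(f(a)) = c_a \mathbf{v}$ with $c_a \in \Q_{\ge 0}$. Then for every $u \in A^*$ we have $\Psi(f(u)) = M_f \Psi(u) = \lambda(u)\,\mathbf{v}$, where $\lambda(u) := \sum_{a\in A} |u|_a\, c_a$, and moreover $|f(u)| = \lambda(u)\,V$ with $V := \sum_{b \in B} v_b$. The key observation is that any factor $w$ of $f(\infw{x})$ decomposes as $w = s\, f(m)\, p$, where $s$ is a suffix of some $f(a)$, the word $p$ is a prefix of some $f(a')$, and $m \in A^*$; hence $\Psi(w) = \Psi(s) + \lambda(m)\,\mathbf{v} + \Psi(p)$. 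Now I would fix the length $|w| = n$: since $n = |s| + \lambda(m)\,V + |p|$, once $\Psi(s)$ and $\Psi(p)$ are fixed the scalar $\lambda(m) = (n - |s| - |p|)/V$ is \emph{determined}, and therefore so is $\Psi(w)$. As $s$ ranges over the finitely many suffixes of the images of letters and $p$ over their finitely many prefixes, this bounds the number of distinct Parikh vectors among length-$n$ factors by a constant independent of $n$, giving bounded abelian complexity for every $\infw{x}$.

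For the converse I would argue by contraposition: suppose $f$ is not Parikh-collinear, so $M_f$ has rank at least $2$ and there are letters $a,b$ with $\Psi(f(a)), \Psi(f(b))$ linearly independent; in particular $|f(a)|,|f(b)|>0$, and the frequency vectors $\Psi(f(a))/|f(a)|$ and $\Psi(f(b))/|f(b)|$ differ, since otherwise $\Psi(f(b))$ would be a scalar multiple of $\Psi(f(a))$. Fix a letter $c \in B$ with $|f(a)|_c/|f(a)| \neq |f(b)|_c/|f(b)|$, and consider the infinite word $\infw{x} = a\,b\,a^2\,b^2\,a^3\,b^3\cdots$, whose blocks grow in length. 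I would then compare, for a fixed window length $L$, a factor $W$ lying deep inside a long $a$-block, for which $|W|_c = \tfrac{L}{|f(a)|}|f(a)|_c + O(1)$, with a factor lying deep inside a long $b$-block, for which $|W|_c = \tfrac{L}{|f(b)|}|f(b)|_c + O(1)$; for $L$ large the two $c$-counts differ by $\Theta(L)$, and a block long enough to house such a window exists for every $L$.

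The final step — which I expect to be the main obstacle, since it requires more than a two-class lower bound — is to upgrade this single pair into unboundedly many abelian classes. Here I would invoke the discrete ``intermediate value'' behaviour of a sliding window (the one-letter refinement of the argument behind \cref{rk:abelian_complexity}): sliding a length-$L$ window by one position changes $|{\cdot}|_c$ by at most $1$, so as the window travels from deep inside a long $a$-block to deep inside the following long $b$-block it realises \emph{every} integer $c$-count between the two extremes. Distinct values of $|{\cdot}|_c$ force distinct abelian classes, hence $\bc{f(\infw{x})}{1}(L) \ge \Theta(L)$, which is unbounded. This contradicts the hypothesis that $f$ sends $\infw{x}$ to a word of bounded abelian complexity, completing the contrapositive; the totally erasing case is immediate, as then every image is finite.
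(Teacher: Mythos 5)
Your argument is correct, but note that the paper does not prove this statement at all: it is imported verbatim as \cite[Thm.~11]{CassaigneRichomeSaariZamboni2011}, so there is no in-paper proof to compare against. What you have written is a sound self-contained proof, and both halves closely mirror techniques the authors \emph{do} deploy for their generalization, \cref{thm:Parikh-collinear_bounded-k_to_bounded_k+1}: your decomposition $w = s\,f(m)\,p$ of a factor of $f(\infw{x})$, with the middle scalar $\lambda(m)=(n-|s|-|p|)/V$ pinned down by the window length, is exactly the $p\,f(u)\,s$ bookkeeping used there for the implication $(i)\Rightarrow(ii)$ (specialized to $k=0$, where it becomes pleasantly explicit); and your witness word $a\,b\,a^2b^2a^3b^3\cdots$ is the same construction $uvu^2v^2\cdots$ used there for $(iii)\Rightarrow(i)$. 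Two small points to tighten. First, a factor of $f(\infw{x})$ lying entirely inside a single $f(a)$ need not decompose as $s\,f(m)\,p$; this costs nothing since there are only finitely many such factors (all of length at most $\max_a|f(a)|$), but it should be said. Second, in the converse you correctly identify that a two-class separation is not enough and supply the sliding-window intermediate-value step; that step is legitimate because every length-$L$ factor occurs along the single word $f(\infw{x})$, so the set of values $|{\cdot}|_c$ attained is an integer interval, and its length is $\Theta(L)$ by your frequency computation --- this is the non-binary analogue of \cref{rk:abelian_complexity} and deserves the one-line justification you gesture at. With those two remarks made explicit, the proof is complete.
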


We extend the above theorem to the following one.
We say that a morphism $f\colon A^*\to B^*$ {\em satisfies $P_k$} if $f$ maps all words with bounded $k$-binomial complexity to words with bounded $(k+1)$-binomial complexity.
Note that,  for $k=0$,  {\em $0$-binomial complexity} has to be understood as the ``equal length'' equivalence relation. So the $0$-binomial complexity of an infinite word is the constant function~$1$ and \cref{thm: abelian complexity iff Parikh-collinear} can be restated as \emph{$f$ is Parikh-collinear if and only if $f$ satisfies~$P_0$}.

\begin{theorem}\label{thm:Parikh-collinear_bounded-k_to_bounded_k+1}
Let $f\colon A^*\to B^*$ be a morphism. The following are equivalent.
\begin{enumerate}[leftmargin=*,label=(\roman*)]
\item The morphism $f$ is Parikh-collinear. 
\item For all $k\geq 0$,  $f$ satisfies $P_k$.
\item There exists an integer $k\ge 0$ such that $f$ satisfies $P_k$.
\end{enumerate}
\end{theorem}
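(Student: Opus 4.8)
The plan is to prove the cycle of implications (i) $\Rightarrow$ (ii) $\Rightarrow$ (iii) $\Rightarrow$ (i); the middle step is immediate, and the case $k=0$ of the whole statement is precisely \cref{thm: abelian complexity iff Parikh-collinear}. I would write Parikh-collinearity as $|f(a)|_b = c_a v_b$ for some nonnegative rationals $c_a$ (with $c_a>0$ iff $f(a)\neq\varepsilon$) and a fixed direction vector $(v_b)_{b\in B}$, and I would use the expansion \eqref{eq:morphic_image_coeff} of $\binom{f(w)}{e}$ throughout. Since a factor of an infinite word maps into a word of the form $u'f(w)u''$, where $u'$ (resp.\ $u''$) is a proper suffix (resp.\ prefix) of some image letter and $w$ is a factor of the original word, the congruence property of $\sim_{k+1}$ reduces everything to understanding the $\sim_{k+1}$-class of $f(w)$.

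For (i) $\Rightarrow$ (ii): let $\infw{x}$ have bounded $k$-binomial complexity, so by \eqref{eq:ineq} it has bounded abelian complexity. I would first show that, for every $e\in B^{k+1}$, the coefficient $\binom{f(w)}{e}$ depends only on the $\sim_k$-class of $w$. In \eqref{eq:morphic_image_coeff} the terms with $\ell\le k$ are functions of the $\binom{w}{u}$ with $|u|\le k$, hence of the $\sim_k$-class. In the top term $\ell=k+1$ every $e_i$ is a single letter, so that term equals $\bigl(\prod_i v_{e_i}\bigr)\sum_{a_1,\dots,a_{k+1}}\binom{w}{a_1\cdots a_{k+1}}\prod_i c_{a_i}$, and the inner sum is the elementary symmetric polynomial of degree $k+1$ in the multiset $\{c_{w_1},\dots,c_{w_{|w|}}\}$, which is determined by $\Psi(w)$ alone. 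Hence the $\sim_{k+1}$-class of $u'f(w)u''$ is determined by $(u',u'',[w]_{\sim_k})$. To finish I must bound, for each target length $n$ and each admissible $(u',u'')$, the number of $\sim_k$-classes of the middle words $w$: here $|f(w)|=n-|u'|-|u''|$ is fixed, and I would show that this forces $|w|$ into a bounded range. Indeed, bounded abelian complexity yields, via \cref{rk:abelian_complexity} and a Fekete superadditivity argument on the extreme weights, uniform frequencies $\rho_a$ with bounded discrepancy, whence $|f(w)|=\gamma|w|+O(1)$ with $\gamma=\sum_a c_a\rho_a>0$ in the non-erasing case. Thus for fixed $|f(w)|$ the length $|w|$ takes boundedly many values, each contributing at most $\bc{\infw{x}}{k}(|w|)$ classes; summing over the finitely many $(u',u'')$ bounds $\bc{f(\infw{x})}{k+1}(n)$ independently of $n$. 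Purely erasing letters are removed by passing to the sub-alphabet $\{a:f(a)\neq\varepsilon\}$.

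For (iii) $\Rightarrow$ (i): I argue by contraposition, assuming $f$ is not Parikh-collinear and exhibiting, for the given $k$, a word $\infw{z}$ with bounded $k$-binomial but unbounded $(k+1)$-binomial image complexity. Note that a bounded-abelian input always yields a bounded-abelian output (a linear image of a bounded set is bounded), so the witnessing fluctuation must appear at order $k+1$, not abelian order. First I reduce to a binary output: non-collinearity gives letters $a,b$ with $\Psi(f(a)),\Psi(f(b))$ linearly independent, hence a letter $c\in B$ of differing density in $f(a)$ and $f(b)$; projecting $B$ onto $\{c,d\}$ by the coding $\pi$ fixing $c$ and collapsing the rest to $d$ can only decrease binomial complexity, and $\pi\circ f$ restricted to $\{a,b\}$ has determinant $\det=|f(a)|_c|f(b)|_d-|f(b)|_c|f(a)|_d\neq0$. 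Expanding the top ($\ell=k+1$) term of \eqref{eq:morphic_image_coeff} for a suitable $e\in\{c,d\}^{k+1}$, this determinant appears as the coefficient of a genuinely order-$(k+1)$ coefficient $\binom{w}{u_0}$ not determined by coefficients of order $\le k$. I then take $\infw{z}=\varphi^{k}(\infw{s})$ with $\infw{s}$ Sturmian over $\{a,b\}$, whose bounded $k$- / unbounded $(k+1)$-binomial profile follows from \cref{thm:Ochsenschlager} and \cref{lem:transfer}. For such $\infw{z}$ the coefficients of $w$ of order $\le k$ take boundedly many values among factors of fixed length, while $\binom{w}{u_0}$ fluctuates unboundedly; since the lower-order contributions to $\binom{(\pi f)(w)}{e}$ stay bounded and the top contribution is $\det\cdot\binom{w}{u_0}+O(1)$, the value $\binom{(\pi f)(w)}{e}$ fluctuates unboundedly, so $\pi(f(\infw{z}))$, and hence $f(\infw{z})$, has unbounded $(k+1)$-binomial complexity, contradicting $P_k$.

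I expect the main obstacle to be the final step of (iii) $\Rightarrow$ (i): verifying that the unbounded order-$(k+1)$ fluctuation of $\infw{z}$ survives $f$ rather than being cancelled. This is exactly why the nonzero determinant and the uniform control of all lower-order coefficients (from bounded $k$-binomial complexity of $\infw{z}$) are essential, since an unbounded top-order term cannot be cancelled by bounded lower-order terms. A secondary technical point, in (i) $\Rightarrow$ (ii), is the passage from bounded abelian complexity to bounded-discrepancy control of factor lengths, which I would settle with the superadditivity argument sketched above.
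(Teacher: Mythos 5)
Your direction (i) $\Rightarrow$ (ii) is essentially the paper's argument: the elementary-symmetric-polynomial computation of the top term of \eqref{eq:morphic_image_coeff} is a valid rephrasing of \cref{lem:g-function} and \cref{lem:sum-constantPvect} (i.e., of the implication (iv)$\Rightarrow$(i) of \cref{prop:Parikh-collinear-k-characterization}), and your length-control step plays the role of \cref{lem:technical_image_lengths}; your handling of erasing letters is glib but repairable. The real problem is in (iii) $\Rightarrow$ (i).

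There, your witness $\infw{z}=\varphi^{k}(\infw{s})$ does not have the property you ascribe to it. Since $\varphi$ is Parikh-collinear and $\infw{s}$ has bounded abelian complexity, $k$ applications of the implication (i)$\Rightarrow$(ii) (or \cref{prop:sturmiancomplexity} directly) show that $\varphi^{k}(\infw{s})$ has \emph{bounded} $(k+1)$-binomial complexity; hence among its factors of any fixed length the coefficient $\binom{w}{u_0}$ with $|u_0|=k+1$ takes only boundedly many values and does not ``fluctuate unboundedly''. The word with bounded $k$- and unbounded $(k+1)$-binomial complexity is $\varphi^{k-1}(\infw{s})$ (cf.\ \cref{thm:iterate_thue-morse_fibonacci}), so your construction is off by one at best. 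Even after that fix, a second ingredient is missing: the top term of \eqref{eq:morphic_image_coeff} is a linear combination of \emph{all} coefficients $\binom{w}{u}$ with $u\in\{a,b\}^{k+1}$, and collapsing it to ``$\det\cdot\binom{w}{u_0}+O(1)$'' requires showing that every other order-$(k+1)$ coefficient is determined by the $\sim_k$-class of $w$ and that the one remaining coefficient really does take unboundedly many values along the chosen word; this is a general analogue of \cref{lem:Michel's} for an arbitrary non-collinear $f$ and is precisely the hard step you flag as ``the main obstacle'' without resolving it. The paper avoids all of this: non-collinearity together with \cref{prop:Parikh-collinear-k-characterization} produces a single pair $u\sim_k v$ with $f(u)\not\sim_{k+1}f(v)$; setting $\infw{x}=uvu^2v^2u^3v^3\cdots$, the word $\infw{x}$ has bounded $k$-binomial complexity because $u\sim_k v$, while $f(\infw{x})$ contains for each $n$ the $n+1$ factors $U^rV^{n-r}$ (with $U=f(u)$, $V=f(v)$), which are pairwise $(k+1)$-inequivalent by the cancellation property (\cref{lem: cancellation property}) and \cref{lem:diff-powers}. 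I would adopt that construction rather than trying to repair the density/determinant argument.
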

Before proving this result in \cref{sec:thm:proof-Parikh-collinear_bounded-k_to_bounded_k+1}, let us mention a straightforward consequence, which generalizes \cite[Thm.~13]{RigoSalimov2015} from Parikh-constant to
Parikh-collinear morphisms. For example, the Thue--Morse morphism is Parikh-constant and thus Parikh-collinear but the morphism of \cref{exa:pc} is Parikh-collinear but not Parikh-constant.


\begin{corollary}\label{cor:k-binomial_bounded}
Let $\infw{z}$ be a fixed point of a Parikh-collinear morphism.
For any $k \geq 1$ there exists a constant $C_{\infw{z},k} \in \N$ such that
$\bc{\infw{z}}{k}(n)\leq C_{\infw{z},k}$ for all $n \in \N$.
\end{corollary}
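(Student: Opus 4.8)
The plan is to deduce \cref{cor:k-binomial_bounded} directly from \cref{thm:Parikh-collinear_bounded-k_to_bounded_k+1} by an induction on $k$, exploiting the self-referential structure of a fixed point. Let $f$ be the Parikh-collinear morphism and let $\infw{z}$ be a fixed point, so $f(\infw{z}) = \infw{z}$. The key observation is that since $f$ is Parikh-collinear, by \cref{thm:Parikh-collinear_bounded-k_to_bounded_k+1} it satisfies $P_k$ for every $k \geq 0$; that is, it promotes bounded $k$-binomial complexity to bounded $(k+1)$-binomial complexity. Applying $f$ to $\infw{z}$ reproduces $\infw{z}$, so one can bootstrap boundedness up through all values of $k$ starting from a base case.

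For the base case, I would take $k=0$: the $0$-binomial complexity of any infinite word is the constant function $1$ (as noted just before the theorem statement), hence trivially bounded. Since $f$ is Parikh-collinear it satisfies $P_0$, so $f(\infw{z})$ has bounded $1$-binomial complexity. But $f(\infw{z}) = \infw{z}$, so $\bc{\infw{z}}{1}$ is bounded. This is precisely the abelian statement one also gets from \cref{thm: abelian complexity iff Parikh-collinear}, giving the constant $C_{\infw{z},1}$.

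For the inductive step, suppose $\bc{\infw{z}}{k}$ is bounded by some constant $C_{\infw{z},k}$. Then $\infw{z}$ is a word of bounded $k$-binomial complexity, and because $f$ satisfies $P_k$ (again from \cref{thm:Parikh-collinear_bounded-k_to_bounded_k+1}), the image $f(\infw{z})$ has bounded $(k+1)$-binomial complexity. Using $f(\infw{z}) = \infw{z}$ once more yields that $\bc{\infw{z}}{k+1}$ is bounded, say by $C_{\infw{z},k+1}$. This closes the induction and establishes the corollary for all $k \geq 1$.

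The argument is genuinely short, so I do not expect a serious obstacle; the real content is entirely absorbed into \cref{thm:Parikh-collinear_bounded-k_to_bounded_k+1}. The only point requiring a little care is the correct handling of the base case and of the convention for $0$-binomial complexity, together with the remark that applying $f$ leaves $\infw{z}$ invariant — it is this fixed-point identity that lets a statement about \emph{images} under $f$ become a statement about $\infw{z}$ itself, allowing the boundedness to be iterated indefinitely rather than lost after a single application. One should also note that the constants $C_{\infw{z},k}$ depend on $k$ (and on $\infw{z}$) and need not be uniform in $k$, which is exactly what the statement claims.
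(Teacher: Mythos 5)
Your proof is correct and follows essentially the same route as the paper: the paper likewise uses the fixed-point identity $f(\infw{z})=\infw{z}$ together with \cref{thm: abelian complexity iff Parikh-collinear} for the base case and \cref{thm:Parikh-collinear_bounded-k_to_bounded_k+1} for the inductive step. Nothing is missing.
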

\begin{proof}
Let $f\colon A^* \to A^*$ be a Parikh-collinear morphism
whose fixed point is $\infw{z}$. Since $f(\infw{z}) = \infw{z}$,
\cref{thm: abelian complexity iff Parikh-collinear} implies that
$\infw{z}$ has bounded abelian complexity. For any $k\geq 1$, we have
that $\infw{z} = f(f^{k-1}(\infw{z}))$ implying that $\infw{z}$ has
bounded $k$-binomial complexity by induction and the previous theorem.
\end{proof}

\begin{remark}
We cannot relax the (implicit) assumption on the rank of the adjacency matrix~$M_f$ in \cref{cor:k-binomial_bounded}.
For example, the morphism $f: \{0,1,2\}^* \to \{0,1,2\}^*$ defined by
$0\mapsto 0^32^3$, $1\mapsto 0^31^32$, $2\mapsto 2^40^61^3$ has
an adjacency matrix of rank $2$. The fixed point $\infw{x}$
starting with $0$ is aperiodic as $f^n(0)$ is readily seen to be
right special for all $n\geq 0$. Yet, its adjacency matrix has
eigenvalues $\theta_1 = 5 + \sqrt{13}$,
$\theta_2 = 5 - \sqrt{13}$, and $0$, and the former two are 
 greater than $1$. This means that the word has unbounded
abelian complexity. Indeed, a deep result of Adamczewski on
balances in primitive pure morphic words
\cite[Thm.~13(ii)]{Adamczewski2003balances}
implies that the ``$\limsup$''-growth of the function
\[
n \mapsto \max_{a \in \Sigma,u,v\in \Fac_n(\infw{x})}
	\left\{\bigl||u|_a - |v|_a\bigr|\right\}
\]
grows as $\Theta(n^{\log_{\theta_1}\theta_2})$, where
$\log_{\theta_1}\theta_2 \approx 0{,}15448$. It follows (see, e.g., \cite[Lem.~2.2]{RichommeSZ2011Abelian}) that $\bc{\infw{x}}{k}$ is unbounded for each $k\geq 1$.
\end{remark}

\subsection{An Intermediate Characterization of Parikh-Collinearity}\label{sec:charPCM}
To prove \cref{thm:Parikh-collinear_bounded-k_to_bounded_k+1}, we give further
characterizations of Parikh-collinear morphisms. To this end, we require the
following lemma where we define a map $g_e$ which is constant on any abelian
equivalence class. Such a map is natural to consider in view of
\eqref{eq:morphic_image_coeff}.

\begin{lemma}\label{lem:g-function}
Let $A,B$ be finite alphabets with $|A|\ge 2$.
Let $f\colon A^* \to B^*$ be a Parikh-collinear morphism.
For a word $e = e_1 \cdots e_n$ of length $n$ over $B$, define $g_e \colon A^n \to \N$ by
\[
g_e(a_1 \cdots a_n) := \prod_{i=1}^n \binom{f(a_i)}{e_i}.
\]
Then, for all words $w,w'\in A^n$ with $w \sim_1 w'$, we have $g_e(w) = g_e(w')$.
\end{lemma}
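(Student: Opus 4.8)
The plan is to reduce the statement to a single $2\times 2$ determinant identity. First I would observe that, since $e$ has length $n$ and is written $e = e_1 \cdots e_n$ as a product of $n$ factors, each $e_i$ is necessarily a single letter of $B$; consequently every binomial coefficient in the definition of $g_e$ is simply a Parikh coordinate, namely $\binom{f(a_i)}{e_i} = |f(a_i)|_{e_i} = (M_f)_{e_i, a_i}$. Thus $g_e(a_1\cdots a_n) = \prod_{i=1}^n (M_f)_{e_i, a_i}$, and the whole problem is about this product of entries of the adjacency matrix.

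Next, since $w \sim_1 w'$ means precisely that $w'$ is a rearrangement of $w$, and every permutation is a product of adjacent transpositions, it suffices to show that $g_e$ is unchanged when we interchange two adjacent letters of its argument (the intermediate words stay in the same abelian class and $g_e$ is defined on all of $A^n$). Fix a position $j$ and let $w'$ agree with $w$ except that the letters in positions $j$ and $j+1$ are swapped; writing $a = a_j$, $b = a_{j+1}$, $x = e_j$, $y = e_{j+1}$, all factors with $i \neq j, j+1$ cancel, so the required equality $g_e(w) = g_e(w')$ reduces to
\[
(M_f)_{x,a}\,(M_f)_{y,b} = (M_f)_{x,b}\,(M_f)_{y,a},
\]
i.e.\ to the vanishing of the $2\times 2$ minor of $M_f$ on rows $x, y$ and columns $a, b$.

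Finally I would invoke Parikh-collinearity in the matrix form recorded after \cref{def:Parikh-collinear}: $f$ is Parikh-collinear exactly when $M_f$ has rank at most $1$ (the totally erasing case giving $M_f = 0$). A matrix of rank at most $1$ has all of its $2\times 2$ minors equal to zero, which is precisely the displayed identity; the degenerate subcases $x = y$ (two equal rows) and $a = b$ (the swap changes nothing) are subsumed automatically. Equivalently, and without matrices, one may write $\Psi(f(c)) = \mu_c\, v$ for a common vector $v = (v_b)_{b\in B}$ and scalars $\mu_c$, after which both sides of the displayed identity visibly equal $\mu_a\mu_b\, v_x v_y$. I do not anticipate a genuine obstacle: the only point requiring care is the opening observation that the factors $e_i$ are single letters, since after that the collinearity hypothesis enters solely through the vanishing of $2\times 2$ minors.
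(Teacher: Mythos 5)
Your proof is correct, but it follows a genuinely different route from the paper's. After the shared opening observation that each $e_i$ is a single letter, so that $g_e(a_1\cdots a_n)=\prod_{i}(M_f)_{e_i,a_i}$, you reduce invariance under $\sim_1$ to invariance under one adjacent transposition, which is exactly the vanishing of the $2\times 2$ minor of $M_f$ on rows $e_j,e_{j+1}$ and columns $a_j,a_{j+1}$; this is immediate since Parikh-collinearity means the rank of $M_f$ is at most $1$. The paper instead argues globally: it partitions the index set by the pairs $(a_i,e_i)$, substitutes $\binom{f(\alpha)}{\beta}=r_\alpha\binom{f(0)}{\beta}$ for a fixed non-erased letter $0$, and arrives at the closed formula
\[
g_e(w)=\Bigl(\prod_{\beta\in B}\binom{f(0)}{\beta}^{|e|_\beta}\Bigr)\Bigl(\prod_{\alpha\in A}r_\alpha^{|w|_\alpha}\Bigr),
\]
from which the claim is read off because the second factor depends only on $\Psi(w)$. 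Your argument is more local and arguably shorter, funnelling everything through a single $2\times 2$ determinant identity; the paper's buys an explicit expression for $g_e(w)$ in terms of the Parikh vectors of $e$ and $w$, making the dependence on $\Psi(w)$ (and on $\Psi(e)$) completely transparent. Both handle the degenerate totally erasing case correctly (rank $0$ in your formulation; the paper dismisses it as trivial before choosing the letter $0$).
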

\begin{proof}
Write $w = a_1\cdots a_n$ with $a_i\in A$ for all $i\in \{1,\ldots,n\}$.
For all $\alpha\in A$ and $\beta\in B$, define $I(\alpha,\beta) := \left\{ i \in \{1,\ldots,n\} \mid a_i = \alpha \text{ and } e_i = \beta\right\}$.
We get
\[
g_e(w)
= 
\prod_{\substack{\alpha \in A\\ \beta\in B}} \prod_{i\in I(\alpha,\beta)}  \binom{f(\alpha)}{\beta}.
\]
The claim is trivial if $f$ maps all words to $\varepsilon$,
so let $0 \in A$ be a letter for which $|f(0)| \neq 0$. Since the morphism $f$ is Parikh-collinear, for all $\alpha \in A$ and all $\beta\in B$, there exists $r_\alpha \in\mathbb{Q}$ such that $\binom{f(\alpha)}{\beta} = r_\alpha \binom{f(0)}{\beta}$.
We now get
\begin{align*}
g_e(w)
& = 
\prod_{\substack{\alpha \in A\\ \beta\in B}} \prod_{i\in I(\alpha,\beta)}   \binom{f(\alpha)}{\beta}
= 
 \prod_{\substack{\alpha \in A\\ \beta\in B}} \prod_{i\in I(\alpha,\beta)}  r_\alpha \binom{f(0)}{\beta}\\
 &=
\left( \prod_{\substack{\alpha \in A\\ \beta\in B}} \prod_{i\in I(\alpha,\beta)} \binom{f(0)}{\beta} \right) 
\left( \prod_{\substack{\alpha \in A\\ \beta\in B}} \prod_{i\in I(\alpha,\beta)}  r_\alpha \right).
\end{align*}
For any letter $\beta\in B$, the definition of $I(\alpha,\beta)$ gives
\[
\prod_{\alpha\in A} \prod_{i\in I(\alpha,\beta)} \binom{f(0)}{\beta}
=  \binom{f(0)}{\beta}^{|e|_\beta}.
\]
Similarly, for any letter $\alpha \in A$, the definition of $I(\alpha,\beta)$ yields
\[
\prod_{\beta\in B} \prod_{i\in I(\alpha,\beta)}  r_\alpha = r_\alpha^{|w|_\alpha}.
\]
Thus
\[
g_e(w) 
= 
\left( \prod_{\beta\in B} \binom{f(0)}{\beta}^{|e|_\beta} \right) 
\left( \prod_{\alpha \in A} r_\alpha^{|w|_\alpha} \right).
\]
Observe that the first factor in this product only depends on (the Parikh vector of) $e$ --- in particular, not on $w$ --- as the morphism $f$ is fixed. Similarly, the second factor in the product depends solely on the Parikh vector of $w$, not on the word $w$ itself. The desired result follows.
%
\end{proof}

We now characterize Parikh-collinear morphisms by means of binomial complexities.

\begin{proposition}
\label{prop:Parikh-collinear-k-characterization}
Let $f\colon A^*\to B^*$ be a morphism. The following are equivalent.
\begin{enumerate}[leftmargin=*,label=(\roman*)]
\item For all $k\geq 2$ and $u,v \in A^*$, $u \sim_{k-1} v$
implies $f(u) \sim_k f(v)$.
\item There exists an integer $k\geq 2$ such that for all $u,v \in A^*$, $u \sim_{k-1} v$ implies
$f(u) \sim_k f(v)$.
\item For all $u,v \in A^*$, $u \sim_1 v$ implies
$f(u) \sim_2 f(v)$.
\item The morphism $f$ is Parikh-collinear. 
\end{enumerate}
\end{proposition}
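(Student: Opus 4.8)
The plan is to prove the four conditions equivalent by closing the cycle (i) $\Rightarrow$ (iii) $\Rightarrow$ (ii) $\Rightarrow$ (iv) $\Rightarrow$ (i). The first two links are immediate: (iii) is exactly the instance $k=2$ of (i), and (iii) in turn witnesses the existential statement (ii) with $k=2$. So the substance lies in (iv) $\Rightarrow$ (i) and in (ii) $\Rightarrow$ (iv), and I expect the latter to be the main obstacle. (The degenerate case $|A|=1$ is trivial, since then $f$ is automatically Parikh-collinear and all $\sim_{k-1}$-equivalent words have equal length; so I assume $|A|\ge 2$ throughout, as required by \cref{lem:g-function}.)

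For (iv) $\Rightarrow$ (i), I would assume $f$ is Parikh-collinear and $u\sim_{k-1}v$ (so in particular $\Psi(u)=\Psi(v)$ as $k\ge 2$), and show $\binom{f(u)}{e}=\binom{f(v)}{e}$ for every $e\in B^{\le k}$. Expanding both sides via \eqref{eq:morphic_image_coeff}, every term indexed by a tuple $(a_1,\dots,a_\ell)$ with $\ell\le k-1$ carries a factor $\binom{u}{a_1\cdots a_\ell}$ whose argument has length at most $k-1$, hence is unchanged upon replacing $u$ by $v$. The only remaining contribution arises when $|e|=k$ and $\ell=k$, in which case each block of the factorization $e=e_1\cdots e_k$ is a single letter and $\prod_i\binom{f(a_i)}{e_i}=g_e(a_1\cdots a_k)$ for the map $g_e$ of \cref{lem:g-function}. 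Grouping $\sum_{w\in A^k}\binom{u}{w}g_e(w)$ by abelian classes, using that $g_e$ is constant on each class (\cref{lem:g-function}) together with $\sum_{w\in\mathcal C}\binom{u}{w}=\prod_{a}\binom{|u|_a}{m_a}$ (\cref{lem:sum-constantPvect}), shows this contribution depends only on $\Psi(u)=\Psi(v)$ and hence is again unchanged. Thus $f(u)\sim_k f(v)$.

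For (ii) $\Rightarrow$ (iv) I would argue by contraposition: assuming $f$ is not Parikh-collinear, I show the defining property of (ii) fails for \emph{every} $k\ge 2$, contradicting its existential form. Not being Parikh-collinear means $M_f$ has rank at least $2$, so there are letters $a,b$ with $\Psi(f(a)),\Psi(f(b))$ linearly independent, whence some $2\times 2$ minor is nonzero: there exist distinct $c,d\in B$ with $|f(a)|_c|f(b)|_d-|f(b)|_c|f(a)|_d\neq 0$. Since $\binom{xy}{cd}=\binom{x}{cd}+\binom{y}{cd}+|x|_c|y|_d$, this minor equals $\binom{f(a)f(b)}{cd}-\binom{f(b)f(a)}{cd}$, so taking $x=ab$, $y=ba$ gives $x\sim_1 y$, $|f(x)|=|f(y)|$, but $f(x)\not\sim_2 f(y)$. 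This is the base case ($m=2$) of an induction producing, for every $m\ge 2$, words $x,y$ with $|x|=|y|$, $|f(x)|=|f(y)|$, $x\sim_{m-1}y$ and $f(x)\not\sim_m f(y)$.

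The inductive step is where \cref{thm:w35} does the work, and it is the part I expect to be most delicate: given such $x,y$ for $m$, I would set $X=xy$ and $Y=yx$, so that $|X|=|Y|$ and $|f(X)|=|f(Y)|$ hold automatically. By \cref{thm:w35}, the relation $x\sim_{m-1}y$ gives $X=xy\sim_m yx=Y$, while $f(x)\not\sim_m f(y)$ together with $|f(x)|=|f(y)|$ gives $f(X)=f(x)f(y)\not\sim_{m+1}f(y)f(x)=f(Y)$; thus $X,Y$ serve for $m+1$. Consequently, for each $k\ge 2$ the pair $(X,Y)$ satisfies $X\sim_{k-1}Y$ yet $f(X)\not\sim_k f(Y)$, contradicting the property asserted in (ii) for that $k$ and completing the contrapositive. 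The crux is recognizing that the commutation criterion of \cref{thm:w35} propagates the single low-level failure $f(ab)\not\sim_2 f(ba)$ upward to all levels, while the equal-length hypotheses needed to invoke it are preserved for free by the doubling $x,y\mapsto xy,yx$.
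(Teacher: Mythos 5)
Your proof is correct, and all of its ingredients coincide with the paper's: the implication (iv)\,$\Rightarrow$\,(i) is carried out exactly as in the paper via \eqref{eq:morphic_image_coeff}, \cref{lem:g-function} and \cref{lem:sum-constantPvect}, and the hard direction rests on the same two devices, namely \cref{thm:w35} applied to the doubling $x,y\mapsto xy,yx$, and the $2\times 2$ computation $\binom{f(a)f(b)}{cd}-\binom{f(b)f(a)}{cd}=|f(a)|_c|f(b)|_d-|f(b)|_c|f(a)|_d$. The organization, however, is the contrapositive of the paper's. The paper proves (ii)\,$\Rightarrow$\,(iii) by a \emph{downward} reduction --- if the property holds at level $k$ it holds at level $k-1$, because a failure $f(u)\not\sim_{k-1}f(v)$ with $u\sim_{k-2}v$ would propagate to the pair $(uv,vu)$ at level $k$ --- and then extracts Parikh-collinearity from the $k=2$ instance by summing the identity $\binom{f(x)}{s}\binom{f(y)}{t}=\binom{f(y)}{s}\binom{f(x)}{t}$ over $s$. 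You instead start from non-collinearity, read off a nonzero $2\times2$ minor of $M_f$ to get an explicit failure $f(ab)\not\sim_2 f(ba)$ at level $2$, and push it \emph{upward} through all levels by the same doubling; this shows at once that (ii) fails for every $k$, which is slightly more than the equivalence requires but costs nothing. Your inductive step is sound: both the positive direction ($x\sim_{m-1}y$ gives $xy\sim_m yx$) and the negative one ($f(x)\not\sim_m f(y)$ with $|f(x)|=|f(y)|$ gives $f(x)f(y)\not\sim_{m+1}f(y)f(x)$) are legitimate uses of \cref{thm:w35}, and the equal-length hypotheses are indeed preserved by the construction. The only points worth flagging are cosmetic: your appeal to ``rank at least $2$'' uses the reading of Parikh-collinearity as $\operatorname{rank}M_f\le 1$ (the one the paper adopts in its remark on adjacency matrices), and the restriction $|A|\ge 2$ you import from \cref{lem:g-function} is harmless since the $|A|=1$ case is, as you say, vacuous.
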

\begin{proof}
Clearly $(i)$ implies $(ii)$. We show that $(ii)$ implies $(iii)$. There is nothing to prove if $(ii)$ holds for $k=2$, so assume that $k\geq 3$. We show that
$f$ also satisfies $(ii)$ with $k-1$ instead of $k$, and hence, by repeating
the argument, $f$ satisfies $(ii)$ with $k=2$. Assume to the contrary that there exists a pair $u$, $v$ such that $u\sim_{k-2} v$ but $f(u)\not \sim_{k-1} f(v)$. Since $u$ and $v$ are abelian equivalent ($k-2 \geq 1$) they have equal length, so by \cref{thm:w35}, we have that $uv \sim_{k-1} vu$. Then,
since $f$ has the property for $k$, we have
$f(u)f(v) \sim_{k} f(v)f(u)$. Furthermore, $f(u)$ and $f(v)$ have
the same length (due to $u \sim_1 v$). This implies that $f(u) \sim_{k-1} f(v)$ by the converse part of \cref{thm:w35}, contrary to what was assumed.

Assuming $(iii)$, we show that $(iv)$ holds.
Let $x,y$ be distinct letters from $A$. Since $xy \sim_1 yx$,
we have $f(xy) \sim_2 f(yx)$ by assumption. In other words, for all $s,t \in B$ we have, applying \eqref{eq:morphic_image_coeff},
\begin{align*}
0 &= \binom{f(xy)}{st} - \binom{f(yx)}{st}
=\sum_{\substack{a_1,\ldots,a_{\ell} \in A\\ \ell \leq 2} }
\left[\binom{xy}{a_1\cdots a_{\ell}} - \binom{yx}{a_1\cdots a_{\ell}}\right] 
						\sum_{\substack{st=b_1\cdots b_{\ell}\\b_i\in B^+}}\prod_{i=1}^{\ell}\binom{f(a_i)}{b_i}
\\
&
= \sum_{a_1,a_2 \in A}\left(\binom{xy}{a_1 a_2} - \binom{yx}{a_1 a_2}\right)\binom{f(a_1)}{s}\binom{f(a_2)}{t}
  = \binom{f(x)}{s}\binom{f(y)}{t} - \binom{f(y)}{s}\binom{f(x)}{t},
\end{align*}
where in the third equality we use $\binom{xy}{a} = \binom{yx}{a}$ for all $a \in A$ (since $xy \sim_1 yx$).
Summing over $s\in B$, we get $|f(x)|\binom{f(y)}{t} = |f(y)|\binom{f(x)}{t}$ for all $t \in B$. Now $x$ and $y$ were chosen arbitrarily from the alphabet~$A$.
If $|f(x)|=0$ for all $x\in A$, then $f$ is clearly Parikh-collinear.
If there is a letter $x$ for which $|f(x)| > 0$,
we may write
$\left(\binom{f(y)}{t}\right)_{t\in B} = \frac{|f(y)|}{|f(x)|}\left(\binom{f(x)}{t}\right)_{t \in B}$ for each $y \in A$. In other words, $f$ is Parikh-collinear.

To complete the proof, we show that $(iv)$ implies $(i)$. So let $f$ be a Parikh-collinear morphism and $u\sim_{k-1} v$ with $k\geq 2$. We again apply \eqref{eq:morphic_image_coeff}: for any word $e \in B^*$, we have
\[
\binom{f(u)}{e} - \binom{f(v)}{e}  = \sum_{\substack{a_1,\ldots,a_{\ell} \in A\\ \ell \leq |e|} }\left(\binom{u}{a_1\cdots a_{\ell}} - \binom{v}{a_1\cdots a_{\ell}}\right)
						\sum_{\substack{e=e_1\cdots e_{\ell}\\e_i\in B^+}}\prod_{i=1}^{\ell}\binom{f(a_i)}{e_i}.
\]
Notice that for words $e\in B^{<k}$, we have $\binom{u}{a_1\cdots a_{\ell}} = \binom{v}{a_1\cdots a_{\ell}}$ since $u \sim_{k-1} v$, which in turn gives $\binom{f(u)}{e} = \binom{f(v)}{e}$.
So to show that $f(u) \sim_{k} f(v)$, it suffices to consider words $e \in B^k$.
By assumption, for $\ell < k$, we again have $\binom{u}{a_1\cdots a_{\ell}} = \binom{v}{a_1\cdots a_{\ell}}$. Therefore,
we have $\binom{f(u)}{e} = \binom{f(v)}{e}$
if and only if
\begin{equation}\label{eq:prop_equation}
\sum_{a_1, \ldots, a_{k} \in A}\binom{u}{a_1\cdots a_{k}} \prod_{i=1}^{k}\binom{f(a_i)}{e_i} = \sum_{a_1,\ldots, a_{k} \in A}\binom{v}{a_1\cdots a_{k}} \prod_{i=1}^{k}\binom{f(a_i)}{e_i}.
\end{equation}
Observe here that $\prod_{i=1}^{k}\binom{f(a_i)}{e_i} = g_e(a_1\cdots a_{k})$ as defined in \cref{lem:g-function}.
Let $\mathcal{C}$ be an abelian equivalence class in $A^k/{\sim_1}$. 
By \cref{lem:g-function}, $g_e(\cdot)$ is constant on $\mathcal{C}$, so write $g_e(w)=g_{\mathcal{C},e}$ for all words $w\in\mathcal{C}$. For each $w \in \mathcal{C}$ we may write $\Psi(w)=(m_{\mathcal{C},a})_{a \in A}$.
We now have
\begin{align*}
\sum_{w \in A^{k}}\binom{u}{w}g_e(w) 
&= \sum_{\mathcal{C}\in A^k/{\sim_1}}\sum_{w \in \mathcal{C}}\binom{u}{w}g_e(w)
= \sum_{\mathcal{C}\in A^k/{\sim_1}}\!\!\!g_{\mathcal{C},e}\sum_{w \in \mathcal{C}}\binom{u}{w} \nonumber
= \sum_{\mathcal{C}\in A^k/{\sim_1}}\!\!\! g_{\mathcal{C},e} \prod_{a \in A}\binom{|u|_a}{m_{\mathcal{C},a}},
\end{align*}
where the last equality is from~\cref{lem:sum-constantPvect}.
One obtains the same formula by replacing $u$ with $v$, and equality
indeed holds in \eqref{eq:prop_equation} as $|u|_a = |v|_a$ for each
letter $a\in A$. This concludes the proof.
\end{proof}

\begin{remark}
In \cite[Lem.~5]{ManvelMeyerowitz}, the authors show that, for
a morphism $f$ such that $f(a) \sim_h f(b)$ for all $a,b \in A$,
for all words $u$, $v \in A^*$ with $u \sim_k v$ we have
that $f(u) \sim_{k+h} f(v)$. Towards the converse, assume that
$f$ is a morphism for which the conclusion holds
(for all $k\geq 1$ but fixed $h\geq 1$). Then we necessarily have
$f(a)^m f(b)^n \sim_{h+1} f(b)^n f(a)^m$ for all $a,b\in A$, $m,n\geq 1$. From this we
infer that, e.g., $f(a)^{|f(b)|} \sim_h f(b)^{|f(a)|}$ (as a corollary
of \cref{thm:w35}).
In particular, if~$f$ is uniform, we have $f(a) \sim_h f(b)$ for all $a,b \in A$. It would be interesting
to characterize the non-uniform morphisms $f$ with this property.
For example, one can take any Parikh-collinear morphism $g$; then
$f=g^h$ is such a morphism. We highly suspect that these are not
the only such morphisms.
%
\end{remark}

\subsection{Proof of \texorpdfstring{\cref{thm:Parikh-collinear_bounded-k_to_bounded_k+1}}{}}\label{sec:thm:proof-Parikh-collinear_bounded-k_to_bounded_k+1}



%
%

We require the following technical result, which essentially appears in the proof of \cite[Thm.~12]{CassaigneRichomeSaariZamboni2011}. We give a proof here for the sake of completeness.
\begin{lemma}\label{lem:technical_image_lengths}
Let $\infw{x}$ be a an infinite word over $A$ with bounded abelian complexity.
Let $f: A^* \to B^*$ be a morphism and assume $\infw{y} = f(\infw{x})$ is an infinite word.
Then for all $c \in \N$ there exists $D_{\infw{x},c} \in \N$ such that if $\big||f(u)| - |f(v)|\big| \leq c$, for some $u,v \in \mathcal{L}(\infw{x})$, then $\big||u|-|v|\big| \leq D_{\infw{x},c}$.
\end{lemma}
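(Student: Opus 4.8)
The plan is to show that, under the bounded abelian complexity hypothesis, the length of $f(w)$ is a linear function of $|w|$ up to a \emph{bounded} additive error, uniformly over all factors $w\in\Fac(\infw{x})$; the lemma then drops out of a one-line triangle-inequality computation. Throughout, set $K:=\sup_n\bc{\infw{x}}{1}(n)<\infty$ and $\delta_a(n):=\max_{u,v\in\Fac_n(\infw{x})}\bigl||u|_a-|v|_a\bigr|$.

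First I would record that bounded abelian complexity forces bounded letter fluctuations, namely $\delta_a(n)\le K-1$ for every $a\in A$ and every $n$. This is the general-alphabet analogue of the folklore lemma: as a length-$n$ window slides one position, the count $|\cdot|_a$ changes by at most $1$, so sliding from a minimizing window to a maximizing window (both occur in $\infw{x}$) realizes every intermediate $a$-count among factors of length $n$, producing at least $\delta_a(n)+1$ distinct Parikh vectors; hence $\bc{\infw{x}}{1}(n)\ge\delta_a(n)+1$. Next I would upgrade this to a bounded discrepancy for prefixes. Splitting $\pref_{Jn}(\infw{x})$ into $J$ consecutive blocks of length $n$, every block has $a$-count within $K-1$ of $|\pref_n(\infw{x})|_a$; dividing by $Jn$ and letting $J\to\infty$ shows that the frequency $d_a:=\lim_n|\pref_n(\infw{x})|_a/n$ exists and satisfies $\bigl||\pref_n(\infw{x})|_a-d_an\bigr|\le K-1$. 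Combined with the fluctuation bound, every factor $w$ of length $n$ obeys $\bigl||w|_a-d_an\bigr|\le 2(K-1)$. Summing $|f(w)|=\sum_a|w|_a\,|f(a)|$ against these estimates yields the key uniform bound $\bigl||f(w)|-\rho|w|\bigr|\le E_0$ for all $w\in\Fac(\infw{x})$, with $\rho:=\sum_a d_a\,|f(a)|$ and $E_0:=2(K-1)\sum_a|f(a)|$.

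The only place I would use the hypothesis that $\infw{y}=f(\infw{x})$ is infinite is to check $\rho>0$. If every letter $a$ with $|f(a)|>0$ occurred only finitely often in $\infw{x}$, then $f(\infw{x})$ would be finite; hence some $a_0$ with $|f(a_0)|>0$ occurs infinitely often, so $|\pref_n(\infw{x})|_{a_0}\to\infty$, which by the discrepancy bound forces $d_{a_0}>0$, giving $\rho\ge d_{a_0}\,|f(a_0)|>0$. Finally, given $c$ and factors $u,v$ with $\bigl||f(u)|-|f(v)|\bigr|\le c$, the triangle inequality against the uniform bound gives $\rho\,\bigl||u|-|v|\bigr|\le\bigl||f(u)|-\rho|u|\bigr|+\bigl||f(u)|-|f(v)|\bigr|+\bigl||f(v)|-\rho|v|\bigr|\le c+2E_0$, so $D_{\infw{x},c}:=\lceil(c+2E_0)/\rho\rceil$ works.

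I expect the main obstacle to be the second step: converting ``bounded abelian complexity'' into a genuinely \emph{bounded} (rather than merely $o(n)$) additive error in the linear growth of $|f(w)|$. A purely asymptotic expansion rate would only give $n\lesssim(\beta/\rho)m$ for $|u|=m\le n=|v|$ and would not let one conclude that $|u|$ and $|v|$ differ by a constant; it is precisely the uniform discrepancy estimate $\bigl||w|_a-d_an\bigr|\le 2(K-1)$, extracted from the block-averaging argument, that makes the final cancellation work.
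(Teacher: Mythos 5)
Your proof is correct, but it takes a genuinely different route from the paper's. The paper aligns the two factors directly: assuming $|u|\geq|v|$ it writes $u=u'v'$ with $|v'|=|v|$, applies the reverse triangle inequality to get $|f(u')|\leq c+\bigl||f(v')|-|f(v)|\bigr|$, observes that $\Psi(v')-\Psi(v)$ ranges over a finite set because bounded abelian complexity is equivalent to $C$-balancedness, so that $|f(u')|$ is bounded by some $D$, and finally bounds $|u'|$ itself by noting that some letter $a$ with $f(a)\neq\varepsilon$ occurs in every sufficiently long factor of the balanced word $\infw{x}$. You instead establish a global affine estimate $\bigl||f(w)|-\rho|w|\bigr|\leq E_0$ valid for every factor $w$, with $\rho>0$, by first extracting letter frequencies $d_a$ with uniformly bounded discrepancy from balancedness, and then conclude by a triangle inequality. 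Both arguments rest on the same two inputs --- balancedness coming from bounded abelian complexity, and the infiniteness of $f(\infw{x})$ to guarantee expansion --- but yours is more quantitative: it produces an explicit growth rate $\rho=\sum_a d_a|f(a)|$ and an explicit constant $D_{\infw{x},c}=\lceil(c+2E_0)/\rho\rceil$, at the cost of the small extra step of verifying that the frequencies $d_a$ actually exist (the Cauchy estimate $\bigl|\,|\pref_m(\infw{x})|_a/m-|\pref_n(\infw{x})|_a/n\,\bigr|\leq(K-1)(1/m+1/n)$, obtained by comparing both quantities to $\pref_{mn}(\infw{x})$, which you leave implicit but which is routine). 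The paper's argument is shorter and avoids frequencies entirely.
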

\begin{proof} Assume without loss of generality that $|u| \geq |v|$ and write $u = u'v'$ with $|v'| = |v|$.
Let $M_f$ be the adjacency matrix of $f$.
If $\big| |f(u)| - |f(v)| \big| \leq c$, we have by the reverse triangle inequality
\[
c \geq \big||f(u')| - |f(v)| + |f(v')|\big|
\geq |f(u')| - \big| |f(v')|-|f(v)| \big| 
 = |f(u')| - |\langle M_f(\Psi(v') - \Psi(v)),\vec{1} \rangle|,
\]
where $\langle \cdot\, , \,\cdot\rangle$ denotes the inner product of vectors, and $\vec{1}$ is the all-ones-vector. Recall that $\infw{x}$ has bounded abelian complexity if and only if it is $C$-balanced for some $C$ \cite{RichommeSZ2011Abelian}.  
Hence, as $v$ and $v'$ are factors of the same length, $\Psi(v') - \Psi(v)$ attains finitely many distinct integer points (in particular, belonging to $[-C,C]^{\# A}$). So does $M_f(\Psi(v') - \Psi(v))$.
We therefore obtain $|f(u')| \leq D$ for some $D\in \N$.
We deduce that $u'$ is bounded in length as well: indeed, let
$a\in A$ be a letter occurring infinitely often in
$\infw{x}$ and for which $f(a) \neq \varepsilon$ (such a letter exists
because $f(\infw{x})$ is infinite). Since
$\infw{x}$ is balanced, we deduce that all long enough factors
of $\infw{x}$ contain more than $|u'|$ occurrences of $a$.
We let $D_{\infw{x},c}$ be this bound on $|u'|$ to conclude the proof.
\end{proof}

We are now ready to prove the main result of this section, characterizing Parikh-collinear morphisms in terms the property $P_k$ defined at the beginning of \cref{sec: rank1}.

\begin{proof}[Proof of \cref{thm:Parikh-collinear_bounded-k_to_bounded_k+1}]
Let us first show that $(i)$ implies $(ii)$.
Assume thus that $f$ is Parikh-collinear. \Cref{thm: abelian complexity iff Parikh-collinear} implies that $f$ maps all words (i.e., all words with bounded $0$-binomial complexity) to words with bounded $1$-binomial complexity. Let $k\geq 1$ and let $\infw{x}$ be a word with bounded $k$-binomial complexity. Let $n\in \N$.
Any length-$n$ factor of $f(\infw{x})$ can be written as
$p f(u) s$, where the word $u$ is a factor of $\infw{x}$, $p$ is a suffix of $f(a)$ and $s$ is a prefix of $f(b)$ for some letters
$a,b\in A$.
Here $n - 2m < |f(u)| \leq n$,
where $m:=\max_{a\in A}|f(a)|$.
The $(k+1)$-binomial equivalence class of $p f(u) s$ is completely
determined by the words $p$, $s$, and the $k$-binomial equivalence
class of $f(u)$, which itself is determined by the
abelian equivalence class of $u$ by
\cref{prop:Parikh-collinear-k-characterization}.

The former two words $p$ and $s$ are drawn from a finite set, as their lengths are bounded by the constant $m$ (depending on $f$).
The length of $u$ can be chosen from an interval whose length is uniformly bounded in $n$. Indeed, assume we have equal length factors $w = p f(u) s$ and $w' = p' f(v) s'$. As observed above,
$n \geq |f(u)|$ and $|f(v)| > n-2m$, so that
$\big| |f(u)| - |f(v)| \big| < 2m$.
Applying \cref{lem:technical_image_lengths} (by assumption, $\infw{x}$ has bounded $k$-binomial complexity and thus, $\infw{x}$ has bounded
abelian complexity by \eqref{eq:ineq}) there exists a bound $D$
such that $\big||u| - |v|\big| \leq D$ uniformly in $n$.
Since the number of $k$-binomial equivalence classes in $\infw{x}$ of each length is uniformly bounded by assumption, and the number of admissible lengths for $u$ above is bounded, we conclude that the number of choices for the $k$-binomial equivalence class of $u$ is bounded. We have shown that the number of
$(k+1)$-binomial equivalence classes among factors of length $n$ in $f(\infw{x})$ is determined from a bounded amount of information
(not depending on $n$), as was to be shown.
Consequently,  $f$ satisfies $P_k$.

Notice that $(ii)$ trivially implies $(iii)$.

Let us turn to the last implication, namely $(iii)$ implies $(i)$.
Assume $(iii)$ holds,  that is,  for some integer $k\ge 0$,  $f$ satisfies $P_k$.
If $k=0$,  then $f$ maps all words to words with bounded $1$-binomial complexity,  so $f$ is Parikh-collinear by \cref{thm: abelian complexity iff Parikh-collinear}.
Assume that $k\ge 1$, and towards a contradiction, assume further that $f$ is not Parikh-collinear.
By \cref{prop:Parikh-collinear-k-characterization}, there exist words $u,v$ with $u \sim_k v$ and $f(u) \not\sim_{k+1} f(v)$.
Write $U=f(u)$ and $V=f(v)$.
Now define the word $\infw{x}=uvu^2v^2u^3v^3 \cdots u^n v^n \cdots$ and consider
\[
f(\infw{x})=UVU^2V^2U^3V^3 \cdots U^n V^n \cdots.
\]
Below we show that $\infw{x}$ has bounded $k$-binomial complexity, while $f(\infw{x})$ has unbounded $(k+1)$-binomial complexity, which is enough to contradict $(iii)$.

Since $u$ and $v$ are $k$-binomially equivalent, $\bc{\infw{x}}{k}$ is bounded. 
(To see this,  one may apply arguments similar to those developed in the first part of the proof.)
Let us prove the second.
For each integer $n$, $f(\infw{x})$ contains the factors $U^r V^{n-r}$ with $r\in\{0,\ldots,n\}$.
These factors are actually all $(k+1)$-binomially inequivalent.
Indeed, assume towards a contradiction that $U^rV^{n-r} \sim_{k+1} U^sV^{n-s}$ for some $r,s\in\{0,\ldots,n\}$ with $r>s$.
By the Cancellation property (\cref{lem: cancellation property}), we obtain $U^{r-s} \sim_{k+1} V^{r-s}$.
\cref{lem:diff-powers} then implies that $U \sim_{k+1} V$, which is a contradiction.
Consequently, $\bc{f(\infw{x})}{k+1}$ is unbounded, as desired.
\end{proof}

\section{Binomial Properties of the Thue--Morse Morphism, Part I}
\label{sec:propsTMI}
\label{sec: answer question stab}

In this section, we consider binomial complexities of iterates of the Thue--Morse morphism $\varphi$ on aperiodic binary words.
The section is split into three subsections. To state the main result, we define the following.
\begin{definition}
Let $\infw{x}$ be a binary word and $k \geq 1$ an integer.
We say that $\infw{x}$ has property $\tmprop(k)$ if, for all $1 \leq j \leq k$, we have $\bc{\infw{x}}{j} = \bc{\infw{t}}{j}$.
\end{definition}
Recall that the exact values for $\bc{\infw{t}}{j}$
were computed in \cite[Thm.~6]{LejeuneLeroyRigo2020} (and are
given by \cref{eq:k-image-j-complexities}).
The main result of~\cref{sec:first-k-TM} is the following
theorem, which can be seen as a generalization of the aforementioned result.

\begin{theorem}\label{thm:k-image-j-complexities}
Let $k$ be an integer and let $\infw{y}$ be an aperiodic binary
word. Then the word $\infw{x} = \varphi^k(\infw{y})$ (and any of its suffixes) has property $\tmprop(k)$.
\end{theorem}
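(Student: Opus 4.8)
The plan is to prove a stronger, cleaner statement: for every $j\le k$ and every $n$, the set of $\sim_j$-equivalence classes in $\{0,1\}^n/{\sim_j}$ that are met by $\Fac_n(\varphi^k(\infw{y}))$ depends only on $n$, $j$, $k$ and not on the chosen aperiodic binary word $\infw{y}$. Since $\infw{t}$ is a fixed point of $\varphi$, hence of $\varphi^k$, we have $\infw{t}=\varphi^k(\infw{t})$ with $\infw{t}$ aperiodic and binary, so the common value of $\bc{\varphi^k(\infw{y})}{j}(n)$ is forced to be $\bc{\infw{t}}{j}(n)$, given by \cref{eq:k-image-j-complexities}. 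This yields $\tmprop(k)$, and the assertion for a suffix of $\varphi^k(\infw{y})$ reduces to the same statement for a suffix of $\infw{y}$ (still aperiodic binary) up to a bounded prefix absorbed by the decomposition below. The restriction $j\le k$ is exactly what the argument needs: by \cref{thm:Ochsenschlager} we have $\varphi^k(0)\sim_k\varphi^k(1)$ (but not $\sim_{k+1}$), which lets us flatten whole blocks only up to $\sim_k$.

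For $n\le 2^k$ I would in fact prove the equality of sets $\Fac_n(\varphi^k(\infw{y}))=\Fac_n(\infw{t})$, which immediately equalises all $\sim_j$-counts. A window of length $n\le 2^k$ meets at most two consecutive blocks, so it lies inside $\varphi^k(ab)$ for some $ab\in\Fac_2(\infw{y})\subseteq\{0,1\}^2=\Fac_2(\infw{t})$, giving $\subseteq$. For $\supseteq$, write $\Fac_n(\infw{t})=\bigcup_{ab\in\{0,1\}^2}\Fac_n(\varphi^k(ab))$. Since $\infw{y}$ is aperiodic it contains $01$, $10$, and at least one of $00,11$, so it suffices to observe $\Fac_n(\varphi^k(00))=\Fac_n(\varphi^k(11))$ for $n\le 2^k$: both sets are the length-$n$ circular factors of $\varphi^k(0)$, respectively $\varphi^k(1)$, and $\varphi^k(0)=\varphi^{k-1}(0)\varphi^{k-1}(1)$, $\varphi^k(1)=\varphi^{k-1}(1)\varphi^{k-1}(0)$ are conjugate.

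For $n>2^k$ I would use the block decomposition $w=p\,\varphi^k(z)\,s$ of a factor $w\in\Fac_n(\varphi^k(\infw{y}))$, where $z\in\Fac(\infw{y})$ is a non-empty core, $p$ is a proper suffix of some $\varphi^k(a)$ and $s$ a proper prefix of some $\varphi^k(b)$. As $\sim_k$ is a congruence and $\varphi^k(0)\sim_k\varphi^k(1)$, flattening the core gives $\varphi^k(z)\sim_k(\varphi^k(0))^{|z|}$ and hence $w\sim_j p\,(\varphi^k(0))^{|z|}\,s$ for all $j\le k$; thus the $\sim_j$-class of $w$ is controlled by the boundary pieces $p,s$ (including which block they are cut from) and the core length $m=|z|$. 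The inclusion ``classes of $\varphi^k(\infw{y})\subseteq$ classes of $\infw{t}$'' is then easy: for the boundary letters $a,b$ of $w$, the Thue--Morse word contains a length-$(m+2)$ factor $a\tilde{z}b$, so $p\,\varphi^k(\tilde{z})\,s\in\Fac_n(\infw{t})$ and, by the same flattening, $w\sim_k p\,\varphi^k(\tilde{z})\,s$; hence the class of $w$ is met by $\infw{t}$ as well.

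The main obstacle is the reverse inclusion, because a given aperiodic $\infw{y}$ need not realise every boundary-letter pair $(a,b)$ at every core length $m$ (for instance, a Sturmian $\infw{y}$ of small slope has no short factor starting and ending with $1$), so one cannot simply run the previous paragraph backwards. What rescues the count is redundancy of the block decomposition: a single binary word typically admits several decompositions into a boundary--core--boundary shape, and the reductions identify the corresponding reduced words up to $\sim_j$, so that a class realised in $\infw{t}$ by an unavailable configuration is still realised in $\varphi^k(\infw{y})$ by an available one. Making this precise is where the remaining tools enter: one decomposes $p$ and $s$ recursively through $\varphi^k(a)=\varphi^{k-1}(a)\varphi^{k-1}(\overline{a})$, slides the resulting lower-power blocks across the flattened core using the transfer lemma (\cref{lem:transfer}), and matches the outcomes up to $\sim_j$ via \cref{thm:w35} and the cancellation property (\cref{lem: cancellation property}); aperiodicity of $\infw{y}$ guarantees that at least one admissible configuration exists in each case. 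Carrying out this boundary bookkeeping carefully --- and checking that it reproduces the dichotomy $3\cdot 2^j-3$ versus $3\cdot 2^j-4$ according to whether $2^j\mid n$ --- is the technical heart of the argument.
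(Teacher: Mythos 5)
Your overall strategy is the same as the paper's: establish $\Fac_n(\varphi^k(\infw{y}))=\Fac_n(\infw{t})$ for $n\le 2^k$ (your conjugacy/circular-factor argument for $\Fac_n(\varphi^k(00))=\Fac_n(\varphi^k(11))$ is a slick variant of \cref{lem:k-image-short-factors}), and for longer factors prove equality of the sets of $\sim_j$-classes with those of $\infw{t}$ by double inclusion, flattening cores with \cref{thm:Ochsenschlager} and moving boundary pieces with \cref{lem:transfer}. The forward inclusion is fine. But the reverse inclusion is exactly where you stop: you correctly identify that $\infw{y}$ need not realise every boundary pair $(a,b)$ at every core length, announce that ``boundary bookkeeping'' will fix it, and then declare that carrying it out is ``the technical heart of the argument.'' That heart is missing, so the proposal is not a proof.

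Concretely, the step you have not done is the following. Fix $j\le k$ and a factor $u=p\varphi^j(u')s$ of $\infw{t}$ whose $\varphi^j$-ancestor is $au'b$ with $a=b$, in the situation where (say) $\Fac_{|u'|+2}(\varphi^{k-j}(\infw{y}))\cap 1A^*1=\emptyset$ while, by \cref{lem:boundaries}, factors of the form $0v'0$ of lengths $|u'|+1$, $|u'|+2$, $|u'|+3$ are available. One must exhibit a factor of $\varphi^k(\infw{y})$ that is $\sim_j$-equivalent to $u$ but has a $0\cdots 0$ ancestor, and this requires splitting into four cases according to whether $|p|$ and $|s|$ are smaller than or at least $2^{j-1}$: in each case one peels a $\varphi^{j-1}$-block off $p$ or $s$ (or grafts one on), slides it across the core with \cref{lem:transfer}, and adjusts the core length by $\pm 1$ so that the resulting word is a factor of $\varphi^j(0v'0)$. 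This is precisely the paper's Cases 1--4, and none of it is automatic: the core length genuinely changes in two of the cases, and which case applies depends on where $p$ and $s$ are cut. A secondary remark: you propose to do all of this at the $\varphi^k$-block level for every $j\le k$ at once, whereas the paper re-factorises at level $j$ (writing $\infw{x}=\varphi^j(\varphi^{k-j}(\infw{y}))$); your variant can be made to work since \cref{lem:transfer} at level $k$ preserves $\sim_k$ and hence $\sim_j$, but the level-$j$ viewpoint is what lets the paper observe afterwards that the awkward case analysis is only ever needed when $j=k$. Finally, once both inclusions are established no separate verification of the dichotomy $3\cdot 2^j-3$ versus $3\cdot 2^j-4$ is needed --- it is inherited from \cref{eq:k-image-j-complexities} --- so the last sentence of your proposal asks for more work than your own strategy requires.
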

The application of $\varphi^k$ to a word changes the $j$-binomial
complexities, $j \leq k$, to that of the Thue--Morse word's.
Putting this bluntly, the binomial complexities of the original
word play no role in the $j$-binomial complexities of the image word (for small $j$).

The topic of \cref{sec:morebin} is to characterize the $k$- and $(k+1)$-binomial
equivalence among factors of words of the form $\varphi^k(\infw{y})$
(\cref{thm:k-image-prefix-suffix-relation,pro:k-image_k+1-prefix-suffix}). In the
latter, we see that structure of $\infw{y}$ already appears to affect the $(k+1)$-binomial complexity of
$\varphi^k(\infw{y})$. This allows to conclude, for example, that $\bc{}{k} \prec \bc{}{k+1}$
(\cref{cor:k-image_k+1-prec}) for words of this form. Throughout the rest of this section we fix $\infw{x}$ and $\infw{y}$ to be as in \cref{thm:k-image-j-complexities}.

We begin with a subsection introducing a convenient tool, called abelian Rauzy graphs,
which we use throughout the current and
the following section.

\subsection{Abelian Rauzy graphs}\label{sec:abelian-Rauzy}
For an infinite word $\infw{z} \in A^{\N}$, consider a sliding
window of length $n$: as the window shifts, one goes from heavier
factors to lighter factors and vice versa.
One can consider a directed labeled graph $G = (V,E)$ capturing its progress: the
vertices are the Parikh vectors of factors of length $n$, and
there is an edge from $\vec{x}$ to $\vec{y}$ labeled with $(a,b) \in A \times A$, if there exists
$aub \in \Fac_{n+1}(\infw{z})$ such that $\Psi(au) = \vec{x}$
and $\Psi(ub) = \vec{y}$. We call $G$ the \emph{abelian Rauzy graph (of order $n$)}. Such graphs were considered already in
\cite{RichommeSZ2010balance}.

\begin{remark}
  The abelian Rauzy graph $G=(V,E)$ defined here is a quotient of the usual Rauzy graph of $\infw{z}$ of order $n$. The latter one is defined as $R=(V',E')$ where $V'=\Fac_{n}(\infw{z})$ and there is an edge from $au$ to $ub$ of label $(a,b)$ whenever $aub\in \Fac_{n+1}(\infw{z})$. The Parikh map $\Psi:V'\to V$ is a morphism of graphs: any labeled path in $R$ is mapped to a path in $G$ with same label.
\end{remark}

We describe some properties of abelian Rauzy graphs.
\begin{observation}\label{obs:graph-structure}
Let $G = (V,E)$ be the abelian Rauzy graph of order $n$ of an infinite word $\infw{z} \in A^{\N}$.
\begin{itemize}
\item The number of vertices is $\# V=\bc{\infw{z}}{1}(n)$;
  
\item An edge with label $(a,b)$ corresponds to an increase in weight if and only if $ab = 01$;

\item An edge with label $(a,b)$ corresponds to a decrease in weight if and only if $ab = 10$;

\item An edge is a loop if and only if $a = b$.

\item A right special factor of length $n$ gives rise to a \emph{right special vertex}:
a vertex with two outgoing edges for which the labels have the same first component.
If $\infw{z}$ is binary, then one of the two edges is a loop.

\item Each vertex has at least one outgoing edge (possibly a loop).
Furthermore, if the word $\infw{z}$ is aperiodic, each vertex has at least one
outgoing edge that is not a loop, and there is at least one vertex with two outgoing edges.
In particular, the graph has at least $\#V+1$ edges.
\end{itemize}
\end{observation}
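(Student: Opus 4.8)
The plan is to check the six assertions one at a time; most drop out of the definition of the abelian Rauzy graph $G=(V,E)$, and only the two statements under the aperiodicity hypothesis in the final item will require real work. For the vertex count, I would observe that, by definition, the vertices are exactly the Parikh vectors occurring among length-$n$ factors, and two factors share a Parikh vector precisely when they are abelian equivalent; hence $\#V$ is the number of $\sim_1$-classes in $\Fac_n(\infw{z})$, which is $\bc{\infw{z}}{1}(n)$. For the weight and loop assertions, I would take an edge arising from $aub\in\Fac_{n+1}(\infw{z})$, with source $\Psi(au)$ and target $\Psi(ub)$, and compute $\Psi(ub)-\Psi(au)=\Psi(b)-\Psi(a)$ (the common interior $u$ cancels). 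Reading off the weight component gives $|ub|_1-|au|_1=|b|_1-|a|_1$, which equals $+1$ iff $ab=01$, equals $-1$ iff $ab=10$, and equals $0$ iff $a=b$; moreover $\Psi(ub)=\Psi(au)$ holds iff $a=b$, so the edge is a loop exactly when $a=b$. This settles the second, third, and fourth bullets at once (the loop characterization holding over any alphabet).

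Next I would treat right special vertices. If $p\in\Fac_n(\infw{z})$ is right special, it has two distinct right extensions $pb_1,pb_2\in\Fac_{n+1}(\infw{z})$ with $b_1\neq b_2$; writing $a$ for the first letter of $p$, these yield two edges leaving the vertex $\Psi(p)$ with labels $(a,b_1)$ and $(a,b_2)$. The labels share their first component and differ in their second, so they are genuinely distinct edges and the vertex is right special in the stated sense. In the binary case $\{b_1,b_2\}=\{0,1\}\ni a$, so one of the two labels is $(a,a)$, and by the loop characterization just established that edge is a loop.

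For the last item, I would first note that every vertex $\Psi(p)$ has an outgoing edge, since the factor $p$, occurring in the infinite word $\infw{z}=z_0z_1\cdots$, admits at least one right extension $pc\in\Fac_{n+1}(\infw{z})$. The step I expect to be the main obstacle is showing that under aperiodicity each vertex has a \emph{non-loop} outgoing edge, which I would handle by contradiction: if every edge leaving $\vec{v}=\Psi(p)$ were a loop, then every length-$(n+1)$ factor whose length-$n$ prefix has Parikh vector $\vec{v}$ would begin and end with the same letter. Taking an occurrence of such a factor at position $i$, this forces $z_{i+n}=z_i$; but then the shifted window $z_{i+1}\cdots z_{i+n}$ again has Parikh vector $\vec{v}$, so the argument repeats and yields $z_{i+n+m}=z_{i+m}$ for all $m\ge 0$, making $\infw{z}$ ultimately periodic, a contradiction. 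For the existence of a vertex with two outgoing edges I would invoke that aperiodicity forces the factor complexity to be strictly increasing (the standard refinement of Morse--Hedlund), so $\fc{\infw{z}}(n+1)>\fc{\infw{z}}(n)$ produces a right special factor of length $n$, which by the fifth bullet gives a vertex of out-degree at least $2$. Finally, since each edge has a unique source, summing out-degrees counts every edge exactly once, giving at least $\#V$ edges from the first part and one extra from the right special vertex, hence at least $\#V+1$ edges.
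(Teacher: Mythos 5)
Your proof is correct. The paper states this as an \emph{Observation} without giving any proof, and your verifications --- the identification of vertices with abelian classes, the cancellation $\Psi(ub)-\Psi(au)=\Psi(b)-\Psi(a)$ for the label/weight/loop bullets, the sliding-window argument showing that a vertex with only loops forces $z_{i+n}=z_i$ for all large $i$ (hence ultimate periodicity), and the Morse--Hedlund refinement producing a right special factor of every length --- are exactly the routine arguments the paper leaves implicit.
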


\begin{example}\label{ex:abelian-Rauzy-TM}
Let us consider the abelian Rauzy graphs of the Thue--Morse word.
For a fixed $n$, we identify the vertices of $G$, that is, the
Parikh vectors of factors of length $n$, with their second
components. Indeed,  for a binary word $u$ of a fixed length $n$,
we have $\Psi(u) = (n-|u|_1,|u|_1)^{\intercal}$. We show that,
for all $m\geq 1$,  $G_{2m}$ and $G_{2m+1}$ take the following forms:
\begin{center}
\begin{tikzpicture}[x=1.3cm]
\node at (.25,0) {$G_{2m}$:};
\node[circle,draw,minimum size = 20pt,inner sep = 0pt] (x2) at (1,0) {\tiny $m\!-\!1$};
\node[circle,draw,minimum size = 20pt,inner sep = 0pt] (x3) at (2,0) {\tiny $m$};
\node[circle,draw,minimum size = 20pt,inner sep = 0pt] (x4) at (3,0) {\tiny $m\! +\! 1$};

\draw[->] (x2) edge[out=330,in=210] node[midway,below] {\tiny $(0,1)$} (x3);
\draw[->] (x3) edge[in=30,out=150] node[midway,above] {\tiny $(1,0)$} (x2);

\draw[->] (x3) edge[out=330,in=210] node[midway,below] {\tiny $(0,1)$} (x4);
\draw[->] (x4) edge[in=30,out=150] node[midway,above] {\tiny $(1,0)$} (x3);

\draw[->] (x3) edge[in=120,out=60,looseness=7] node[midway,above] {\tiny $\substack{(0,0)\\(1,1)}$} (x3);
\end{tikzpicture}
\qquad
\begin{tikzpicture}[x=1.4cm]
\node at (.2,0) {$G_{2m+1}$:};
\node[circle,draw,minimum size = 20pt,inner sep = 0pt] (x2) at (1,0) {\tiny $m$};
\node[circle,draw,minimum size = 20pt,inner sep = 0pt] (x3) at (2,0) {\tiny $m\!+\!1$};

\draw[->] (x2) edge[out=330,in=210] node[midway,below] {\tiny $(0,1)$} (x3);
\draw[->] (x3) edge[in=30,out=150] node[midway,above] {\tiny $(1,0)$} (x2);

\draw[->] (x2) edge[in=120,out=60,looseness=7] node[midway,above] {\tiny $\substack{(0,0)\\(1,1)}$} (x2);
\draw[->] (x3) edge[in=120,out=60,looseness=7] node[midway,above] {\tiny $\substack{(0,0)\\(1,1)}$} (x3);
\end{tikzpicture}
\end{center}
Let us write $\infw{t} = 01101001\cdots = a_0a_1a_2\cdots$.
We first consider $G_{2m+1}$. It is well-known and plain to see that $\infw{t}$
is closed under complementation; for all $u \in \Fac(\infw{t})$,
we have $\overline{u} \in \Fac(\infw{t})$. Let $au$, $a \in \{0,1\}$,
$au \in \Fac_m(\infw{t})$, be right special in $\infw{t}$.
Consequently, $\varphi(au)a \in \Fac(\infw{t})$; hence $G_{2m}$
contains the loop $m \xrightarrow{(a,a)} m$. By complementing such a
factor, we also find $m \xrightarrow{(\overline{a},\overline{a})} m$ in $G_{2m+1}$.
To see that, e.g., there is no loop at $m-1$, we note that any $m-1$-factor is
of the form $0\varphi(u')0$, and appears at an odd position in $\infw{t}$.
Hence there is certainly no loop with label $(1,1)$ at $m-1$. Neither can
there be a loop with label $(0,0)$, as $00$ is not the image
of a letter.

We then inspect $G_{2m+1}$. The following statements are easy to prove using,
e.g., the automatic prover \texttt{Walnut} \cite{Mousavi2016automatic}. For a comprehensive take
on the usage of \texttt{Walnut}, we recommend the book \cite{Shallit2022logical}.
\begin{itemize}
\item For all $m\geq 1$ there exists a length-$(2m+2)$ factor $u$ starting at an even index in
$\infw{t}$, and which begins and ends with $1$.\\
The following \texttt{Walnut} formula returns 'TRUE':\\
\verb|eval OddL11 "Am (m>0) => Ej T[2*j]=@1 & T[2*j+2*m+1]=@1";|

\item For all $m \geq 0$ there exists a length-$(2m+2)$ factor starting at an odd index in
$\infw{t}$, and which begins and ends with $0$.\\
The following \texttt{Walnut} formula returns 'TRUE':\\
\verb|eval OddL00 "Am Ej j>0 & T[2*j-1]=@0 & T[2*j+2*m]=@0";|
\end{itemize}
Then the first item implies that $G_{2m+1}$ contains the loop $m \xrightarrow{(1,1)} m$.
Indeed, the length-$(2m+1)$ prefix of $u$ in the first item has weight $m$
(because $u = \varphi(u')01$ for some $|u'|=m$). Similarly the second item implies
that $m$ has a loop with label $(0,0)$. Recalling that $\infw{t}$ is closed under complementation,
$G_{2m+1}$ is seen to be of the claimed form.
\end{example}

We show the structure of the abelian Rauzy graphs of Sturmian words in \cref{prop:sturmian-abelian-Rauzy-graphs}.

We shall make use of the following general lemma, a part of which
appears as \cite[Lem.~3.2.]{RichommeSZ2011Abelian}
\begin{lemma}\label{lem:boundaries}
Let $\infw{z}$ be an aperiodic binary word. Then, for every
$n \geq 1$, the set of edge labels of the abelian Rauzy graph $G_n$ contains
$(0,1)$, $(1,0)$, and either $(0,0)$ or $(1,1)$. Furthermore, if $(a,a)$ does
not appear as a label of a loop in $G_n$, then $G_{n+1}$ contains a loop
with the label $(\overline{a},\overline{a})$.
\end{lemma}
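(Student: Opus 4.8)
The plan is to prove \cref{lem:boundaries} by analyzing the possible edge labels using the structural properties collected in \cref{obs:graph-structure}, together with the combinatorial meaning of balance. First I would establish the three labels $(0,1)$, $(1,0)$, and ``$(0,0)$ or $(1,1)$''. Since $\infw{z}$ is aperiodic and binary, by \cref{obs:graph-structure} the graph $G_n$ must contain at least one vertex with two outgoing edges, and every vertex has an outgoing edge that is not a loop. A non-loop edge in a binary alphabet has label $(a,b)$ with $a\neq b$, hence is either $(0,1)$ or $(1,0)$. I would argue that both must occur: the factor $\infw{z}$ contains both letters (else it is periodic), and as the sliding window of length $n$ moves across $\infw{z}$ the weight $|u|_1$ must both increase and decrease infinitely often (again by aperiodicity, since the weight cannot be eventually monotone along an infinite word with bounded window). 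An increase corresponds, by \cref{obs:graph-structure}, precisely to a label $(0,1)$ and a decrease to a label $(1,0)$, so both appear. For the third label, I would observe that $\infw{z}$ must contain two consecutive equal letters somewhere among its length-$(n+1)$ factors --- i.e., some factor contains $00$ or $11$ as the pair of window-endpoints --- because otherwise $\infw{z}$ would be an alternating word, hence periodic, contradicting aperiodicity. Such an occurrence produces an edge with label $(0,0)$ or $(1,1)$ (possibly a loop, possibly not), giving the claim.

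The more delicate part is the ``furthermore'' clause relating loops in $G_n$ to loops in $G_{n+1}$; this is where I expect the main obstacle. Suppose $(a,a)$ does not appear as a loop label in $G_n$. I would interpret the absence of an $(a,a)$-loop in terms of factors: a loop with label $(a,a)$ at a vertex $\vec{x}$ means there is a factor $aua \in \Fac_{n+1}(\infw{z})$ with $\Psi(au)=\Psi(ua)=\vec{x}$, which forces $u$ to begin and end appropriately so that the window keeps the same Parikh vector. The assumption is that no length-$(n+1)$ factor $w$ with $\Psi(\pref_n(w))=\Psi(\suff_n(w))$ has first and last letter both equal to $a$. The goal is to produce a length-$(n+2)$ factor realizing a loop with label $(\overline{a},\overline{a})$ in $G_{n+1}$, i.e., a factor $\overline{a}\,u'\,\overline{a} \in \Fac_{n+2}(\infw{z})$ whose length-$(n+1)$ prefix and suffix share a Parikh vector.

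The natural strategy here is to take a factor witnessing the third bullet point --- a factor in $G_n$ that does realize a same-weight occurrence of consecutive endpoints --- and, using the hypothesis that the $(a,a)$ case is excluded, deduce the $(\overline a,\overline a)$ case one length up. Concretely, I would consider occurrences where the window of length $n$ returns to the same Parikh vector after one shift; the hypothesis kills the $a\cdots a$ pattern, so the only same-weight endpoint pattern available at level $n$ is governed by $\overline{a}$, and extending the window by one letter promotes this to a genuine $(\overline a,\overline a)$-loop at level $n+1$. I would also invoke the complementation-type/parity reasoning used in \cref{ex:abelian-Rauzy-TM}, where the absence of a loop at one vertex of $G_{2m}$ was leveraged to locate a complementary loop in $G_{2m+1}$; the cited \cite[Lem.~3.2]{RichommeSZ2011Abelian} presumably supplies exactly the extension argument, so I would reduce the furthermore clause to that result and supply only the short combinatorial bridge. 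The hard part is making the endpoint-Parikh bookkeeping precise so that ``no $(a,a)$-loop at length $n$'' rigorously yields ``an $(\overline a,\overline a)$-loop at length $n+1$'' rather than merely some non-loop $(\overline a,\overline a)$-edge; I would handle this by carefully tracking how appending one letter to a length-$n$ window changes the two endpoint letters and the weight difference, which is a finite case check.
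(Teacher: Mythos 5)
Your treatment of the first assertion is essentially sound and close in spirit to the paper's: aperiodicity forces the walk of $\infw{z}$ through $G_n$ to change weight in both directions infinitely often (yielding the labels $(0,1)$ and $(1,0)$) and to admit an edge with equal label components (which, by \cref{obs:graph-structure}, is automatically a loop). One small repair is needed for the third label: your justification (``otherwise $\infw{z}$ would be an alternating word'') only makes literal sense for $n=1$. For general $n$, the absence of an edge with label $(0,0)$ or $(1,1)$ in $G_n$ means $z_{i+n}=\overline{z_i}$ for every position $i$, hence $\infw{z}$ has period $2n$ --- same conclusion, different reason. (The paper instead gets the loop from a right special vertex via Coven--Hedlund.)

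The ``furthermore'' clause is where your proposal has a genuine gap. Your concrete mechanism --- take a factor $\overline{a}\,u\,\overline{a}$ of length $n+1$ witnessing the $(\overline{a},\overline{a})$-loop of $G_n$ and ``extend the window by one letter'' --- does not work: extending to $\overline{a}\,u\,\overline{a}\,c$ produces an edge of $G_{n+1}$ with label $(\overline{a},c)$, and nothing forces $c=\overline{a}$; the same failure occurs for a left extension, and a priori $G_{n+1}$ could have only an $(a,a)$-loop. The ``finite case check'' you postpone is precisely where the content lies, and the key idea is missing: one must work inside the strongly connected subgraph of $G_n$ traversed infinitely often by $\infw{z}$ and exploit an \emph{extremal} vertex. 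Concretely, for $a=0$: that recurrent component has at least two vertices (else $\infw{z}$ would eventually satisfy $z_i=z_{i+n}$, hence be ultimately periodic), so the walk enters its lightest vertex infinitely often via an edge labelled $(1,0)$, i.e.\ via a factor $1u0$ of length $n+1$; the next outgoing step cannot carry label $(1,0)$ (there is no lighter recurrent vertex) nor $(0,0)$ (excluded by hypothesis), so the appended letter is $1$, and the length-$(n+2)$ factor $1u01$ yields the required $(1,1)$-loop of $G_{n+1}$. Finally, deferring to \cite[Lem.~3.2]{RichommeSZ2011Abelian} on the grounds that it ``presumably supplies exactly the extension argument'' does not close the gap: the paper notes that only part of the statement appears there and supplies this walk argument itself.
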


\begin{proof}
Since $\infw{z}$ is aperiodic, it must have a connected component containing a
loop (a right special vertex) and at least two vertices (by a theorem of Coven and Hedlund \cite{CovenH1973sequences}).
An edge from a lighter vertex to a heavier one has label $(0,1)$, and $(1,0)$ appears as the
label from the heavier to the lighter one.

Assume that $G_n$ does not contain a loop with label $(0,0)$.
Consider the walk $W$ in $G_n$ defined by $\infw{z}$: in particular,
consider the strongly connected subgraph of $G_n$ comprising the
vertices and edges that $W$ traverses infinitely many times.
There is an edge from the second lightest vertex to the lightest
one {\tt l}, labeled with $(1,0)$. This means that the factor of length
$n+1$ corresponding to this edge begins with $1$ and ends with $0$.
Since $(0,0)$ does not appear as a label in $G_n$ and from the lightest vertex there is no outgoing edge with label $(1,0)$, the edge that
$W$ takes from {\tt l} has label $(\cdot,1)$. Thus the factor of length $n+2$ begins and ends with $1$.
This gives an edge in $G_{n+1}$ with label $(1,1)$.
\end{proof}
%

\subsection{The First \texorpdfstring{$k$}{k} Binomial Complexities}\label{sec:first-k-TM}



We begin by defining the notion of $\varphi^j$-factorizations of factors of $\infw{x}$. This will be used throughout this and the next section.

\begin{definition}\label{def:varphij-factorization}
For any factor $u$ of $\varphi^j(\infw{y})$ of length at least $2^j-1$ there exist $a,b \in \{0,1\}$ and $z \in \{0,1\}^*$ with $azb\in \mathcal{L}(\infw{y})$ such that
$u = p\varphi^j(z)s$ for some
proper suffix $p$ of $\varphi^j(a)$ and some proper prefix $s$ of
$\varphi^j(b)$. (Note that $z$ could be empty.) The triple $(p,\varphi^j(z),s)$ is called a {\em $\varphi^j$-factorization}\footnote{We warn the reader that the term $\varphi$-factorization has a different meaning in~\cite{LejeuneLeroyRigo2020}. Our $\varphi^j$-factorization corresponds to their ``factorization of order $j$''.} of $u$. The word $azb$ (resp., $zb$; $az$; $z$) is said to be the corresponding {\em $\varphi^j$-ancestor} of $u$ when $p,s$ are non-empty (resp., $p=\varepsilon$ and $s\neq\varepsilon$; $p\neq\varepsilon$ and $s=\varepsilon$; $p=s=\varepsilon$).
\end{definition}

Since the words $\varphi^j(0)$ and $\varphi^j(1)$
begin with different letters, we notice that if $s \neq \varepsilon$ in a $\varphi^j$-factorization of a word,
then the letter $b$ is uniquely determined. Similarly the $j$th images
of the letters end with distinct letters (for $j$ fixed), whence the letter $a$ is uniquely determined once $p \neq \varepsilon$.

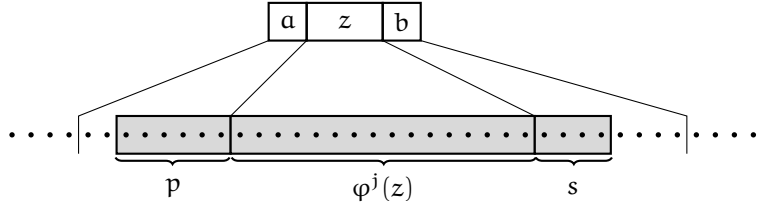
\begin{figure}[h!t]
  \begin{center}
    \begin{tikzpicture}
      \node [fit={(5,2.5) (5.5,3)}, inner sep=0pt, draw=black, thick] (a) {};
       \node [fit={(5.5,2.5) (6.5,3)}, inner sep=0pt, draw=black, thick] (y) {};
       \node [fit={(6.5,2.5) (7,3)}, inner sep=0pt, draw=black, thick] (b) {};
  \node [fit={(3,1) (4.5,1.5)}, inner sep=0pt, draw=black, thick, fill=gray!30] (u) {};
  \node [fit={(4.5,1) (8.5,1.5)}, inner sep=0pt, draw=black, thick, fill=gray!30] (x) {};
  \node [fit={(8.5,1) (9.5,1.5)}, inner sep=0pt, draw=black, thick, fill=gray!30] (v) {};
  \draw [thick, decoration={brace, mirror, raise=0.3cm}, decorate] (u.west) -- (x) {};
  \draw [thick, decoration={brace, mirror, raise=0.3cm}, decorate] (x.east) -- (v.east) ;
  \draw [thick, decoration={brace, mirror, raise=0.3cm}, decorate] (u.east) -- (x.east) ;
  \node at (3.75,.55) {$p$};
  \node at (6.5,.55) {$\varphi^j(z)$};
  \node at (9,.55) {$s$};
  \node at (5.25,2.75) {$a$};
  \node at (6,2.75) {$z$};
  \node at (6.75,2.75) {$b$};
  \draw (5.5,2.5) -- (4.5,1.5);
  \draw (5,2.5) -- (2.5,1.5) -- (2.5,1) ;
  \draw (6.5,2.5) -- (8.5,1.5);
   \draw (7,2.5) -- (10.5,1.5) -- (10.5,1) ;
  \foreach \r in {6,...,45}{
    \node[circle,fill=black,inner sep=0pt,minimum size=2pt] (a) at (.125+.25*\r,1.25) {};
    }
\end{tikzpicture}
\end{center}
\caption{A $\varphi^j$-factorization and its $\varphi^j$-ancestor.}
  \label{fig:frame}
\end{figure}

The following lemma says that an aperiodic word of the form
$\varphi^k(\infw{y})$ has the same short factors as the Thue--Morse word.
\begin{lemma}\label{lem:k-image-short-factors}
For an aperiodic binary word $\infw{y}$ and integer $k$,
we have
$\mathcal{L}_{n}(\varphi^k(\infw{y})) = \mathcal{L}_{n}(\infw{t})$ for all $n \leq 2^k$.
\end{lemma}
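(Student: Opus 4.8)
The plan is to prove the two inclusions $\mathcal{L}_n(\varphi^k(\infw{y}))\subseteq\mathcal{L}_n(\infw{t})$ and $\mathcal{L}_n(\infw{t})\subseteq\mathcal{L}_n(\varphi^k(\infw{y}))$ separately for $n\le 2^k$, reducing everything to the behaviour of $\varphi^k$ across a single block boundary. The basic tool is a sliding-window count: since each block $\varphi^k(a)$ has length $2^k$, any factor $w$ of length $n\le 2^k$ of a word $\varphi^k(\infw{z})$ meets at most two consecutive blocks, hence is a factor of $\varphi^k(ab)$ for some length-$2$ factor $ab$ of $\infw{z}$. I would record this once and apply it both to $\infw{z}=\infw{y}$ and, using the fixed-point identity $\infw{t}=\varphi^k(\infw{t})$, to $\infw{z}=\infw{t}$.

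For the inclusion $\mathcal{L}_n(\varphi^k(\infw{y}))\subseteq\mathcal{L}_n(\infw{t})$, the window argument writes any such $w$ as a factor of $\varphi^k(ab)$ with $ab\in\mathcal{L}_2(\infw{y})\subseteq\{00,01,10,11\}=\mathcal{L}_2(\infw{t})$. Since $\infw{t}=\varphi^k(\infw{t})$ and $ab$ is a factor of $\infw{t}$, the word $\varphi^k(ab)$ is a factor of $\infw{t}$, and hence so is $w$. This direction is immediate.

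The reverse inclusion is where the work lies. First I would record that, as $\infw{y}$ is aperiodic and binary, it contains both letters and is neither eventually constant nor (globally) alternating; this forces $\{01,10\}\subseteq\mathcal{L}_2(\infw{y})$ together with at least one of $00,11$ lying in $\mathcal{L}_2(\infw{y})$. Given $w\in\mathcal{L}_n(\infw{t})$, the window argument makes $w$ a factor of $\varphi^k(cd)$ for some $cd\in\{00,01,10,11\}$. If $cd\in\{01,10\}$ then $\varphi^k(cd)$ already occurs in $\varphi^k(\infw{y})$ and we are done. The remaining cases $cd\in\{00,11\}$ are handled by an exchange lemma: for $k\ge 1$, every factor of length $\le 2^k$ of $\varphi^k(00)$ is a factor of $\varphi^k(11)$, and vice versa. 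Since $\mathcal{L}_2(\infw{y})$ contains $00$ or $11$, this lemma lets the missing junction be simulated by the one that is present, covering both cases.

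The crux, and the main obstacle, is this exchange lemma, which I expect to prove as follows. Write $\varphi^k(00)=\varphi^{k-1}(0101)$ and $\varphi^k(11)=\varphi^{k-1}(1010)$, each a concatenation of four $\varphi^{k-1}$-blocks of length $2^{k-1}$. The delicate point is the counting at this finer scale: a window of length $\le 2^k=2\cdot 2^{k-1}$ can meet three consecutive $\varphi^{k-1}$-blocks (not two), so any length-$\le 2^k$ factor of $\varphi^k(00)$ is a factor of $\varphi^{k-1}(010)$ or of $\varphi^{k-1}(101)$. As $010$ and $101$ are both factors of $1010=\varphi(11)$, these are factors of $\varphi^{k-1}(1010)=\varphi^k(11)$; the symmetric statement holds because $010,101$ are equally factors of $0101=\varphi(00)$. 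Finally, the boundary case $k=0$ (where $n\le 1$) is trivial, since $\mathcal{L}_1(\infw{y})=\{0,1\}=\mathcal{L}_1(\infw{t})$.
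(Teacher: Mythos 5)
Your proposal is correct and follows essentially the same route as the paper: the forward inclusion via the two-block window and $\infw{t}=\varphi^k(\infw{t})$, and the reverse inclusion via aperiodicity forcing $01,10$ and one of $00,11$, then trading $\varphi^k(00)=\varphi^{k-1}(0101)$ against $\varphi^k(11)=\varphi^{k-1}(1010)$ by noting a window of length $\le 2^k$ sits inside three consecutive $\varphi^{k-1}$-blocks, i.e.\ inside $\varphi^{k-1}(010)$ or $\varphi^{k-1}(101)$. The paper phrases this last step as the factor lying in $\varphi^{k-1}(1)\varphi^k(0)$ or $\varphi^k(0)\varphi^{k-1}(0)$, which is the same decomposition.
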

\begin{proof}
Let $\infw{x} = \varphi^k(\infw{y})$. The claim is trivial for $k=0$ so assume $k\geq 1$.
Any factor of $\infw{x}$ of length at most $2^k$ appears as
a factor of $\varphi^k(v)$, where $v\in \Fac_2(\infw{y})$.
Since $\Fac_2(\infw{t}) = \{0,1\}^2$, all such factors appear
in $\varphi^k(\infw{t})=\infw{t}$. In particular we have shown $\mathcal{L}_{n}(\infw{x}) \subseteq \mathcal{L}_{n}(\infw{t})$
for $n \leq 2^k$.

Since $\infw{y}$ is aperiodic it contains both $01$ and
$10$ and either of $00$, and $11$. If $\infw{y}$ contains all
factors of length $2$, then clearly the considered languages are
equal. So assume without loss of generality that $11$ does not
appear in $\infw{y}$. Consider the factors of length at most
$2^k$ of
$\varphi^k(11) = \varphi^{k-1}(1)\varphi^k(0)\varphi^{k-1}(0)$.
Such a factor is either a factor of
$\varphi^{k-1}(1)\varphi^k(0)$ or
$\varphi^{k}(0)\varphi^{k-1}(0)$, both of which are factors
of $\varphi^k(00)$. Hence all factors of length at most $2^k$
appearing in $\infw{t}$ appear in $\infw{x}$ as well; this
concludes the proof.
\end{proof}

We are in the position to prove the main result of this subsection.

\begin{proof}[Proof of \cref{thm:k-image-j-complexities}]
Let $j\in\{1,\ldots,k\}$. We first prove the claim for $\infw{x}$ (and afterwards the claim for any of its suffixes).
As the factors of length at most
$2^j$ of $\infw{x}=\varphi^j(\varphi^{k-j}(\infw{y}))$
coincide with those of $\infw{t}$ by the above lemma,
the $j$-binomial complexity of $\infw{x}$ coincides with
that of the Thue--Morse word's for $n < 2^j$.

In the remaining of the proof we let $n\geq 2^j$. We show that
$\Fac_n(\infw{t})/{\sim_j} = \Fac_n(\infw{x})/{\sim_j}$ by double inclusion, which
suffices for the claim since \cref{thm:k-image-j-complexities} holds true for $\infw{x} = \infw{t}$.

Let $u \in \mathcal{L}(\infw{x})$; we show that there exists
$v \in \mathcal{L}(\infw{t})$ such that $u \sim_j v$. To this end, let $\infw{z} = \varphi^{k-j}(\infw{y})$ so that
$\infw{x} = \varphi^{j}(\infw{z})$.
Let $u$ have $\varphi^j$-factorization $p\varphi^j(u')s$ with $\varphi^j$-ancestor $au'b\in \Fac(\infw{z})$.
The Thue--Morse word contains a factor $a v' b$, where $|v'| = |u'|$ (see, e.g., \cite[Prop.~33]{LejeuneLeroyRigo2020} or the abelian Rauzy graphs in \cref{ex:abelian-Rauzy-TM}). It follows that $\infw{t}$ contains the factor
$v:=p\varphi^j(v')s$. Now $u \sim_j v$ because $\varphi^{j}(u') \sim_j \varphi^j(v')$ by \cref{thm:Ochsenschlager}.

Let then $u \in \mathcal{L}(\infw{t})$ have $\varphi^j$-factorization $p\varphi^j(u')s$ with $\varphi^j$-ancestor $au'b\in \Fac(\infw{t})$.
As before we show that
there exists $v \in \mathcal{L}(\infw{x})$ such that $u \sim_j v$.
As a consequence of \cref{lem:boundaries}, $\infw{z}$ contains, at
each length, factors from both the languages $0A^*1$ and $1A^*0$.
Hence, if $a$ and $b$ above are distinct, we may argue as in the
previous paragraph to obtain the desired conclusion. Assume thus
that $a = b$. Again \cref{lem:boundaries} implies that $\infw{z}$
contains a factor of length $|u'|+2$ in the language $1A^*1 \cup 0A^*0$.
Assume without loss of generality that it contains a factor from $0A^*0$.
Then, if $a = b = 0$,
we may again argue as in the previous paragraph. So assume now that
$a = b = 1$ and $\Fac_{|u|'+2}(\infw{z}) \cap 1A^*1 = \emptyset$. Notice that \cref{lem:boundaries} implies that
$\Fac_{|u|'+2}(\infw{z}) \cap 0A^*0 \neq \emptyset$ and, further,
$\Fac_{|u|'+2 \pm 1}(\infw{z}) \cap 0A^*0 \neq \emptyset$. 
To conclude with the claim for $\infw{x}$, we have four cases to consider depending on the length of $p$ and $s$ which can be less or equal, or greater than $2^{j-1}$.

\begin{enumerate}[wide=15pt,widest=99,label=\textbf{Case \arabic*:},itemsep=3pt]

\item Assume that $p$ is a suffix of $\varphi^{j-1}(0)$ and $s$ is a prefix of $\varphi^{j-1}(1)$. For all $v'$ such that $|v'| = |u'|-1$, $\varphi^j(u')\sim_j\varphi^j(v'1)$ by \cref{thm:Ochsenschlager}. By the
\hyperref[lem:transfer]{Transfer Lemma} (\cref{lem:transfer}), $\varphi^j(v'1)\sim_j \varphi^{j-1}(1)\varphi^j(v')\varphi^{j-1}(0)$. Consequently
\[
u \sim_j p\varphi^{j-1}(1)\varphi^j(v')\varphi^{j-1}(0)s=:v,
\]
where $p\varphi^{j-1}(1)$ is a suffix of $\varphi^j(0)$ and $\varphi^{j-1}(0)s$ is a prefix of $\varphi^j(0)$. Hence $v$ is a factor of $\varphi^j(0v'0)$.
Recall that a factor of the form $0v'0$ appears in $\infw{z}$ by assumption, and thus $\varphi^{j}(0v'0)$ appears in $\infw{x}$.
To recap, we have shown a factor $v$ of
$\infw{x}$ $j$-binomially equivalent to $u$.

\item Assume that $p=p'\varphi^{j-1}(0)$ where $p'$ is a suffix of $\varphi^{j-1}(1)$ and $s$ is a prefix of
$\varphi^{j-1}(1)$. For all $v'$ such that $|u'|=|v'|$, applying \cref{thm:Ochsenschlager} and \cref{lem:transfer},
$$u\sim_j p' \varphi^j(v')\varphi^{j-1}(0)s=:v.$$
Hence $v$ is a factor of $\varphi^j(0v'0)$, and such a factor appears in $\infw{z}$ by assumption. We conclude as above.

\item Assume that $p$ is a suffix of
$\varphi^{j-1}(0)$ and $s=\varphi^{j-1}(1)s'$ where $s'$ is a prefix of
$\varphi^{j-1}(0)$. For all $v'$ such that $|u'|=|v'|$, applying
\cref{thm:Ochsenschlager,lem:transfer},
we have that $u\sim_j p \varphi^{j-1}(1) \varphi^j(v')s'=:v$
and the conclusion is the same as in the previous case.

\item Assume that $p=p'\varphi^{j-1}(0)$ and $s=\varphi^{j-1}(1)s'$ where $p'$ is a suffix of $\varphi^{j-1}(1)$  and $s'$ is a prefix of $\varphi^{j-1}(0)$.  For all $v'$ such that $|v'|=|u'|+1$, applying \cref{thm:Ochsenschlager} and \cref{lem:transfer},
\[
u\sim_j p' \varphi^{j-1}(0) \varphi^{j-1}(1)\varphi^j(u')s'\sim_j p' \varphi^j(w')s'=:v,
\]
Hence $v$ is a factor of $\varphi^j(0w'0)$ and the conclusion is similar to Case 1.
\end{enumerate}
To conclude the proof, we consider the case of a suffix
$\infw{w}$ of $\infw{x}$. Now $\infw{w}$ has a suffix of the
form $\varphi^k(\infw{y}')$, where $\infw{y}'$ is a suffix of $\infw{y}$. Notice now that $\Fac(\infw{x}) \supseteq \Fac(\infw{w}) \supseteq \Fac(\varphi^k(\infw{y}'))$. The theorem
applies to both $\infw{x}$ and $\varphi^k(\infw{y}')$ by the previous part. Hence $\infw{w}$ has property $\tmprop(k)$ also.
\end{proof}

%
%
%
%

\begin{remark}
If $\infw{y}$ is an aperiodic infinite word, then for all $a,b\in\{0,1\}$ and $n \geq 2$ we have
$\Fac_n(\varphi(\infw{y})) \cap aA^* b \neq \emptyset$.
Indeed, for $a \neq b$ the claim follows from \cref{lem:boundaries}. For $a = b$, we observe the following:
for even length factors $n = 2\ell$, $\ell \geq 1$, a factor $\overline{a} y a$ of $\infw{y}$ of length $\ell + 1$ (which exists by \cref{lem:boundaries}) gives
a factor $\overline{a}a \varphi(y) a\overline{a}$ in $\infw{z}$,
hence we have the factor $a z a$ with $|z| = 2\ell - 2$.
For odd length factors $n  = 2\ell+1$, $\ell \geq 1$, we have
that a factor of the form $c y c$,
$|y| = \ell-1$, of $\infw{y}$
(such a factor exists for some $c \in \{0,1\}$ by \cref{lem:boundaries})
gives $c\overline{c} \varphi(y) c \overline{c}$.
Consequently $\infw{z}$ contains a factor in $a A^* a$ of length $n$ as well.

Applying this observation in the above proof to $\infw{z}$ when
$j<k$, we have $\Fac_{n}(\infw{z}) \cap 1A^*1 \neq \emptyset$
for all $n\geq 2$, and thus case analysis at the end of the proof
is only necessary for the case $j=k$.
\end{remark}

\subsection{On the \texorpdfstring{$k$- and $(k+1)$}{k- and (k+1)}-Binomial Equivalence}\label{sec:morebin}

The previous subsection was dealing with the $j$-binomial equivalence in $\infw{x}=\varphi^{k}(\infw{y})$, where $\infw{y}$ is an aperiodic binary word and $j\leq k$. Here, we are concerned with the $(k+1)$-binomial equivalence in such words. To this end, we need to have more control on the $k$-binomial equivalence in $\infw{x}$. First, we have a closer look at the $\varphi^j$-factorizations of a word and in particular at the associated prefixes and suffixes.


\begin{definition}[{\cite[Def.~43]{LejeuneLeroyRigo2020}}]
\label{def:type}
Let $j\geq 1$. 
 Let us define the equivalence relation $\equiv_j$ on ${A^{< 2^j} \times A^{< 2^j}}$ by $(p_1,s_1) \equiv_j (p_2,s_2)$ whenever there exists $a \in A$ such that one of the following situations occurs:
\begin{enumerate}[itemsep=5pt]
\item\label{it:same_sum}
$|p_1|+|s_1| = |p_2|+|s_2|$ and
\begin{enumerate}[topsep=3pt,itemsep=5pt]
\item 
$(p_1,s_1) = (p_2,s_2)$;
\item
$(p_1, \varphi^{j-1}(a) s_1) = (p_2 \varphi^{j-1}(a), s_2)$;
\item
$(p_2, \varphi^{j-1}(a) s_2) = (p_1 \varphi^{j-1}(a), s_1)$; 
\item
$(p_1,s_1) = (s_2,p_2) = (\varphi^{j-1}(a),\varphi^{j-1}(\overline{a}))$; 
\end{enumerate}
\item \label{it:not_same_sum}
$\bigl| |p_1|+|s_1| - (|p_2|+|s_2|) \bigr| = 2^j$ and
\begin{enumerate}[itemsep=5pt,topsep=3pt]
\item
$(p_1,s_1) = (p_2\varphi^{j-1}(a),\varphi^{j-1}(\bar{a})s_2)$; 
\item
$(p_2,s_2) = (p_1\varphi^{j-1}(a),\varphi^{j-1}(\bar{a})s_1)$.
\end{enumerate}
\end{enumerate}
\end{definition}

The next lemma is essentially \cite[Lem.~40 and 41]{LejeuneLeroyRigo2020} (except that with an arbitrary word $\infw{y}$ instead of the Thue--Morse word~$\infw{t}$, we cannot use the fact that $\infw{t}$ is overlap-free, so factors such as $10101$ may appear in $\infw{y}$). To each $\varphi^j$-factorization there is a natural corresponding $\varphi^{j-1}$-factorization, though two $\varphi^j$-factorizations may correspond to the same $\varphi^{j-1}$-factorization. The next lemma also describes how such factorizations are related.

\begin{lemma}\label{lem:unique_image}
Let $j\ge 1$. Let $u$ be a factor of $\varphi^j(\infw{y})$ such
that $|u|\ge 2^j-1$. Then $u$ has at most two
$\varphi^j$-factorizations. Let further $u$ have a
$\varphi^j$-factorization of the form $(p,\varphi^{j}(z),s)$ and
$z_0 z z_{n+1}$ being the corresponding $\varphi^j$-ancestor
(where according to \cref{def:varphij-factorization}
$z_0,z_{n+1}$ or $z$ could be empty). The factor $u$ has a
unique $\varphi^j$-factorization if and only if the word
$z_0 z z_{n+1}$ contains both letters $0$ and $1$.  Moreover, if
there is another $\varphi^j$-factorization
$(p',\varphi^{j}(z'),s')$ with $\varphi^j$-ancestor
$z_0' z' z_{m+1}'$, then $(p,s) \equiv_j (p',s')$ with
$\big||p|-|p'|\big|=\big||s|-|s'|\big| = 2^{j-1}$,
$z_0 z z_{n+1} = a^{n+2}$, and $z_0' z' z_{m+1}' = \overline{a}^{m+2}$ for some $a \in \{0,1\}$.
\end{lemma}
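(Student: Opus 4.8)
The plan is to reduce the entire statement to a single rigid identity among the blocks $\varphi^j(0)$ and $\varphi^j(1)$, and then read off every conclusion from it. The starting observation is that a $\varphi^j$-factorization of $u$ is completely determined by the length $\ell=|p|\in\{0,\ldots,2^j-1\}$ of its prefix part: once $\ell$ is fixed, $p$ is the length-$\ell$ prefix of $u$, the length of the suffix part is $|u|-\ell \bmod 2^j$ and hence $s$, $\varphi^j(z)$, and (via \cref{def:varphij-factorization}) the letters $a,b$ are all forced. Thus a factorization is just an admissible placement of the length-$2^j$ block grid against $u$, and counting factorizations amounts to counting admissible phases $\ell$. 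I would therefore fix two factorizations with prefix lengths $\ell_1<\ell_2$ and shift $d:=\ell_2-\ell_1\in\{1,\ldots,2^j-1\}$, and aim to show $d=2^{j-1}$.

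The heart of the argument is an overlap lemma, which I would isolate and prove first: if $\varphi^j(c')=QR$ where $Q$ is a nonempty proper suffix of $\varphi^j(c_1)$ and $R$ a nonempty proper prefix of $\varphi^j(c_2)$ (so $|Q|+|R|=2^j$), then necessarily $c_1=c_2$, $c'=\overline{c_1}$, and $|R|=2^{j-1}$. To connect this with the two factorizations, observe that the two block grids on $u$ differ everywhere by the constant shift $d$; hence, on any region where both grids carry full blocks, a full block of the second grid lies astride two consecutive blocks $\varphi^j(c_1)\varphi^j(c_2)$ of the first, producing exactly such an overlap equation with $|R|=d$. Such an overlapping region exists because $|u|\ge 2^j-1$, the only genuinely short configuration $z_1=z_2=\varepsilon$ (forcing $|u|<2^{j+1}$) being a direct finite check. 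The lemma then forces $d=2^{j-1}$ and $c_1=c_2$.

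I expect the overlap lemma to be the main obstacle, and I would prove it by induction on $j$. The base case $j=1$ is immediate, since $Q,R$ are single letters and $QR=\overline{c_1}\,c_2$ equals $\varphi(c')$ only when $c_1=c_2$. For the inductive step I would split each block along its midpoint using $\varphi^j(c)=\varphi^{j-1}(c)\varphi^{j-1}(\overline c)$. If $d\neq 2^{j-1}$, the central boundary of $\varphi^j(c')$ (at position $2^{j-1}$) falls strictly inside one of the half-blocks of $\varphi^j(c_1)$ or of $\varphi^j(c_2)$; peeling it off yields a shorter overlap equation of length $2^{j-1}$ between the two complementary halves $\varphi^{j-1}(c)$ and $\varphi^{j-1}(\overline c)$ of a single block, and the induction hypothesis applied to it would force a letter to equal its own complement — a contradiction. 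Hence $d=2^{j-1}$; then $Q=\varphi^{j-1}(\overline{c_1})$ and $R=\varphi^{j-1}(c_2)$, and comparing $QR$ with $\varphi^j(c')=\varphi^{j-1}(c')\varphi^{j-1}(\overline{c'})$ half by half, together with the injectivity of $\varphi^{j-1}$, gives $c'=\overline{c_1}$ and $c_2=c_1$.

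Finally I would harvest the conclusions. Since any two distinct admissible phases differ by exactly $2^{j-1}$, there are at most two (three values in $\{0,\ldots,2^j-1\}$ cannot be pairwise at distance $2^{j-1}$, as that would force a gap of $2^j$), which also gives $\big||p|-|p'|\big|=\big||s|-|s'|\big|=2^{j-1}$. Applying the overlap lemma to every straddling block shows that all blocks of the first factorization carry the same letter $a$, so its ancestor is $a^{n+2}$; the same reasoning on the second factorization (whose straddling blocks are all $\varphi^j(\overline a)$) gives ancestor $\overline a^{m+2}$. The explicit forms of $p,s,p',s'$, each pair differing by a half-block $\varphi^{j-1}(a)$, then match one of the cases of \cref{def:type}, yielding $(p,s)\equiv_j(p',s')$. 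The equivalence in the statement is now immediate: if $z_0zz_{n+1}$ contains both letters the overlap lemma rules out a second factorization, so $u$ is unique; conversely, if $z_0zz_{n+1}=a^{n+2}$, the identity
\[
\varphi^j(a^{n+2})=\varphi^{j-1}(a)\,\varphi^j(\overline a^{\,n+1})\,\varphi^{j-1}(\overline a)
\]
exhibits the shifted ($d=2^{j-1}$) decomposition explicitly, providing the required second factorization.
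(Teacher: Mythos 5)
Your proof is essentially correct but follows a genuinely different route from the paper's. The paper proves uniqueness by induction on $j$ through the intermediate $\varphi^{j-1}$-factorization (using that $11\notin\{\varphi(0),\varphi(1)\}$ and $\varphi^{j-1}(1)\varphi^{j-1}(1)\notin\{\varphi^j(0),\varphi^j(1)\}$ to lift uniqueness one level), and then handles the non-unique case by writing out the shifted factorization explicitly and running a four-way case analysis on $|p|,|s|$ versus $2^{j-1}$. You instead isolate a single synchronization statement --- if $\varphi^j(c')$ equals a nonempty proper suffix of $\varphi^j(c_1)$ followed by a nonempty proper prefix of $\varphi^j(c_2)$, then $c_1=c_2$, $c'=\overline{c_1}$ and the cut is at $2^{j-1}$ --- prove it by induction on $j$ (the induction is sound: when $d\neq 2^{j-1}$ the half of $\varphi^j(c')$ that straddles the midpoint of a single block yields an overlap equation between $\varphi^{j-1}(c)$ and $\varphi^{j-1}(\overline{c})$, forcing $c=\overline{c}$), and read off every conclusion from the resulting rigidity of the two block grids. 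This is more modular and makes the ``phase shift of exactly $2^{j-1}$'' phenomenon transparent, whereas the paper's proof keeps that fact implicit in its explicit rewriting; the paper's route, on the other hand, produces the concrete forms of $(p',s')$ needed later (e.g.\ in \cref{prop:formula}) as a byproduct.

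Two loose ends in your write-up deserve attention. First, the configuration in which neither factorization contains a full block ($z=z'=\varepsilon$, so $|u|=|p|+|s|\le 2^{j+1}-2$) is not a ``direct finite check'': it occurs for every $j$, so it needs an argument uniform in $j$. The fix is the one the paper uses --- since $|p|+|s|\ge 2^j-1$, one of $|p|,|s|$ is at least $2^{j-1}$, which exposes a full half-block $\varphi^{j-1}(\cdot)$ inside $u$ and lets your overlap lemma (applied one level down, or directly to the exposed half-blocks) take over. Second, when determining the end letters $z_0,z_{n+1}$ of the ancestors, the boundary of grid~1 adjacent to a short partial block (say $|p|<2^{j-1}$) is not straddled by any full block of grid~2, so the overlap lemma does not apply to it directly; you must instead use that $p$ is then a common nonempty suffix of $\varphi^{j-1}(\overline{z_0})$ and of the first half of $\varphi^j(z_0')$, together with the fact that $\varphi^{j-1}(0)$ and $\varphi^{j-1}(1)$ end in different letters, to pin down $z_0$. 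Both repairs use only tools you already have, so I regard these as gaps in exposition rather than in the method.
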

Otherwise stated, the $\varphi^j$-factorization is not unique if
and only if $u$ is a factor of $\varphi^{j-1}(x)$ with
$x\in (01)^* \cup (10)^* \cup 1(01)^* \cup 0(10)^*$.

\begin{proof}
Since $|u|\ge 2^j-1$, $u$ has a factor of the form
$\varphi^{j-1}(a)$ and thus at least one
$\varphi^j$-factorization of the prescribed form exists with
$z=z_1\cdots z_n$ and $n\ge 0$ ($n=0$ if $z=\varepsilon$).
  %

We first prove the claim for uniqueness by induction on $j$. For
$j=1$, assume that
$u=z_0 \varphi(z_1)\cdots \varphi(z_n)z_{n+1}$ with
$z_0,z_{n+1}\in\{0,1,\varepsilon\}$.
Suppose, as in the statement, that both letters $0$ and $1$ occur in
$z_0\cdots z_{n+1}$.
Then we have $z_iz_{i+1}=01$ (or similarly $10$)
for some $i$. This means that $u$ contains the factor $11$
forcing uniqueness of this kind of a factorization:
$11\not\in\{\varphi(0),\varphi(1)\}$. Assume that the property
holds true up to $j-1$ and prove it for $j\ge 2$. Let
$u=p\varphi^{j}(z_1)\cdots \varphi^{j}(z_n)s$ be a 
$\varphi^j$-factorization and assume that $z_iz_{i+1}=01$ for
some $i$. To this factorization, we have a corresponding
factorization of the form
  $$u=p\varphi^{j-1}(z_1)\varphi^{j-1}(\overline{z_1})\cdots \varphi^{j-1}(z_n)\varphi^{j-1}(\overline{z_n})s.$$
Notice that $p$ is a suffix of $\varphi^{j-1}(\overline{z_0})$
if $|p|<2^{j-1}$ and otherwise,
$p=p'\varphi^{j-1}(\overline{z_0})$ with $p'$ a suffix of
$\varphi^{j-1}(z_0)$. Similarly, $s$ is a prefix of
$\varphi^{j-1}(z_{n+1})$ if $|s|<2^{j-1}$ and otherwise,
$s=\varphi^{j-1}(z_{n+1})s'$ with $s'$ a prefix of
$\varphi^{j-1}(\overline{z_{n+1}})$. Observe that
$z_i\overline{z_i}z_{i+1}\overline{z_{i+1}}=0110$. So by the
induction hypothesis, the $\varphi^{j-1}$-factorization of $u$
is unique. There are at most two $\varphi^{j}$-factorizations corresponding to a
$\varphi^{j-1}$-factorization. But since
$\varphi^{j-1}(1)\varphi^{j-1}(1) \notin\{\varphi^j(0),\varphi^j(1)\}$, the claimed uniqueness follows.

We then prove the claim for non-unique factorizations.
  Assume that $z_0=z_1=\cdots=z_{n+1}=0$. Then
  $$u=p\varphi^j(0)\cdots \varphi^j(0)s=p \varphi^{j-1}(0)\varphi^{j-1}(1)\cdots \varphi^{j-1}(0)\varphi^{j-1}(1) s$$
  with $p$ (resp., $s$) a suffix (resp.,  prefix) of $\varphi^j(0)$.
If $|p|\ge 2^{j-1}$, then $p=p' \varphi^{j-1}(1)$ with $p'$ a
suffix of $\varphi^{j-1}(0)$ (and thus, a suffix of
$\varphi^{j}(1)$), otherwise set $p'=p\varphi^{j-1}(0)$.
Similarly, if $|s|\ge 2^{j-1}$, then $s=\varphi^{j-1}(0)s'$ with
$s'$ a prefix of $\varphi^{j-1}(1)$, otherwise
$s'=\varphi^{j-1}(1)s$. Notice that the corresponding
$\varphi^{j-1}$-factorization of $u$ is unique since the
$\varphi^{j-1}$-ancestor is not a power of a letter: if $n \neq 0$ then the claim is
clear. Otherwise $|u|=|ps| \geq 2^j - 1$; this implies that either
$|p| \geq 2^{j-1}$ or $|s| \geq 2^{j-1}$. Assuming the latter
(the other case being symmetric), we have that
$s = \varphi^{j-1}(0)s'$ with $s'$ a prefix of
$\varphi^{j-1}(1)$. If $s' \neq \varepsilon$, then the
$\varphi^{j-1}$-ancestor contains both letters. If
$s' = \varepsilon$, then $p \neq \varepsilon$, and then again
the $\varphi^{j-1}$-factorization contains both letters.
 
Now $u$ can also be written as
  \[
p'\varphi^{j-1}(1)\varphi^{j-1}(0)\cdots \varphi^{j-1}(1)\varphi^{j-1}(0) s'=p'\varphi^j(1)\cdots \varphi^j(1)s'.
  \]
There are no other $\varphi^j$-factorizations due to the
uniqueness of the $\varphi^{j-1}$ factorization of $u$. To
conclude the claim in this case, a straightforward case analysis
shows that $(p,s) \equiv_{j} (p',s')$ with $\big||p|-|p'|\big|=\big||s|-|s'|\big| = 2^{j-1}$:
  
  If $|p|\ge 2^{j-1}$ and if $|s|\ge 2^{j-1}$, then $(p,s)=(p' \varphi^{j-1}(1),\varphi^{j-1}(0)s')$.
  
  If $|p|\ge 2^{j-1}$ and if $|s|< 2^{j-1}$, then $(p,\varphi^{j-1}(1) s)=(p' \varphi^{j-1}(1),s')$.

   If $|p|< 2^{j-1}$ and if $|s|\ge 2^{j-1}$, then $(p\varphi^{j-1}(0),s)=(p',\varphi^{j-1}(0)s')$. 

      If $|p|< 2^{j-1}$ and if $|s|< 2^{j-1}$, then $(p\varphi^{j-1}(0),\varphi^{j-1}(1) s)=(p',s')$. 
\end{proof}



We have the following theorem, the proof of which is essentially the proof
of \cite[Thm.~48]{LejeuneLeroyRigo2020}. Indeed, the lemmas in
\cite{LejeuneLeroyRigo2020} leading to its proof do not require that the factors $u$ and $v$ are from the
Thue--Morse word, only that they have $\varphi^j$-factorizations.
We note that \cite[Thm.~48]{LejeuneLeroyRigo2020} is stated for
$j\geq 3$. However, the statement holds also for $j=1$
(trivially) and for $j=2$ as it is essentially a restatement
of \cite[Thm.~34]{LejeuneLeroyRigo2020} obtained by closely
inspecting its proof.

\begin{theorem}\label{thm:k-image-prefix-suffix-relation}
  Let $\infw{y}$ be an aperiodic binary word. 
Let $k\geq j\geq 1$. Let $u$ and $v$ be equal-length factors of $\infw{x}=\varphi^k(\infw{y})$ 
with $\varphi^j$-factorizations $u = p_1\varphi^j(z) s_1$ and $v = p_2 \varphi^j(z') s_2$. 
Then $u \sim_j v$ if and only if $(p_1,s_1)\equiv_j (p_2,s_2)$.
\end{theorem}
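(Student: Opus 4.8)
The plan is to prove the two implications separately, treating the forward (sufficiency of $\equiv_j$) direction as a direct verification and reserving the real work for the converse. Throughout, the two workhorses are Ochsenschl\"ager's result (\cref{thm:Ochsenschlager}), which together with the fact that $\sim_j$ is a congruence yields $\varphi^j(z)\sim_j\varphi^j(z')$ whenever $|z|=|z'|$, and the \hyperref[lem:transfer]{Transfer Lemma} (\cref{lem:transfer}), which slides a block $\varphi^{j-1}(a)$ past an image $\varphi^j(z)$ at the cost of only relabelling the latter. I would also repeatedly invoke the \hyperref[lem: cancellation property]{Cancellation property} (\cref{lem: cancellation property}) to strip common prefixes and suffixes, reducing each equivalence to a statement about short blocks, and use the identity $\varphi^{j-1}(a)\varphi^{j-1}(\overline{a})=\varphi^j(a)$.

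For the ``if'' direction I would assume $(p_1,s_1)\equiv_j(p_2,s_2)$ and run through the finitely many defining cases of \cref{def:type}. In each case, matching lengths $|u|=|v|$ forces a relation between $|z|$ and $|z'|$ (equal in the cases under item~\ref{it:same_sum}, differing by one under item~\ref{it:not_same_sum}), and cancelling reduces the claim to the equivalence of two short blocks. For instance, in Case~\ref{it:same_sum}(b), where $p_1=p_2\varphi^{j-1}(a)$ and $s_2=\varphi^{j-1}(a)s_1$ and hence $|z|=|z'|$, the Transfer Lemma gives $\varphi^{j-1}(a)\varphi^j(z)\sim_j\varphi^j(z')\varphi^{j-1}(a)$ at once; cancelling $p_2$ on the left and $s_1$ on the right yields $u\sim_j v$. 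The remaining cases are handled identically, each combining one or two applications of the Transfer Lemma with the identity above and the length-preserving equivalence from Ochsenschl\"ager's result.

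For the ``only if'' direction I would argue by induction on $j$, with base cases $j\in\{1,2\}$ supplied by the $j$-independent reformulations of \cite[Thm.~34]{LejeuneLeroyRigo2020} noted in the text. For the step, suppose $u\sim_j v$ with $j\ge 3$. Since $\sim_j$ refines $\sim_{j-1}$, the factors are also $(j-1)$-binomially equivalent, so the induction hypothesis, applied to the $\varphi^{j-1}$-factorizations of $u$ and $v$ induced by splitting each $\varphi^j$-block as $\varphi^{j-1}(a)\varphi^{j-1}(\overline{a})$ and redistributing $p_i,s_i$ across the $2^{j-1}$ boundary exactly as in \cref{lem:unique_image}, constrains the $(j-1)$-level prefix/suffix pair up to $\equiv_{j-1}$. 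It then remains to upgrade this to $\equiv_j$, that is, to show that the information carried by the length-$j$ binomial coefficients (beyond length $j-1$) pins $(p_i,s_i)$ down to its $\equiv_j$-class. This is where \cref{thm:w35} enters: by prepending and appending suitable images and cancelling, one converts a putative $\sim_j$-equivalence into a commutation statement $xy\sim_j yx$, which by \cref{thm:w35} is equivalent to $x\sim_{j-1}y$, thereby lowering the order of the equivalence and making the hypothesis applicable.

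The main obstacle is precisely this converse bookkeeping: one must verify that distinct $\equiv_j$-classes genuinely produce different length-$j$ binomial coefficients, controlling all boundary effects among $p_i$, the block $\varphi^j(z)$, and $s_i$, and ruling out accidental coincidences. I would lean on \cref{lem:unique_image} to bound the number of $\varphi^j$-factorizations by two and to isolate the only degenerate situation---ancestors that are powers of a single letter, i.e.\ $u$ a factor of $\varphi^{j-1}$ applied to a word in $(01)^*\cup(10)^*\cup1(01)^*\cup0(10)^*$---in which two factorizations coexist; this is the only place the combinatorial structure of $\infw{y}$ could interfere, and it must be checked directly. Crucially, none of these steps uses that $\infw{y}$ is the Thue--Morse word, and in particular overlap-freeness is never invoked: the arguments rely only on the existence and near-uniqueness of $\varphi^j$-factorizations, so the \cite{LejeuneLeroyRigo2020} proof transfers with only the degenerate cases requiring separate attention.
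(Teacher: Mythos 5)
Your proposal matches the paper's treatment: the paper gives no in-text proof of this theorem but defers to \cite[Thm.~48]{LejeuneLeroyRigo2020}, remarking---exactly as you do---that the lemmas leading to that proof require only that $u$ and $v$ admit $\varphi^j$-factorizations (not that they come from the Thue--Morse word, and in particular never overlap-freeness), with the $j=1$ and $j=2$ cases handled separately via \cite[Thm.~34]{LejeuneLeroyRigo2020}. Your ``if''-direction via the Transfer Lemma, Ochsenschl\"ager's theorem and cancellation is the same mechanism the paper deploys in the analogous case analyses (e.g.\ in the proof of \cref{thm:k-image-j-complexities}), and your isolation of the degenerate two-factorization situation via \cref{lem:unique_image} is precisely the point the authors single out as the only place where the passage from $\infw{t}$ to an arbitrary aperiodic $\infw{y}$ needs separate attention.
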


We then turn to the $(k+1)$-binomial equivalence in $\infw{x}$.
A straightforward consequence of \eqref{eq:morphic_image_coeff}
together with the identities
$\sum_{x \in A^{\ell}}\binom{u}{x} = \binom{|u|}{\ell}$,
$\ell \geq 1$, is the following observation.
\begin{lemma}\label{lem:coefficients_of_images}
Let $u \in \{0,1\}^*$. Then
\begin{equation*}
\binom{\varphi(u)}{0} = |u|;\quad
\binom{\varphi(u)}{01} = |u|_0 + \binom{|u|}{2};\quad
\binom{\varphi(u)}{011}  =\binom{u}{01} + \binom{|u|_0}{2} + \binom{|u|}{3}.
\end{equation*}
\end{lemma}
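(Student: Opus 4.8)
The plan is to apply the morphic-image expansion \eqref{eq:morphic_image_coeff} directly, exploiting the fact that $\varphi$ is so simple that all the ``atomic'' binomial coefficients $\binom{\varphi(a)}{\beta}$ are either $0$ or $1$ and easy to tabulate. First I would record the values we need: since $\varphi(0)=01$ and $\varphi(1)=10$, for every $a,\beta\in\{0,1\}$ we have $\binom{\varphi(a)}{\beta}=1$; moreover $\binom{\varphi(0)}{01}=1$ while $\binom{\varphi(1)}{01}=0$; and neither $01$ nor $10$ contains $00$, $11$, or any word of length $3$ as a subword, so $\binom{\varphi(a)}{00}=\binom{\varphi(a)}{11}=0$ and $\binom{\varphi(a)}{e}=0$ whenever $|e|\geq 3$. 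These observations are exactly what make every sum below collapse.

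For the first identity, taking $e=0$ in \eqref{eq:morphic_image_coeff} leaves only $\ell=1$, giving $\binom{\varphi(u)}{0}=\binom{u}{0}\binom{\varphi(0)}{0}+\binom{u}{1}\binom{\varphi(1)}{0}=|u|_0+|u|_1=|u|$. For $e=01$ there are two contributions. The $\ell=1$ term keeps $e$ as a single block, and since $\binom{\varphi(1)}{01}=0$ only $a_1=0$ survives, contributing $\binom{u}{0}=|u|_0$. The $\ell=2$ term uses the unique factorization $e=0\cdot 1$; as every atomic coefficient on single letters equals $1$, it equals $\sum_{a_1,a_2\in A}\binom{u}{a_1a_2}=\sum_{x\in A^2}\binom{u}{x}=\binom{|u|}{2}$ by the identity $\sum_{x\in A^\ell}\binom{u}{x}=\binom{|u|}{\ell}$. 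Summing yields $|u|_0+\binom{|u|}{2}$.

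The third identity $e=011$ is where the bookkeeping must be done carefully, and it is the only genuinely non-trivial point. The $\ell=1$ term vanishes because $\binom{\varphi(a)}{011}=0$. For $\ell=2$ there are two factorizations of $011$: the split $(0,11)$ dies since $\binom{\varphi(a)}{11}=0$, while the split $(01,1)$ forces $a_1=0$ (as $\binom{\varphi(1)}{01}=0$) and then sums $\binom{\varphi(a_2)}{1}=1$ over $a_2$, producing $\binom{u}{00}+\binom{u}{01}$; here I would use $\binom{u}{00}=\binom{|u|_0}{2}$. The single $\ell=3$ factorization $(0,1,1)$ again has all atomic factors equal to $1$, so it contributes $\sum_{x\in A^3}\binom{u}{x}=\binom{|u|}{3}$. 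Adding the surviving terms gives exactly $\binom{u}{01}+\binom{|u|_0}{2}+\binom{|u|}{3}$, as claimed. The main (and fairly modest) obstacle is simply to enumerate, for each target word $e$, all factorizations into nonempty blocks together with the admissible letter assignments $a_1\cdots a_\ell$, and to notice that all but one or two of them are killed by the vanishing atomic coefficients.
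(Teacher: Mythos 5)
Your proposal is correct and follows essentially the same route as the paper's proof: both apply the expansion \eqref{eq:morphic_image_coeff}, tabulate the atomic coefficients $\binom{\varphi(a)}{e_i}$ (noting that $\binom{\varphi(a)}{11}=\binom{\varphi(a)}{011}=0$ and $\binom{\varphi(a)}{b}=1$), and use $\sum_{x\in A^{\ell}}\binom{u}{x}=\binom{|u|}{\ell}$ together with $\binom{u}{00}=\binom{|u|_0}{2}$ to collapse the sums. You simply carry out all three cases explicitly, whereas the paper only details the $011$ case and leaves the rest as analogous.
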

\begin{proof}
For example, $\binom{\varphi(a)}{011} = 0 = \binom{\varphi(a)}{11}$ for both $a \in \{0,1\}$. Similarly $\binom{\varphi(a)}{b} = 1$ for letters $a,b\in \{0,1\}$. Therefore
\begin{align*}
\binom{\varphi(u)}{011} &=
	\sum_{x_1, x_2 \in A}\binom{u}{x_1x_2}\sum_{\substack{011 = e_1e_2\\ e_i \in A^+}}
		\binom{\varphi(x_1)}{e_1}\binom{\varphi(x_2)}{e_2}
	+\sum_{|x| = 3}\binom{u}{x}\\
	&= \binom{u}{00} + \binom{u}{01} + \binom{|u|}{3} .
\end{align*}
and the claim follows.
\end{proof}

The next technical lemma has an important role in studying the
$(k+1)$-binomial equivalence.

\begin{lemma}\label{lem:Michel's}
  Let $u,v$ be two binary words of equal length. For $k\ge 1$, we have
  $$\binom{\varphi^k(u)}{01^k}-\binom{\varphi^k(v)}{01^k}=2^{(k-1)(k-2)/2}(|u|_0-|v|_0).$$
  In particular, $u\not\sim_1 v$ implies $\varphi^k(u)\not\sim_{k+1}\varphi^k(v)$. Moreover, if $u\sim_1 v$,  for $k\ge 1$, we have
  $$\binom{\varphi^k(u)}{01^{k+1}}-\binom{\varphi^k(v)}{01^{k+1}}=2^{(k-1)(k-2)/2}\left(\binom{u}{01}-\binom{v}{01}\right).$$
  In particular, $u\not\sim_2 v$ implies $\varphi^k(u)\not\sim_{k+2}\varphi^k(v)$.
\end{lemma}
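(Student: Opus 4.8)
The plan is to expand both target coefficients through the morphic-image formula \eqref{eq:morphic_image_coeff} applied to $f=\varphi^k$, and then to isolate, in the difference, the very few terms that survive. Writing the target as $01^m$ (with $m=k$ or $m=k+1$), any factorization $01^m=e_1\cdots e_\ell$ into nonempty blocks must have $e_1=01^{s_1}$ ($s_1\ge 0$) and $e_i=1^{s_i}$ ($s_i\ge 1$) for $i\ge 2$, with $s_1+\cdots+s_\ell=m$, since the single $0$ sits at the front of $01^m$. The crucial structural point is that $\binom{\varphi^k(a)}{1^{s}}=\binom{|\varphi^k(a)|_1}{s}=\binom{2^{k-1}}{s}$ does not depend on the letter $a$, so in each term of the expansion only the first factor $\binom{\varphi^k(a_1)}{01^{s_1}}$ depends on a letter of the argument word.

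Next I would group the expansion by the first letter $a_1$. For fixed $a_1$, summing $\binom{u}{a_1a_2\cdots a_\ell}$ over $a_2,\ldots,a_\ell$ gives $P^{(a_1)}_\ell(u)$, the number of length-$\ell$ increasing subsequences of $u$ starting with $a_1$. These counts are position-dependent, and this is the main obstacle. The resolution is Ochsenschl\"ager's theorem (\cref{thm:Ochsenschlager}): since $\varphi^k(0)\sim_k\varphi^k(1)$, we have $\binom{\varphi^k(0)}{01^{s_1}}=\binom{\varphi^k(1)}{01^{s_1}}$ whenever $s_1\le k-1$. For such $s_1$ the two first-letter coefficients coincide, so the corresponding contribution to the difference is proportional to $P^{(0)}_\ell+P^{(1)}_\ell=\binom{|u|}{\ell}$, which is identical for $u$ and $v$ because $|u|=|v|$; hence it cancels. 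Thus only terms with $s_1\ge k$ can survive, and these force $\ell$ to be tiny, eliminating the position-dependence.

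For the first identity ($m=k$) the constraint $s_1\ge k$ forces $s_1=k$ and $\ell=1$, leaving exactly $\binom{\varphi^k(0)}{01^k}(|u|_0-|v|_0)+\binom{\varphi^k(1)}{01^k}(|u|_1-|v|_1)=\kappa_k(|u|_0-|v|_0)$, where $\kappa_k:=\binom{\varphi^k(0)}{01^k}-\binom{\varphi^k(1)}{01^k}$. I would compute $\kappa_k$ from $\varphi^k(0)=\varphi^{k-1}(0)\varphi^{k-1}(1)$ and $\varphi^k(1)=\varphi^{k-1}(1)\varphi^{k-1}(0)$: splitting $\binom{PQ}{01^k}=\sum_{01^k=w_1w_2}\binom{P}{w_1}\binom{Q}{w_2}$ and using both $\binom{\varphi^{k-1}(0)}{1^s}=\binom{\varphi^{k-1}(1)}{1^s}$ and $\varphi^{k-1}(0)\sim_{k-1}\varphi^{k-1}(1)$, all split terms vanish or cancel in pairs, leaving the recursion $\kappa_k=2^{k-2}\kappa_{k-1}$ with $\kappa_1=\binom{01}{01}-\binom{10}{01}=1$; hence $\kappa_k=2^{(k-1)(k-2)/2}$, which is exactly the claimed constant. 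The consequence $u\not\sim_1 v\Rightarrow\varphi^k(u)\not\sim_{k+1}\varphi^k(v)$ is then immediate, as $|01^k|=k+1$ and $\kappa_k>0$ (here $u\not\sim_1 v$ means $|u|_0\neq|v|_0$ for equal-length binary words).

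For the second identity ($m=k+1$, under $u\sim_1 v$) the constraint $s_1\ge k$ leaves two candidates: $(\ell,s_1)=(1,k+1)$ and $(\ell,(s_1,s_2))=(2,(k,1))$. The first contributes $\binom{\varphi^k(0)}{01^{k+1}}(|u|_0-|v|_0)+\binom{\varphi^k(1)}{01^{k+1}}(|u|_1-|v|_1)=0$ since $u\sim_1 v$. The second carries the letter-independent factor $\binom{2^{k-1}}{1}=2^{k-1}$; here $P^{(0)}_2(w)=\binom{w}{00}+\binom{w}{01}$ and $P^{(1)}_2(w)=\binom{w}{10}+\binom{w}{11}$, and because $\binom{\cdot}{00}$, $\binom{\cdot}{11}$ and $\binom{\cdot}{01}+\binom{\cdot}{10}$ are abelian invariants, $P^{(0)}_2(u)-P^{(0)}_2(v)=\binom{u}{01}-\binom{v}{01}$ while $P^{(1)}_2(u)-P^{(1)}_2(v)$ equals its negative. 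This reduces the surviving contribution to $2^{k-1}\kappa_k\bigl(\binom{u}{01}-\binom{v}{01}\bigr)$, that is, to $2^{k-1}\cdot 2^{(k-1)(k-2)/2}=2^{k(k-1)/2}$ times $\binom{u}{01}-\binom{v}{01}$; I would sanity-check this value at $k=2$ via $u=01$, $v=10$, where the left-hand side equals $4-2=2$ (note the extra factor $2^{k-1}$ coming from $\binom{\varphi^k(a_2)}{1}$). In either reading of the constant, $u\not\sim_2 v$ then splits into the case $u\not\sim_1 v$ (covered by the first formula) and the case $u\sim_1 v$ with $\binom{u}{01}\neq\binom{v}{01}$ (covered here), so that $\varphi^k(u)\not\sim_{k+2}\varphi^k(v)$ since $|01^{k+1}|=k+2$. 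The heart of the argument is that Ochsenschl\"ager's equivalence collapses precisely the awkward position-dependent terms, leaving a clean single-letter computation; \cref{lem:coefficients_of_images} supplies the low-order base cases as an independent check.
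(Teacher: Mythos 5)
Your proof is correct, and it takes a genuinely different route from the paper's. The paper's proof delegates the combinatorial core to the multiset machinery $f$ of Lejeune--Leroy--Rigo (their Remark~23, Definitions~15 and~17, and Proposition~28), reducing the difference of coefficients to a signed count of multiplicities $m_{f^k(01^k)}(0)-m_{f^k(01^k)}(1)$; your argument is instead self-contained within this paper, expanding $\binom{\varphi^k(\cdot)}{01^m}$ via \eqref{eq:morphic_image_coeff} and using Ochsenschl\"ager's theorem (\cref{thm:Ochsenschlager}) to kill every factorization with $s_1\le k-1$ (since those contributions reduce to multiples of $\binom{|u|}{\ell}$), leaving only $\ell=1$ for $m=k$ and $\ell\in\{1,2\}$ for $m=k+1$. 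The recursion $\kappa_k=2^{k-2}\kappa_{k-1}$ obtained by splitting $\varphi^k(a)=\varphi^{k-1}(a)\varphi^{k-1}(\overline{a})$ is a clean replacement for the citation of Proposition~28, and it buys transparency at the cost of a somewhat longer computation. Your treatment of the two ``in particular'' consequences (splitting $u\not\sim_2 v$ into $u\not\sim_1 v$ versus $u\sim_1 v$ with $\binom{u}{01}\neq\binom{v}{01}$, using that $\binom{\cdot}{00}$, $\binom{\cdot}{11}$ and $\binom{\cdot}{01}+\binom{\cdot}{10}$ are abelian invariants) matches what the lemma needs.

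One substantive remark: your sanity check at $k=2$ with $u=01$, $v=10$ is right, and it exposes that the constant in the second displayed identity of the lemma as printed, $2^{(k-1)(k-2)/2}$, does not match the computation --- the surviving $\ell=2$ term carries the extra letter-independent factor $\binom{\varphi^k(a_2)}{1}=2^{k-1}$, so the correct constant is $2^{k-1}\cdot 2^{(k-1)(k-2)/2}=2^{k(k-1)/2}$ (which agrees with $\binom{\varphi^2(01)}{0111}-\binom{\varphi^2(10)}{0111}=4-2=2$ and with the $k=1$ base case from \cref{lem:coefficients_of_images}). This discrepancy is harmless for every application in the paper, since only the nonvanishing of the constant is ever used, but you were right not to gloss over it.
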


\begin{proof}
  The case $k=1$ is deduced from \cref{lem:coefficients_of_images}. Then assume $k\ge 2$. We encourage the reader to refer to \cite{LejeuneLeroyRigo2020} for details that would be too long to reproduce here.  From \cite[Rem.~23]{LejeuneLeroyRigo2020}, we have the following expression 
  $$\binom{\varphi^k(u)}{01^k}-\binom{\varphi^k(v)}{01^k}=
  \sum_{x\in f^k(01^k)}m_{f^k(01^k)}(x)\left[\binom{u}{x}-\binom{v}{x}\right],$$
  where the map $f$ is defined to take into account the multiple ways factors $01$ or $10$ may occur in a word: $f(u)$ is a multiset of words of length shorter than $u$; see \cite[Def.~15 and 17]{LejeuneLeroyRigo2020}. We let the coefficient $m_{f^k(01^k)}(x)$ denote the multiplicity of $x$ as an element of the multiset $f^k(01^k)$. It can be shown that the multiset $f^k(01^k)$ only contains the elements $0$ and $1$. 
  Therefore we obtain
    $$\binom{\varphi^k(u)}{01^k}-\binom{\varphi^k(v)}{01^k}=m_{f^k(01^k)}(0)\, \left(|u|_0-|v|_0\right)+m_{f^k(01^k)}(1)\, \left(|u|_1-|v|_1\right).$$
 To conclude with the proof, we use two facts. The first is that $|u|_1-|v|_1=-(|u|_0-|v|_0)$ since $u,v$ have equal length.
 The second is that
  $$m_{f^k(01^k)}(0)-m_{f^k(01^k)}(1)=m_{f^{k-1}(01^k)}(01)-m_{f^{k-1}(01^k)}(10)=2^{(k-1)(k-2)/2},$$
  which follows from \cite[Prop.~28]{LejeuneLeroyRigo2020}. For the second part, the same reasoning may be applied to obtain
 $$\binom{\varphi^k(u)}{01^{k+1}}-\binom{\varphi^k(v)}{01^{k+1}}=
 \sum_{x\in f^k(01^{k+1})}m_{f^k(01^{k+1})}(x)\left[\binom{u}{x}-\binom{v}{x}\right].$$
 The multiset $f^k(01^{k+1})$ only contains $0,1,00,01,10,11$. But since it is assumed that $u\sim_1 v$, the only (potentially) non-zero terms
 in the sum correspond to $x\in\{01,10\}$. Then the observation
 $\binom{u}{01}-\binom{v}{01}=\binom{v}{10}-\binom{u}{10}$ following from \cref{lem:sum-constantPvect} suffices to conclude.
\end{proof}

Next we consider the structure of factors of the image of an arbitrary
binary word $\infw{y}$.

\begin{definition}
For $n\geq 1$ we let $\mathcal{S}(n) = \Fac_{n}(\infw{y})$. 
Further, for all $a,b\in \{\varepsilon,0,1\}$
such that $ab\neq \varepsilon$, we define
$\mathcal{S}_{a,b}(n) = \Fac_{n+|ab|}(\infw{y}) \cap a A^* b$.
We call these sets \emph{factorization classes of order $n$}.

Consider now a factor $u$ of
$\varphi(\infw{y})$.
We associate with $u$ some factorization classes as follows.
Let $a\varphi(u')b$ be the $\varphi$-factorization of $u$ with $\varphi$-ancestor $au'b \in \Fac(\infw{y})$.
If $ab = \varepsilon$, we associate the factorization class
$\mathcal{S}(|u'|)$. For $ab \neq \varepsilon$, we have that $u$ is a
factor of $\varphi(\overline{a} u' b)$. In this case we associate the
factorization class $\mathcal{S}_{\overline{a},b}(|u'|)$.
If $u$ is associated with a factorization class $\mathcal{T}$,
we write $u \assoc \mathcal{T}$, otherwise we write $u \not\models \mathcal{T}$.
\end{definition}

Observe that $u \assoc \mathcal{S}(n)$ implies that $|u| = 2n$.
Also, for $ab \neq \varepsilon$,
$u \assoc \mathcal{S}_{a,b}(n)$ implies that
$|u| = 2n+|ab|$.
Notice also that a factor $u$ of $\varphi(\infw{y})$ can be
associated with several factorization classes: take, e.g.,
$(10)^{\ell}1 = 1(01)^{\ell}$ which is associated with both
$\mathcal{S}_{\varepsilon,1}(\ell)$ and
$\mathcal{S}_{0,\varepsilon}(\ell)$, or $(01)^{\ell+1} = 0(10)^{\ell}1$ which is
associated with both $\mathcal{S}(\ell + 1)$ and $\mathcal{S}_{1,1}(\ell)$.

\begin{lemma}\label{lem:2-bin_same_fac-class}
For two $2$-binomially equivalent factors
$u,v \in \Fac(\varphi(\infw{y}))$, if $u \assoc \mathcal{T}$
for some factorization class $\mathcal{T}$, then $v \assoc \mathcal{T}$. Furthermore, a factor $u$ of $\infw{y}$ is associated with distinct factorization classes if
and only if $u \in L = (01)^* \cup (10)^* \cup 1(01)^* \cup 0(10)^*$.
\end{lemma}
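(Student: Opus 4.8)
I would prove the two assertions in the reverse of the stated order, since the \emph{furthermore} clause clarifies the bookkeeping used in the first part. Throughout I read ``factor $u$'' as ``factor of $\varphi(\infw{y})$'', as that is where the relation $\assoc$ is defined. Recall that every $\varphi$-factorization $(p,\varphi(z),s)$ of such a $u$ determines exactly one factorization class --- namely $\mathcal{S}(|z|)$ when $p=s=\varepsilon$ and $\mathcal{S}_{\overline a,b}(|z|)$ otherwise --- and conversely each class that $u$ is associated with comes from such a factorization. The key observation is that \emph{distinct} $\varphi$-factorizations of $u$ yield \emph{distinct} classes: by \cref{lem:unique_image} with $j=1$, if $u$ has two $\varphi$-factorizations then the two overhangs differ in length by $1$ on each side, so the interior lengths $|z|$ differ by $1$; since the class records $|z|$ as its index, the two classes cannot coincide. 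Hence the number of classes associated with $u$ equals its number of $\varphi$-factorizations, which by \cref{lem:unique_image} is at least $2$ exactly when $u$ is a factor of some $x\in L$; as $L$ is closed under taking factors, this is equivalent to $u\in L$. This proves the second assertion.

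For the first assertion, assume $u\sim_2 v$ and $u\assoc\mathcal{T}$. Choose any $\varphi$-factorizations of $u$ and of $v$, with respective overhang pairs $(p_u,s_u)$ and $(p_v,s_v)$. If these pairs coincide then, as $|u|=|v|$, the interior lengths agree and $v\assoc\mathcal{T}$, which is what we want. So the entire content reduces to the following claim: \emph{if two factors $u\sim_2 v$ of $\varphi(\infw{y})$ admit $\varphi$-factorizations with distinct overhang pairs, then $u=v$} (whence $u\in L$ and, trivially, $v\assoc\mathcal{T}$).

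To establish the claim I would first use $u\sim_1 v$ (a consequence of $u\sim_2 v$) to restrict the overhangs. Since $\sim_1$ on binary words is determined by length and weight, and the weight contributed by each overhang type is immediate from its shape, the two overhang pairs must have the same length-and-weight signature; equivalently, $(p_u,s_u)\equiv_1(p_v,s_v)$ by \cref{thm:k-image-prefix-suffix-relation} (with $j=k=1$) and \cref{def:type}. A direct inspection shows that the only ways two \emph{distinct} overhang types can share a signature are the even-length triple $\{(\varepsilon,\varepsilon),(0,1),(1,0)\}$ and the odd-length pairs $\{(0,\varepsilon),(\varepsilon,0)\}$ and $\{(1,\varepsilon),(\varepsilon,1)\}$. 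In each of these finitely many cases I would compute $\binom{u}{01}$ and $\binom{v}{01}$ using \cref{lem:coefficients_of_images} together with the additivity rule $\binom{XY}{01}=\binom{X}{01}+\binom{Y}{01}+|X|_0|Y|_1$, each expressed in terms of the number of $0$'s in the corresponding interior preimage. The resulting identity $\binom{u}{01}=\binom{v}{01}$, combined with the trivial range bounds on those counts, forces both interior preimages to be monochromatic (all $0$'s on one side, all $1$'s on the other). This in turn makes $u$ and $v$ equal to the \emph{unique} alternating word of the prescribed length and weight, so $u=v\in L$; for instance, the pair $(\varepsilon,\varepsilon)$ versus $(0,1)$ collapses to $u=\varphi(0^m)=(01)^m=0\,\varphi(1^{m-1})\,1=v$.

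The routine parts are the enumeration of overhang signatures and the length-$2$ binomial bookkeeping. The conceptual crux --- and the main obstacle --- is that $\sim_1$ genuinely fails to separate the classes inside each ambiguous signature (e.g.\ $\mathcal{S}(m)$ from $\mathcal{S}_{1,1}(m-1)$), so one \emph{must} descend to length-$2$ information; the payoff is that the single extra equation $\binom{u}{01}=\binom{v}{01}$ either pins down the overhang type or else collapses the word into $L$, which is precisely the regime identified in the \emph{furthermore} clause where a factor legitimately carries several classes. Recognizing that these two phenomena dovetail --- the only obstruction to class-invariance under $\sim_2$ lives on words of $L$, where the ``ambiguity'' is harmless since there $v=u$ carries all the same classes --- is what makes the statement go through.
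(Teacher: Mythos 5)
Your argument for the main assertion is essentially the paper's own: the paper likewise splits by parity, uses the weight (i.e.\ $\sim_1$) to cut the possible overhang combinations down to the even-length triple $(\varepsilon,\varepsilon),(0,1),(1,0)$ and the two odd-length pairs, and then compares $\binom{\cdot}{01}$ via \cref{lem:coefficients_of_images}, using the resulting range bounds to force either a contradiction or monochromatic interiors, hence $u=v\in L$. Your packaging of this as a single claim about distinct overhang pairs is slightly cleaner than the paper's case analysis by factorization class, but the computational core is identical, and the reduction is logically sound: if $u\neq v$, the claim forces every $\varphi$-factorization of $u$ and of $v$ to carry one common overhang pair, hence one common class, which must be $\mathcal{T}$.

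There is, however, one incorrect inference, in your proof of the \emph{furthermore} clause. From $\bigl||p|-|p'|\bigr|=\bigl||s|-|s'|\bigr|=1$ you conclude that the interior lengths differ by $1$; this fails when the two differences have opposite signs. The paper's own example $(10)^{\ell}1=1(01)^{\ell}$ is a counterexample: its two factorizations $\varepsilon\cdot\varphi(1^{\ell})\cdot 1$ and $1\cdot\varphi(0^{\ell})\cdot\varepsilon$ both have interior length $\ell$, and the associated classes $\mathcal{S}_{\varepsilon,1}(\ell)$ and $\mathcal{S}_{0,\varepsilon}(\ell)$ share the index $\ell$. The conclusion (distinct factorizations give distinct classes) is still true, but for a different reason: since $|p|\neq|p'|$, exactly one of the two left overhangs is empty, so either one class is of the form $\mathcal{S}(\cdot)$ and the other is not, or the two classes differ in their subscripts. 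With that one-line repair your argument is complete.
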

\begin{proof}
\textbf{Even-length factors.}
Let $u \sim_2 v$ with $|u| = 2n$. If
$u \assoc \mathcal{S}_{\overline{a},a}(n-1)$ with $a \in \{0,1\}$,
then $u$ is of the form $a\varphi(x)a$ with $|x| = n-1$, whence
$|u|_a = n+1$. Factors
$v' \not\assoc \mathcal{S}_{\overline{a},a}(n-1)$ of length $2n$
have $|v'|_a \leq n$ by inspection. Hence also
$v \assoc \mathcal{S}_{\overline{a},a}(n-1)$.
The above arguments
also show that $u$ is associated with exactly one factorization
class. For the latter claim, we note that $u$ has even length and begins and ends with the same letter, so it cannot appear in the language $L$.

Assume then that $u \not\assoc \mathcal{S}_{\overline{a},a}(n-1)$,
$a\in \{0,1\}$. Then
$v \not\assoc \mathcal{S}_{\overline{a},a}(n-1)$, $a\in \{0,1\}$
by the previous observation. Notice that we may assume
$n\geq 2$ as otherwise we have $|u| = 2$ and the claim is
trivial ($2$-binomial equivalence is equality in this case). We compare
the values of $\binom{y}{01}$ for $y$ associated with
$\mathcal{S}_{1,1}(n-1)$, $\mathcal{S}_{0,0}(n-1)$, and
$\mathcal{S}(n)$, respectively.

\begin{enumerate}[wide=15pt,widest=99,label=\textbf{Case \arabic*:},itemsep=3pt]

\item 
$y\assoc \mathcal{S}_{1,1}(n-1)$.
We have $\binom{y}{01} \geq \binom{n}{2}+n$, and equality holds for $y = (01)^n$.
Indeed, say $y = 0\varphi(x)1$ for some $x\in \{0,1\}^{n-1}$. Then
we have by \cref{lem:coefficients_of_images}
\begin{equation*}
\binom{y}{01} 
= \binom{\varphi(x)}{01} + |\varphi(x)|_0 + |\varphi(x)1|_{1} = |x|_0 + \binom{|x|}{2} + 2|x| + 1 = |x|_0 + \binom{n}{2} + n,
\end{equation*}
since $|x| = n-1$. Equality now holds when $|x|_0 = 0$, i.e., $x = 1^{n-1}$.

\item
$y \assoc \mathcal{S}_{0,0}(n-1)$.
We have $\binom{y}{01} \leq \binom{n}{2}$, and equality holds when $y = (10)^n$. Indeed, say $y = 1\varphi(x)0$ for some $x\in \{0,1\}^{n-1}$. Then
\begin{equation*}
\binom{y}{01} = \binom{\varphi(x)}{01} = |x|_0 + \binom{|x|}{2} = |x|_0 + \binom{n}{2} - (n-1).
\end{equation*}
Since $|x| = n-1$, we have $\binom{y}{01} \leq \binom{n}{2}$. Equality holds when $x = 0^{n-1}$.

\item
$y \assoc \mathcal{S}(n)$. 
We have $\binom{n}{2} \leq \binom{y}{01} \leq \binom{n}{2} + n$.
The former equality is attained with $y = (10)^n$ and the latter with $y = (01)^n$. Indeed, say
$y = \varphi(x')$ for some $x'\in \{0,1\}^n$. We have $\binom{y}{01} = \binom{n}{2} + |x'|_0$ from
\cref{lem:coefficients_of_images}. Therefore, $\binom{n}{2} \leq \binom{y}{01} \leq \binom{n}{2} + n$.
The former equality is attained with $x' = 1^n$ and the latter with $x'= 0^n$.
\end{enumerate}
We conclude that $u$ and $v$ are associated with a common factorization class. In fact, the latter claim is also implied from the above:
a word can be associated with two (and only two) factorization classes if and only if it appears in $L$.
This concludes the proof in the case of even length factors.

\textbf{Odd-length factors}.
Assume without loss of generality that
$u \assoc \mathcal{S}_{a,\varepsilon}(n)$ with
$u = a\varphi(u')$ of length $2n + 1$.
Recalling that $|\varphi(u')|_0 = |u'| = n$, if $u \sim_2 v$
with $u$ and $v$ associated with distinct factorization classes,
then necessarily $v \in \mathcal{S}_{\varepsilon,a}$, say
$v = \varphi(v')a$.
We show that this is impossible, unless $u = v \in L$.

Indeed, assuming that we have $2$-binomial equivalence, we have
\begin{equation}\label{eq:odd_length_01_ximage}
\binom{a\varphi(u')}{01} = \binom{\varphi(u')}{01} + \delta_0(a) \binom{\varphi(u')}{1}
= |u'|_0 + \binom{n}{2} + \delta_0(a) n
\end{equation}
which is equal to
\begin{equation}\label{eq:odd_length_01_image_x}
\binom{\varphi(v')a}{01} =  \binom{\varphi(v')}{01} + \delta_1(a)\binom{\varphi(v')}{0}
= |v'|_0 + \binom{n}{2} + \delta_1(a) n
\end{equation}
where $\delta_a(b) = 1$ if $a= b$, otherwise $\delta_a(b) = 0$.
Rearranging, we get $|u'|_0 - |v'|_0 = (\delta_1(a) - \delta_0(a))n \in \{ \pm n \}$. This implies, without loss of generality, that $u' = 0^n$, $v' = 1^n$, and $a = 1$.
But then $u = 1(01)^n = (10)^n1 = v \in L$, as claimed.
\end{proof}

The next result characterizes $(k+1)$-binomial equivalence in $\infw{x} = \varphi^k(\infw{y})$ when $\infw{y}$
is an arbitrary binary word.
\begin{proposition}\label{pro:k-image_k+1-prefix-suffix}
Let $u$ and $v$ be factors of length at least $2^k-1$ of $\infw{x}=\varphi^k(\infw{y})$ with the $\varphi^k$-factorizations
$u = p_1 \varphi^k(z) s_1$ and $v = p_2 \varphi^k(z') s_2$.
Then $u \sim_{k+1} v$ and $u\neq v$ if and only if $z\sim_1 z'$, $z'\neq z$, and $(p_1,s_1) = (p_2,s_2)$.
\end{proposition}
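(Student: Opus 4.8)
The plan is to prove the two implications separately. The backward direction is direct: assume $z\sim_1 z'$, $z\neq z'$, and $(p_1,s_1)=(p_2,s_2)=:(p,s)$. Since $\varphi^k$ is injective, $z\neq z'$ gives $u=p\varphi^k(z)s\neq p\varphi^k(z')s=v$, and by the cancellation property (\cref{lem: cancellation property}) it suffices to prove $\varphi^k(z)\sim_{k+1}\varphi^k(z')$. As $\varphi^k(a)\sim_k\varphi^k(b)$ for all $a,b\in\{0,1\}$ by \cref{thm:Ochsenschlager}, the result \cite[Lem.~5]{ManvelMeyerowitz} applies with $h=k$, turning $z\sim_1 z'$ into $\varphi^k(z)\sim_{k+1}\varphi^k(z')$.

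For the forward direction, assume $u\sim_{k+1}v$ and $u\neq v$; then $u\sim_k v$ and $|u|=|v|$, so \cref{thm:k-image-prefix-suffix-relation} (with $j=k$) gives $(p_1,s_1)\equiv_k(p_2,s_2)$. The heart of the proof is to upgrade this to the equality $(p_1,s_1)=(p_2,s_2)$. Once this is known, the remaining conclusions follow immediately: cancellation reduces $u\sim_{k+1}v$ to $\varphi^k(z)\sim_{k+1}\varphi^k(z')$, whence $z\sim_1 z'$ by the implication $z\not\sim_1 z'\Rightarrow\varphi^k(z)\not\sim_{k+1}\varphi^k(z')$ of \cref{lem:Michel's}, and $z\neq z'$ because $z=z'$ would force $u=v$.

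To obtain the equality, I rule out every branch of $\equiv_k$ (see \cref{def:type}) except $(p_1,s_1)=(p_2,s_2)$. In each such branch the two boundary pairs differ by a shift involving a block $\varphi^{k-1}(a)$; writing out $u$ and $v$ and cancelling their common prefix and suffix where present (\cref{lem: cancellation property}) rewrites $u\sim_{k+1}v$ as $\varphi^{k-1}(P)\sim_{k+1}\varphi^{k-1}(Q)$ for equal-length binary words $P,Q$. For instance, the branch $p_1=p_2\varphi^{k-1}(a)$, $s_2=\varphi^{k-1}(a)s_1$ yields $P=a\varphi(z)$ and $Q=\varphi(z')a$; the branch $(p_1,s_1)=(\varphi^{k-1}(a),\varphi^{k-1}(\overline{a}))$, $(p_2,s_2)=(\varphi^{k-1}(\overline{a}),\varphi^{k-1}(a))$ yields $P=a\varphi(z)\overline{a}$, $Q=\overline{a}\varphi(z')a$; and the two length-changing branches yield $P=a\varphi(z)\overline{a}$, $Q=\varphi(z')$. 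Because $\varphi^{k-1}$ is injective, $P=Q$ is equivalent to $u=v$, so $u\neq v$ forces $P\neq Q$. In every case $P$ and $Q$ lie in distinct factorization classes (such as $\mathcal{S}_{a,\varepsilon}$ versus $\mathcal{S}_{\varepsilon,a}$, or $\mathcal{S}(\cdot)$ versus a class $aA^*\overline{a}$), so the $\binom{\cdot}{01}$-computations from the proof of \cref{lem:2-bin_same_fac-class} give $P\not\sim_2 Q$; the implication $P\not\sim_2 Q\Rightarrow\varphi^{k-1}(P)\not\sim_{k+1}\varphi^{k-1}(Q)$ of \cref{lem:Michel's} (valid for $k\ge 2$; for $k=1$ the reduced relation is literally $P\sim_2 Q$) then contradicts $u\sim_{k+1}v$. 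Hence only the branch $(p_1,s_1)=(p_2,s_2)$ survives.

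The main obstacle is the bookkeeping in the previous step: for each branch of \cref{def:type} one must verify that the letter $a$ appearing there coincides with the appropriate boundary letter of the $\varphi^k$-ancestors of $u$ and $v$, so that $P$ and $Q$ really have the indicated shapes, belong to distinct factorization classes, and share the same length (the length constraints built into \cref{def:type} are precisely what guarantee $|P|=|Q|$, as required for the application of \cref{lem:Michel's}). Once this alignment is checked, the uniform chain $\varphi^{k-1}(P)\sim_{k+1}\varphi^{k-1}(Q)\Rightarrow P\sim_2 Q\Rightarrow P=Q\Rightarrow u=v$ closes every case and completes the proof.
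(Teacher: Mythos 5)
Your proposal is correct and follows essentially the same route as the paper's proof: reduce to $(p_1,s_1)\equiv_k(p_2,s_2)$ via \cref{thm:k-image-prefix-suffix-relation}, eliminate every branch of \cref{def:type} other than equality by cancelling common boundaries and combining \cref{lem:2-bin_same_fac-class} with \cref{lem:Michel's}, and handle the surviving branch with \cref{lem:Michel's} again. The only (immaterial) deviation is in the ``if''-direction, where you invoke Ochsenschl\"ager's theorem together with the Manvel--Meyerowitz lemma in place of the paper's repeated application of \cref{prop:Parikh-collinear-k-characterization}; both give $\varphi^k(z)\sim_{k+1}\varphi^k(z')$ in one line.
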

Notice that the proposition claims that those factors of $\infw{x}$ having at least two $\varphi^k$-factorizations are $(k+1)$-binomially equivalent only to themselves (in
$\Fac(\infw{x})$).
\begin{proof}
The ``if''-part of the statement follows by a repeated application
of \cref{prop:Parikh-collinear-k-characterization} on the Thue--Morse morphism together with the fact that the morphism is injective.

Let us assume that $u\sim_{k+1} v$ for some distinct factors. It follows that $u \sim_{k} v$, which implies
that $(p_1,s_1) \equiv_k (p_2,s_2)$ by \cref{thm:k-image-prefix-suffix-relation}. Next we show that $(p_1,s_1) = (p_2,s_2)$ and
$z \sim_1 z'$. We have the following case distinction from
\cref{def:type}:

\eqref{it:same_sum}(a): We have that $(p_1,s_1) = (p_2,s_2)$.
By deleting the common prefix $p_1$ and suffix $s_1$,
we are left with the equivalent statement
$\varphi^{k}(z) \sim_{k+1} \varphi^{k}(z')$. If $z \not \sim_1 z'$, then we have a contradiction with \cref{lem:Michel's}.
The desired result follows in this case.

In the remaining cases, we assume towards a contradiction that $(p_1,s_1) \neq (p_2,s_2)$.

\eqref{it:same_sum}(b): Suppose that
$(p_1,s_2) = (p_2\varphi^{k-1}(a),\varphi^{k-1}(a)s_1)$.
Deleting the common prefixes $p_2$ and suffixes $s_1$, we are left with
$\varphi^{k-1}(a\varphi(z)) \sim_{k+1} \varphi^{k-1}(\varphi(z') a)$.
Now $a\varphi(z) \sim_1 \varphi(z')a$, but $a\varphi(z) \not\sim_2 \varphi(z')a$ by \cref{lem:2-bin_same_fac-class} (otherwise
$a\varphi(z) = \varphi(z')a$ and thus $u = v$ contrary to the assumption). \cref{lem:Michel's}
then implies that $\varphi^{k-1}(a\varphi(z)) \not\sim_{k+1} \varphi^{k-1}(\varphi(z')a)$, which is a contradiction.

\eqref{it:same_sum}(c): Suppose that 
$(p_2, \varphi^{k-1}(a) s_2) = (p_1 \varphi^{k-1}(a), s_1)$. This is symmetric to the previous case.

\eqref{it:same_sum}(d): Suppose that
$(p_1,s_1) = (s_2,p_2) = (\varphi^{k-1}(a),\varphi^{k-1}(\overline{a}))$. We thus have directly
\[
\varphi^{k-1}(a\varphi(z)\overline{a}) \sim_{k+1} \varphi^{k-1}(\overline{a}\varphi(z)a).
\]
The claim follows by an argument similar to that of in Case \eqref{it:same_sum}(b).

\eqref{it:not_same_sum}(a): Suppose that
$(p_1,s_1) = (p_2\varphi^{k-1}(a),\varphi^{k-1}(\bar{a})s_2)$. After removing common prefixes and suffixes,
we are left with
$ \varphi^{k-1}(a\varphi(z)\overline{a})  \sim_{k+1} \varphi^{k-1}(\varphi(z'))$. We have that
$a\varphi(z)\overline{a} \sim_1 \varphi(z')$, but by \cref{lem:2-bin_same_fac-class}
$a\varphi(z)\overline{a} \not\sim_2 \varphi(z')$ (otherwise $z = \overline{a}^{\ell}$ and $z' = a^{\ell + 1}$, implying that $u=v$, a contradiction). This is again a contradiction by \cref{lem:Michel's}.

\eqref{it:not_same_sum}(b): Suppose that
$(p_2,s_2) = (p_1\varphi^{j-1}(a),\varphi^{j-1}(\bar{a})s_1)$.
This is symmetric to the previous case.
\end{proof}

Notice that \cref{thm:k-image-j-complexities} and \cref{pro:k-image_k+1-prefix-suffix} have the following corollary:
\begin{corollary}\label{cor:k-image_k+1-prec}
Let $\infw{x} = \varphi^k(\infw{y})$, where $\infw{y}$ is an arbitrary
aperiodic binary word. We have
\[ \bc{\infw{x}}{1} \prec \bc{\infw{x}}{2} \prec \cdots \prec \bc{\infw{x}}{k} \prec \bc{\infw{x}}{k+1}. \]
\end{corollary}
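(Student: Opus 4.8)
The plan is to establish each of the $k$ strict inequalities $\bc{\infw{x}}{j} \prec \bc{\infw{x}}{j+1}$ for $j \in \{1,\ldots,k\}$ by exhibiting, for infinitely many lengths $n$, two factors of $\infw{x}$ of length $n$ that are $j$-binomially equivalent but not $(j+1)$-binomially equivalent. Recall that $\prec$ requires $\bc{\infw{x}}{j}(n) < \bc{\infw{x}}{j+1}(n)$ for infinitely many $n$; since the chain of inequalities $\bc{\infw{x}}{j}(n) \le \bc{\infw{x}}{j+1}(n)$ always holds by \eqref{eq:ineq}, it suffices to show that for infinitely many $n$ the relation $\sim_{j+1}$ properly refines $\sim_j$ on $\Fac_n(\infw{x})$, i.e.\ some $\sim_j$-class of length-$n$ factors splits into at least two $\sim_{j+1}$-classes.

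First I would handle the final inequality $\bc{\infw{x}}{k} \prec \bc{\infw{x}}{k+1}$, which is exactly the content announced just before \cref{cor:k-image_k+1-prec} as a consequence of \cref{pro:k-image_k+1-prefix-suffix}. The key observation is that \cref{pro:k-image_k+1-prefix-suffix} characterizes $(k+1)$-binomial equivalence among factors of $\infw{x}$ with a fixed $\varphi^k$-factorization shape $(p_1,s_1)=(p_2,s_2)$ by the condition $z \sim_1 z'$. Since $\infw{y}$ is aperiodic, its abelian complexity $\bc{\infw{y}}{1}$ is unbounded is not needed; we only need that there exist, for infinitely many lengths, two distinct abelian-equivalent factors $z,z'$ of $\infw{y}$ (e.g.\ $z=01w$, $z'=10w$). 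For such $z,z'$ the factors $\varphi^k(z)$ and $\varphi^k(z')$ satisfy $\varphi^k(z)\sim_k\varphi^k(z')$ by repeated use of \cref{prop:Parikh-collinear-k-characterization} (the Thue--Morse morphism is Parikh-constant, hence Parikh-collinear), yet $\varphi^k(z)\not\sim_{k+1}\varphi^k(z')$ by \cref{pro:k-image_k+1-prefix-suffix}. This gives, at infinitely many lengths $n$, a $\sim_k$-class splitting under $\sim_{k+1}$, establishing $\bc{\infw{x}}{k}\prec\bc{\infw{x}}{k+1}$.

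Next I would treat the inequalities $\bc{\infw{x}}{j}\prec\bc{\infw{x}}{j+1}$ for $j<k$. Here the clean approach is to invoke \cref{thm:k-image-j-complexities}, which tells us that $\infw{x}$ has property $\tmprop(k)$, so that $\bc{\infw{x}}{j}=\bc{\infw{t}}{j}$ for every $j\le k$, with the exact values given by \eqref{eq:k-image-j-complexities}. Reading off \eqref{eq:k-image-j-complexities}, for $j<k$ and large $n$ the function $\bc{\infw{t}}{j}$ takes the constant value $3\cdot 2^j-4$ except when $n\equiv 0 \pmod{2^j}$, where it equals $3\cdot 2^j-3$. Thus for any $n\ge 2^{j+1}$ with $n\equiv 0\pmod{2^{j+1}}$ (hence also $n\equiv0\pmod{2^j}$) we have $\bc{\infw{x}}{j}(n)=3\cdot 2^j-3$ while $\bc{\infw{x}}{j+1}(n)=3\cdot 2^{j+1}-3$, and since $3\cdot 2^j-3 < 3\cdot 2^{j+1}-3$, the strict inequality $\bc{\infw{x}}{j}(n)<\bc{\infw{x}}{j+1}(n)$ holds. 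As this happens for all $n$ in the infinite arithmetic progression $2^{j+1}\N$, we obtain $\bc{\infw{x}}{j}\prec\bc{\infw{x}}{j+1}$ for each $j\in\{1,\ldots,k-1\}$.

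Assembling the two parts yields the full chain $\bc{\infw{x}}{1}\prec\bc{\infw{x}}{2}\prec\cdots\prec\bc{\infw{x}}{k}\prec\bc{\infw{x}}{k+1}$, as claimed. The main subtlety I anticipate is purely bookkeeping: I must make sure that the two factors $z,z'$ used for the last inequality genuinely give \emph{distinct} $(k+1)$-classes at the relevant lengths and that their $\varphi^k$-images occur as factors of $\infw{x}$ at the right lengths (both do, since $z,z'\in\Fac(\infw{y})$ implies $\varphi^k(z),\varphi^k(z')\in\Fac(\infw{x})$), and that the length constraint $|u|\ge 2^k-1$ of \cref{pro:k-image_k+1-prefix-suffix} is met by taking $z,z'$ long enough. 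For the inequalities with $j<k$ everything reduces to comparing the explicit formulas in \eqref{eq:k-image-j-complexities}, which is immediate; the only care required is to pick $n$ divisible by $2^{j+1}$ so that both complexities are evaluated on the ``$\equiv 0$'' branch relative to their respective moduli.
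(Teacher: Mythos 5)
Your handling of the inequalities $\bc{\infw{x}}{j}\prec\bc{\infw{x}}{j+1}$ for $j<k$ is correct and is exactly what the paper does (it dismisses them with ``the rest follows by \cref{thm:k-image-j-complexities}''). The problem is the final inequality $\bc{\infw{x}}{k}\prec\bc{\infw{x}}{k+1}$, where you have read \cref{pro:k-image_k+1-prefix-suffix} backwards. That proposition is an ``if and only if'': for distinct factors $u=p_1\varphi^k(z)s_1$ and $v=p_2\varphi^k(z')s_2$, the conditions $z\sim_1 z'$, $z\neq z'$ and $(p_1,s_1)=(p_2,s_2)$ are \emph{equivalent} to $u\sim_{k+1}v$. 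So your witnesses $\varphi^k(z)$ and $\varphi^k(z')$ with $z\sim_1 z'$, $z\neq z'$ (both with $(p,s)=(\varepsilon,\varepsilon)$) are precisely in the situation where the proposition asserts $\varphi^k(z)\sim_{k+1}\varphi^k(z')$ --- indeed, $z\sim_1 z'$ forces $\varphi^k(z)\sim_{k+1}\varphi^k(z')$ by $k$ applications of \cref{prop:Parikh-collinear-k-characterization}, each raising the equivalence level by one. Your pair therefore does not split a $\sim_k$-class and proves nothing. (The subsidiary claim that $01w$ and $10w$ are both factors of $\infw{y}$ is also unjustified, but that is moot given the main error.)

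The paper's proof instead produces witnesses with \emph{different} $(p,s)$-data: by \cref{lem:boundaries}, $\infw{y}$ contains arbitrarily long factors $\overline{a}za$, which yield in $\infw{x}$ the factors $\varphi^{k-1}(a)\varphi^k(z)$ and $\varphi^k(z)\varphi^{k-1}(a)$; these are $k$-binomially equivalent by the Transfer Lemma (\cref{lem:transfer}) but, having unequal prefix--suffix pairs, cannot be $(k+1)$-binomially equivalent by \cref{pro:k-image_k+1-prefix-suffix} unless they coincide (which happens only for $z=\varepsilon$). Your scheme can alternatively be repaired by flipping the condition on the ancestors: take equal-length factors $z,z'$ of $\infw{y}$ with $z\not\sim_1 z'$ (these exist at every length $n\geq 1$ since $\infw{y}$ is aperiodic, hence $\bc{\infw{y}}{1}(n)\geq 2$). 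Then $\varphi^k(z)\sim_k\varphi^k(z')$ by \cref{thm:Ochsenschlager} and the congruence property of $\sim_k$, while $\varphi^k(z)\not\sim_{k+1}\varphi^k(z')$ by \cref{lem:Michel's} (or by the ``only if'' direction of \cref{pro:k-image_k+1-prefix-suffix}). Either fix yields the missing inequality; as written, the argument is invalid.
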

\begin{proof}
Recall that $\infw{y}$ contains arbitrarily long factors of the
form $\overline{a}za$, $a\in \{0,1\}$ \cref{lem:boundaries}. Therefore $\infw{x}$ contains
the $k$-binomially equivalent (by \cref{lem:transfer}) factors
$\varphi^{k-1}(a)\varphi^k(z)$ and
$\varphi^k(z)\varphi^{k-1}(a)$. However, by \cref{pro:k-image_k+1-prefix-suffix} these factors are either not
$(k+1)$-binomially equivalent, or
$\varphi^{k-1}(a)\varphi^k(z) = \varphi^k(z)\varphi^{k-1}(a)$.
The latter happens when $\varphi^k(z) = \varphi^{k-1}(a)^{\ell}$ for some $\ell \geq 0$, and thus only when $\ell = 0$ and $z = \varepsilon$.
(Indeed, it is not hard to prove that if $w$ is primitive so is $\varphi(w)$.)
This observation suffices for showing
$\bc{\infw{x}}{k} \prec \bc{\infw{x}}{k+1}$. The rest of the claim follows by \cref{thm:k-image-j-complexities}.
\end{proof}

\section{Binomial Properties of the Thue--Morse Morphism, Part II}
\label{sec:propsTMII}
In this section we consider a complementary result to \cref{thm:k-image-j-complexities}, which partially extends the following
theorem of Richomme, Saari, and Zamboni
\cite{RichommeSZ2011Abelian}.
\begin{theorem}[{\cite[Thm~3.3]{RichommeSZ2011Abelian}}]
\label{thm:richommeTM}
Let $\infw{x}$ be an aperiodic word.
Then $\bc{\infw{x}}{1} = \bc{\infw{t}}{1}$ if and only if there
exists a binary word $\infw{y}$ and $a \in \{\varepsilon,0,1\}$
such that $\infw{x} = a\varphi(\infw{y})$.
\end{theorem}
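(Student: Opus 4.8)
The plan is to recode everything through the \emph{discrepancy walk} of $\infw{x}$. Writing $\infw{x}=x_0x_1\cdots$ over $\{0,1\}$ (the hypothesis $\bc{\infw{x}}{1}(1)=2$ already forces $\infw{x}$ binary in the ``only if'' direction), set $h(i)=|x_0\cdots x_{i-1}|_1$ and $\delta(i)=2h(i)-i$, so that $\delta(0)=0$, $\delta(i+1)-\delta(i)=2x_i-1\in\{-1,+1\}$, and $\delta(i)\equiv i\pmod 2$. Every length-$n$ factor has weight $(n+\delta(i+n)-\delta(i))/2$, so by \cref{rk:abelian_complexity} we have $\bc{\infw{x}}{1}(n)=1+\tfrac12(a_n-b_n)$, where $a_n=\max_i(\delta(i+n)-\delta(i))$ and $b_n=\min_i(\delta(i+n)-\delta(i))$. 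With this dictionary I would prove the chain of equivalences: the complexity condition $\iff$ the walk has total range $\sup_i\delta(i)-\inf_i\delta(i)\le 2$ $\iff$ $\infw{x}=a\varphi(\infw{y})$.

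The structural half is a parity observation. If $\delta$ takes values in three consecutive integers $\{c-1,c,c+1\}$, then, since $\delta$ has the parity of its argument, one parity class of indices is forced to be constant: if $c$ is even then $\delta\equiv 0$ on even indices (as $\delta(0)=0$), so each block $x_{2i}x_{2i+1}$ carries one $0$ and one $1$, i.e. $\infw{x}=\varphi(\infw{y})$; if $c$ is odd then $\delta$ is constant on odd indices, and applying the same to the shift $x_1x_2\cdots$ gives $\infw{x}=x_0\varphi(\infw{y})$. Conversely $\infw{x}=a\varphi(\infw{y})$ makes $\delta$ constant on a parity class, hence of range $\le 2$. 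For the easy direction range $\le 2\Rightarrow$ condition, I would compute $a_n,b_n$ directly from the two-valued walk, using aperiodicity to guarantee both values $\pm 1$ occur on the non-constant parity class, plus a short argument — if $\infw{y}$ never has $y_\ell=0$ together with $y_{\ell+d}=1$ then $\infw{y}$ is ultimately periodic with period $d$ — to see that the extreme differences are realized at \emph{every} length, yielding $a_n-b_n=4$ for even $n\ge 2$ and $2$ for odd $n$.

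The main obstacle is the remaining implication, condition $\Rightarrow$ range $\le 2$, and here is the key idea. Both $M_n:=\max$ and $m_n:=\min$ of the weights of length-$n$ factors are non-decreasing with increments in $\{0,1\}$ (extend a factor by one letter). The hypothesis says the spread $M_n-m_n$ runs through $0,1,2,1,2,1,2,\dots$, and at each step the spread increment equals $(\Delta M_n)-(\Delta m_n)$ with each term in $\{0,1\}$; this determines both increments uniquely (a spread rise forces $(1,0)$, a spread drop forces $(0,1)$). Solving from $M_0=m_0=0$ pins down $M_n=\lfloor n/2\rfloor+1$ and $m_n=\lceil n/2\rceil-1$ for all $n\ge 1$. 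Translating back, $a_n=2M_n-n\le 2$ and $b_n=2m_n-n\ge -2$ for every $n$; moreover the length-$i$ prefix has weight in $[m_i,M_i]$, so $\delta(i)\in[-2,2]$ for all $i$ and $\delta$ attains a global maximum at some $q$ and minimum at some $p$. Whichever of $q,p$ is larger supplies a base index and a length to which $a_\bullet\le 2$ or $b_\bullet\ge -2$ applies at these two positions, giving $\sup_i\delta(i)-\inf_i\delta(i)\le 2$ and closing the loop.

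I expect the pinning-down of $M_n$ and $m_n$ by this forced-increment recursion to be the crux: it is what converts the purely quantitative alternation $2,3,2,3,\dots$ into a rigid bound on the walk, and it relies essentially on the condition holding at \emph{all} lengths simultaneously. Aperiodicity enters only to exclude the degenerate constant walks (which would render $\infw{x}$ ultimately periodic) and to supply the genuine occurrences needed in the easy direction.
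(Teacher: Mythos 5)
Your argument is correct, but there is nothing in the paper to compare it against: \cref{thm:richommeTM} is imported verbatim from Richomme--Saari--Zamboni \cite[Thm.~3.3]{RichommeSZ2011Abelian} and the paper gives no proof of it. So your proposal is necessarily an independent route, and I have checked it as such.

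The reduction to the discrepancy walk $\delta(i)=2|x_0\cdots x_{i-1}|_1-i$ is sound: \cref{rk:abelian_complexity} gives $\bc{\infw{x}}{1}(n)=1+(M_n-m_n)$ with $M_n,m_n$ the extreme weights, and the translation $a_n=2M_n-n$, $b_n=2m_n-n$ is exact. The parity argument for ``range $\le 2$ $\iff$ $\infw{x}=a\varphi(\infw{y})$'' works (the one value of $\{c-1,c,c+1\}$ whose parity is that of $c$ pins $\delta$ on one residue class, and $\delta(0)=0$ decides whether $a=\varepsilon$), and it degrades gracefully when $\delta$ takes only two values. The crux, as you say, is the forced-increment recursion: both $M_n$ and $m_n$ are non-decreasing with steps in $\{0,1\}$, so the prescribed spread sequence $0,1,2,1,2,\dots$ determines $(\Delta M_n,\Delta m_n)$ at every step, yielding $M_n=\lfloor n/2\rfloor+1$, $m_n=\lceil n/2\rceil-1$, hence $|a_n|,|b_n|\le 2$ and $\delta(i)\in[-2,2]$; the max/min comparison then bounds the global range by $2$. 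This is correct and genuinely uses the hypothesis at all lengths simultaneously. The converse direction is also fine: $\infw{y}$ is aperiodic (else $\infw{x}$ would be), so every suffix of $\infw{y}$ contains both letters (giving both differences $\pm1$ at odd lengths), and your observation that ``$y_\ell=0\Rightarrow y_{\ell+d}=0$ for all $\ell$'' forces ultimate $d$-periodicity supplies both $\pm2$ at even lengths. Two details worth writing out in a final version: the case $a\in\{0,1\}$ in the converse (the walk is then constant $2a-1$ on odd indices and the same computation applies), and the harmless normalization that $\bc{\infw{x}}{1}(1)=2$ only fixes the alphabet up to renaming. Neither is a gap. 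Compared with the original source (whose proof analyzes positions of the factors $00$ and $11$ and their parities), your balance-theoretic route is arguably more mechanical and would generalize to other prescribed abelian-complexity profiles with $|\Delta\bc{}{1}|=1$ at every step.
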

Notice that \cref{thm:k-image-j-complexities} is a generalization
of the ``if''-direction. We give a partial generalization in the other direction in \cref{sec:converse}, namely:
\begin{theorem}\label{thm:k-complexities-k-image}
Let $\infw{x}$ be a recurrent binary word having property
$\tmprop(k)$ for some $k\geq 1$. Then there exists a binary
word $\infw{y}$ such that $\infw{x} = u\varphi^k(\infw{y})$,
where $u$ is a proper suffix of $\varphi^k(0)$ or
$\varphi^k(1)$.
\end{theorem}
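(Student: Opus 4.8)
The plan is to induct on $k$. I first record that property $\tmprop(k)$ (for $k\ge 1$) entails $\bc{\infw{x}}{1}=\bc{\infw{t}}{1}$, and by \eqref{eq:k-image-j-complexities} this value is $2$ or $3$ for every $n\ge 1$, never $1$. By the Coven--Hedlund characterization of pure periodicity through abelian complexity \cite{CovenH1973sequences}, $\infw{x}$ is not purely periodic; since a recurrent ultimately periodic word is purely periodic, $\infw{x}$ is aperiodic. For the base case $k=1$, property $\tmprop(1)$ is exactly $\bc{\infw{x}}{1}=\bc{\infw{t}}{1}$, so \cref{thm:richommeTM} applies and yields $\infw{x}=a\varphi(\infw{y})$ with $a\in\{\varepsilon,0,1\}$. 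As $\{\varepsilon,0,1\}$ is precisely the set of proper suffixes of $\varphi(0)=01$ and $\varphi(1)=10$, the case $k=1$ is settled. It is convenient to rephrase the conclusion as: \emph{$\infw{x}$ is a suffix of $\varphi^{k}(\infw{w})$ whose deleted prefix is a proper prefix of the first block $\varphi^{k}(w_0)$}, where $w_0$ is the first letter of $\infw{w}$; absorbing the letter $a$ into a leading block shows this holds for $k=1$.

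For the inductive step, assume the statement for $k-1\ge 1$ and let $\infw{x}$ be recurrent with $\tmprop(k)$, $k\ge 2$. The base case (valid since $\tmprop(k)$ implies $\tmprop(1)$) writes $\infw{x}$ as such a proper-prefix-suffix of $\varphi(\infw{y}_1)$ for some binary word $\infw{y}_1$. Since $\infw{x}$ is recurrent, this suffix shares its factor set, so $\Fac(\varphi(\infw{y}_1))=\Fac(\infw{x})$ and $\varphi(\infw{y}_1)$ also has $\tmprop(k)$. I next claim $\infw{y}_1$ is recurrent and aperiodic. Aperiodicity transfers through injectivity of $\varphi$. For recurrence I use that the Thue--Morse morphism is recognizable (being primitive and aperiodic): every occurrence of $\varphi(w)$ in $\varphi(\infw{y}_1)$ is block-aligned once $w$ is long enough, so the infinitely many occurrences of $\varphi(w)$ guaranteed by recurrence of $\infw{x}$ force $w$ to recur in $\infw{y}_1$.

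The crux, and the step I expect to be the main obstacle, is to show that $\infw{y}_1$ has property $\tmprop(k-1)$, i.e.\ $\bc{\infw{y}_1}{j}=\bc{\infw{t}}{j}$ for $1\le j\le k-1$. The engine is the relationship, for a recurrent aperiodic binary word $\infw{z}$ and $j\ge 1$, between $\bc{\varphi(\infw{z})}{j+1}$ and $\bc{\infw{z}}{j}$. Because $\varphi$ is Parikh-collinear, \cref{prop:Parikh-collinear-k-characterization} shows $w\sim_j w'\Rightarrow\varphi(w)\sim_{j+1}\varphi(w')$, so $\varphi$ descends to a surjection from the $\sim_j$-classes of $\Fac_{m}(\infw{z})$ onto the $\sim_{j+1}$-classes of block-aligned factors in $\Fac_{2m}(\varphi(\infw{z}))$. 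The injectivity of this correspondence---that $\varphi(w)\sim_{j+1}\varphi(w')$ forces $w\sim_j w'$ for equal-length $w,w'$---is the delicate point, supplied by \cref{lem:Michel's} together with the equivalence characterizations \cref{thm:k-image-prefix-suffix-relation} and \cref{pro:k-image_k+1-prefix-suffix}; the remaining, non-block-aligned factors of length $2m$ (those carrying a nonempty boundary from the $\varphi$-factorization) contribute a count that, by \cref{lem:boundaries}, depends only on $m$ and not on the particular word $\infw{z}$. Hence $\bc{\varphi(\infw{z})}{j+1}(2m)$ is a universal function of $\bc{\infw{z}}{j}(m)$ and $m$. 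Applying this both to $\infw{z}=\infw{y}_1$ and to $\infw{z}=\infw{t}$ (using $\infw{t}=\varphi(\infw{t})$), the hypothesis $\bc{\varphi(\infw{y}_1)}{j+1}=\bc{\infw{t}}{j+1}=\bc{\varphi(\infw{t})}{j+1}$ forces $\bc{\infw{y}_1}{j}=\bc{\infw{t}}{j}$ for all $j\le k-1$. Establishing the injectivity at every level and the universality of the boundary count is where the detailed factorization machinery (\cref{lem:unique_image}) must be invoked, and it is the genuine difficulty of the proof.

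With recurrence and $\tmprop(k-1)$ in hand, the induction hypothesis gives that $\infw{y}_1$ is a suffix of $\varphi^{k-1}(\infw{w}')$ whose deleted prefix $q$ is a proper prefix of the first block $\varphi^{k-1}(c')$, so $|q|\le 2^{k-1}-1$. Applying $\varphi$, the word $\varphi(\infw{y}_1)$ is a suffix of $\varphi^{k}(\infw{w}')$ with deleted prefix $\varphi(q)$, and $\infw{x}$ is in turn a suffix of $\varphi(\infw{y}_1)$ with deleted prefix $p_0$ of length at most $1$. Thus $\infw{x}$ is a suffix of $\varphi^{k}(\infw{w}')$ with deleted prefix $\varphi(q)p_0$, of length $\le (2^{k}-2)+1=2^{k}-1<2^{k}$; since $p_0$ is a prefix of the $\varphi$-image of the letter following $q$ in $\varphi^{k-1}(c')$, this deleted prefix is a genuine, hence proper, prefix of the first block $\varphi^{k}(c')$. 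This is exactly the suffix formulation at level $k$, i.e.\ $\infw{x}=u\varphi^{k}(\infw{y})$ with $u$ a proper suffix of $\varphi^{k}(0)$ or $\varphi^{k}(1)$, completing the induction and the proof.
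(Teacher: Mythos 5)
Your overall architecture differs from the paper's: you peel a single $\varphi$ off the \emph{bottom} and try to push property $\tmprop(k-1)$ down to the preimage $\infw{y}_1$, whereas the paper peels $\varphi^k$ off the \emph{top} via the induction hypothesis, writes $\infw{x}=u'\varphi^k(\infw{z})$, and then only needs to show that $\infw{z}$ has $\tmprop(1)$ --- which it extracts from the explicit counting formula of \cref{prop:formula} for $\bc{\varphi^k(\infw{z})}{k+1}$ compared against the known values \cref{eq:k-image-j-complexities}, before invoking \cref{thm:richommeTM} on $\infw{z}$. The difference matters, because the paper's route only ever requires a transfer between level $1$ of the preimage and level $k+1$ of the image across $\varphi^k$, which is exactly what \cref{pro:k-image_k+1-prefix-suffix} and \cref{prop:formula} provide; your route requires a transfer between level $j$ and level $j+1$ across a single $\varphi$ for every $j\le k-1$, and no such result exists in the paper.

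That is where your proof has a genuine gap. The ``engine'' you describe --- that $\bc{\varphi(\infw{z})}{j+1}(2m)$ is a universal function of $\bc{\infw{z}}{j}(m)$ and $m$ --- is false already for $j=1$: \cref{prop:formula} with $k=1$ shows that $\bc{\varphi(\infw{z})}{2}$ depends on $\#X_{\infw{z}}(n)$, $\#Y_{\infw{z}}(n)$ and the run-length parameters $m,m'$, none of which are determined by $\bc{\infw{z}}{1}$ and $n$ alone (two words with identical abelian complexity can have abelian Rauzy graphs with different edge counts). Your appeal to \cref{lem:boundaries} for the boundary factors only controls which label pairs $(a,b)$ occur, not the joint distribution of $(a,[w']_j,b)$ over factors $aw'b$ of $\infw{z}$, which is what actually governs the count of classes $a\varphi(w')b$; so the boundary contribution is emphatically word-dependent. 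Even if universality held, you would still need the transfer function to be injective to deduce $\bc{\infw{y}_1}{j}=\bc{\infw{t}}{j}$ from equality of the image complexities, and you do not address this. Finally, the ``injectivity on aligned factors'' you rely on, namely $\varphi(w)\sim_{j+1}\varphi(w')\Rightarrow w\sim_j w'$, is supplied by \cref{lem:Michel's} only for $j\le 2$; for $j\ge 3$ neither that lemma nor \cref{thm:k-image-prefix-suffix-relation} nor \cref{pro:k-image_k+1-prefix-suffix} (which relate level $k+1$ of the image to level $1$ of the ancestor, not level $j+1$ to level $j$) gives it. The base case, the recurrence/aperiodicity transfer, and the final suffix bookkeeping are fine, but the inductive crux is unsupported and the mechanism proposed for it contradicts the paper's own formula.
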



To prove the theorem, we first derive a formula for counting
$(k+1)$-binomial equivalence classes of words that are of the form
$\varphi^{k}(\infw{y})$ for $\infw{y}$ aperiodic in \cref{sec:formula}.

\subsection{A formula for counting \texorpdfstring{$(k+1)$}{(k+1)}-binomial complexities}
\label{sec:formula}

For a binary word $\infw{y}$ we define
\begin{align*}
X_{\infw{y}}(n) &:= \{(a,\Psi(u),b) \colon a,b \in \{0,1\}, aub \in \Fac_{n+1}(\infw{y})\},\\
Y_{\infw{y},L}(n) &:= \{(a,\Psi(u)) \colon a \in \{0,1\}, au \in \Fac_{n+1}(\infw{y})\}, \\
Y_{\infw{y},R}(n) &:= \{(\Psi(u),a) \colon a \in \{0,1\}, ua \in \Fac_{n+1}(\infw{y})\},\\
Y_{\infw{y}}(n) &:= Y_{\infw{y},L} \cup Y_{\infw{y},R}.
\end{align*}

\begin{observation}
Let $G_n = (V,E)$ be the abelian Rauzy graph of $\infw{y}$ (of order $n$).
\begin{itemize}
\item $E$ is in one-to-one correspondence with
$X_{\infw{y}}(n)$, namely, $\vec{x}\xrightarrow{(a,b)}\vec{y}$ is identified with $(a,\vec{x}-\Psi(a),b)$. In particular, $\# E=\# X_{\infw{y}}(n)$.

\item The set $Y_{\infw{y},R}(n)$ is in one-to-one correspondence with
$E/{\equiv_R}$, where $\equiv_R$ is the equivalence relation defined by
the (surjective) mapping $E \to Y_{\infw{y},R}(n)$ (meaning, the equivalence classes are the full preimages of elements of $Y_{\infw{y},R}(n)$),
\[
\Big( \vec{x} \xrightarrow{(a,b)} \vec{y} \Big)
\mapsto (\vec{x},b).
\]
In particular, $\# Y_{\infw{y},R}(n)\le \# E$. 
Similarly $Y_{\infw{y},L}(n)$ is in one-to-one correspondence with
$E/{\equiv_L}$, where $\equiv_L$ is the equivalence relation defined by
the (surjective) mapping
\[
\Big( \vec{x} \xrightarrow{(a,b)} \vec{y} \Big)
\mapsto (a,\vec{y}).
\]
In particular, $\# Y_{\infw{y},L}(n)\le \# E$.

\item Each equivalence class in $E/{\equiv_L}$ contains at most two
elements: two edges are equivalent if their target vertices and the first
components of the labels are equal. Hence the equivalence relation
can only identify a non-loop edge with a loop.

\item Note that any loop at a vertex $v$ with label $(0,0)$ can only be equivalent
under either $\equiv_L$ or $\equiv_R$ to an edge between $v$ and a lighter
vertex. Similarly a loop with label $(1,1)$ can only be equivalent to an edge
between $v$ and a heavier vertex. In particular, a loop with $(0,0)$ (resp., $(1,1)$)
on the lightest (resp., heaviest) vertex is not equivalent to any other edge
under either $\equiv_L$ or $\equiv_R$.

\end{itemize}
\end{observation}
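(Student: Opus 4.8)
The plan is to verify the four bullet points by direct computation with Parikh vectors, using the binary alphabet $A=\{0,1\}$ throughout. The organizing principle is that an edge of $G_n$ is completely determined by a triple $(a,\vec{w},b)$ with $aub\in\Fac_{n+1}(\infw{y})$ and $\Psi(u)=\vec{w}$: its source is $\vec{w}+\Psi(a)$, its target is $\vec{w}+\Psi(b)$, and its label is $(a,b)$. First I would prove the bijection $E\leftrightarrow X_{\infw{y}}(n)$ by checking that the stated map $\bigl(\vec{x}\xrightarrow{(a,b)}\vec{y}\bigr)\mapsto(a,\vec{x}-\Psi(a),b)$ and the reconstruction $(a,\vec{w},b)\mapsto\bigl(\vec{w}+\Psi(a)\xrightarrow{(a,b)}\vec{w}+\Psi(b)\bigr)$ are mutually inverse; in particular $\#E=\#X_{\infw{y}}(n)$.

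For the second bullet I would observe that both maps $E\to Y_{\infw{y},R}(n)$ and $E\to Y_{\infw{y},L}(n)$ are well defined and surjective: the edge coming from the factor $aub$ is sent to $(\Psi(au),b)$ and $(a,\Psi(ub))$ respectively, and conversely any pair $(\Psi(w),c)\in Y_{\infw{y},R}(n)$ with $wc\in\Fac_{n+1}(\infw{y})$ arises from the edge attached to $wc$ (writing $w=au'$), and symmetrically for $Y_{\infw{y},L}(n)$. A surjection factors through a bijection on the quotient by the fibre relation, and that relation is precisely $\equiv_R$ (resp.\ $\equiv_L$); hence $\#Y_{\infw{y},R}(n)=\#(E/{\equiv_R})\le\#E$, and symmetrically for $L$.

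The content of the third and fourth bullets is a short weight calculation. Fixing the target $\vec{y}$ and the first letter $a$, an edge with $\equiv_L$-image $(a,\vec{y})$ has source $\vec{y}-\Psi(b)+\Psi(a)$, which depends only on $b\in\{0,1\}$; this gives at most two such edges. If two distinct edges share an image they must differ in $b$, so $\{b_1,b_2\}=\{0,1\}$; since an edge is a loop exactly when $a=b$, precisely one value of $b$ matches $a$ and hence exactly one of the two edges is a loop. For the fourth bullet I would track weights (the second Parikh component): a $(0,0)$-loop at $v$ is $\equiv_L$-identified only with the edge of triple $(0,v-\Psi(1),1)$, whose source has weight $|v|_1-1$, and $\equiv_R$-identified only with the edge of triple $(1,v-\Psi(1),0)$, whose target has weight $|v|_1-1$; in both cases the partner edge meets a strictly lighter vertex. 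The $(1,1)$-case is symmetric with ``lighter'' replaced by ``heavier''. The final assertion then follows at once: if $v$ is the lightest vertex there is no lighter vertex for a partner edge to reach, so a $(0,0)$-loop at $v$ lies alone in its class, and dually for a $(1,1)$-loop at the heaviest vertex.

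Since all computations are elementary, the only point demanding care — and the one I would treat explicitly — is the length bookkeeping: an edge of $G_n$ comes from a length-$(n+1)$ factor $aub$ whose inner part $u$ has length $n-1$, whereas the sets $Y_{\infw{y},\bullet}(n)$ are written with a word of length $n$. The reconciliation is that the ``$w$'' appearing in $Y_{\infw{y},R}(n)$ is the length-$n$ prefix $au$ of the edge's factor (and the length-$n$ suffix $ub$ for $Y_{\infw{y},L}(n)$); expanding the Parikh vectors makes the surjectivity claims literally correct, and is the step most easily botched if left implicit.
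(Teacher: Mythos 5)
Your proposal is correct; the paper states this as an Observation without any written proof, and your direct verification via Parikh-vector bookkeeping (edge $\leftrightarrow$ triple $(a,\vec{w},b)$, source $\vec{w}+\Psi(a)$, target $\vec{w}+\Psi(b)$, loop iff $a=b$) is exactly the routine checking the authors intend the reader to supply. Your explicit attention to the length discrepancy between $X_{\infw{y}}(n)$ (inner word of length $n-1$) and $Y_{\infw{y},\bullet}(n)$ (word of length $n$) is the one genuinely delicate point, and you resolve it correctly.
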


\begin{example}
Recall the abelian Rauzy graphs of the Thue--Morse word from \cref{ex:abelian-Rauzy-TM}.
The edges correspond exactly to $X_{\infw{t}}(n)$.
The equivalence classes of $E/{\equiv_R}$ (resp., $E/{\equiv_L}$)
corresponding to $Y_{\infw{t},R}(n)$ (resp., $Y_{\infw{t},L}(n)$) containing at least two elements are listed below:
\begin{itemize}[align=left,leftmargin=*]
\item[$n=2m$:] $Y_{\infw{t},R}:$ $\left\{ m \xrightarrow{(0,0)} m, m \xrightarrow{(1,0)} m-1 \right\}$,\quad 
$\left\{ m \xrightarrow{(1,1)} m, m \xrightarrow{(0,1)} m+1 \right\}$;

$Y_{\infw{t},L}:$ $\left\{ m \xrightarrow{(0,0)} m, m-1 \xrightarrow{(0,1)} m \right\}$,\quad 
$\left\{ m \xrightarrow{(1,1)} m, m+1 \xrightarrow{(1,0)} m \right\}$.

\item[$n=2m-1$:] $Y_{\infw{t},R}:$ $\left\{ m \xrightarrow{(1,1)} m, m \xrightarrow{(0,1)} m+1 \right\}$, \quad
$\left\{ m+1 \xrightarrow{(0,0)} m+1,m+1 \xrightarrow{(1,0)} m \right\}$;

$Y_{\infw{t},L}:$ $\left\{ m \xrightarrow{(1,1)} m, m+1 \xrightarrow{(1,0)} m\right\}$,\quad
$\left\{ m+1 \xrightarrow{(0,0)} m+1, m \xrightarrow{(0,1)} m+1 \right\}$.

\end{itemize}
\end{example}

We may now establish a formula for counting the $(k+1)$-binomial
complexity of the $k$th image of a word $\infw{y}$ under the
Thue--Morse morphism. This will turn
out to be key in proving a converse to
\cref{thm:k-image-j-complexities}.

\begin{proposition}
\label{prop:formula}
Let $\infw{y}$ be an infinite binary word and let $\infw{x} = \varphi^k(\infw{y})$ with $k\geq 1$.
Let
$m = \max\{n \in \N \colon 0^{n} \text{ and } 1^{n} \in \Fac(\infw{y})\}$
and $m' = \max \{n \in \N \colon 0^{n} \text{ or } 1^{n} \in \Fac(\infw{y})\}$,
where we allow $m$ and $m'$ to equal $\infty$. We have
$\bc{\infw{x}}{k+1}(r) = \fc{\infw{t}}(r)$ for all $0\leq r < 2^k$.
Setting $Z(n,0) := (2^k-1)\#X_{\infw{y}}(n) + \bc{\infw{y}}{1}(n)$, for all $n \geq 1$ we have
\begin{equation}\label{eq:k+1-formula}
\bc{\infw{x}}{k+1}(2^k n) = Z(n,0) -
\begin{cases} 2^k, & \text{if } n < m; \\
1,  & \text{if } n=m < m'; \\
0, & \text{otherwise}.
\end{cases}
\end{equation}

For all $n \geq 1$ and $0 < r < 2^k$, setting $Z(n,r) := (r-1)\# X_{\infw{y}}(n+1) +  (2^k-r-1)\#X_{\infw{y}}(n) + \#Y_{\infw{y}}(n)$, we have
\begin{equation}\label{eq:k+1-formula2}
\bc{\infw{x}}{k+1}(2^k n + r) = Z(n,r) -
\begin{cases}
2^k, & \text{if } n + 1 < m;\\
(2^k - r + 1), & \text{if } n+1 = m < m';\\
(2^k - 2(r-1)), & \text{if } n+1 = m =m' \text{ and } r\leq 2^{k-1};\\
0, & \text{otherwise}.
\end{cases}
\end{equation}
\end{proposition}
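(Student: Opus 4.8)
The plan is to count the $(k+1)$-binomial classes among the length-$L$ factors of $\infw{x}$, treating $L=2^kn$ and $L=2^kn+r$ separately, in two stages: a \emph{raw count} obtained by treating each $\varphi^k$-factorization as determining one class, followed by the subtraction of an overcount contributed by factors admitting two $\varphi^k$-factorizations. The short-length claim $\bc{\infw{x}}{k+1}(r)=\fc{\infw{t}}(r)$ for $0\le r<2^k$ is immediate: by \cref{lem:k-image-short-factors} the length-$r$ factors of $\infw{x}$ and of $\infw{t}$ coincide, so the two words share the same $(k+1)$-binomial complexity at such lengths, and $\bc{\infw{t}}{k+1}(r)=\fc{\infw{t}}(r)$ by \eqref{eq:k-image-j-complexities} since $r<2^k<2^{k+1}$.

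For the raw count I rely on \cref{pro:k-image_k+1-prefix-suffix}: a factor $u=p\varphi^k(z)s$ with a \emph{unique} $\varphi^k$-factorization has its $(k+1)$-binomial class determined exactly by the triple $(p,\Psi(z),s)$, and two such factors are $(k+1)$-equivalent if and only if their triples agree. Decomposing a length $L=2^kn+r$ factor as $L=|p|+2^k|z|+|s|$ with $0\le|p|,|s|<2^k$ forces the middle length $|z|$ to be $n$ or $n-1$. When $r=0$, either $p=s=\varepsilon$ (the triple reduces to the Parikh vector $\Psi(z)$, a vertex of the abelian Rauzy graph $G_n$, of which there are $\bc{\infw{y}}{1}(n)$) or both $p,s$ are nonempty ($2^k-1$ choices of $|p|$, each paired with an element of $X_{\infw{y}}(n)$ via the ancestor $azb$). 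When $0<r<2^k$ the both-nonempty factorizations split according to $|z|=n$ and $|z|=n-1$, contributing $(r-1)\#X_{\infw{y}}(n+1)$ and $(2^k-r-1)\#X_{\infw{y}}(n)$, while the factorizations with exactly one of $p,s$ empty contribute $\#Y_{\infw{y},L}(n)$ and $\#Y_{\infw{y},R}(n)$, i.e.\ $\#Y_{\infw{y}}(n)$ in total. Using the bijections between the edges (resp.\ their $\equiv_L$- and $\equiv_R$-classes) of $G_n$ and $X_{\infw{y}}(n)$ (resp.\ $Y_{\infw{y},L}(n)$, $Y_{\infw{y},R}(n)$) recorded in the Observation, these tallies are exactly $Z(n,0)$ and $Z(n,r)$.

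Next I account for the overcount. By \cref{lem:unique_image} a factor has two $\varphi^k$-factorizations precisely when it is a length-$L$ factor of the purely periodic word $\varphi^k(0)^\infty$; since $\varphi^k(1)=\overline{\varphi^k(0)}$ is the cyclic shift of $\varphi^k(0)$ by $2^{k-1}$, the words $\varphi^k(0)^\infty$ and $\varphi^k(1)^\infty$ have the same factors. Such a factor $w$ has an extremal middle Parikh vector, so the two triples it produces are realised by $w$ alone; thus $w$ is a singleton $(k+1)$-class that has been counted twice in $Z$, whence the number of $(k+1)$-classes equals $Z$ minus the number of length-$L$ factors of $\infw{x}$ with two $\varphi^k$-factorizations. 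The $2^k$ length-$L$ factors of $\varphi^k(0)^\infty$ are indexed by an offset $o\in\{0,\dots,2^k-1\}$, and such a factor genuinely has both factorizations in $\infw{x}$ if and only if both runs $0^{N_1(o)}$ and $1^{N_2(o)}$ occur in $\infw{y}$, where $N_i(o)$ is the number of period blocks spanned in the respective periodic word and $N_2(o)=N_1((o+2^{k-1})\bmod 2^k)$.

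The final and most delicate step is to evaluate this count against $m$ and $m'$. For $r=0$ one has $N_1(o),N_2(o)\in\{n,n+1\}$, with a single offset attaining $N_1=n$ and a single one attaining $N_2=n$; tracing which of $0^n,0^{n+1},1^n,1^{n+1}$ are present yields the corrections $2^k$, $1$, and $0$ according to $n<m$, $n=m<m'$, and otherwise. For $0<r<2^k$ one has $N_1(o),N_2(o)\in\{n+1,n+2\}$ with $\#\{o:N_1(o)=n+1\}=2^k-r+1$; when $n+1<m$ all $2^k$ factors qualify, when $n+1=m<m'$ exactly the offsets with $N_2(o)=n+1$ (again $2^k-r+1$ of them) qualify, and when $n+1=m=m'$ the qualifying offsets are $S_1\cap S_2$ with $S_1=\{o:N_1(o)=n+1\}$ and $S_2=S_1-2^{k-1}\pmod{2^k}$. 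The heart of the argument is this intersection count: a direct computation gives $\#(S_1\cap S_2)=2^k-2(r-1)$ when $r\le 2^{k-1}$ and $\#(S_1\cap S_2)=0$ otherwise, matching \eqref{eq:k+1-formula2}. I expect the bookkeeping around $N_1(o)$, $N_2(o)$, the shift $2^{k-1}$, and this overlap to be the main obstacle; the remaining cases reduce to whether the relevant runs fit below $m$ or $m'$.
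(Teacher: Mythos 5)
Your proposal is correct and follows essentially the same route as the paper: the raw count $Z(n,r)$ obtained by enumerating $\varphi^k$-factorization data via \cref{pro:k-image_k+1-prefix-suffix}, minus the number of factors admitting two realized $\varphi^k$-factorizations, which by \cref{lem:unique_image} are exactly the length-$L$ factors of $\varphi^k(0)^\infty$ whose two letter-power ancestors both occur in $\infw{y}$. Your offset parametrization with $N_2(o)=N_1(o+2^{k-1}\bmod 2^k)$ and the intersection count $\#(S_1\cap S_2)=2^k-2(r-1)$ is just a tidier bookkeeping of the paper's explicit factorization tables, and the resulting case analysis on $m$ and $m'$ matches the paper's exactly.
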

\begin{proof}
\cref{lem:k-image-short-factors} implies the formula for lengths less than $2^k$.
The proof strategy to establish formulas \cref{eq:k+1-formula,eq:k+1-formula2} is as follows.
For $n\ge 1$ and $0\le r < 2^k$,  we first obtain an upper bound $Z(n,r)$ on $\bc{\infw{x}}{k+1}(2^k n + r)$ by counting
the different $\varphi^k$-factorizations up to the equivalence implied by \cref{pro:k-image_k+1-prefix-suffix}.
Then we establish the exact formula by subtracting the number of $(k+1)$-binomial classes that admit several
$\varphi^k$-factorizations as counted above. 
To do so,  we use the following argument.
By \cref{pro:k-image_k+1-prefix-suffix} and the observation made right after its statement,  those factors of $\infw{x}$
that admit several $\varphi^k$-factorizations are $(k+1)$-binomially equivalent only to themselves. In fact, such factors
are well-understood by \cref{lem:unique_image}; they only admit two distinct $\varphi^k$-factorizations.
Hence,  counting the number of factors that have two $\varphi^k$-factorizations and subtracting that number from
the term $Z(n,r)$ gives the number of $(k+1)$-binomial equivalence classes.

We first prove formula~\cref{eq:k+1-formula} by inspecting factors of length $2^k n$ for some $n \geq 1$.
They are of the following two forms: either $\varphi^k(u)$,  with $|u|=n$,  or $p \varphi^k(v) s$,  with $|v|=n-1$, $p$ and $s$ non-empty.
Each abelian equivalence class in $\Fac_n(\infw{y})/{\sim_1}$ gives a $(k+1)$-equivalence class of factors of the first form by \cref{prop:Parikh-collinear-k-characterization} (recall that $\varphi$ is Parikh-collinear).
Hence the term $\bc{\infw{y}}{1}(n)$ in $Z(n,0)$.
For the factors of the second form, we notice the following. Such a factor has the $\varphi^k$-ancestor $avb$, with
$(a,\Psi(v),b) \in X_{\infw{y}}(n)$. 
On the other hand, any $(a,\Psi(v),b) \in X_{\infw{y}}(n)$ gives rise to
$(2^k-1)$ $(k+1)$-binomial equivalence classes, namely,  those represented by the words
\[
\suff_i(\varphi^k(a))\ \varphi^k(v)\ \pref_{2^k-i}(\varphi^k(b)),
\ 1 \leq i  < 2^k,
\]
where,  for a word $w$ and $i\in\{1,\ldots,|w|\}$,  we let $\pref_i(w)$ (resp.,  $\suff_i(w)$) denote the length-$i$ prefix (resp.,  suffix) of $w$.
Hence the term $(2^k-1)\#X_{\infw{y}}(n)$ in the formula.
Therefore we have established the upper bound $\bc{\infw{x}}{k+1}(2^k n) \le Z(n,0)$, with $Z(n,0)=(2^k-1)\#X_{\infw{y}}(n) + \bc{\infw{y}}{1}(n)$.

As explained at the beginning of the proof,  we now examine factors admitting several $\varphi^k$-factorizations and subtract their number from $Z(n,0)$ to establish formula~\cref{eq:k+1-formula}.
Let $x$ be such a factor. 
Then it has, by \cref{lem:unique_image}, exactly two $\varphi^k$-factorizations, and we may write
\begin{equation}\label{eq:2facs}
p\varphi^k(u)s = x = p'\varphi^k(u')s'.
\end{equation}
Here we note that $|ps|=2^k$ if and only if $p$ or $s$ non-empty.
Moreover,
the corresponding $\varphi^k$-ancestors are powers of letters, and given in the table below (where in the third row $p'$ is defined by $p = p'\varphi^{k-1}(\overline{a})$, and in the fourth row,  $s'$ is defined by $s = \varphi^{k-1}(a)s'$).
\[
\begin{array}{c|c|c|c|c}
\text{fact.} & \text{ancest.} & \text{conditions on } p,s & \text{2nd fact.} & \text{ancest.} \\
\hline
p\varphi^k(a^{n})s
&
a^{n}
&
|p| = 0 =|s|
&
\varphi^{k-1}(a) \varphi^k(\overline{a}^{n-1})\varphi^{k-1}(\overline{a})
& 
\overline{a}^{n+1} 
\\
p\varphi^k(a^{n-1})s
&
a^{n+1}
&
p = \varphi^{k-1}(\overline{a}), s = \varphi^{k-1}(a)
&
\varphi^{k}(\overline{a}^{n})
& 
\overline{a}^{n} 
\\
p\varphi^k(a^{n-1})s
&
a^{n+1}
&
|p| > 2^{k-1},0 < |s| < 2^{k-1}
&
p'\varphi^k(\overline{a}^{n-1})\varphi^{k-1}(\overline{a})s
& 
\overline{a}^{n+1}
\\
p\varphi^k(a^{n-1})s
&
a^{n+1}
&
0< |p| < 2^{k-1}, |s| > 2^{k-1}
 &
p\varphi^{k-1}(a)\varphi^k(\overline{a}^{n-1})s'
&
\overline{a}^{n+1}
\end{array}
\]
In particular,  for $x$ to have two $\varphi^k$-factorizations (and so for a class to have been counted twice), $a^{n}$ and $\overline{a}^n$ both must appear in $\infw{y}$, and at least one of $a^{n+1}$ and $\overline{a}^{n+1}$ has to also appear in the word.
We divide the proof into three cases. 

\begin{enumerate}[wide=15pt,widest=99,label=\textbf{Case \arabic*.},itemsep=3pt]

\item If $n > m$ or when $n=m=m'$,  then as concluded above,  there is no factor having several $\varphi^k$-factorizations,  and the formula holds.

\item Assume now that $n < m$, so both $a^{n+1}$ and $\overline{a}^{n+1}$ appear in
$\infw{y}$.
Reusing the table above,  we see that any equivalence class corresponding to a
factor having $\varphi^k$-ancestor $a^{n+1}$ (or $a^n$) has been counted twice (and corresponds to a word with $\varphi^k$-ancestor $\overline{a}^{n+1}$ or $\overline{a}^n$). 
There are $2^k$ of those, whence the formula for $n < m$.

\item Assume finally that $n = m$ and $m' > m$. 
Assume without loss of generality that $a^{m+1}$ appears in $\infw{y}$.
Therefore $\overline{a}^{m+1}$ does not appear in $\infw{y}$.
Thus, if $x$ has two $\varphi^k$-ancestors, one of them is $\overline{a}^m$.
There is only one such factor, and this proves the remaining case in the formula.
\end{enumerate}

We now turn to the proof of formula \cref{eq:k+1-formula2}, and consider
factors of the length $2^kn + r$ , with $n\ge 1$ and $0<r<2^k$. 
Let us first establish the upper bound $Z(n,r)$ on $\bc{\infw{x}}{k+1}(2^k n + r)$.
Each element of $Y_{\infw{y}}(n)$ gives rise to unique $(k+1)$-factorization;
for example, $(a,\Psi(u))$ gives the class represented by $\suff_{r}(\varphi^k(a)) \varphi^k(u)$.
Hence the term $\#Y_{\infw{y}}(n)$ in $Z(n,r)$.
Each element of $X_{\infw{y}}(n+1)$ gives rise to
$r-1$ many $(k+1)$-factorizations as follows: $(a,\Psi(u),b)$ gives
\[
\suff_{i}(\varphi^k(a))\ \varphi^k(u)\ \pref_{r-i}(\varphi^k(b)), \quad 1 \leq i  < r. 
\]
Similarly each element of $X_{\infw{y}}(n)$ gives $2^{k}-r-1$
elements, namely $(a,\Psi(u'),b)$ gives
\[
\suff_{i}(\varphi^k(a))\ \varphi^k(u')\ \pref_{2^k + r - i}(\varphi^k(b)) \quad r < i < 2^k.
\]
Hence $\bc{\infw{x}}{k+1}(2^k n + r) \le Z(n,r)$ with
$Z(n,r) = (r-1)\# X_{\infw{y}}(n+1) +  (2^k-r-1)\#X_{\infw{y}}(n) + \#Y_{\infw{y}}(n)$.

We again face the problem of over-counting. 
As previously,  we count the number of factors that have two $\varphi^k$-factorizations. 
Let again $x$ have two $\varphi^k$-factorizations as in \cref{eq:2facs}, where now $ps$ and $p's'$ are both non-empty. 
The $\varphi^k$-ancestor of each factorization is given in the table below.
\[
\begin{array}{c|c|l}
\text{fact.} & \text{ancestor} & \text{conds.~on }p,s\\
\hline
p\varphi^k(u)s 
& 
\begin{array}{c}
a^{n+2} \\
a^{n+1} \\
\end{array}
&
\begin{array}{l}
\text{if } |ps|=r \text{ and } p,s\neq \varepsilon \\
\text{if } |ps|=r \text{ with } p=\varepsilon \text{ or } s=\varepsilon,  \text{ or if } |ps|>r
\end{array}
\\
\hline
p'\varphi^k(u')s'
&  
\begin{array}{c}
\overline{a}^{n+2} \\
\overline{a}^{n+1} \\
\end{array}
&
\begin{array}{l}
\text{if } |p's'|=r \text{ and } p',s'\neq \varepsilon \\
\text{if }|p's'|=r \text{ with } p'=\varepsilon \text{ or } s'=\varepsilon,  \text{ or if } |p's'|>r
\end{array}
\end{array}
\]
We conclude that,  for $x$ to have two $\varphi^k$-factorizations (and so for a class to have been counted twice), we must have $n+1 \leq m$.
We divide the proof into three cases. 

\begin{enumerate}[wide=15pt,widest=99,label=\textbf{Case \arabic*.},itemsep=3pt]

\item Assume that $n+1 > m$.  As concluded above,  there is no factor having several $\varphi^k$-factorizations,  and the formula holds.

\item Assume that $n+1 < m$. In this case we have $1^{n+2}$ and $0^{n+2}$
appearing in $\infw{y}$.
We claim that any factor with a $\varphi^k$-ancestor $a^{n+2}$ or $a^{n+1}$ has also a $\varphi^k$-ancestor $\overline{a}^{n+2}$ or $\overline{a}^{n+1}$,  and we show there are $2^k$ such factors.
Hence the formula follows.

First, if $x = p\varphi^k(a^n)s$, with $|ps|=r$, we have $x = p\varphi^{k-1}(a)\varphi^k(\overline{a}^{n-1})\varphi^{k-1}(\overline{a})s$.  
The other $\varphi^k$-factorization of $x$ is given in the table below, where $p'$ and $s'$ are suitably chosen (for example, in the first case,  we have $p'=p\varphi^{k-1}(a)$ and $s'=\varphi^{k-1}(\overline{a})s$).
\[
\begin{array}{c|c|c}
\text{conditions on } p,s & \text{$\varphi^k$-fact.} & \text{$\varphi^k$-ancestor} \\
\hline
|p|,|s| < 2^{k-1} 
&
p'\varphi^{k}(\overline{a}^{n-1})s'
& 
\overline{a}^{n+1} 
\\
|p| \ge 2^{k-1} \text{ and } |s| < 2^{k-1} 
&
p'\varphi^k(\overline{a}^{n})\varphi^{k-1}(\overline{a})s
& 
\overline{a}^{n+2}
\\
|p| < 2^{k-1} \text{ and } |s| \ge 2^{k-1} 
 &
p\varphi^{k-1}(a)\varphi^k(\overline{a}^{n})s'
&
\overline{a}^{n+2}
\end{array}
\]
(Observe that $|p| \geq  2^{k-1}$ and $|s| \geq 2^{k-1}$ cannot simultaneously hold as $|ps| = r < 2^k$.)

Second,  if $x = p\varphi^k(a^{n-1})s$ with $|ps| = 2^k + r$, $r < |p| < 2^k$,  the other $\varphi^k$-factorization of $x$ is given in the table below,  where $p'$ and $s'$ are again suitably chosen.
\begin{equation}\label{eq:2facsan1}
\begin{array}{c|c|c}
\text{conditions on } p,s & \text{$\varphi^k$-fact.} & \text{$\varphi^k$-ancestor} \\
\hline
|p| \geq 2^{k-1}, |s| < 2^{k-1}
&
p'\varphi^{k}(\overline{a}^{n-1}) \varphi^{k-1}(\overline{a})s
& 
\overline{a}^{n+1} 
\\
|p|=2^{k-1}, |s|=2^{k-1}+r
&
\varphi^{k}(\overline{a}^{n})s'
& 
\overline{a}^{n+1} 
\\
|p|,|s| > 2^{k-1}
&
p'\varphi^{k}(\overline{a}^{n})s'
& 
\overline{a}^{n+2}
\\
|p|=2^{k-1}+r, |s|=2^{k-1}
&
p'\varphi^{k}(\overline{a}^{n})
& 
\overline{a}^{n+1} 
\\
 |p| < 2^{k-1}, |s| \geq 2^{k-1}
 &
p\varphi^{k-1}(a)\varphi^{k}(\overline{a}^{n-1}) s'
&
\overline{a}^{n+1}
\end{array}
\end{equation}
This concludes the proof for this part, as we have exhibited $2^k$ distinct factors, and there are no other possibilities. (Indeed, there
are $r-1$ factors having $a^{n+2}$ as a $\varphi^k$-ancestor,
and $2^k-r+1$ factors having $a^{n+1}$ as such.)

\item Assume finally that $n+1 = m$.  We divide the proof into two subcases.

\begin{enumerate}[wide=15pt,widest=99,label*=\textbf{\arabic*.},itemsep=1pt]
\item Assume that $m < m'$. 
Let us assume that $a^{n+1}$ appears in $\infw{y}$ but $a^{n+2}$ does not. Then $\overline{a}^{n+2}$ does under the assumption. Notice that in the previous case there were exactly $r-1$ factors having $a^{n+2}$ as a $\varphi^k$-ancestor. 
Under our current assumption, these factors do not have this ancestor, but have instead the ancestor $\overline{a}^{n+2}$.
They thus have only one $\varphi^k$-factorization. 
However,  as before,  the $2^{k-1}-r+1$ factors with $\varphi^k$-ancestor $a^{n+1}$ have a second $\varphi^k$-factorization. We conclude that the formula holds also in this case.

\item Finally assume that $m' = m$.
Then we have that neither $a^{n+2}$ nor $\overline{a}^{n+2}$ appears in $\infw{y}$, while both $a^{n+1}$ and $\overline{a}^{n+1}$ do. 
We thus need to count those factors that have both $a^{n+1}$ and $\overline{a}^{n+1}$ as $\varphi^k$-ancestors. 
Looking at the previous table,  only the center row gives $\overline{a}^{n+2}$ as a $\varphi^k$-ancestor. 
Such factors appear when $|ps| = 2^k + r$ with $2^{k-1} < |p| < 2^{k-1} + r$, i.e., there are $r-1$ of them whenever $r \leq 2^{k-1}$ (recall that $|p|,|s|<2^k$).
Symmetric arguments apply to factors having $\varphi^k$-ancestors
$\overline{a}^{n+1}$ (i.e.,  exchanging the role of $a$ and $\overline{a}$). 
We conclude that the number of factors having two $\varphi^k$-factorizations is $2^k-2(r-1)$ when $r \leq 2^{k-1}$, as is claimed in the formula. 

We are left with the case that $r > 2^{k-1}$.  
Here we show that no factor has two $\varphi^k$-factorizations with respect to $\infw{y}$.
Since $r > 2^{k-1}$,  we have $|ps| = 2^k + r > 2^k + 2^{k-1}$ with $|p|,|s| < 2^{k}$.
It follows that for such $\varphi^k$-factorizations we must have $|p|,|s| > 2^{k-1}$,  which only leaves the center row of the previous table. 
But,  we already discarded these factors,  so the proof is completed.\qedhere
\end{enumerate}
\end{enumerate}

\subsection{A converse to \texorpdfstring{\cref{thm:k-image-j-complexities}}{}}
\label{sec:converse}

As announced at the beginning of the section, we now obtain a
partial converse statement to \cref{thm:k-image-j-complexities}.
Before giving the proof, which is quite long and technical, we give a brief sketch of it.
The proof is by induction on $k$. The induction hypothesis allows to conclude that
$\infw{x}$ in the statement is essentially the $k$th image of a recurrent word $\infw{z}$.
We then show that $\infw{z}$ has property $\tmprop(1)$ using several times the
formulas established in \cref{prop:formula}. The word $\infw{z}$ having property
$\tmprop(1)$ allows to show that $\infw{x}$ is essentially the $(k+1)$st image
of another binary word $\infw{y}$ which then suffices for the claim by \cref{thm:k-complexities-k-image}.
\end{proof}

\begin{proof}[Proof of \cref{thm:k-complexities-k-image}]
Observe first that $\infw{x}$ is aperiodic; we shall implicitly
use this fact throughout the proof. Indeed, if it was not
aperiodic, it would be purely periodic by the recurrence
assumption. However, purely periodic words have $\bc{}{1}(n)=1$
for infinitely many $n$. This would contradict the assumption
that $\infw{x}$ has property $\tmprop(1)$.

We shall prove the claim by induction. So let first $k=1$. Then
\cref{thm:richommeTM} asserts that
there exist $a \in \{\varepsilon,0,1\}$ and a binary word
$\infw{y}$ such that $\infw{x} = a \varphi(\infw{y})$, which was to be proven.
Assume then that the claim holds for some $k$ and assume further
that $\infw{x}$ has property $\tmprop(k+1)$. It follows that
$\infw{x} = u'\varphi^k(\infw{z})$, where $u'$ is a proper (possibly empty) suffix of
$\varphi^k(0)$ or $\varphi^k(1)$.
\begin{claim}
If $\infw{z}$ has property $\tmprop(1)$, then $\infw{x}$
is of the form $\infw{x} = u\varphi^{k+1}(\infw{y})$ with
$u$ a suffix of $\varphi^{k+1}(0)$ or $\varphi^{k+1}(1)$.
\end{claim}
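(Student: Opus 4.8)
The plan is to feed $\infw{z}$ into \cref{thm:richommeTM} and then push the resulting factorization through $\varphi^k$. Two preliminary observations set this up. First, $\infw{z}$ is aperiodic (as already noted for $\infw{x}$, using injectivity of $\varphi^k$). Second, $\infw{z}$ is recurrent: any factor $w$ of $\infw{z}$ produces the factor $\varphi^k(w)$ of the recurrent word $\infw{x}$, and since $\varphi$ (hence $\varphi^k$) is recognizable on aperiodic words, all but finitely many reoccurrences of $\varphi^k(w)$ are block-aligned and thus correspond to reoccurrences of $w$ in $\infw{z}$. Consequently \cref{thm:richommeTM} applies and yields $a\in\{\varepsilon,0,1\}$ and a binary word $\infw{y}$ with $\infw{z}=a\varphi(\infw{y})$.

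Substituting gives $\infw{x}=u'\varphi^k(\infw{z})=u'\varphi^k(a)\varphi^{k+1}(\infw{y})$, so writing $u:=u'\varphi^k(a)$ the task reduces to showing that $u$ is a suffix of $\varphi^{k+1}(0)$ or of $\varphi^{k+1}(1)$. I would organise this via the identities $\varphi^{k+1}(0)=\varphi^k(0)\varphi^k(1)$ and $\varphi^{k+1}(1)=\varphi^k(1)\varphi^k(0)$ together with $\varphi^k(1)=\overline{\varphi^k(0)}$, which implies that $\varphi^k(0)$ and $\varphi^k(1)$ have no common nonempty suffix. Let $c$ be the letter with $u'$ a suffix of $\varphi^k(c)$; it is unique when $u'\neq\varepsilon$ and may be chosen freely when $u'=\varepsilon$. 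The convenient reformulation is that $\infw{x}$ is the suffix of $\varphi^k(c\infw{z})$ obtained by deleting the length-$(2^k-|u'|)$ prefix of its leading block $\varphi^k(c)$. If $a=\varepsilon$, then $c\infw{z}=c\varphi(\infw{y})$, whence $\infw{x}=u'\varphi^{k+1}(\infw{y})$ with $u'$ a suffix of $\varphi^k(c)$ and thus of $\varphi^{k+1}(\overline{c})$. If $a\neq\varepsilon$ and $c\neq a$ (always arrangeable when $u'=\varepsilon$ by taking $c=\overline{a}$), then $ca=\varphi(c)$, so $c\infw{z}=\varphi(c\infw{y})$ and $\infw{x}$ is a suffix of $\varphi^{k+1}(c\infw{y})$ whose deleted prefix has length $<2^{k+1}$; hence $\infw{x}=u\varphi^{k+1}(\infw{y})$ with $u$ a suffix of $\varphi^{k+1}(c)$.

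The remaining configuration $a=c$ with $u'\neq\varepsilon$ is the one genuine obstacle: here $u=u'\varphi^k(a)$ ends in $\varphi^k(a)$ while $u'$ is a nonempty suffix of $\varphi^k(a)$, and the complementarity of $\varphi^k(0)$ and $\varphi^k(1)$ shows that $u$ is then a suffix of no $\varphi^{k+1}(d)$. I would exclude this case by contradiction using recurrence of $\infw{x}$. Taking a long prefix $u'\varphi^k(z_0\cdots z_r)$ of $\infw{x}$ and reoccurring it by recurrence, recognizability of $\varphi^k$ forces the reoccurrence to be block-aligned; tracing the alignment shows some block $\varphi^k(z_j)$ has suffix $u'$ (so $z_j=c=a$) and is followed by $\varphi^k(z_0)\varphi^k(z_1)\cdots$, that is $z_{j+1+i}=z_i$ for $0\le i\le r$. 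Thus $\infw{z}$ contains $a\cdot z_0z_1\cdots z_r$ for every $r$, meaning the prefix of $\infw{z}$ left-extends by $a$ at all lengths, i.e.\ $a\infw{z}=aa\varphi(\infw{y})$ lies in the subshift of $\infw{z}$. But recognizability of $\varphi$ forbids this: a long block-aligned copy of a prefix of $\varphi(\infw{y})$ can occur only starting at a block boundary, whereas the defect $aa$ sitting immediately before it would force a full $\varphi$-block equal to $aa$, which is impossible. This contradiction rules out $a=c$ and completes the proof. I expect this synchronization/recognizability argument to be the technical heart, the rest being bookkeeping with the explicit shapes of $\varphi^{k+1}(0)$ and $\varphi^{k+1}(1)$.
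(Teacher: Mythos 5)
Your proposal is correct in substance and rests on the same three ingredients as the paper's argument: \cref{thm:richommeTM} applied to the aperiodic word $\infw{z}$, recurrence of $\infw{x}$, and unique decipherability of images under powers of $\varphi$. The execution, however, differs. The paper avoids your case analysis entirely: writing $\infw{x}=u'\varphi^k(b)\varphi^{k+1}(\infw{y})$, it takes the prefix $u'\varphi^k(b)\varphi^{k+1}(y)$ with $y$ a prefix of $\infw{y}$ containing both letters, lets recurrence push this factor into the suffix $\varphi^{k+1}(\infw{y})$, and invokes \cref{lem:unique_image} once at level $k+1$: the forced block-alignment of the occurrence of $\varphi^{k+1}(y)$ immediately makes the preceding word $u'\varphi^k(b)$, of length $<2^{k+1}$, a suffix of $\varphi^{k+1}(0)$ or $\varphi^{k+1}(1)$. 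You instead settle two of the three configurations of $(a,c)$ by direct computation with $\varphi^{k+1}(d)=\varphi^k(d)\varphi^k(\overline{d})$ and exclude the configuration $a=c$, $u'\neq\varepsilon$ by a two-stage synchronization argument (first at level $k$ to produce the factors $aa\varphi(\infw{y})[0..r]$ of $\infw{z}$, then at level $1$ to rule them out via the impossible block $aa$); this is more laborious but sound. Two small points. First, your appeals to ``recognizability'' should be replaced by citations of \cref{lem:unique_image}, which is exactly the self-contained statement the paper uses for this purpose: a factor $\varphi^j(w)$ with $w$ containing both letters has a unique $\varphi^j$-factorization, hence every occurrence of it is block-aligned. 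Second, your preliminary assertion that ``all but finitely many reoccurrences of $\varphi^k(w)$ are block-aligned'' fails for short $w$ (e.g.\ $w$ a single letter) and needs $w$ to contain both letters; this only affects your derivation of recurrence of $\infw{z}$, which is not needed for the claim at all --- the paper establishes it separately, after the claim, for later use.
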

\begin{claimproof}
The assumption implies that $\infw{z} = b\varphi(\infw{y})$ for
some $b \in \{\varepsilon, 0, 1\}$, whence
$\infw{x} = u'\varphi^k(b)\varphi^{k+1}(\infw{y})$.
Let $y$ be a prefix of $\infw{y}$ that contains both letters $0$ and $1$. Then the factor $Y = \varphi^{k+1}(y)$ has a unique
$\varphi^{k+1}$-factorization by \cref{lem:unique_image}.
Now $u'\varphi^k(b)Y$ appears also in
$\varphi^{k+1}(\infw{y})$ due to $\infw{x}$ being recurrent. In
particular, it admits a $\varphi^{k+1}$-factorization, and since
$Y$ has a unique $\varphi^{k+1}$-factorization, we conclude that
$u'\varphi^k(b)$ must be the suffix of $\varphi^{k+1}(0)$ or
$\varphi^{k+1}(1)$.
\end{claimproof}

To prove the theorem, it is thus enough to show that
$\infw{z}$ has $\tmprop(1)$. Indeed, then $\infw{x}$ is a suffix
of the word of the form $\varphi^{k+1}(\infw{y})$ with $\infw{y}$
aperiodic, and \cref{thm:k-image-j-complexities} gives the claim.
Notice that
$\bc{\infw{x}}{k+1} = \bc{\varphi^{k}(\infw{z})}{k+1}$ again due
to recurrence of $\infw{x}$. This fact is again used
throughout the rest of the proof.

Notice now that $\infw{z}$ is also
recurrent: if it is not recurrent, then it has a prefix $w$,
containing both letters, which appears only once in $\infw{z}$. Let us write $\infw{z} = w\infw{z}'$. However, $\varphi^k(w)$ appears in $\varphi^k(\infw{z}')$ by the recurrence of $\infw{x}$. Since
$w$ contains both letters, the $\varphi^k$-factorization of
$\varphi^k(w)$ is unique. But, since $\varphi^k$ is injective,
we must find $w$ in $\infw{z}'$, a contradiction.

Let us assume towards a contradiction that $\infw{z}$ does not
have $\tmprop(1)$, and let $n$ be the least integer for which
\[
\bc{\infw{z}}{1}(n) \neq \bc{\infw{t}}{1}(n)
= 
\begin{cases}
  2, & \text{if } n \text{ is odd};\\
3, & \text{if } n \text{ is even}.
\end{cases}
\]
We now divide the proof into two cases,  depending on the parity of $n$.
As it appears, the case where $n$ is even is easier to handle.

\subsubsection{\texorpdfstring{$n$}{n} is even}
By definition of $n$, $\bc{\infw{z}}{1}(n-1)=\bc{\infw{t}}{1}(n-1)=2$ and $\bc{\infw{z}}{1}(n)\neq 3$.  Note however that $\bc{\infw{z}}{1}(n) \neq 1$ because $\infw{z}$ is aperiodic. 
Since $\bc{\infw{z}}{1}$ can increase or decrease by at most $1$ between
consecutive values, we conclude that $\bc{\infw{z}}{1}(n)=2$.

\begin{claim}\label{cl:n-greater-m-even}
We have $n > m$, where $m$ is as in \cref{prop:formula}.
\end{claim}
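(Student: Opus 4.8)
The plan is to argue by contradiction, exploiting the characterization of the abelian complexity of a binary word from \cref{rk:abelian_complexity} together with the minimality of $n$. Recall that, as already established in this subcase, $n$ is even and $\bc{\infw{z}}{1}(n) = 2$; by \cref{rk:abelian_complexity} the latter equality says precisely that $\max_{u,v\in\Fac_n(\infw{z})}\bigl||u|_1-|v|_1\bigr| = 1$, i.e.\ the length-$n$ factors of $\infw{z}$ realize only two consecutive weights.

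First I would suppose, towards a contradiction, that $n \le m$ (this inequality also covers the degenerate possibility $m = \infty$). By the definition of $m$ in \cref{prop:formula}, both $0^m$ and $1^m$ belong to $\Fac(\infw{z})$; since the set of factors is closed under taking factors and $n \le m$, both $0^n$ and $1^n$ then lie in $\Fac_n(\infw{z})$. Taking the pair $(u,v)=(1^n,0^n)$ in \cref{rk:abelian_complexity} gives
\[
\bc{\infw{z}}{1}(n) = 1 + \max_{u,v\in\Fac_n(\infw{z})}\bigl||u|_1-|v|_1\bigr| \ge 1 + \bigl||1^n|_1 - |0^n|_1\bigr| = 1 + n \ge 3,
\]
where the last inequality uses $n \ge 2$ (as $n$ is even and positive). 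This contradicts $\bc{\infw{z}}{1}(n) = 2$, so we must have $n > m$; in particular $m$ is finite.

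There is essentially no hard computational step here: the argument is a direct comparison of the weights of $0^n$ and $1^n$. The only points that require care are, first, correctly invoking the already-established value $\bc{\infw{z}}{1}(n) = 2$ (which itself rests on the minimality of $n$, the aperiodicity of $\infw{z}$, and the $1$-Lipschitz property of $\bc{\infw{z}}{1}$ supplied by \cref{rk:abelian_complexity}), and second, observing that running the same weight-comparison under the convention $n\le\infty$ simultaneously rules out $m=\infty$, so that the finiteness of $m$ comes for free.
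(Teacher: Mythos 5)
Your proof is correct, and the claim follows from it in full. It takes a slightly different route from the paper's: you argue by contradiction at length $n$ itself, observing that $n\le m$ would put both $0^n$ and $1^n$ in $\Fac_n(\infw{z})$ and hence force $\bc{\infw{z}}{1}(n)\ge n+1\ge 3$ via \cref{rk:abelian_complexity}, contradicting the already-established $\bc{\infw{z}}{1}(n)=2$. The paper instead exploits the minimality of $n$ at the short lengths $2$ and $3$: $\bc{\infw{z}}{1}(2)=3$ forces $00,11\in\Fac(\infw{z})$, while $\bc{\infw{z}}{1}(3)=2$ excludes $000$ and $111$, giving $m=2$ exactly when $n\ge 4$ (and $m=1$ when $n=2$). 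Both arguments rest on the same weight-difference reading of the folklore lemma; yours is shorter, needs no case split on $n$, and disposes of $m=\infty$ uniformly, but it only yields the inequality $n>m$, whereas the paper's computation pins down $m$ (and $m'$) exactly --- which is the kind of precise information that is needed again in the odd case (\cref{cl:m2nodd}). Since the claim asserts only $n>m$, and that is all that is used afterwards to place the even-length computation in the ``otherwise'' branch of \cref{eq:k+1-formula}, your argument suffices.
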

\begin{claimproof}
If $n > 2$ (i.e., $n \geq 4$), then $m=2$ (and $m' = 2$)  because
$\bc{\infw{z}}{1}(2) = 3$ implies that $00$,
$11 \in \Fac(\infw{z})$ while $\bc{\infw{z}}{1}(3) = 2$ implies
that $000$, $111 \notin \Fac(\infw{z})$.
If $n=2$ then $m=1$.
\end{claimproof}

We next show that $\#X_{\infw{z}}(n) \leq 5$.
To this end, let
$M_n = \max\{|u|_1 \colon u \in \Fac_{n}(\infw{z})\}$, i.e., the maximum weight among length-$n$ factors of $\infw{y}$. Assume first that $M_{n-1} = M_n$. Then any factor $v \in \Fac_{n-1}(\infw{z})$
with $|v|_1 = M_{n-1}$ is followed and preceded by $0$ (except possibly for the prefix, which is still followed by $0$), as otherwise $M_n \neq M_{n-1}$. We
conclude that
\[
X_{\infw{z}}(n) \subseteq \big\{(0,\Psi(v),0)\big\}
		\cup \left(\{0,1\} \times \big\{\Psi(v)+(1,-1)\big\} \times \{0,1\}\right),
\]
so the claim follows.

Assume second that $M_{n-1} = M_n-1$. Then each factor
$v \in \Fac_{n-1}(\infw{z})$ with $|v|_1 = M_{n-1}-1$ is
followed and preceded by $1$; this is because
$\bc{\infw{z}}{1}(n-1) = 2 = \bc{\infw{z}}{1}(n)$. In this case 
\[
X_{\infw{z}}(n) \subseteq \big\{ (1,\Psi(v),1) \big\}
		\cup \left(
			\{0,1\} \times \big\{\Psi(v) + (-1,1) \big\} \times \{0,1\}
			\right).
\]
We have shown $\#X_{\infw{z}}(n) \leq 5$.
This however leads to a contradiction: applying
formula \cref{eq:k+1-formula}, we find
\[
3\cdot 2^{k+1}-3
= \bc{\infw{t}}{k+1}(2^k n)
= \bc{\infw{x}}{k+1}(2^k n)
=
\bc{\varphi^k(\infw{z})}{k+1}(2^k n)
\leq 5 \cdot (2^k-1) + 2 = 3\cdot 2^{k+1}-2^k - 3,
\]
where the leftmost equality follows from $n$ being even and \cref{eq:k-image-j-complexities}. 
We hence move to the case where $n$ is odd.

\subsubsection{\texorpdfstring{$n$}{n} is odd}
We have that $n \geq 3$ is odd (as $\infw{z}$ is
binary). Since $\bc{\infw{z}}{1}(n-1)=3$, we have
$\bc{\infw{z}}{1}(n) \in \{3,4\}$ arguing as in the case when $n$
was even.

\begin{claim}\label{cl:3vert}
We have $\bc{\infw{z}}{1}(n) = 3$.
\end{claim}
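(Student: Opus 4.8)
The plan is to rule out $\bc{\infw{z}}{1}(n)=4$ (we already know $\bc{\infw{z}}{1}(n)\in\{3,4\}$), the engine being the exact formula of \cref{prop:formula} evaluated at length $2^k n$ and compared against the known value, using that $\bc{\infw{x}}{k+1}=\bc{\varphi^k(\infw{z})}{k+1}=\bc{\infw{t}}{k+1}$ by recurrence and $\tmprop(k+1)$. Assume for contradiction $\bc{\infw{z}}{1}(n)=4$. First I would extract a lower bound on the number of edges of the abelian Rauzy graph $G_n$ of $\infw{z}$: by \cref{rk:abelian_complexity} the length-$n$ factors realize exactly four consecutive weights, and since $\infw{z}$ is recurrent the lightest and heaviest of these recur infinitely often. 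Hence the walk traced by $\infw{z}$ crosses each of the three consecutive weight-gaps in both directions infinitely often, producing an increasing edge (label $(0,1)$) and a decreasing edge (label $(1,0)$) for each gap; these six edges are distinct, so $\#X_{\infw{z}}(n)=\#E(G_n)\ge 6$.

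Next I would control the correction term in \cref{eq:k+1-formula}, i.e.\ pin down the parameters $m,m'$ of \cref{prop:formula} well enough to guarantee $n\ge m$ (so that the subtracted quantity is at most $1$). For $n\ge 5$ this is immediate: minimality of $n$ gives $\bc{\infw{z}}{1}(3)=2$, so $000,111\notin\Fac(\infw{z})$ and thus $m=m'=2<n$, forcing the correction to be $0$. The delicate point is $n=3$, where $\bc{\infw{z}}{1}(3)=4$ forces $000,111\in\Fac(\infw{z})$ and hence $m\ge 3$. Here I would first establish $m\le 3$: if $0^M,1^M\in\Fac(\infw{z})$ for the \emph{largest} such $M$ and $M\ge 4$, then $\bc{\infw{z}}{1}(M)=M+1$ and, as above, $\#X_{\infw{z}}(M)\ge 2M$, so \cref{eq:k+1-formula} at $2^k M$ gives $\bc{\infw{x}}{k+1}(2^k M)\ge 2M(2^k-1)+(M+1)-1=M(2^{k+1}-1)\ge 4(2^{k+1}-1)>3\cdot 2^{k+1}-3\ge\bc{\infw{t}}{k+1}(2^k M)$, contradicting $\tmprop(k+1)$. (Taking $M=m$ is what keeps the correction at $2^k M$ bounded by $1$.) Thus $m=3=n$ when $n=3$; in all cases $n\ge m$, so the correction $c_0$ in \cref{eq:k+1-formula} lies in $\{0,1\}$.

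With these two ingredients the contradiction is immediate. Since $n$ is odd and $n\ge 3$, we have $2^k n\equiv 2^k\not\equiv 0\pmod{2^{k+1}}$ and $2^k n\ge 2^{k+1}$, so \cref{eq:k-image-j-complexities} gives $\bc{\infw{t}}{k+1}(2^k n)=3\cdot 2^{k+1}-4=6\cdot 2^k-4$. On the other hand \cref{eq:k+1-formula} yields $\bc{\infw{x}}{k+1}(2^k n)=(2^k-1)\#X_{\infw{z}}(n)+\bc{\infw{z}}{1}(n)-c_0\ge 6(2^k-1)+4-1=6\cdot 2^k-3$, which strictly exceeds $6\cdot 2^k-4$. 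As $\bc{\infw{x}}{k+1}=\bc{\infw{t}}{k+1}$, this is the desired contradiction, whence $\bc{\infw{z}}{1}(n)=3$. I expect the main obstacle to be the bookkeeping around the correction term for small $n$—precisely the case $n=3$, where one must rule out long constant runs $0^M,1^M$ in $\infw{z}$. The estimate leaves no slack: with $c_0\le 1$ one needs exactly $\#X_{\infw{z}}(n)\ge 6$ (since $5$ fails to beat $6-1/(2^k-1)$), so each of the two bounds must be carried out tightly.
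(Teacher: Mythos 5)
Your proof is correct, and the engine is the same as the paper's: assume $\bc{\infw{z}}{1}(n)=4$, bound $\#X_{\infw{z}}(n)$ from below via the structure of the abelian Rauzy graph $G_n$, and contradict $\bc{\infw{t}}{k+1}(2^kn)=3\cdot 2^{k+1}-4$ through \cref{eq:k+1-formula}. The difference is in how the slack is obtained. The paper notes that aperiodicity of $\infw{z}$ forces a right special factor of length $n$, hence a loop in $G_n$ in addition to the six gap-crossing edges of the four-vertex path, so $\#X_{\infw{z}}(n)\ge 7$; with that seventh edge the crude bound ``correction $\le 2^k$'' already gives $7(2^k-1)+4-2^k = 6\cdot 2^k-3 > 6\cdot 2^k - 4$, and no analysis of $m$ or $m'$ is needed at all. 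You stop at $\#X_{\infw{z}}(n)\ge 6$ and instead pin the correction term down to at most $1$, which forces you to locate $m$: routine for $n\ge 5$ (minimality gives $\bc{\infw{z}}{1}(3)=2$, hence $m=2<n$), but requiring an extra sub-argument at $n=3$ to rule out $0^4$ and $1^4$ both occurring --- itself another, correctly executed, application of \cref{eq:k+1-formula} at length $2^kM$. Both routes land on the same threshold $6\cdot 2^k-3$ versus $6\cdot 2^k-4$, so, as you observe, your version has no slack whatsoever. The single observation that a binary aperiodic word always contributes a loop to its abelian Rauzy graph would have let you skip the entire $m$-bookkeeping, including the delicate $n=3$ case.
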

\begin{claimproof}
Assume for a contradiction that $\bc{\infw{z}}{1}(n) = 4$. As
$\infw{z}$ recurrent, the abelian Rauzy graph $G_n$ has the
following graph as a subgraph:
\begin{center}
\begin{tikzpicture}[scale=1.2]
\node[circle,draw,minimum size = 11pt,inner sep = 1pt] (x1) at (0,0) {$x_1$};

\node[circle,draw,minimum size = 11pt,inner sep = 1pt] (x2) at (1,0) {$x_2$};
\node[circle,draw,minimum size = 11pt,inner sep = 1pt] (x3) at (2,0) {$x_3$};
\node[circle,draw,minimum size = 11pt,inner sep = 1pt] (x4) at (3,0) {$x_4$};

\draw[->] (x1) edge[out=330,in=210] node[midway,below] {\tiny $(0,1)$} (x2);
\draw[->] (x2) edge[in=30,out=150] node[midway,above] {\tiny $(1,0)$} (x1);

\draw[->] (x2) edge[out=330,in=210] node[midway,below] {\tiny $(0,1)$} (x3);
\draw[->] (x3) edge[in=30,out=150] node[midway,above] {\tiny $(1,0)$} (x2);

\draw[->] (x3) edge[out=330,in=210] node[midway,below] {\tiny $(0,1)$} (x4);
\draw[->] (x4) edge[in=30,out=150] node[midway,above] {\tiny $(1,0)$} (x3);
\end{tikzpicture}
\end{center}
Further, since $\infw{z}$ is also aperiodic, it must have at least one loop
(at a right special vertex). We conclude that $G_n$ has at least seven edges,
that is, $\#X_{\infw{z}}(n) \geq 7$. Since $n$ is odd we have, using
\cref{eq:k+1-formula} and recalling that
$\bc{\infw{x}}{k+1} = \bc{\varphi^k(\infw{z})}{k+1}$
\[
3\cdot 2^{k+1} - 4 = \bc{\infw{t}}{k+1}(2^k n)
= \bc{\infw{x}}{k+1}(2^k n)
\geq \#X_{\infw{z}}(n)(2^k-1) + \bc{\infw{z}}{1}(n) - 2^k
\geq 6 \cdot 2^k - 3 = 3\cdot 2^{k+1} - 3,
\]
which is absurd.
\end{claimproof}

Recall the entities $m$ and $m'$ from \cref{prop:formula}.
\begin{claim}\label{cl:m2nodd}
We have $m = 2$. If $n\geq 5$, then $m' = 2$ also. Otherwise $n=3$ and $m' > 2$.
\end{claim}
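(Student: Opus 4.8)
The plan is to translate everything into statements about the weights of short factors of $\infw{z}$ via the Folklore lemma (\cref{rk:abelian_complexity}), which gives
$\bc{\infw{z}}{1}(j) = 1 + \max_{u,v \in \Fac_j(\infw{z})} \bigl||u|_1 - |v|_1\bigr|$, and then to read off which powers $0^j,1^j$ must or must not occur. First I would record the base input: since $n \geq 3$ we have $2 < n$, so the minimality of $n$ gives $\bc{\infw{z}}{1}(2) = \bc{\infw{t}}{1}(2) = 3$; by the Folklore lemma there are length-$2$ factors differing in weight by $2$, forcing both $00$ and $11$ into $\Fac(\infw{z})$. Hence $m \geq 2$ and $m' \geq 2$.

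For the upper bound $m = 2$ I would rule out that both $000$ and $111$ occur. If they both occurred, $\Fac_3(\infw{z})$ would contain factors of weights $0$ and $3$, so the Folklore lemma would force $\bc{\infw{z}}{1}(3) \geq 4$. But when $n = 3$ we have $\bc{\infw{z}}{1}(3) = \bc{\infw{z}}{1}(n) = 3$ by \cref{cl:3vert}, and when $n \geq 5$ we have $\bc{\infw{z}}{1}(3) = \bc{\infw{t}}{1}(3) = 2$ by minimality of $n$; either value contradicts the bound $\geq 4$. Thus at most one of $000,111$ occurs, and combined with $00,11 \in \Fac(\infw{z})$ this yields exactly $m = 2$.

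For $m'$ I would split on the value of $n$. If $n \geq 5$, then $\bc{\infw{z}}{1}(3) = 2$, so all length-$3$ factors have weights lying in an interval of length $1$. Were $000$ to occur, every length-$3$ factor would have weight at most $1$; but the factor $11 \in \Fac(\infw{z})$ extends to the right inside the infinite word to a length-$3$ factor of weight at least $2$, a contradiction. A symmetric argument, using $00$ and a hypothetical $111$, rules out $111$. Hence neither $000$ nor $111$ occurs and $m' = 2$. If instead $n = 3$, then $\bc{\infw{z}}{1}(3) = 3$ means that some two length-$3$ factors differ in weight by exactly $2$; since these weights lie in $\{0,1,2,3\}$, the lightest such factor has weight $0$ (so $000$ occurs) or the heaviest has weight $3$ (so $111$ occurs). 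In either case $m' \geq 3 > 2$, which settles this last subcase.

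The arguments are essentially routine once the Folklore lemma is brought in; the only point requiring any care is the extension step for $n \geq 5$, where I rely on the fact that an occurrence of $11$ (resp.\ $00$) in the infinite word $\infw{z}$ can always be extended one symbol to the right to a length-$3$ factor of weight $\geq 2$ (resp.\ $\leq 1$), so that the balance constraint $\bc{\infw{z}}{1}(3) = 2$ is genuinely violated.
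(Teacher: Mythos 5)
Your proof is correct and follows essentially the same route as the paper: both deduce $00,11\in\Fac(\infw{z})$ from $\bc{\infw{z}}{1}(2)=3$, and then use $\bc{\infw{z}}{1}(3)=2$ (for $n\geq 5$) or $\bc{\infw{z}}{1}(3)=3$ (for $n=3$) to decide which of $000,111$ can occur. You merely spell out the extension/weight-interval details that the paper leaves implicit when it refers back to \cref{cl:n-greater-m-even}.
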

\begin{claimproof}
For $n \geq 5$ one may proceed as in the proof of \cref{cl:n-greater-m-even}.
When $n=3$, we still have that $\bc{\infw{z}}{1}(2) = 3$ implies that both $00$
and $11$ appear in the word. However, $\bc{\infw{z}}{1}(3) = 3$ implies that one of
$000$ or $111$ appears in $\infw{z}$ while the other does not.
\end{claimproof}

\begin{claim}\label{cl:graph-structure}
We have that $k=1$ and $\# X_{\infw{z}}(n) = 5$.
\end{claim}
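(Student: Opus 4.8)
The plan is to apply formula \cref{eq:k+1-formula} of \cref{prop:formula} to the word $\infw{z}$ at the length $2^k n$, and to convert the resulting numerical equality into a divisibility constraint that pins down $k$. Everything needed has already been assembled: the values $m=2$ and $\bc{\infw{z}}{1}(n)=3$ from \cref{cl:m2nodd,cl:3vert}, the identity $\bc{\infw{x}}{k+1}=\bc{\varphi^k(\infw{z})}{k+1}$ (valid by recurrence, as noted earlier), and property $\tmprop(k+1)$ of $\infw{x}$.

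First I would determine which branch of \cref{eq:k+1-formula} is relevant. By \cref{cl:m2nodd} we have $m=2$, while $n\geq 3$ since $n$ is odd; hence $n>m$, so we land in the ``otherwise'' branch and no correction term is subtracted. Thus
\[
\bc{\varphi^k(\infw{z})}{k+1}(2^k n) = (2^k-1)\,\#X_{\infw{z}}(n) + \bc{\infw{z}}{1}(n).
\]
Next I would substitute the known quantities. Using $\bc{\infw{z}}{1}(n)=3$ from \cref{cl:3vert}, the identity $\bc{\infw{x}}{k+1}=\bc{\varphi^k(\infw{z})}{k+1}$, and the value $\bc{\infw{t}}{k+1}(2^k n)=3\cdot 2^{k+1}-4$ read off from \cref{eq:k-image-j-complexities} (here $n$ odd forces $2^k n\not\equiv 0 \pmod{2^{k+1}}$, while $2^k n\geq 2^{k+1}$ since $n\geq 3$), one obtains
\[
(2^k-1)\,\#X_{\infw{z}}(n) + 3 = 3\cdot 2^{k+1}-4.
\]

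The final step is purely arithmetic. Rearranging gives $(2^k-1)\,\#X_{\infw{z}}(n) = 6\cdot 2^k - 7 = 6(2^k-1)-1$, so $2^k-1$ must divide $1$. This forces $2^k-1=1$, i.e.\ $k=1$, and then $\#X_{\infw{z}}(n)=5$, as claimed. I do not expect any genuine obstacle here; the only points requiring care are the branch selection in \cref{eq:k+1-formula} (confirming $n>m$ so that the correction term vanishes) and the justification that the formula may be applied to $\infw{z}$ via the recurrence identity. Once those are in place, the divisibility argument is immediate.
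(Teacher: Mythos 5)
Your proposal is correct and follows the paper's proof almost exactly: both apply \cref{eq:k+1-formula} at length $2^kn$ in the ``otherwise'' branch (justified by $n\geq 3>2=m$ from \cref{cl:m2nodd}), use $\bc{\infw{x}}{k+1}=\bc{\varphi^k(\infw{z})}{k+1}=\bc{\infw{t}}{k+1}(2^kn)=3\cdot 2^{k+1}-4$ and $\bc{\infw{z}}{1}(n)=3$ from \cref{cl:3vert}, and arrive at $(2^k-1)\#X_{\infw{z}}(n)+3=3\cdot 2^{k+1}-4$. The only divergence is the final arithmetic: the paper rewrites the equation as $(6-\#X_{\infw{z}}(n))2^k+\#X_{\infw{z}}(n)=7$ and needs the extra graph-theoretic input that $G_n$, being strongly connected with three vertices and a right special vertex, has at least five edges (so $\#X_{\infw{z}}(n)\geq 5$), combined with monotonicity of $x\mapsto(6-x)2^k+x$; your rewriting $(2^k-1)(6-\#X_{\infw{z}}(n))=1$ yields both conclusions purely by divisibility, with no lower bound on the number of edges required. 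This is a genuine (if small) simplification over the printed argument.
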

\begin{claimproof}
Consider $\bc{\infw{x}}{k+1}(2^k n)$; applying \cref{eq:k+1-formula} (using the previous claim) we have
\[
(2^k-1)\#X_{\infw{z}}(n) + \bc{\infw{z}}{1}(n)
= \bc{\infw{x}}{k+1}(2^k n)
= \bc{\infw{t}}{k+1}(2^k n)
= 3 \cdot 2^{k+1} - 4
\]
because $n$ is odd. By \cref{cl:3vert}, this is equivalent to
\[
(6 - \#X_{\infw{z}}(n))2^k + \#X_{\infw{z}}(n) = 7.
\]
Since $\infw{z}$ is aperiodic and recurrent and the abelian Rauzy graph $G_n$ has three vertices, $G_n$ must have at least five edges, i.e.,
$\#X_{\infw{z}}(n) \geq 5$. The only way to satisfy the above equality is when
$k = 1$ and $\#X_{\infw{z}}(n) = 5$.
Indeed, the function $x\mapsto (6-x)2^k + x$ is strictly decreasing (as $k \geq 1$), and for $x = 6$, it yields $6$ which is less than the
right-hand side in the above equation. Therefore we must have $\#X_{\infw{z}}(n) \leq 5$. We conclude that $\#X_{\infw{z}}(n) = 5$. Plugging this into the above equation, we find that $k=1$, as claimed.
\end{claimproof}

The previous claim shows that $G_n$ is a graph with three vertices and five
edges. Since $\infw{z}$ is recurrent and aperiodic, $G_n$ can be obtained,
without loss of generality, by adding one loop to the graph
\begin{center}
\begin{tikzpicture}
\node[circle,draw,minimum size = 11pt,inner sep = 0pt] (x2) at (1,0) {\ver{l}};
\node[circle,draw,minimum size = 11pt,inner sep = 0pt] (x3) at (2,0) {\ver{m}};
\node[circle,draw,minimum size = 11pt,inner sep = 0pt] (x4) at (3,0) {\ver{h}};
\draw[->] (x2) edge[out=330,in=210] node[midway,below] {\tiny $(0,1)$} (x3);
\draw[->] (x3) edge[in=30,out=150] node[midway,above] {\tiny $(1,0)$} (x2);
\draw[->] (x3) edge[out=330,in=210] node[midway,below] {\tiny $(0,1)$} (x4);
\draw[->] (x4) edge[in=30,out=150] node[midway,above] {\tiny $(1,0)$} (x3);
\end{tikzpicture}
\end{center}
Here the leftmost vertex $\ver{l}$ corresponds to the
lightest abelian equivalence class, the rightmost vertex
$\ver{h}$ to the heaviest, and the center vertex
$\ver{m}$ to the remaining class; the
structure implies that a lightest factor precedes a heaviest
in $\infw{z}$.  
For any letter $\ver{a}\in \{\ver{l},\ver{m},\ver{h}\}$,  we shall refer to
factors of length $n$ having their Parikh-vector corresponding to $\ver{a}$ as \emph{$\ver{a}$-factors}.

Recall that $M_n$ is defined as the maximum weight among factors of length $n$.
\begin{lemma}\label{lem:graph-loop}
The graph $G_n$ contains either the loop
$\ver{h} \xrightarrow{(0,0)} \ver{h}$ or the loop $\ver{l} \xrightarrow{(1,1)} \ver{l}$.
\end{lemma}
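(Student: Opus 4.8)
The plan is to pin down the unique loop of $G_n$ (the four non‑loop edges already form the path $\ver{l}\,\text{--}\,\ver{m}\,\text{--}\,\ver{h}$, so the fifth edge is the only loop, and being a loop it carries a label $(0,0)$ or $(1,1)$; \cref{lem:boundaries} guarantees such a loop exists). I would organize everything around a recurrence‑and‑maximality dichotomy at the extremal vertices. Write $M_n$ for the maximal weight. Since $\infw{z}$ is recurrent, every heaviest factor $f$ admits both a left and a right extension, and no extension may exceed weight $M_n$. Hence if some heaviest factor \emph{begins or ends with $0$}, the extension on that side is forced to be by $0$ and produces a second window of weight $M_n$, that is, a loop $\ver{h}\xrightarrow{(0,0)}\ver{h}$; symmetrically, if some lightest factor begins or ends with $1$ we obtain $\ver{l}\xrightarrow{(1,1)}\ver{l}$. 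In either case the lemma holds. It therefore remains to rule out the \textbf{rigid case}, in which \emph{every} heaviest factor begins and ends with $1$ and \emph{every} lightest factor begins and ends with $0$; note that precisely in this case neither of the two desired loops can occur (a $(0,0)$‑loop at $\ver{h}$ would need a heaviest factor starting or ending with $0$, and a $(1,1)$‑loop at $\ver{l}$ a lightest factor starting or ending with $1$).

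In the rigid case the loop is thus one of $\ver{l}\xrightarrow{(0,0)}\ver{l}$, $\ver{h}\xrightarrow{(1,1)}\ver{h}$, or a loop at $\ver{m}$. The first two I would eliminate by counting. Since $\infw{x}$ has property $\tmprop(k+1)=\tmprop(2)$ and $k=1$, evaluating \cref{prop:formula} at length $2n+1$ gives $\#Y_{\infw{z}}(n)=\bc{\infw{x}}{2}(2n+1)=\bc{\infw{t}}{2}(2n+1)=8$, using that $2n+1$ is odd. A direct inspection of the maps defining $\equiv_L$ and $\equiv_R$ shows that the base edges already account for four elements of $Y_{\infw{z},L}(n)$ and four of $Y_{\infw{z},R}(n)$, whereas a $(0,0)$‑loop at $\ver{l}$ (resp.\ a $(1,1)$‑loop at $\ver{h}$) contributes a \emph{fresh} element to each of these sets; this would force $\#Y_{\infw{z}}(n)=10$, contradicting $\#Y_{\infw{z}}(n)=8$.

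The genuine obstacle is the last configuration: a loop at $\ver{m}$ while $G_n$ is rigid. Because complementation exchanges $\ver{l}$ and $\ver{h}$, swaps $0$ and $1$, and turns a $(1,1)$‑loop at $\ver{m}$ into a $(0,0)$‑loop at $\ver{m}$, it suffices to exclude $\ver{m}\xrightarrow{(0,0)}\ver{m}$. I would split according to \cref{cl:m2nodd}. For $n=3$ the rigidity determines the few short factors of $\infw{z}$ explicitly, and one checks that whichever of $000$, $111$ is admissible, one of the factors $00$, $11$ cannot appear in $\infw{z}$ at all; this contradicts $m=2$.

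For $n\ge 5$ we have $m=m'=2$, so $\infw{z}$ avoids both $000$ and $111$, and the short‑factor argument no longer suffices. Here I would invoke the minimality of $n$: as $n-1$ is even, $\bc{\infw{z}}{1}(n-1)=\bc{\infw{t}}{1}(n-1)=3$, and evaluating \cref{prop:formula} at length $2(n-1)$ (where the correction term vanishes because $n-1>m$) yields $\#X_{\infw{z}}(n-1)=\bc{\infw{t}}{2}(2n-2)-\bc{\infw{z}}{1}(n-1)=9-3=6$. Thus $G_{n-1}$ is a three‑vertex, six‑edge graph, i.e.\ the path with \emph{two} loops added. In the rigid configuration one further sees that the heaviest weight at order $n-1$ must equal $M_n-1$ (otherwise a weight‑$M_n$ factor of length $n-1$ would extend to a heaviest length‑$n$ factor ending in $0$, violating rigidity), and that \cref{lem:boundaries} forces a $(0,0)$‑loop into $G_{n-1}$. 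Comparing this forced six‑edge structure of $G_{n-1}$ with the rigid five‑edge structure of $G_n$ then yields the contradiction. I expect this transfer between consecutive orders to be the most delicate step, since it requires tracking how the loops and the ``begins/ends with'' rigidity propagate from length $n-1$ to length $n$.
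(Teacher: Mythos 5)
Your argument has a genuine gap at exactly the step you flag as delicate: eliminating a loop at $\ver{m}$ in the ``rigid case'' for $n\ge 5$. You correctly assemble several facts ($\#X_{\infw{z}}(n-1)=6$, $M_{n-1}=M_n-1$, a $(0,0)$-loop somewhere in $G_{n-1}$), but then only assert that ``comparing'' the six-edge structure of $G_{n-1}$ with the five-edge structure of $G_n$ ``yields the contradiction'' without exhibiting it --- and the edge count of $G_{n-1}$ is not where the contradiction lives. What actually closes the case is the mirror image of the weight argument you already ran at the heavy end. Rigidity at the light end (every lightest length-$n$ factor begins and ends with $0$) forces the minimum weights to satisfy $m_{n-1}=m_n$: otherwise a lightest length-$(n-1)$ factor could only be extended on the right by $1$, producing a lightest length-$n$ factor ending in $1$. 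Combined with $M_{n-1}=M_n-1$, this gives $M_{n-1}-m_{n-1}=1$, i.e.\ $\bc{\infw{z}}{1}(n-1)=2$, contradicting $\bc{\infw{z}}{1}(n-1)=3$. (Equivalently: the edge of $G_{n-1}$ from its lightest vertex to its middle vertex, which you already know exists and has label $(0,1)$, corresponds to a length-$n$ factor of minimal weight $M_n-2$ that ends in $1$.) So the rigid case is simply empty, and no tracking of loops between consecutive orders is needed.

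Once this is seen, almost all of your machinery collapses. The paper's proof is precisely the two-case version of the above: either $M_n=M_{n-1}$, in which case every heaviest length-$(n-1)$ factor is surrounded by $0$'s and one reads off $\ver{h}\xrightarrow{(0,0)}\ver{h}$, or $M_n=M_{n-1}+1$, in which case $\bc{\infw{z}}{1}(n)=\bc{\infw{z}}{1}(n-1)=3$ forces the minimum weight to increase as well, every lightest length-$(n-1)$ factor is surrounded by $1$'s, and one reads off $\ver{l}\xrightarrow{(1,1)}\ver{l}$ (recurrence supplies the left extensions in both cases). Your opening dichotomy (``some heaviest length-$n$ factor begins or ends with $0$'' versus ``some lightest one begins or ends with $1$'') is equivalent to this and already exhaustive; the detour through $\#Y_{\infw{z}}(n)=8$, the complementation symmetry, the split $n=3$ versus $n\ge5$, and \cref{lem:boundaries} is never needed. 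Those intermediate computations are, for what it is worth, correct as far as they go, but the proof as written does not reach its conclusion.
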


\begin{proof}
Assume first that $M_n=M_{n-1}$. This implies that all the
heaviest factors of length $n-1$ are surrounded by $0$s in
$\infw{z}$ (meaning, preceded and followed by $0$; notice that by assumption that the prefix is not a heavy factor). Hence, there is a factor $0u0$ in
$\infw{z}$ which corresponds to the loop $\Psi(0u) \xrightarrow{(0,0)}\Psi(u0)$, where $0u$ is an $\ver{h}$-factor, because $|u0|_1 = |u|_1 = M_{n-1} = M_{n}$.

Assume then that $M_n = M_{n-1} + 1$. Since
$\bc{\infw{z}}{1}(n) = \bc{\infw{z}}{1}(n-1) = 3$, we must have
that the minimum weight of factors of length $n$ is one greater
than that of factors of length $n-1$; thus all length-$(n-1)$
minimum-weight factors are surrounded by $1$s in $\infw{z}$.
(The only exception is the prefix, which is still followed by
$1$.) So any non-prefix occurrence of such a factor (recall
$\infw{z}$ is recurrent) gives a loop on $\ver{l}$ with label
$(1,1)$ similar to the above.
\end{proof}

Applying \cref{eq:k+1-formula2} with $r=1$ and $n+1 \geq 4 > 2 = m$ (by \cref{cl:m2nodd}), we have
\[
\# Y_{\infw{z}}(n)
=\bc{\varphi(\infw{z})}{2}(2n+1)
= \bc{\infw{x}}{2}(2 n+1)
	= \bc{\infw{t}}{2}(2n+1) = 8.
\]

We show that this is impossible. Recall that we add either
of the loops $\ver{h}\xrightarrow{(0,0)} \ver{h}$
or $\ver{l}\xrightarrow{(1,1)} \ver{l}$ to the graph $G_n$.
In either case we note the following: all $\ver{l}$-factors
are followed by $1$ and all $\ver{h}$-factors are followed by $0$.
In particular, inspecting factors of length $n+1$, we only have two distinct
Parikh-vectors. Therefore, the graph $G_{n+1}$ has only two vertices, i.e., $\bc{\infw{z}}{1}(n+1) = 2$.
The number of edges of such a graph is at most six: both vertices can have two loops and one outgoing edge to the other vertex. However,
applying \cref{eq:k+1-formula} (with $n+1 > m$), we find
$\#X_{\infw{z}}(n+1) + 2 = 9$ because $n+1$ is even. But then
$\#X_{\infw{z}}(n+1) = 7$, which is impossible. 
This final contradiction proves that $n$ cannot be odd either.
This concludes the proof of \cref{thm:k-complexities-k-image}.
\end{proof}

\section{Several Answers to \texorpdfstring{\cref{q: strict ineq}}{}}
\label{sec:question_of_Lejeune}

One can give a rather direct answer to \cref{q: strict ineq}.
Indeed, let $\infw{c}$ be the binary Champernowne word, that is,
the concatenation of the binary representations of the non-negative integers: $0$, $1$, $10$, $11$, $100$, $101$, $110$, $111$, \ldots.
Notice that $\infw{c}$ contains all binary words. For each $k$,
there exist two binary words $u,v$ such that $u\sim_{k}v$ and $u\not\sim_{k+1}v$ (see, for instance, \cref{thm:Ochsenschlager}). Therefore, the same properties hold for $ux$ and $vx$, for all $x\in\{0,1\}^*$, thus 
$\bc{\infw{c}}{k} \prec \bc{\infw{c}}{k+1}$ for all $k$. Clearly $\bc{\infw{c}}{1}(n)=n+1$ is unbounded and so is $\bc{\infw{c}}{k}$ for $k\geq 2$.

Observe that $\infw{c}$ is not morphic, nor \emph{uniformly recurrent} (a word $\infw{x}$ is uniformly
recurrent if for each $x \in \mathcal{L}(\infw{x})$ there exists
$N \in \N$ such that $x$ appears in all factors in
$\Fac_N(\infw{x})$). Therefore in the rest of the section we provide more ``structured'' words answering \cref{q: strict ineq}.

\subsection{A Non-Binary Pure Morphic Answer}

Consider the morphism $g\colon \{a,0,1,\alpha\}^* \to \{a,0,1,\alpha\}^*$ defined by 
\[ 
				a\mapsto a0\alpha,\ 
				0\mapsto \varphi(0),\ 
				1\mapsto \varphi(1),\ 
				\alpha \mapsto \alpha^2
\]
where $\varphi$ is the Thue--Morse morphism.
We have $\infw{g} = g^{\omega}(a) = a\prod_{j=0}^{\infty}\varphi^j(0) \alpha^{2^j}$.
We show that the word $\infw{g}$ answers \cref{q: strict ineq}:

\begin{proposition}
The abelian complexity of $\infw{g}$ is unbounded and $\bc{\infw{g}}{k} \prec \bc{\infw{g}}{k+1}$ for all $k \geq 1$.
\end{proposition}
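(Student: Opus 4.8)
The plan is to establish the two assertions separately: the unboundedness of the abelian complexity by a direct count of Parikh vectors, and the strict inequalities $\bc{\infw{g}}{k}\prec\bc{\infw{g}}{k+1}$ by locating, inside the blocks $\varphi^j(0)$ occurring in $\infw{g}$, the same witness pairs that drive \cref{cor:k-image_k+1-prec}.

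For the abelian complexity, I would fix a length $n$ and pick $j$ with $2^j\ge n$. In $\infw{g}$ the block $\varphi^j(0)$ (of length $2^j$) is immediately followed by the block $\alpha^{2^j}$, so for every $i\in\{0,1,\dots,n\}$ the concatenation of the length-$(n-i)$ suffix of $\varphi^j(0)$ with the prefix $\alpha^i$ of $\alpha^{2^j}$ is a factor of $\infw{g}$ of length $n$ containing exactly $i$ letters $\alpha$. These $n+1$ factors have pairwise distinct Parikh vectors, whence $\bc{\infw{g}}{1}(n)\ge n+1$; in particular the abelian complexity is unbounded (indeed at least linear), which is the feature distinguishing this construction from the binary words $\varphi^k(\infw{y})$.

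For the strict inequalities, I would use that $\infw{g}$ contains $\varphi^j(0)=\varphi^k(\varphi^{j-k}(0))$ as a factor for every $j\ge k$, together with the fact that $\infw{t}=\varphi^k(\infw{t})$ contains, for each $L\ge 1$, a factor $\overline{a}za$ with $a\in\{0,1\}$ and $|z|=L$ (by \cref{lem:boundaries}). Such a factor occurs in some prefix $\varphi^m(0)$ of $\infw{t}$, hence $\varphi^k(\overline{a}za)$ is a factor of $\varphi^{k+m}(0)$ and thus of $\infw{g}$. Setting $U=\varphi^{k-1}(a)\varphi^k(z)$ and $V=\varphi^k(z)\varphi^{k-1}(a)$, and using that $\varphi^k(\overline{a})$ ends with $\varphi^{k-1}(a)$ while $\varphi^k(a)$ begins with $\varphi^{k-1}(a)$, both $U$ and $V$ are factors of $\varphi^k(\overline{a}za)$, hence of $\infw{g}$ (and of $\infw{t}$), sharing the length $2^{k-1}+2^kL$. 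The Transfer Lemma (\cref{lem:transfer}, applied with data $(a,z,z)$) gives $U\sim_k V$, and $U\ne V$ since $z\ne\varepsilon$, exactly as argued in the proof of \cref{cor:k-image_k+1-prec}. Reading off the $\varphi^k$-factorizations $(p_1,s_1)=(\varphi^{k-1}(a),\varepsilon)$ of $U$ and $(p_2,s_2)=(\varepsilon,\varphi^{k-1}(a))$ of $V$, both with central word $\varphi^k(z)$, \cref{pro:k-image_k+1-prefix-suffix} (with $\infw{y}=\infw{t}$) yields $U\not\sim_{k+1}V$, because its criterion demands that the two central ancestors differ, which they do not. Hence the canonical surjection $\Fac_n(\infw{g})/{\sim_{k+1}}\to\Fac_n(\infw{g})/{\sim_k}$ is non-injective at $n=2^{k-1}+2^kL$, giving $\bc{\infw{g}}{k}(n)<\bc{\infw{g}}{k+1}(n)$ there; letting $L$ run over all positive integers produces infinitely many such $n$, i.e.\ $\bc{\infw{g}}{k}\prec\bc{\infw{g}}{k+1}$.

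The step requiring the most care is the bookkeeping ensuring that $U$ and $V$ are honest factors of $\infw{g}$ and that the proposition applies cleanly even in the degenerate situation where $U$ or $V$ admits two $\varphi^k$-factorizations. In that case \cref{pro:k-image_k+1-prefix-suffix} (through the observation that such factors are $(k+1)$-binomially equivalent only to themselves) still forces $U\not\sim_{k+1}V$ from $U\ne V$, so the conclusion is unaffected; one should likewise check the base case $k=1$, where $\varphi^{k-1}(a)=a$ is a single letter and the factorizations degenerate, but the argument goes through verbatim.
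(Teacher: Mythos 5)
Your proof is correct, and the first half (unbounded abelian complexity via the $\alpha$-count of length-$n$ factors) is exactly the paper's argument. For the strict inequalities $\bc{\infw{g}}{k}\prec\bc{\infw{g}}{k+1}$, however, you take a genuinely different and substantially heavier route. The paper exploits the $\alpha$-blocks directly: since each block $\varphi^j(0)$ ends with $\varphi^k(0)$ or $\varphi^k(1)$ (according to the parity of $j-k$) and is followed by $\alpha^{2^j}$, the words $u_n=\varphi^k(0)\alpha^n$ and $v_n=\varphi^k(1)\alpha^n$ are both factors of $\infw{g}$ for arbitrarily large $n$; by \cref{thm:Ochsenschlager} and the fact that $\sim_k$ is a cancellative congruence (\cref{lem: cancellation property}), $u_n\sim_k v_n$ but $u_n\not\sim_{k+1}v_n$, and one is done in two lines. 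You instead ignore the $\alpha$'s entirely and locate your witnesses $U=\varphi^{k-1}(a)\varphi^k(z)$, $V=\varphi^k(z)\varphi^{k-1}(a)$ inside the Thue--Morse blocks, invoking \cref{lem:boundaries}, the Transfer Lemma (\cref{lem:transfer}) and \cref{pro:k-image_k+1-prefix-suffix} --- in effect re-running the proof of \cref{cor:k-image_k+1-prec} for $\infw{y}=\infw{t}$. All the steps check out, including the bookkeeping that $U,V$ are factors of some $\varphi^{k+m}(0)$ and hence of $\infw{g}$, the primitivity argument for $U\neq V$, and the handling of the degenerate double-factorization case. What your route buys is the observation that the gap between $\bc{}{k}$ and $\bc{}{k+1}$ is already witnessed by the binary part of $\infw{g}$; what it costs is reliance on the machinery of Sections 4.2--4.3 where a direct appeal to Ochsenschläger plus congruence and cancellation suffices. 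Both arguments are valid.
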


\begin{proof}
The abelian complexity of $\infw{g}$ is (at least) linear, since
\[\{|u|_{\alpha} \colon u \in \Fac_n(\infw{g})\} = \{0,\ldots,n\}.\]
Furthermore,
for each $k \in \N$ there exist infinitely many words $u_n$, $v_n \in \mathcal{L}(\infw{g})$ such that
$u_n\sim_k v_n$ but $u_n\not\sim_{k+1} v_n$: by
\cref{thm:Ochsenschlager}, take
$u_n=\varphi^k(0)\alpha^n$ and $v_n=\varphi^k(1)\alpha^n$. Consequently
$\bc{\infw{g}}{k} \prec \bc{\infw{g}}{k+1}$ for all
$k\geq 1$.
\end{proof}

\subsection{A Binary Morphic Answer}

Consider the word $\tau(\infw{g})$, where $\infw{g}$ is the word
defined in the previous subsection, and $\tau$ is the coding
$a \mapsto \varepsilon$, $0 \mapsto 0$, $1\mapsto 1$, and
$\alpha \mapsto 1$.
We have the following:

\begin{proposition}
The abelian complexity of $\tau(\infw{g})$ is unbounded and $\bc{\tau(\infw{g})}{k} \prec \bc{\tau(\infw{g})}{k+1}$ for all $k \geq 1$.
\end{proposition}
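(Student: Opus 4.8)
First I would record the explicit form of the word. Since $\tau$ fixes $0$ and $1$, erases $a$, and sends $\alpha$ to $1$, applying $\tau$ to $\infw{g} = a\prod_{j\ge 0}\varphi^j(0)\alpha^{2^j}$ gives the binary word
\[
\tau(\infw{g}) = \prod_{j\ge 0}\varphi^j(0)\,1^{2^j}
= 0\cdot 1\cdot 01\cdot 11\cdot 0110\cdot 1111\cdots.
\]
In particular $\tau(\infw{g})$ is binary, so \cref{rk:abelian_complexity} applies to it.

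For the unboundedness of the abelian complexity, I would exhibit at each length $n$ two factors whose weights differ by $\Theta(n)$. On the one hand, $1^{2^j}$ is a factor for every $j$, so $1^n$ is a factor of $\tau(\infw{g})$ for all $n$, giving a length-$n$ factor of weight $n$. On the other hand, each $\varphi^j(0)$ is a length-$2^j$ prefix of $\infw{t}$, so every factor of $\infw{t}$ occurs in $\tau(\infw{g})$; since $\infw{t}$ is balanced (its abelian complexity lies in $\{2,3\}$ by \cref{eq:k-image-j-complexities}, so by \cref{rk:abelian_complexity} all length-$n$ factors of $\infw{t}$ have weights within a bounded distance $c$ of the density value $n/2$), the word $\tau(\infw{g})$ has a length-$n$ factor of weight at most $n/2+c$. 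Hence $\max_{u,v\in\Fac_n(\tau(\infw{g}))}\bigl||u|_1-|v|_1\bigr|\ge n/2-c$, and \cref{rk:abelian_complexity} yields $\bc{\tau(\infw{g})}{1}(n)\ge n/2-c+1$, which is unbounded.

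The key point for $\bc{\tau(\infw{g})}{k}\prec\bc{\tau(\infw{g})}{k+1}$ is that, for every $m\ge 1$, both $\varphi^k(0)1^m$ and $\varphi^k(1)1^m$ occur as factors of $\tau(\infw{g})$. To see this, fix $j\ge k$ and write $\varphi^j(0)=\varphi^k(\varphi^{j-k}(0))$; its length-$2^k$ suffix is $\varphi^k(c)$, where $c$ is the last letter of $\varphi^{j-k}(0)$, which equals $0$ when $j-k$ is even and $1$ when $j-k$ is odd. As $\varphi^j(0)$ is immediately followed by $1^{2^j}$ in $\tau(\infw{g})$, the word $\varphi^k(c)1^m$ is a factor whenever $m\le 2^j$. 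Choosing $j$ with $j-k$ even and $2^j\ge m$ produces $\varphi^k(0)1^m$, and choosing $j$ with $j-k$ odd and $2^j\ge m$ produces $\varphi^k(1)1^m$; thus both occur, and they have common length $N=2^k+m$.

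Finally I would conclude as in the proof of the previous proposition. By \cref{thm:Ochsenschlager} we have $\varphi^k(0)\sim_k\varphi^k(1)$, so the congruence property gives $\varphi^k(0)1^m\sim_k\varphi^k(1)1^m$; whereas $\varphi^k(0)1^m\sim_{k+1}\varphi^k(1)1^m$ would force $\varphi^k(0)\sim_{k+1}\varphi^k(1)$ by the cancellation property (\cref{lem: cancellation property}), contradicting \cref{thm:Ochsenschlager}. Hence at each length $N=2^k+m$ there are two factors lying in the same $k$-binomial class but in distinct $(k+1)$-binomial classes, so the canonical surjection $\Fac_N(\tau(\infw{g}))/{\sim_{k+1}}\to\Fac_N(\tau(\infw{g}))/{\sim_k}$ is not injective and $\bc{\tau(\infw{g})}{k}(N)<\bc{\tau(\infw{g})}{k+1}(N)$. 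As $m$ ranges over all positive integers this holds for infinitely many $N$, giving $\bc{\tau(\infw{g})}{k}\prec\bc{\tau(\infw{g})}{k+1}$ for every $k\ge1$. The only genuinely new ingredient compared with the previous proposition is the parity argument producing both $\varphi^k(0)1^m$ and $\varphi^k(1)1^m$ as factors of $\tau(\infw{g})$; everything else is the standard Ochsenschl\"ager--cancellation mechanism, and this locating of factors is the step I expect to require the most care.
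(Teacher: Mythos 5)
Your proof is correct and follows essentially the same route as the paper: the paper also establishes unboundedness via long Thue--Morse factors versus long blocks of $1$'s, and obtains $\bc{\tau(\infw{g})}{k}\prec\bc{\tau(\infw{g})}{k+1}$ from the pair $\varphi^k(0)1^m$, $\varphi^k(1)1^m$ together with \cref{thm:Ochsenschlager} and cancellation. The only difference is that you spell out the parity-of-$j-k$ argument showing both of these words really occur as factors, a detail the paper leaves implicit.
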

\begin{proof}
The word $\tau(\infw{g})$ has unbounded abelian complexity: it
contains arbitrarily long words $u$ for which $|u|_1 = \left\lfloor |u|/2 \right\rfloor$ (take factors of the Thue--Morse word for instance). Similarly it contains arbitrarily long powers of $1$. Consequently, the word has unbounded abelian complexity (recall \cref{rk:abelian_complexity}).

To show $\bc{\tau(\infw{g})}{k} \prec \bc{\tau(\infw{g})}{k+1}$ for all $k$, we notice that the
same arguments as in the case of $\infw{g}$
can be applied verbatim with $\tau(u_n)$ and $\tau(v_n)$.
\end{proof}

\subsection{A Binary Uniformly Recurrent Answer}
We note that none of the above words are
uniformly recurrent. A natural candidate for such a word is one
that has relatively high factor complexity. Uniformly recurrent
words having positive \emph{topological entropy}%
\footnote{The topological entropy of a word $\infw{x}$ is defined
as the quantity
$\smash{\displaystyle{\lim_{n\to \infty}}}\tfrac{\log\fc{\infw{x}}(n)}{n}$, which
exists for any $\infw{x}$ (see \cite[\S 4.3.2]{CANT2010}).}
were studied by Grillenberger in~\cite{Grillenberger1973}.
A construction for uniformly recurrent positive entropy words
appears in \cite[\S 4.4.3]{CANT2010}; this construction is
simpler than that of Grillenberger's, though some properties are
lost (see \cite[\S 4.4.3]{CANT2010} for a discussion). We recall
this construction here. To attain a word with entropy between
$0$ and $\log d$, define $D_{0} = \{0,1,\ldots,d-1\}$ and
let $(q_k)_{k\geq 0}$ be a sequence of positive integers.
Assuming $D_k$ is constructed, let $u_k$ be the product of words
of $D_k$ in lexicographic order (assuming, e.g.,
$0<1<\ldots<d-1$).
Define then $D_{k+1}:= u_k D_k^{q_k}$. The sequence
$(u_k)_{k\in\N}$ converges to a uniformly recurrent word
$\infw{u}$ having, with a suitable choice of $(q_k)$, the
prescribed entropy. We consider the word with $d = 2$ and
$q_k = 2$ for all $k$ (and are not interested in the entropy). Hence for us $\infw{u}=0100010101100111\cdots$.

\begin{lemma}
Let $k\geq 1$.
If, for some $j \geq 0$, $D_j$ contains two words $u$, $v$, such
that $u \sim_k v$ and $u \not\sim_{k+1} v$, then $D_{j+1}$ contains words $x$, $y$, $z$ and $w$ such that
\begin{itemize}[label=$\bullet$]
\item $x \sim_{k} y$ but $x\not \sim_{k+1}y$;
\item $z \sim_{k+1} w$ but $z \not \sim_{k+2} w$.
\end{itemize}
\end{lemma}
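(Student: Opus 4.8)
The plan is to exploit the rigid frame structure of $D_{j+1}$. First I would record two facts about the construction: all words of $D_j$ share a common length, and since $q_j = 2$, every element of $D_{j+1} = u_j D_j^2$ has the form $u_j w_1 w_2$ with $w_1, w_2 \in D_j$. Thus I can freely substitute the given witnesses $u, v \in D_j$ into the two ``slots'' following the fixed prefix $u_j$, and then transfer binomial (in)equivalences between $u, v$ and the resulting elements of $D_{j+1}$ using only the cancellation property and the commutation theorem, without touching $u_j$ or computing any binomial coefficients directly.

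For the first bullet I would set $x = u_j u u$ and $y = u_j u v$; both lie in $D_{j+1}$. Cancelling the common prefix $u_j u$ via the Cancellation property (\cref{lem: cancellation property}) gives, for every $\ell \geq 1$, the equivalence $x \sim_\ell y \Leftrightarrow u \sim_\ell v$. Taking $\ell = k$ and $\ell = k+1$ then yields $x \sim_k y$ and $x \not\sim_{k+1} y$ directly from the hypotheses on $u$ and $v$.

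For the second bullet I would set $z = u_j u v$ and $w = u_j v u$, again both in $D_{j+1}$. Cancelling the common prefix $u_j$ reduces the problem to comparing $uv$ with $vu$. Since $u$ and $v$ have equal length, \cref{thm:w35} applies and gives $uv \sim_{\ell} vu \Leftrightarrow u \sim_{\ell-1} v$ for every $\ell \geq 2$. With $\ell = k+1$ this reads $z \sim_{k+1} w \Leftrightarrow u \sim_k v$ (true), and with $\ell = k+2$ it reads $z \sim_{k+2} w \Leftrightarrow u \sim_{k+1} v$ (false), which is exactly what is required.

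The only delicate point—really the engine of the argument—is the index shift supplied by \cref{thm:w35}: it converts the level-$k$ gap between $u$ and $v$ into a level-$(k+1)$ gap between $z$ and $w$. Its hypotheses are met automatically: the equal-length condition holds because all words of $D_j$ have the same length, and the relevant indices $k+1$ and $k+2$ are at least $2$ under the standing assumption $k \geq 1$. I would finish by simply noting that $x, y, z, w$ are genuine elements of $D_{j+1}$, so no further bookkeeping is needed.
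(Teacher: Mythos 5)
Your proof is correct. The second bullet is argued exactly as in the paper: take $z = u_juv$, $w = u_jvu$, cancel the prefix $u_j$, and apply \cref{thm:w35} to the equal-length words $u,v$ in both directions. For the first bullet, however, you take a genuinely different (and leaner) route: the paper chooses $x = u_juu$ and $y = u_jvv$, gets $x\sim_k y$ from $\sim_k$ being a congruence, and for the non-equivalence must cancel down to $uu\sim_{k+1}vv$ and then invoke \cref{lem:diff-powers} to deduce $u\sim_{k+1}v$; your choice $x = u_juu$, $y = u_juv$ shares the prefix $u_ju$, so a single application of \cref{lem: cancellation property} gives the clean equivalence $x\sim_\ell y \Leftrightarrow u\sim_\ell v$ for every $\ell$, dispensing with \cref{lem:diff-powers} entirely. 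Both choices are legitimate elements of $D_{j+1}=u_jD_j^2$, and the lemma does not require $x,y,z,w$ to be distinct, so the fact that your $y$ coincides with $z$ is harmless. What the paper's choice buys is consistency with the subsequent theorem's proof, where the pair $u_juu$, $u_jvv$ is reused to double the abelian defect ($|x|_0-|y|_0 = 2(|u|_0-|v|_0)$); what your choice buys is a shorter, more elementary argument for the lemma itself. Your observation that all words of $D_j$ have a common length (needed for \cref{thm:w35}) is correct and worth making explicit, as the paper leaves it implicit.
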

\begin{proof}
By definition, the set $D_{j+1}$ contains the words $x = u_juu$, $y = u_jvv$, $z = u_juv$, and $w = u_jvu$.

We first consider the pair $x,y$.
Since $\sim_k$ is a congruence, $x \sim_k y$.
To see that $x \not\sim_{k+1} y$, assume the contrary, so that
this equivalence reduces to $uu \sim_{k+1} vv$ by \cref{lem: cancellation property}.
\cref{lem:diff-powers} implies $u \sim_{k+1} v$, a contradiction.

Next we have $uv \sim_{k+1} vu$ by \cref{thm:w35},
and thus $z = u_juv \sim_{k+1} u_jvu = w$ by \cref{lem: cancellation property}. Similarly $z \sim_{k+2} w$
would imply $uv \sim_{k+2} vu$ and thus $u \sim_{k+1} v$ by \cref{thm:w35}, a contradiction. The claim follows. 
\end{proof}

\begin{theorem}
The abelian complexity of $\infw{u}$ is unbounded and $\bc{\infw{u}}{k} \prec \bc{\infw{u}}{k+1}$ for all $k \geq 1$.
\end{theorem}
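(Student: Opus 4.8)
The plan is to reduce everything to the preceding lemma via one elementary observation: if two equal-length factors $u,v$ of a word $\infw{z}$ satisfy $u \sim_k v$ but $u \not\sim_{k+1} v$, then $\bc{\infw{z}}{k}(|u|) < \bc{\infw{z}}{k+1}(|u|)$. Indeed, since $\sim_{k+1}$ refines $\sim_k$ by \eqref{eq:ineq}, the canonical map $\Fac_n(\infw{z})/{\sim_{k+1}} \to \Fac_n(\infw{z})/{\sim_k}$ is surjective, and such a pair $u,v$ makes it non-injective; a surjective non-injective map of finite sets has a strictly larger domain. So, recalling the definition of $\prec$, it suffices to produce, for each fixed $k$, infinitely many lengths $n$ at which $\Fac_n(\infw{u})$ contains a pair that is $k$-binomially but not $(k+1)$-binomially equivalent.

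First I would record the two structural facts I need about $\infw{u}$. Each word of $D_j$ is a factor of $\infw{u}$: since every word of $D_{j+1}=u_jD_j^2$ begins with $u_j$, the word $u_j$ is a prefix of $u_{j+1}$ and hence of $\infw{u}$; and $u_j$, being the concatenation of all words of $D_j$, contains each of them as a contiguous block. Moreover all words of $D_j$ share one length $\ell_j$, with $\ell_{j+1} > \ell_j$, so $\{\ell_j\}_j$ is an infinite set of lengths.

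For the strict-refinement statement the plan is a double induction seeded by an explicit pair. Taking $u=0101$ and $v=0110$ in $D_1$, one checks $u \sim_1 v$ while $\binom{0101}{01}=3\neq 2=\binom{0110}{01}$, so $u\not\sim_2 v$. Writing $P(k,j)$ for ``$D_j$ contains words that are $k$- but not $(k+1)$-binomially equivalent,'' the lemma gives simultaneously $P(k,j)\Rightarrow P(k,j+1)$ (through its pair $x,y$) and $P(k,j)\Rightarrow P(k+1,j+1)$ (through its pair $z,w$). Iterating the second implication from $P(1,1)$ yields $P(k,k)$ for every $k\ge 1$, and then iterating the first yields $P(k,j)$ for all $j\ge k$. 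Hence, for each fixed $k$, a witnessing pair lives in $D_j$ --- so at length $\ell_j$ --- for every $j\ge k$; by the observation above, $\bc{\infw{u}}{k}(\ell_j)<\bc{\infw{u}}{k+1}(\ell_j)$ for all $j\ge k$, giving $\bc{\infw{u}}{k}\prec\bc{\infw{u}}{k+1}$.

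Finally I would handle the abelian complexity by tracking the weight spread $s_j:=\max_{w\in D_j}|w|_1-\min_{w\in D_j}|w|_1$. Because any word of $D_{j+1}$ has weight $|u_j|_1+|d_1|_1+|d_2|_1$ with $d_1,d_2\in D_j$, we get $s_{j+1}=2s_j$, and with $s_0=1$ this gives $s_j=2^j$; since the two extremal-weight words of $D_j$ are factors of $\infw{u}$ of common length $\ell_j$, \cref{rk:abelian_complexity} yields $\bc{\infw{u}}{1}(\ell_j)\ge 1+2^j$, so the abelian complexity is unbounded. The proof has no deep obstacle; the one point that deserves care is that a naive use of the lemma only gives the single witness $P(k,k)$, whereas $\prec$ requires infinitely many lengths --- it is precisely the ``preservation'' branch $P(k,j)\Rightarrow P(k,j+1)$ that supplies them, so the bookkeeping of the double induction is the crux.
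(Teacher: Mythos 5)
Your proposal is correct and follows essentially the same route as the paper's proof: seed with the pair $0101,0110\in D_1$, propagate witnesses through the preceding lemma to get, for each $k$, pairs in $D_j$ for all $j\geq k$ that are $k$- but not $(k+1)$-binomially equivalent, and track the doubling of the weight spread across the $D_j$ to get unbounded abelian complexity via \cref{rk:abelian_complexity}. Your write-up merely makes explicit some bookkeeping the paper leaves implicit (that every word of $D_j$ is a factor of $\infw{u}$ of a common length $\ell_j$, and the two branches $P(k,j)\Rightarrow P(k,j+1)$ and $P(k,j)\Rightarrow P(k+1,j+1)$ of the induction).
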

\begin{proof}
First we show that
$\bc{\infw{u}}{1}$ is unbounded.
Assume, for some $j\geq 0$, that $D_j$ contains words $u,v$ with $|u|_0 - |v|_0 = 2^{j}$ (this holds for $j=0$). Then by definition
$D_{j+1}$ contains the words $x = u_juu$ and $y = u_jvv$, for which
$|x|_0 - |y|_0 = 2 (|u|_0 - |v|_0) = 2^{j+1}$. This observation suffices for the claim by \cref{rk:abelian_complexity}.

We then prove the second part of the statement.
Observe that $D_1$ contains the words $0101$ and $0110$, which are abelian
equivalent, but not $2$-binomially equivalent (as $\binom{0101}{01}=3$ and $\binom{0110}{01}=2$). The above lemma then
implies that for all $k \geq 1$ and for all $j\geq k$, the set $D_j$
contains words that are $k$-binomially equivalent, but not $(k+1)$-binomially equivalent.
The claim follows.
\end{proof}

\begin{remark}
It can be shown that the word $\infw{u}$ above has topological
entropy equal to $0$. By modifying the arguments above suitably,
the statement of the above theorem holds for any choice of $d$
and $(q_k)$ in the construction---as long as $q_k > 1$ for
infinitely many $k$. Note that to attain a word with positive
entropy, the sequence $(q_k)$ must satisfy this property. Hence
we have:
\emph{For any positive real number $h$ there
is a uniformly recurrent $d$-ary word (with
$d = \max\{2,\lfloor h \rfloor + 1\}$) having entropy~$h$,
unbounded $\bc{}{1}$, and $\bc{}{k}\prec \bc{}{k+1}$ for all $k$.}
\end{remark}

\section{Answer to \texorpdfstring{\cref{q: stab}}{} and Beyond}\label{q: stab-bis}

The word $0^{\omega}$ gives
$\bc{}{1} = \fc{}$. 
The Fibonacci
word $\infw{f} = 0100101001001010010\cdots$, the fixed point of
the morphism $0 \mapsto 01, 1\mapsto 0$, is a pure morphic word such that $2=\bc{\infw{f}}{1} \prec \bc{\infw{f}}{2} = \fc{\infw{f}}$ by \cref{thm:Sturmian2-binomial}.

\begin{remark}\label{rem:abelian_neq_factor}
We notice that $\bc{\infw{x}}{1} = \fc{\infw{x}}$ cannot be attained
for an aperiodic word $\infw{x}$ (indeed, there must exist a factor
$ava$, with $a \in A$ and $v$ containing a letter different to $a$,
whence $av \sim_1 va$ with $av \neq va$). In fact, the only ultimately periodic words over an $m$-letter alphabet $\{a_1,\ldots,a_m\}$
for which the equality holds are of the form $a_1^{n_1} a_2^{n_2}\cdots a_m^{\omega}$, $n_i \in \N$ (up to permutation of the letters).
\end{remark}

 To answer \cref{q: stab} for larger values of $k$, we take images of a Sturmian word $\infw{s}$ by a power of the Thue--Morse morphism $\varphi$ and we prove the following result.

\begin{theorem}\label{thm:iterate_thue-morse_fibonacci} Let $\varphi$ be the Thue--Morse morphism. Let $\infw{s}$ be a Sturmian word. 
For each $k \geq 0$, the word $\infw{s}_k := \varphi^{k}(\infw{s})$ has
\[\bc{\infw{s}_k}{1} \prec \bc{\infw{s}_k}{2} \prec \cdots \prec \bc{\infw{s}_k}{k+1} \prec \bc{\infw{s}_k}{k+2} = \fc{\infw{s}_k}.\]
In particular, putting the Fibonacci word for $\infw{s}$ gives a morphic positive answer to \cref{q: stab}.
\end{theorem}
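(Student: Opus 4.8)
The plan is to treat the two rightmost links of the chain separately from the rest, since \cref{cor:k-image_k+1-prec} already delivers the lower part. Indeed, a Sturmian word is aperiodic and binary, so applying \cref{cor:k-image_k+1-prec} with $\infw{y} = \infw{s}$ immediately gives
\[
\bc{\infw{s}_k}{1} \prec \bc{\infw{s}_k}{2} \prec \cdots \prec \bc{\infw{s}_k}{k} \prec \bc{\infw{s}_k}{k+1}.
\]
(When $k = 0$ this chain is empty and the whole statement collapses to $\bc{\infw{s}}{1} \prec \bc{\infw{s}}{2} = \fc{\infw{s}}$, which is immediate from $\bc{\infw{s}}{1} \equiv 2$ together with \cref{thm:Sturmian2-binomial}.) It thus remains, for $k \geq 1$, to establish the identity $\bc{\infw{s}_k}{k+2} = \fc{\infw{s}_k}$ and then the strict inequality $\bc{\infw{s}_k}{k+1} \prec \bc{\infw{s}_k}{k+2}$.

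For the identity, since $\bc{\infw{s}_k}{k+2} \leq \fc{\infw{s}_k}$ always holds, it suffices to show that any two distinct equal-length factors $u \neq v$ of $\infw{s}_k$ are not $(k+2)$-binomially equivalent. For factors of length at least $2^k - 1$ I would argue as follows: if $u \sim_{k+2} v$ then in particular $u \sim_{k+1} v$, so \cref{pro:k-image_k+1-prefix-suffix} applied to their $\varphi^k$-factorizations $u = p_1\varphi^k(z)s_1$, $v = p_2\varphi^k(z')s_2$ forces $(p_1,s_1) = (p_2,s_2)$ together with $z \sim_1 z'$ and $z \neq z'$. Cancelling the common prefix and suffix via \cref{lem: cancellation property} reduces $u \sim_{k+2} v$ to $\varphi^k(z) \sim_{k+2} \varphi^k(z')$. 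But $z$ and $z'$ are distinct equal-length factors of the Sturmian word $\infw{s}$ with $z \sim_1 z'$, so \cref{thm:Sturmian2-binomial} yields $z \not\sim_2 z'$; then \cref{lem:Michel's} gives $\varphi^k(z) \not\sim_{k+2} \varphi^k(z')$, a contradiction. For the remaining short lengths $n \leq 2^k$, \cref{lem:k-image-short-factors} shows $\mathcal{L}_n(\infw{s}_k) = \mathcal{L}_n(\infw{t})$, and since $n < 2^{k+2}$ we have $\bc{\infw{t}}{k+2}(n) = \fc{\infw{t}}(n)$ by \cref{eq:k-image-j-complexities}; as $\sim_{k+2}$ only sees subwords of length at most $k+2$, the counts transfer verbatim to $\infw{s}_k$, giving the identity on this range as well.

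For the penultimate strict inequality, I would exploit that a Sturmian word has $\ell + 1$ factors of each length $\ell$ distributed among exactly two abelian classes. Hence for every $\ell \geq 2$ there are two distinct factors $z \neq z'$ of $\infw{s}$ with $z \sim_1 z'$, and their images $\varphi^k(z)$, $\varphi^k(z')$ are distinct (as $\varphi$ is injective) length-$2^k\ell$ factors of $\infw{s}_k = \varphi^k(\infw{s})$. Choosing empty prefix and suffix in \cref{pro:k-image_k+1-prefix-suffix} shows $\varphi^k(z) \sim_{k+1} \varphi^k(z')$, so $\bc{\infw{s}_k}{k+1}(2^k\ell) < \fc{\infw{s}_k}(2^k\ell) = \bc{\infw{s}_k}{k+2}(2^k\ell)$ for all $\ell \geq 2$, which gives $\bc{\infw{s}_k}{k+1} \prec \bc{\infw{s}_k}{k+2}$. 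Finally, the Fibonacci word is Sturmian and a fixed point of a morphism, and a morphic image of a morphic word is again morphic, so $\varphi^k(\infw{f})$ is morphic and realizes the chain for every $\ell = k+2 \geq 2$; combined with $0^{\omega}$ for $\ell = 1$ this provides the claimed positive answer to \cref{q: stab}.

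The main obstacle I anticipate is the identity step: the delicate point is that \cref{pro:k-image_k+1-prefix-suffix} only controls $(k+1)$-equivalence, so one must first use it to pin down the rigid shape $(p_1,s_1)=(p_2,s_2)$ of any $(k+2)$-equivalent pair, and only then invoke the finer quantitative gap of \cref{lem:Michel's} together with the Sturmian rigidity $\bc{\infw{s}}{2} = \fc{\infw{s}}$ to rule out the residual $\varphi^k(z) \sim_{k+2} \varphi^k(z')$.
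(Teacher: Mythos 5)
Your proof is correct, and its heart --- establishing $\bc{\infw{s}_k}{k+2} = \fc{\infw{s}_k}$ by combining \cref{pro:k-image_k+1-prefix-suffix}, cancellation, \cref{thm:Sturmian2-binomial} and \cref{lem:Michel's} --- is exactly the paper's argument. You diverge in two minor places. For the link $\bc{\infw{s}_k}{k+1}\prec\bc{\infw{s}_k}{k+2}$ the paper simply observes that $\bc{\infw{s}_k}{k+1}$ is bounded (an immediate consequence of \cref{thm:Parikh-collinear_bounded-k_to_bounded_k+1}, since $\infw{s}$ has bounded abelian complexity) while $\fc{\infw{s}_k}$ is unbounded by aperiodicity; your constructive alternative --- exhibiting, for each $\ell\geq 2$, distinct abelian-equivalent Sturmian factors $z\neq z'$ of length $\ell$ whose images collapse under $\sim_{k+1}$ by the ``if'' direction of \cref{pro:k-image_k+1-prefix-suffix} --- is equally valid, just slightly more work. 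Second, you explicitly dispose of factors too short to admit a $\varphi^k$-factorization via \cref{lem:k-image-short-factors} and \cref{eq:k-image-j-complexities}; the paper leaves this boundary case implicit, so this is a welcome bit of extra care rather than a deviation.
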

\begin{proof}
Observe that $\infw{s}_k$ has bounded $(k+1)$-binomial complexity as a straightforward application of \cref{thm:Parikh-collinear_bounded-k_to_bounded_k+1} (because $\infw{s}$ has bounded abelian complexity), and thus $\bc{\infw{s}_k}{k+1} \prec \fc{\infw{s}_k}$.
By \cref{cor:k-image_k+1-prec}, we need only
to show that $\bc{\infw{s}_k}{k+2} = \fc{\infw{s}_k}$. 

Let $u$ and $v$ be distinct factors of $\infw{s}_k$. Assume they
are $(k+2)$-binomially equivalent.
By \cref{pro:k-image_k+1-prefix-suffix}, we have
that $u = p\varphi^k(z)s$, $v = p\varphi^k(z')s$ with $z \sim_1 z'$.
If $z \neq z'$, then $z \not \sim_2 z'$ by
\cref{thm:Sturmian2-binomial}. But then \cref{lem:Michel's} implies that
$\varphi^k(z) \not\sim_{k+2} \varphi^{k}(z')$, contradicting the
assumption. Hence we deduce that $z = z'$, but then $u = v$ contrary to the assumption.
\end{proof}

\begin{remark}
In the above proof, since $\infw{s}$ is Sturmian, \cref{thm:Sturmian2-binomial} says distinct factors are not $2$-binomially equivalent. This means that \cref{thm:iterate_thue-morse_fibonacci} applies to and only to aperiodic words $\infw{s}$ such that $\bc{\infw{s}}{2}=\fc{\infw{s}}$.
The ``only if''-part of the statement follows by a repeated application
of \cref{prop:Parikh-collinear-k-characterization} on the Thue--Morse morphism together with the fact that the morphism is injective.
\end{remark}


\subsection{Strengthening \texorpdfstring{\cref{q: stab}}{Question B}}\label{sec: 7.1}

We answered \cref{q: stab} by providing a word with bounded abelian complexity. We can therefore strengthen the question with the following extra requirement.

\begin{questions}\label{q: stab + unbounded}
For each $\ell \ge 1$, does there exist a word $\infw{w}$ (depending on $\ell$) such that $\bc{\infw{w}}{1}$ is unbounded and 
\[
\bc{\infw{w}}{1}
\prec \bc{\infw{w}}{2}
\prec \cdots 
\prec \bc{\infw{w}}{\ell -1}
\prec \bc{\infw{w}}{\ell}
= \fc{\infw{w}}?
\]
If the answer is positive, can we find a (pure) morphic such word $\infw{w}$?
\end{questions}

The following word answers the question for $\ell=3$ in the positive.

\begin{theorem}
The word $\infw{h} = 0112122122212222122222\cdots$ fixed point of the morphism
$0\mapsto 01$, $1 \mapsto 12$, and $2\mapsto 2$ is such that its abelian complexity $\bc{\infw{h}}{1}$ is unbounded and $\bc{\infw{h}}{1} \prec \bc{\infw{h}}{2} \prec \bc{\infw{h}}{3} = \fc{\infw{h}}$.
\end{theorem}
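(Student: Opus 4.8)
The plan is to analyze the word $\infw{h}$, the fixed point of $\sigma\colon 0\mapsto 01,\ 1\mapsto 12,\ 2\mapsto 2$, by relating it to the Thue--Morse machinery developed in the paper. First I would establish the abelian complexity. Since $\sigma$ fixes the letter $2$ and expands $0$ and $1$ while feeding $2$'s, one sees that $\infw{h}$ contains arbitrarily long runs $2^n$ (from iterating $1\mapsto 12$) as well as factors with a positive proportion of $1$'s; by \cref{rk:abelian_complexity} the spread $\max_{u,v}\bigl||u|_2-|v|_2\bigr|$ over length-$n$ factors is unbounded, so $\bc{\infw{h}}{1}$ is unbounded. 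This also rules out the word being a satisfactory candidate only if we failed to separate the complexities, so the real content is the chain of strict inequalities.

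Next I would show $\bc{\infw{h}}{1}\prec\bc{\infw{h}}{2}$ and $\bc{\infw{h}}{2}\prec\bc{\infw{h}}{3}$. For the first, I would exhibit, for infinitely many lengths $n$, two factors $u\sim_1 v$ with $\binom{u}{01}\neq\binom{v}{01}$ (equivalently $u\not\sim_2 v$): candidates are factors straddling the boundary where the pattern $01$ versus $12$ appears, so that rearranging letters changes the count of the subword $01$ while preserving the Parikh vector. For the second, I would similarly locate, for infinitely many $n$, a pair $u\sim_2 v$ with $u\not\sim_3 v$, distinguished by a length-$3$ subword such as $011$ or $012$; here \cref{lem:diff-powers} and the cancellation property (\cref{lem: cancellation property}) are the tools to manufacture such pairs by taking powers or concatenations of short $2$-equivalent-but-not-$3$-equivalent blocks and embedding them via the self-similar structure of $\infw{h}$.

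The crucial and hardest step is the equality $\bc{\infw{h}}{3}=\fc{\infw{h}}$, i.e.\ that \emph{any} two distinct equal-length factors of $\infw{h}$ are separated by some subword of length $\le 3$. The plan is to use the $\sigma$-structure: an arbitrary factor of $\infw{h}$ decomposes according to where it sits relative to images of letters, and over the ternary alphabet $\{0,1,2\}$ the binomial coefficients $\binom{\cdot}{0},\binom{\cdot}{1},\binom{\cdot}{2}$ (the Parikh vector) together with the length-$2$ and length-$3$ counts should pin down a factor completely. I expect this to reduce to a case analysis on the possible forms of factors (a factor contains at most one occurrence of $0$, and the $1$'s and $2$'s occur in a rigidly determined order dictated by the morphism), from which one argues that the Parikh vector plus the counts of $01$, $12$, and one length-$3$ subword already determine the factor. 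The main obstacle will be making this case analysis airtight: controlling how factors overlap the images $\sigma^n(0),\sigma^n(1)$ and ensuring no two genuinely distinct factors collapse to the same triple of low-order binomial data. Once this is done, combining $\bc{\infw{h}}{3}=\fc{\infw{h}}$ with the two strict inequalities and the unboundedness of $\bc{\infw{h}}{1}$ yields the full statement $\bc{\infw{h}}{1}\prec\bc{\infw{h}}{2}\prec\bc{\infw{h}}{3}=\fc{\infw{h}}$.
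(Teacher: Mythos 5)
Your overall strategy is the paper's: establish unboundedness of $\bc{\infw{h}}{1}$, exhibit separating pairs for the strict inequalities, and prove $\bc{\infw{h}}{3}=\fc{\infw{h}}$ by a case analysis on the rigid form of factors (the paper does this in \cref{prop:ab-2-bin_h} and \cref{prop:3-bin_factor_h}). But several of the specific tools you name would not do the work. First, a small slip: $0^{-1}\infw{h}$ has no factors with a positive proportion of $1$'s --- the length-$n$ prefix carries only $\Theta(\sqrt{n})$ ones --- so the Parikh spread is $\Theta(\sqrt{n})$, which still gives unboundedness but not for the reason you state. Second, the subword $01$ is useless as a discriminator since $0$ occurs exactly once in $\infw{h}$; the paper instead separates the abelian-equivalent factors $2^x12^y$, $x+y=n-1$, by $\binom{\cdot}{12}$. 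Third, the Thue--Morse machinery (\cref{lem:diff-powers}, Ochsenschl\"ager-style blocks, self-similar embeddings) has no purchase here because $\infw{h}$ is not a $\varphi$-image; and you do not need to manufacture $\sim_2$-but-not-$\sim_3$ pairs at all --- once $\bc{\infw{h}}{3}=\fc{\infw{h}}$ is known, any pair of \emph{distinct} $2$-binomially equivalent factors (the paper uses $12^{n-2}1$ and $212^{n-4}12$) already yields $\bc{\infw{h}}{2}(n)<\fc{\infw{h}}(n)=\bc{\infw{h}}{3}(n)$.

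The genuine gap is in the decisive step. You correctly observe that every factor of $0^{-1}\infw{h}$ with at least two $1$'s has the rigid form $2^{x}12^{a}12^{a+1}1\cdots 12^{a+t}12^{y}$, but ``Parikh vector plus the counts of $01$, $12$, and one length-$3$ subword'' is not yet an argument: you must say which coefficient separates two such factors when their $\sim_1$ and $\sim_2$ data agree. The paper's point is that if $a_1=a_2$ the factors are forced to coincide (else already $\binom{\cdot}{12}$ differs), while if $a_1<a_2$ the coefficient $\binom{\cdot}{121}$ discriminates, via the observation that $\binom{u_i}{121}=\binom{v_i}{121}$ for the core $v_i=12^{a_i}12^{a_i+1}1\cdots 12^{a_i+t}1$, and $v_1$ is a \emph{proper subword} of $v_2$, so $\binom{v_1}{121}<\binom{v_2}{121}$. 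Without identifying this (or an equivalent) discriminator and the monotonicity argument behind it, the case analysis cannot be closed.
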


We obtain the previous theorem by combining the following two results.

\begin{proposition}\label{prop:ab-2-bin_h}
The abelian complexity $\bc{\infw{h}}{1}$ of $\infw{h}$ is unbounded and
$\bc{\infw{h}}{1}(n) < \bc{\infw{h}}{2}(n) < \fc{\infw{h}}(n)$ for all $n\geq 6$.
\end{proposition}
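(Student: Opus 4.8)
The plan is to first pin down the combinatorial structure of $\infw{h}$ explicitly and then read off all three claims from it. Writing $\sigma$ for the defining morphism $0\mapsto01$, $1\mapsto12$, $2\mapsto2$, one checks directly that $\sigma$ fixes the infinite product $0\prod_{k\ge0}12^{k}$: indeed $\sigma(0)=01$ and $\sigma(12^{k})=12^{k+1}$, so $\sigma\!\left(0\prod_{k\ge0}12^{k}\right)=01\prod_{j\ge1}12^{j}=0\prod_{k\ge0}12^{k}$. Hence $\infw{h}=0\,1\,12\,122\,1222\cdots$, which matches the stated expansion. From this closed form the factor structure of the $\{1,2\}$-part is transparent: the occurrences of $1$ are the first letters of the blocks $12^{k}$, two consecutive $1$'s are separated by $0,1,2,\dots$ letters $2$ (each gap occurring exactly once), and the maximal runs of $2$'s realize every length $1,2,3,\dots$. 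Consequently the length-$n$ factors containing exactly one $1$ are precisely the words $2^{a}12^{b}$ with $a+b=n-1$ (all such $a,b$ occur, by centering on a $1$ deep enough in the word), and those containing exactly two $1$'s are precisely $2^{a}12^{c}12^{b}$ with $0\le a\le c-1$ and $0\le b\le c+1$ (the middle block forces which consecutive pair of $1$'s is used). Throughout I would use the elementary reduction that, for factors $u,v$ avoiding the unique letter $0$ with the same Parikh vector, $u\sim_2 v$ iff $\binom{u}{12}=\binom{v}{12}$, since $\binom{u}{11},\binom{u}{22}$ and $\binom{u}{12}+\binom{u}{21}=|u|_1|u|_2$ are all fixed by the Parikh vector; and the immediate identities $\binom{2^{a}12^{b}}{12}=b$ and $\binom{2^{a}12^{c}12^{b}}{12}=c+2b$.

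For the unboundedness of $\bc{\infw{h}}{1}$, I would note that the length-$\bigl(1+\binom{m}{2}\bigr)$ prefix of the $\{1,2\}$-part contains exactly $m$ ones, while $2^{\ell}$ is a factor for every $\ell$ (arbitrarily long runs of $2$'s occur) and contains no $1$. Sliding a window of length $\ell=1+\binom{m}{2}$ across $\infw{h}$ changes $|u|_1$ by at most one per step and visits both an $m$-one factor and a $0$-one factor, so all intermediate values of $|u|_1$ are attained; hence $\bc{\infw{h}}{1}(\ell)\ge m+1$, which is unbounded.

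For $\bc{\infw{h}}{1}(n)<\bc{\infw{h}}{2}(n)$, I would observe that the $n$ single-$1$ factors $2^{a}12^{b}$ of length $n$ all lie in one abelian class, yet by the identity above they realize the $n$ distinct values $\binom{\cdot}{12}=b\in\{0,\dots,n-1\}$, hence $n$ distinct $2$-binomial classes. Thus this single abelian class already splits, giving the strict inequality (indeed for every $n\ge2$).

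For $\bc{\infw{h}}{2}(n)<\fc{\infw{h}}(n)$ I would exhibit, for each $n\ge6$, two \emph{distinct} length-$n$ factors that are $2$-binomially equivalent. The key point is that the substitution $(a,c,b)\mapsto(a+1,c-2,b+1)$ preserves both the length $a+c+b+2$ and the value $\binom{2^{a}12^{c}12^{b}}{12}=c+2b$. I would therefore take the two-$1$ factors $2^{0}12^{c}12^{b}$ and $2^{1}12^{c-2}12^{b+1}$ with $c+b=n-2$; they are distinct and, by the reduction, $2$-binomially equivalent. It remains to verify that \emph{both} actually occur in $\infw{h}$, i.e. that the admissibility constraints $a\le c-1$, $b\le c+1$ hold for each triple: this forces $c\ge4$ and $b\le c-2$, which together with $c+b=n-2$ is solvable (e.g. $c=\max\{4,\lceil n/2\rceil\}$) exactly when $n\ge6$. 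This matching of the parametrised pair against the precise set of admissible $(a,c,b)$ is the only delicate step, and it is exactly where the threshold $n\ge6$ enters: the short runs of $2$'s near the start of $\infw{h}$ do not leave enough room for the construction when $n<6$, so the small cases genuinely must be excluded.
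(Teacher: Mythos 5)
Your proposal is correct and follows essentially the same route as the paper: unboundedness of $\bc{\infw{h}}{1}$ from the growth of the maximal number of $1$'s in length-$n$ factors, the strict inequality $\bc{\infw{h}}{1}<\bc{\infw{h}}{2}$ from the abelian class $\{2^a12^b : a+b=n-1\}$ splitting into $n$ distinct $2$-binomial classes, and $\bc{\infw{h}}{2}<\fc{\infw{h}}$ from a pair of distinct two-$1$ factors that are $2$-binomially equivalent. Your parametrized family $(2^012^c12^b,\,2^112^{c-2}12^{b+1})$ contains the paper's witness pair $12^{n-2}1$ and $212^{n-4}12$ as the instance $c=n-2$, $b=0$, and your admissibility analysis correctly locates the threshold $n\ge 6$.
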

\begin{proof}
We claim that $\bc{\infw{h}}{1}$ is of the order $\Theta(\sqrt{n})$. Clearly it suffices to show the claim for the
word $\infw{h}'=0^{-1}\infw{h}$, as removing the first zero always removes exactly one abelian equivalence class: the only one that contains a zero. The
resulting word $\infw{h}'$ is effectively a binary word;
it is evident that the maximal number of $1$'s in a word of length $n$ is attained by the prefix of
$\infw{h}'$. This value equals the maximal $m$ for which $\sum_{i=1}^m i = \binom{m+1}{2} \leq n$. Clearly $m = \Theta(\sqrt{n})$.
By \cref{rk:abelian_complexity}, we conclude that the abelian complexity of $\infw{h}$ is $\Theta(\sqrt{n})$.

Since the abelian complexity of $\infw{h}$ if unbounded, so is its $2$-binomial complexity. However, the $2$-binomial complexity does not equal
the factor complexity at lengths $n \geq 6$: $\infw{h}$ contains both the factors $12^{n-2}1$ and $212^{n-4}12$ which are readily seen to be $2$-binomially equivalent. (One may also invoke a result from \cite{Fosse2004} for binary alphabets.)

Finally observe that the abelian complexity does not coincide with the $2$-binomial complexity either:
the factors $2^x 1 2^y$ with $x+y = n-1$ are abelian equivalent but not $2$-binomially equivalent.
This ends the proof.
\end{proof}

\begin{proposition}\label{prop:3-bin_factor_h}
We have $\bc{\infw{h}}{3} = \fc{\infw{h}}$.
\end{proposition}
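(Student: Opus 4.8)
The plan is to prove the contrapositive: any two distinct factors of $\infw{h}$ of the same length are already separated by a subword of length at most $3$. The first move is a chain of reductions. Since $0 \mapsto 01$ is the only production that creates a $0$ and $\infw{h}$ begins with $0$, the letter $0$ occurs exactly once in $\infw{h}$; hence for each $n$ there is a single factor of length $n$ containing $0$ (the length-$n$ prefix), and it is separated from every other length-$n$ factor by the value $\binom{\cdot}{0}$. So I may restrict to factors of $\infw{h}' = 0^{-1}\infw{h} = \prod_{k\ge 0} 1\,2^{k}$, a word over $\{1,2\}$. Two factors with different numbers of $1$'s are abelian-inequivalent, so I fix the common value $r = |u|_1 = |v|_1$. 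The cases $r = 0$ (only $2^{n}$) and $r = 1$ (where $2^{a}12^{b}$ is pinned down by $\binom{\cdot}{21}=a$) are immediate, so the crux is $r \ge 2$.

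For the main case I exploit the rigidity of $\infw{h}'$: consecutive $1$'s are separated by runs of $2$'s of lengths $0,1,2,\dots$, so any factor with $r \ge 2$ ones has the shape $u = 2^{a}\,1\,2^{c_0}\,1\,2^{c_0+1}\cdots 1\,2^{c_0+r-2}\,1\,2^{b}$ and is completely determined by the triple $(a,c_0,b)$ for fixed $r$, the internal runs being forced to be consecutive integers. Writing $\ell_0 = a$, $\ell_i = c_0 + i - 1$ for $1 \le i \le r-1$, and $\ell_r = b$ for the run lengths, I introduce the three weighted moments $P_0 = \sum_i \ell_i$, $P_1 = \sum_i i\,\ell_i$, and $P_2 = \sum_i i^2 \ell_i$. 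The key point is that each $P_t$ is recoverable from binomial coefficients of length at most $3$: indeed $P_0 = |u|_2$, and $P_1 = \binom{u}{12}$ by counting, for each $2$, the number of $1$'s to its left; while taking the middle $2$ as pivot gives the crucial \emph{linear} identity $\binom{u}{121} = \sum_i \ell_i\, i(r-i) = rP_1 - P_2$, whence $P_2 = r\binom{u}{12} - \binom{u}{121}$. Since $u \sim_3 v$ forces $|u|_2 = |v|_2$, $\binom{u}{12} = \binom{v}{12}$, $\binom{u}{121} = \binom{v}{121}$, and $|u|_1 = |v|_1 = r$, the two factors share all three moments $P_0, P_1, P_2$.

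It then remains to show that $(a,c_0,b) \mapsto (P_0,P_1,P_2)$ is injective for $r \ge 2$. This map is affine, and expanding $P_0, P_1, P_2$ in the variables $a, b, c_0$ produces the linear part with matrix
\[
\begin{pmatrix} 1 & 1 & r-1 \\ 0 & r & \binom{r}{2} \\ 0 & r^{2} & \sum_{i=1}^{r-1} i^{2} \end{pmatrix},
\]
whose determinant equals $r\sum_{i=1}^{r-1} i^{2} - r^{2}\binom{r}{2} = -\tfrac{1}{6}\,r^{2}(r-1)(r+1)$, which is nonzero for $r \ge 2$. Hence equal moments force equal $(a,c_0,b)$, i.e.\ $u = v$, contradicting distinctness. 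Combining this with the reductions above shows that distinct equal-length factors are never $3$-binomially equivalent, which is exactly $\bc{\infw{h}}{3} = \fc{\infw{h}}$.

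The step demanding the most care, and the conceptual heart of the argument, is the identification of $P_2$ with a length-$3$ statistic. Generic length-$3$ binomial coefficients are quadratic in the run lengths, and the ``two $2$'s, one $1$'' family $\{\binom{\cdot}{122}, \binom{\cdot}{212}, \binom{\cdot}{221}\}$ turns out to carry only a single new (quadratic, non-injective) datum beyond the $2$-binomial information; it is the observation that $\binom{\cdot}{121}$ is \emph{linear} in the $\ell_i$ that unlocks the second moment and collapses the problem to a clean nonsingular linear system. I would therefore verify the pivot count for $\binom{u}{121}$ and the determinant evaluation most carefully, since the whole proof hinges on these two computations.
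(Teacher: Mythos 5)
Your proof is correct. It shares the paper's skeleton --- discard the unique $0$, reduce to $\infw{h}'=0^{-1}\infw{h}=\prod_{k\ge 0}12^k$, dispose of factors with at most one $1$ using coefficients of length at most $2$, and then separate the remaining factors via $\binom{\cdot}{121}$ --- but the way you exploit $\binom{\cdot}{121}$ is genuinely different. The paper splits the case $r\ge 2$ according to whether the first internal run lengths coincide: if they do, $\binom{\cdot}{21}$ already separates the two factors; if not, it observes that the ``core'' $v_1=12^{a_1}12^{a_1+1}\cdots 1$ embeds as a proper subword of the core of the other factor, so the two values of $\binom{\cdot}{121}$ differ by strict monotonicity. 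You instead run a single uniform argument: encode a factor by its run-length vector $(\ell_0,\dots,\ell_r)$, observe that $|u|_2$, $\binom{u}{12}$ and $r\binom{u}{12}-\binom{u}{121}$ are the moments $P_0,P_1,P_2$ (the identity $\binom{u}{121}=\sum_i \ell_i\,i(r-i)$, which I verified, is exactly the linearity that makes this work), and show that the affine map $(a,c_0,b)\mapsto(P_0,P_1,P_2)$ has nonsingular linear part with determinant $-\tfrac16 r^2(r-1)(r+1)$ (also verified). Your route takes a little more setup but avoids the case split and makes explicit that the four statistics $|u|_1$, $|u|_2$, $\binom{u}{12}$, $\binom{u}{121}$ already determine a factor of $\infw{h}'$, a mild strengthening of the proposition; the paper's monotonicity argument is shorter and computation-free.
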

\begin{proof}
We may again discard the first $0$ of $\infw{h}$,
as the prefix is the only factor containing a zero. Assume to the contrary that there exist $3$-binomially equivalent distinct factors $u_1$ and $u_2$ in
$\infw{h}'=0^{-1}\infw{h}$. The two factors must contain the same number of $1$'s, and hence at least one under the assumption that they are distinct. If the factors are of the form $u_i = 2^{x_i}12^{y_i}$ with $x_1 \neq x_2$, then the factors are not even $2$-binomially equivalent. So the words
contain at least two $1$'s. By the structure of $\infw{h}$, we may write $u_i = 2^{x_i}12^{a_i}12^{a_i+1}1\cdots 12^{a_i+t}12^{y_i}$ for some
$t\geq 0$, $a_i \in \mathbb{N}$, $x_i < a_i$ and $y_i \leq a_{i}+t+1$ for all $i\in\{1,2\}$.
If $a_1 = a_2$, then $x_1 \neq x_2$, and we again deduce that the factors are not even $2$-binomially equivalent. So we must have $a_1 < a_2$ without loss of generality.
We show that in this case the factors are not $3$-binomially equivalent.
Indeed, consider the coefficient $\binom{\cdot}{121}$. For $i=1,2$, we clearly have
\begin{equation}\label{eq:rel-coeff121}
\binom{u_i}{121} = \binom{v_i}{121},
\end{equation}
where $v_i = 12^{a_i}12^{a_i+1}1\cdots 12^{a_i+t}1$ is obtained from $u_i$ by deleting a prefix and a suffix.
But, since $a_1 < a_2$, notice now that $v_1$ is a proper subword of $v_2$, meaning that each occurrence of $121$ in $v_1$ has a corresponding occurrence
in $v_2$. Clearly $v_2$ will have more occurrences of $121$.
This combined with~\eqref{eq:rel-coeff121} gives the claim.
\end{proof}

A complete answer to \cref{q: stab + unbounded} is far from obvious; especially if one wishes to obtain a pure morphic word. Conversely, for a non-periodic morphic word $\infw{w}$ which is not the fixed point of a Parikh-collinear morphism, one can wonder about the existence of a minimal value $m$ for which the binomial and factor complexities would coincide. Does there exists $m\in \N$ such that $\bc{\infw{w}}{m}=\fc{\infw{w}}$?

Even with an apparently simple situation, it is far from obvious. As stated in the introduction, computing the $k$-binomial complexity of a particular infinite word remains challenging. The period-doubling word $\infw{pd}=0100 0101 010001\cdots$, the fixed point of $\sigma:0\mapsto 01$, $1\mapsto 00$, can be proved to have the following properties. Its abelian complexity $\bc{\infw{pd}}{1}$ is unbounded \cite[Lem. 4]{KarhumakiSaarela2017}.  For the $2$-binomial complexity, we have
$\bc{\infw{pd}}{2}(2^n) = \fc{\infw{pd}}(2^n)$ for all $n$, but
$\bc{\infw{pd}}{2}(n) < \fc{\infw{pd}}(n)$ for all $n \neq 2^m$ \cite[Prop. 4.5.1]{Lejeune2021thesis}. Otherwise stated, $\bc{\infw{pd}}{1} \prec \bc{\infw{pd}}{2} \prec \fc{\infw{pd}}$.  Computer experiments show that $\bc{\infw{pd}}{3} \prec \fc{\infw{pd}}$  and suggest that $\bc{\infw{pd}}{4}= \fc{\infw{pd}}$.

\subsection{Completing the Binomial Complexities of \texorpdfstring{$\varphi^k$}{} Applied to a Sturmian word}
For any $k\geq 1$, the results presented so far imply that we
have the exact $j$-binomial complexity function of $\varphi^k(\infw{s})$, with $\infw{s}$
a Sturmian word, for each $j \neq k+1$. As a bonus, we compute the
$(k+1)$-binomial complexity in \cref{prop:sturmiancomplexity}.
We first analyze the abelian Rauzy graphs of Sturmian words,
after which we may apply \cref{prop:formula}
to obtain the exact $(k+1)$-binomial complexity as well.

A Sturmian word $\infw{s}$ has $\bc{\infw{s}}{1}(n) = 2$ for all $n \geq 1$.
Hence its abelian Rauzy graph has two vertices.

\begin{proposition}\label{prop:sturmian-abelian-Rauzy-graphs}
Let $G_n = (V_n,E_n)$. We have $\#E_1 = 3$ and $\#E_n = 4$ for all $n\geq 2$. For all $n\geq 1$, we have $\#E_n/{\equiv_L} + \#E_n/{\equiv_R} = 6$.
\end{proposition}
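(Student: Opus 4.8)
The plan is to use that a Sturmian word $\infw{s}$ is binary, balanced, and aperiodic, so by \cref{rk:abelian_complexity} the graph $G_n$ has exactly two vertices for every $n\ge 1$: a light vertex $\ver{l}$ (the length-$n$ Parikh vectors of minimal weight $w$) and a heavy vertex $\ver{h}$ (weight $w+1$). First I would pin down the non-loop edges: by \cref{obs:graph-structure} an edge labelled $(0,1)$ runs $\ver{l}\to\ver{h}$ and one labelled $(1,0)$ runs $\ver{h}\to\ver{l}$, and in each case the source, the target, and hence the middle Parikh vector $\Psi(u)$ are all forced, so there is at most one edge of each label, both existing by \cref{lem:boundaries}. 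Thus $\#E_n = 2 + L_n$, where $L_n$ is the number of loops, and everything reduces to counting loops. The key translation is that a loop labelled $(a,a)$ is present exactly when some length-$(n+1)$ factor is $a$-framed, i.e.\ of the form $aua$ with $|u|=n-1$, its vertex being determined by $|u|_1$. Tabulating the four candidates $(0,0)$ at $\ver{l}$, $(0,0)$ at $\ver{h}$, $(1,1)$ at $\ver{l}$, $(1,1)$ at $\ver{h}$, their middle words $u$ have weights $w,\,w+1,\,w-1,\,w$ and the framed factors themselves have weights $w,\,w+1,\,w+1,\,w+2$.

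For the upper bound I would invoke balance at length $n-1$: the weights of length-$(n-1)$ factors fill only two consecutive values, so at most two of $\{w-1,w,w+1\}$ are admissible for $u$; moreover the pairs $\{(0,0)\text{ at }\ver{l},(1,1)\text{ at }\ver{h}\}$ and $\{(0,0)\text{ at }\ver{h},(1,1)\text{ at }\ver{l}\}$ each require two length-$(n-1)$ factors whose weights differ by $2$, which balance forbids. A short inspection of the two admissible weight windows then gives $L_n\le 2$. This already records the crucial fact needed later: when there are two loops they are never in one of these two \emph{crossed} positions.

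The lower bound is where I expect the real work. For $n=1$, balance forbids both $00$ and $11$ while aperiodicity forbids their simultaneous absence, so $L_1=1$ and $\#E_1=3$. For $n\ge2$ I rule out $L_n\le1$. If $L_n=0$, every window $x_i\cdots x_{i+n}$ is mixed, so $x_{i+n}=1-x_i$ and $\infw{s}$ has period $2n$, contradicting aperiodicity. For $L_n=1$, complement so that $0$ is the majority letter; since an arc argument shows a $0$-framed window always exists, the unique loop must be $(0,0)$ and there is no $1$-framed window. Now compare the minimal window weight with the minimal length-$(n-1)$ weight. If they agree, a minimal-weight window is forced to be $0$-framed, so all $0$-framed windows share the minimal weight; writing each window weight as its interior weight plus its two endpoints (the mixed windows then necessarily carrying the larger weight) forces every length-$(n-1)$ factor to the same weight, impossible by \cref{rk:abelian_complexity}. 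If instead the length-$(n+1)$ floor has jumped, the inequality $\{(n-1)\alpha\}\ge 1-2\alpha$ characterising this case forces $\{n\alpha\}\in[0,\alpha)\cup(1-\alpha,1)$, which is exactly the condition that a pair of $1$'s lie at distance $n$, i.e.\ a $1$-framed window exists --- a contradiction. Hence $L_n=2$ and $\#E_n=4$ for $n\ge2$. This rotation computation in the mechanical-word model is the step I expect to be most delicate.

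Finally, for the identity $\#(E_n/{\equiv_L})+\#(E_n/{\equiv_R})=6$, recall that these counts equal $\#Y_{\infw{s},L}(n)$ and $\#Y_{\infw{s},R}(n)$. Since the two loops are never in a crossed position, only four configurations survive --- the two loops sharing a vertex, or both carrying the same label at distinct vertices --- and for each I would read off $\#Y_{\infw{s},L}(n)=\#Y_{\infw{s},R}(n)=3$ directly from the maps $(\vec x\xrightarrow{(a,b)}\vec y)\mapsto(a,\vec y)$ and $(\vec x\xrightarrow{(a,b)}\vec y)\mapsto(\vec x,b)$; the single-loop graph at $n=1$ is checked the same way, again giving $3+3=6$. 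Thus both numerical claims follow once the loop count is settled, so that after \cref{prop:formula} the exact $(k+1)$-binomial complexity of $\varphi^k(\infw{s})$ becomes computable.
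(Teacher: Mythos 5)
Your proposal reaches the correct conclusions and much of it runs parallel to the paper: two vertices, exactly one non-loop edge in each direction, the exclusion of the two ``crossed'' loop pairs by balancedness, and the final class count $3+3=6$ read off from the surviving configurations (the paper instead observes that the one admissible extra loop is $\equiv_L$- and $\equiv_R$-equivalent to the existing non-loop edges, which amounts to the same computation). One small slip: for the crossed pair $\{(0,0)$ at $\ver{l},\,(1,1)$ at $\ver{h}\}$ the two interior words both have weight $w$, so the balance violation is at length $n+1$ (framed weights $w$ versus $w+2$), not at length $n-1$ as you state; your own weight table already contains the correct data. The genuine divergence is in the hardest step, excluding the single-loop configuration for $n\ge 2$. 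You pass to the mechanical-word model, compare $\lfloor (n+1)\alpha\rfloor$ with $\lfloor (n-1)\alpha\rfloor$, and in the ``jumped'' case derive $\{n\alpha\}\in[0,\alpha)\cup(1-\alpha,1)$ to force a $1$-framed window; I checked that this computation goes through, but it imports the rotation representation (which the paper never sets up), needs care with endpoint conventions, and you leave it as a sketch. The paper instead starts from the fact that $G_n$ is strongly connected and contains a right special vertex, so the base graph already has one loop, and kills the ``no further loop'' case in three lines: with only the loop $(0,0)$ at $\vec{x}$, every $\vec{x}$-factor begins with $0$; the right special factor $0v$ then forces $v$ to begin with $1$ (since $v1$ is a $\vec{y}$-factor) while $v0$ is an $\vec{x}$-factor beginning with $1$, a contradiction. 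Your route buys a quantitative picture of exactly when the floor function jumps, which could be reused elsewhere; the paper's route buys brevity and stays entirely within the combinatorial framework already established (balance, right special factors, \cref{obs:graph-structure}), with no appeal to the arithmetic of slopes.
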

\begin{proof}
Sturmian words are aperiodic, so the graph $G_n$ is always strongly connected.
It also always has a right special vertex.

The claim is plain to verify for $G_1$. Indeed, only one of the vertices can
have a loop, and this loop can only be labeled with $(a,a)$, where $a$ is the
letter corresponding to the vertex in question. The second claim is
straightforward to check in this case.

We next consider $G_n$, $n\geq 2$. Since $G_n$ is strongly connected and
has a right special vertex, we conclude that $G_n$ is obtained by adding
(possibly zero) loops to one of the graphs in \cref{fig:sturmian-subgraph}

\begin{figure}[h!t]
\centering
\begin{tikzpicture}
\node[circle,draw,minimum size = 11pt,inner sep = 0pt] (x) at (1,0) {$\vec{x}$};
\node[circle,draw,minimum size = 11pt,inner sep = 0pt] (y) at (2,0) {$\vec{y}$};

\draw[->] (x) edge[out=330,in=210] node[midway,below] {\tiny $(0,1)$} (y);
\draw[->] (y) edge[in=30,out=150] node[midway,above] {\tiny $(1,0)$} (x);

\draw[->] (x) edge[out=70,in=110,looseness=8] node[midway,above] {\tiny $(0,0)$} (x);
\end{tikzpicture}
\raisebox{15pt}{
\quad
or
\quad
}
\begin{tikzpicture}
\node[circle,draw,minimum size = 11pt,inner sep = 0pt] (x) at (1,0) {$\vec{x}$};
\node[circle,draw,minimum size = 11pt,inner sep = 0pt] (y) at (2,0) {$\vec{y}$};

\draw[->] (x) edge[out=330,in=210] node[midway,below] {\tiny $(0,1)$} (y);
\draw[->] (y) edge[in=30,out=150] node[midway,above] {\tiny $(1,0)$} (x);

\draw[->] (y) edge[out=70,in=110,looseness=8] node[midway,above] {\tiny $(1,1)$} (y);
\end{tikzpicture}
\caption{Possible subgraphs of a Sturmian word's abelian Rauzy graph.}
\label{fig:sturmian-subgraph}
\end{figure}
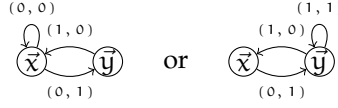

By symmetry, we may assume it is the one on the left.
We claim that we add exactly one loop to this graph. More precisely,
the added loop is either $\vec{x} \xrightarrow{(1,1)} \vec{x}$ or
$\vec{y} \xrightarrow{(0,0)} \vec{y}$.

First off, we cannot add the loop $\vec{y} \xrightarrow{(1,1)} \vec{y}$;
otherwise we have the factors $x0$ and $y1$, where $x$ is an $\vec{x}$-factor
and $y$ is a $\vec{y}$-factors, for which we have $|y1|_1 - |x0|_1 = 2$
contradicting balancedness at $n+1$.
Towards a contradiction, we consider whether we add neither or both of the remaining admissible loops.

Assume first that we add both loops. Then we have that length $n+1$ factors
$1x1$ and $0y0$, where $1x$, $x1$ are $\vec{x}$-factors and $0y$, $y0$ are
$\vec{y}$-factors, having $|1x1|_1 = |0y0|_1$. However, we then have
$|y|_1 - |x|_1 = 2$, which gives a contradiction with balancedness at $n-1$.

To complete the proof of the first claim, suppose we add neither of the loops.
Inspecting $G_n$, we see that a factor of length $n$ is an $\vec{x}$-factor if and only if it begins with $0$. Consider the right special factor $0v$ of length $n$ (i.e., $|v| = n-1 \geq 1$) (it begins with $0$ by the form of the graph).
Since $v1$ is a $\vec{y}$-factor, we deduce that $v$ begins with a $1$. But then $v0$ is a $\vec{x}$-factor beginning with $1$, a contradiction.

The second part of the claim is now straightforward. The edges in
the left-hand graph in
\cref{fig:sturmian-subgraph} are all pairwise inequivalent under both
$\equiv_L$ and $\equiv_R$. Both of the admissible loops to be added to
the graph to obtain $G_n$ are equivalent to the non-loop edges of the graph.
Hence $\#E_n/{\equiv_L} + \#E_n/{\equiv_R} = 6$.
\end{proof}


For a Sturmian word $\infw{s}$, we have, by \cref{prop:sturmian-abelian-Rauzy-graphs}, $X_{\infw{s}}(1) = 3$, $X_{\infw{s}}(n) = 4$ for all $n\geq 2$, and $Y_{\infw{s}}(n) = 6$ for all $n\geq 1$. We also have that
$1 = m < m'$ using the notation of \cref{prop:formula}. Hence,
applying the proposition, we have:
\begin{proposition}\label{prop:sturmiancomplexity}
For all $n\geq 0$ and $0\leq r < 2^k$
\begin{equation}\label{eq:k+1-sturmian}
\bc{\varphi^k(\infw{s})}{k+1}(2^kn + r)
= \begin{cases}
	\fc{\infw{t}}(r), &\text{if }n=0 \text{ and } 0\leq r < 2^k;\\
	3\cdot 2^{k} - 2, &\text{if }n=1 \text{ and } r=0;\\
		3 \cdot 2^k + r -1,  &\text{if }n=1 \text{ and } r>0;\\
2^{k+2} - 2, &\text{otherwise}.
\end{cases}
\end{equation}
\end{proposition}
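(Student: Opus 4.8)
The plan is to apply \cref{prop:formula} directly, feeding in the combinatorial data for a Sturmian word $\infw{s}$ established by \cref{prop:sturmian-abelian-Rauzy-graphs}. The required quantities are: $\#X_{\infw{s}}(1) = 3$ and $\#X_{\infw{s}}(n) = 4$ for all $n \geq 2$ (since $\#X_{\infw{s}}(n) = \#E_n$); $\#Y_{\infw{s}}(n) = 6$ for all $n \geq 1$ (since $\#Y_{\infw{s}}(n) = \#E_n/{\equiv_L} + \#E_n/{\equiv_R}$); and $\bc{\infw{s}}{1}(n) = 2$ for all $n \geq 1$, as $\infw{s}$ is Sturmian. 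It remains to pin down the parameters $m$ and $m'$ of \cref{prop:formula}. Since $\infw{s}$ is balanced, at most one of $00$ and $11$ occurs in $\infw{s}$, so $m = 1$, while some letter occurs in a square (indeed both $0$ and $1$ occur, and one of them appears doubled), giving $m' > 1 = m$. This places us squarely in the regime $1 = m < m'$ used in the formulas.

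First I would handle the case $n = 0$, i.e. lengths $0 \leq r < 2^k$: here \cref{prop:formula} gives $\bc{\varphi^k(\infw{s})}{k+1}(r) = \fc{\infw{t}}(r)$ outright, matching the first line of \eqref{eq:k+1-sturmian}. Next, for $n = 1$ and $r = 0$, I would apply formula \eqref{eq:k+1-formula} with $Z(1,0) = (2^k-1)\#X_{\infw{s}}(1) + \bc{\infw{s}}{1}(1) = 3(2^k-1) + 2$; since $n = 1 = m < m'$, the subtracted term is $1$, yielding $3\cdot 2^k - 2$. For $n = 1$ and $0 < r < 2^k$, I would use \eqref{eq:k+1-formula2} with $Z(1,r) = (r-1)\#X_{\infw{s}}(2) + (2^k - r - 1)\#X_{\infw{s}}(1) + \#Y_{\infw{s}}(1) = 4(r-1) + 3(2^k - r - 1) + 6$; here $n + 1 = 2$ and the subcase $n+1 = m$ requires $m = 2$, which is false ($m = 1$), so we are in the ``otherwise'' branch with subtraction $0$, giving $3\cdot 2^k + r - 1$ after simplification.

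Finally, for $n \geq 2$ (all $r$), both $\#X_{\infw{s}}(n)$ and $\#X_{\infw{s}}(n+1)$ equal $4$. For $r = 0$ I would use \eqref{eq:k+1-formula} with $Z(n,0) = 4(2^k-1) + 2$ and, since $n \geq 2 > 1 = m$, subtraction $0$, giving $2^{k+2} - 2$. For $0 < r < 2^k$ I would use \eqref{eq:k+1-formula2} with $Z(n,r) = (r-1)\cdot 4 + (2^k - r - 1)\cdot 4 + 6 = 4(2^k - 2) + 6 = 2^{k+2} - 2$, independent of $r$; since $n + 1 \geq 3 > m$ the subtraction is again $0$, matching the last line of \eqref{eq:k+1-sturmian}.

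The computation is routine once the parameters are correct, so the only genuine obstacle is the careful determination of $m$ and $m'$ and the verification that for $n = 1$ we land in the right branch of the piecewise formula \eqref{eq:k+1-formula2}. The subtlety is that $m = 1$ (not $2$) forces the condition $n + 1 = m$ to fail for every $n \geq 1$, so the only nonzero subtraction across the entire table is the single ``$1$'' arising in the $n = 1$, $r = 0$ case via \eqref{eq:k+1-formula}; I would double-check this against balancedness to make sure no square $11$ coexists with a square $00$, which is exactly what guarantees $m = 1$.
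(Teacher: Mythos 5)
Your proposal is correct and follows exactly the paper's route: the paper likewise reads off $\#X_{\infw{s}}(1)=3$, $\#X_{\infw{s}}(n)=4$ for $n\geq 2$, and $\#Y_{\infw{s}}(n)=6$ from \cref{prop:sturmian-abelian-Rauzy-graphs}, observes $1=m<m'$, and plugs these into \cref{prop:formula}. Your determination of $m$ and $m'$ via balancedness and aperiodicity, and the resulting arithmetic in each branch, match what the paper leaves implicit.
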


We thus conclude the following. For any $k\geq 1$ and a Sturmian
word $\infw{s}$, we have $\bc{\varphi^k(\infw{s})}{j} = \bc{\infw{t}}{j}$ if $1\leq j\leq k$ (\cref{thm:k-image-j-complexities});
$\bc{\varphi^k(\infw{s})}{k+1}$ is as in \cref{eq:k+1-sturmian};
and $\bc{\varphi^k(\infw{s})}{j} = \fc{\varphi^k(\infw{s})}$ if $j \geq k+2$ (\cref{thm:iterate_thue-morse_fibonacci}). The exact
value for $\fc{\varphi^k(\infw{s})}(n)$ is given by
$\fc{\infw{t}}(n)$ when $n\leq 2^k$ (\cref{lem:k-image-short-factors}), and by $n+2^{k+1}-1$ for
$n>2^k$. The latter can be deduced by using the methods described in
\cite[\S 4.1]{Frid1999}.

 \bibliographystyle{plainurl}
\bibliography{../bibliography.bib}

\end{document}